\definecolor{MyDarkblue}{rgb}{0,0.08,0.50}
\definecolor{Brickred}{rgb}{0.65,0.08,0}
\newtheorem*{theorem*}{Theorem}
\newtheorem{theorem}{Theorem}[section]
\newtheorem{lemma}[theorem]{Lemma}
\newtheorem{proposition}[theorem]{Proposition}
\newtheorem{corollary}[theorem]{Corollary}
\newtheorem{problem}[theorem]{Open Problem}
\newtheorem{conjecture}[theorem]{Conjecture}
\theoremstyle{definition}
\newtheorem{definition}[theorem]{Definition}
\newtheorem{remark}[theorem]{Remark}
\newenvironment{assK}{
	\textbf{Assumption $\boldsymbol{\cK}$. }}{}
\newenvironment{assKalpha}{
	\textbf{Assumption $\boldsymbol{\cK_\alpha}$. }}{}
\renewcommand{\P}{\mathbb{P}}
\newcommand{\Pv}{\mathbb{P}}
\newcommand{\eps}{\varepsilon}
\newcommand{\cA}{\mathcal{A}}\newcommand{\cB}{\mathcal{B}}
\newcommand{\cD}{\mathcal{D}}\newcommand{\cE}{\mathcal{E}}\newcommand{\cF}{\mathcal{F}}
\newcommand{\cI}{\mathcal{I}}
\newcommand{\cK}{\mathcal{K}}
\newcommand{\cO}{\mathcal{O}}
\newcommand{\cR}{\mathcal{R}}
\newcommand{\cS}{\mathcal{S}}\newcommand{\cU}{\mathcal{U}}
\newcommand{\Var}{{\rm Var}}
\newcommand{\e}{{\mathrm e}}
\newcommand{\R}{\mathbb{R}}
\newcommand{\N}{\mathbb{N}}
\newcommand{\Z}{\mathbb{Z}}
\newcommand{\dd}{\mathrm{d}}
\renewcommand{\emptyset}{\varnothing}
\newcommand*{\wt}{\widetilde}
\newcommand*{\be}{\begin{equation}}
	\newcommand*{\ee}{\end{equation}}
\newcommand*{\ba}{\begin{aligned}}
	\newcommand*{\ea}{\end{aligned}}
\newcommand*{\barr}{\begin{array}{c}}
	\newcommand*{\earr}{\end{array}}
\def \toinp    {\overset  \Pv \longrightarrow}
\def \toindis  {\overset {\mathrm{d}}{\longrightarrow}}
\def \toas     {\overset {\mathrm{a.s.}}{\longrightarrow}}
\newcommand*{\ind}{\mathbbm{1}}
\def\namedlabel#1#2{\begingroup
	#2%
	\def\@currentlabel{#2}%
	\phantomsection\label{#1}\endgroup
}
\newcommand{\bes}{\begin{equation*}}
	\newcommand{\ees}{\end{equation*}}
\renewcommand{\P}[1]{\mathbb{P}\!\left(#1\right)}
\newcommand{\E}[1]{\mathbb{E}\left[#1\right]}
\renewcommand{\N}{\mathbb{N}}
\newcommand{\I}{\mathbb{I}}
\renewcommand{\th}{\mathrm{th}}
\newcommand{\cont}{\mathrm{cont}}
\numberwithin{equation}{section}
\renewcommand{\e}{\mathrm{e}}
\newcommand{\bp}{\mathrm{BP}}
\newcommand{\floor}[1]{\lfloor #1\rfloor}
\newcommand{\ceil}[1]{\lceil #1\rceil}
\newcommand{\Ps}[1]{\mathbb P_{\cS}\left(#1\right)}
\newcommand{\toinps}{\xrightarrow{\mathbb P_\cS}}
\newcommand{\ensymboldremark}{\hfill$\blacktriangleleft$}
\newcommand{\invisible}[1]{}
\newcommand{\leqnomode}{\tagsleft@true\let\veqno\@@leqno}
\newcommand{\reqnomode}{\tagsleft@false\let\veqno\@@eqno}
\newlength{\tagmarginsep} 
\title[Lack of Persistence of the Maximum Degree in PAVD models]{Preferential Attachment Trees with Vertex Death: Lack of Persistence of the Maximum Degree}
\author{Markus Heydenreich\orcidlink{0000-0002-3749-7431}}
\author{Bas Lodewijks\orcidlink{0000-0001-5624-2410}}
\address{Universität Augsburg, Department of Mathematics, D-86135 Augsburg, Germany}
\email{markus.heydenreich@uni-a.de, bas.lodewijks@uni-a.de}
\date{\today} 
\begin{document}

	\begin{abstract}
		We consider an evolving random discrete tree model called \emph{Preferential Attachment with Vertex Death}, as introduced by Deijfen~\cite{Dei10}. Initialised with an alive root labelled $1$, at each step $n\geq1$ either a new vertex with label $n+1$ is introduced that attaches to an existing \emph{alive} vertex selected preferentially according to a function $b$, or an alive vertex is selected preferentially according to a function $d$ and \emph{killed}. 
		In this article we introduce a generalised concept of \emph{persistence} and lack thereof for evolving random graph models. Let $O_n$ be the smallest label among all alive vertices (the oldest alive vertex), and let $I_n$ be the label of the alive vertex with the largest degree (among all alive vertices). Persistence occurs when $I_n/O_n$ is tight, whereas lack of persistence occurs when $I_n/O_n$ diverges with $n$. 
		
		We study \emph{lack of persistence} in this article and we identify two regimes: the `{old are rich}' regime and the `{rich die young}' regime. In the `rich are old' regime, though the oldest alive vertices in the tree typically have the largest degrees, lack of persistence can occur subject to the non-summability condition $\sum_{i=0}^\infty 1/(b(i)+d(i))^2=\infty$, under which `lucky' vertices that are slightly younger than the oldest vertices can attain the largest degrees by step $n$. This generalises known results by Banerjee and Bhamidi~\cite{BanBha21} whilst also removing a technical assumption in their work. In contrast, lack of persistence always occurs in the ‘rich die young’ regime, without the need of the non-summability condition. This regime is entirely novel and cannot be observed in preferential attachment models without death. 
		Here, vertices can survive for an exceptionally long time by obtaining a low degree, whereas vertices with a large degree die much faster, causing lack of persistence to occur.  
		A main technique is an embedding of the discrete tree process into a Crump-Mode-Jagers branching process, and a higher-order analysis of the resulting birth-death mechanism based on moderate deviation principles with exponential tilting. 
	\end{abstract}
	
	\maketitle
	
	\tableofcontents
	
	\section{Introduction}\label{sec:intro}
	
	Since the late 1990s and early 2000s, the amount of research into random graphs as models for real-world networks has grown tremendously. The universal behaviour observed in many different real-world networks coming from distinct and unrelated contexts drove the motivation to understand the formation and structure of these networks from a more abstract and theoretical perspective. Within the fields of network science, statistical physics, and mathematics, this has yielded a large variety of random graph models, often with particular features that serve certain modelling purposes or aim to explain the underlying principles that potentially govern how real-world networks form. 
	
	Within this `random graph zoo', a distinction can be made between \emph{static} and \emph{evolving} random graphs. The former consists of models where a large random graph is considered to be a snapshot of a real-world network, but where there is no direct link or correlation between the random graph of size $n$ (e.g.\ with $n$ vertices) and that of size $n+1$. The configuration model is a clear example. Evolving random graphs, on the other hand, attempt to model the formation of a real-world network through time. Here, new vertices (and edges) are added to the graph sequentially, and new vertices (may) connect to already present vertices. The preferential attachment model is possibly the most famous example of a family of such evolving random graphs. 
	
	These preferential attachment models have received a wealth of attention over the years and are well-understood nowadays. Still, a clear gap between most of these models and networks that form in the real world is the fact that preferential attachment graphs allow for \emph{growth}  only. Vertices and edges are sequentially attached to the graph, causing the graph to grow. However, this is not particularly realistic, as nodes and bonds in real-world networks can be \emph{removed} in almost all contexts as well. As an example, people are born but also pass away; friendships are formed but can also be broken; scientific articles receive citations from other articles but can become irrelevant over time as the field progresses; users of social media can befriend or follow others, but also unfriend or unfollow them and even remove their accounts all together. 
	
	The presence of both growth and shrinking of a network is clearly represented in the case of population sizes, as displayed in Figure~\ref{fig:popsize}. The population sizes of different countries such as Austria, Bosnia and Herzegovina, Croatia, and Portugal are, at times, subject to contrasting trends, showing both growth and decrease in size. This becomes even clearer when looking at certain sub-groups of the population, such as females within the whole population, or females aged 25-29 within the female population, as exemplified in Figure~\ref{fig:propfem}. Overall, however, the total population of the world is increasing without much fluctuation in time.
	
	\begin{figure}[h]
		\includegraphics[width=0.5\textwidth]{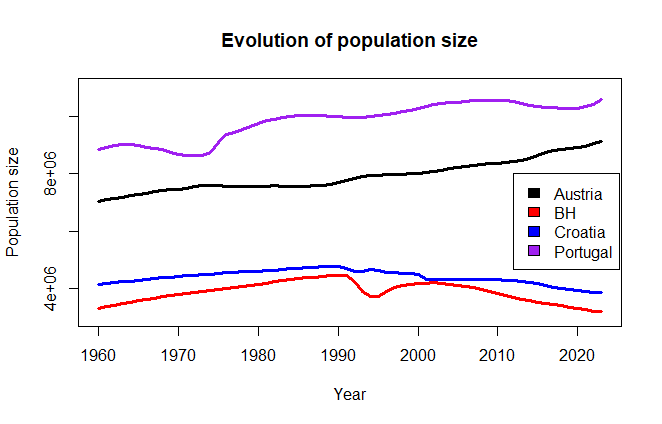}
		\caption{Population size of the countries Austria, Bosnia and Herzegovina, Croatia, and Portugal, from 1960 until 2023~\cite{Pop}. }\label{fig:popsize}
	\end{figure}
	
	\begin{figure}[h]
		\centering
		\begin{subfigure}{0.45\textwidth}
			\includegraphics[width=\textwidth]{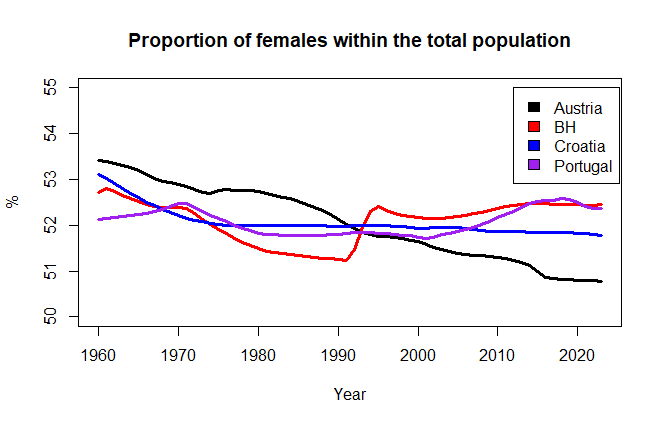}
		\end{subfigure} 
		~
		\begin{subfigure}{0.45\textwidth}
			\includegraphics[width=\textwidth]{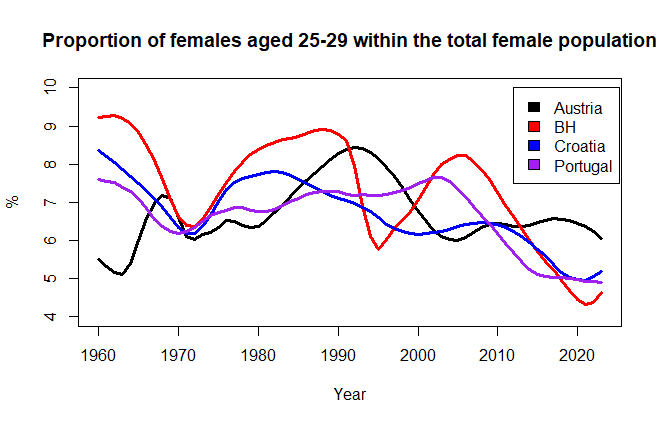}
		\end{subfigure}
		\caption{Proportion of females within the total population~\cite{Popfem} (left) and the proportion of females aged 25 to 29 years old within the total female population~\cite{Popfem2529} (right) of the countries Austria, Bosnia and Herzegovina, Croatia, and Portugal, from 1960 until 2023.}\label{fig:propfem}
	\end{figure}

	To more closely match this growth and shrinking of real-world networks, a limited amount of work has focused on models that allow for vertices and/or edges to also be removed from the network. Generally speaking, in such models at each step, 
	\begin{enumerate}[label=(\arabic*)] 
		\item A new vertex is added to the graph with probability $p_1$, which connects to $m$ already existing vertices (preferentially or uniformly at random).
		\item A new edge is added to the graph between existing vertices with probability $p_2$, where the existing vertices are selected preferentially and/or uniformly. 
		\item A vertex is selected preferentially or uniformly and is removed with probability $p_3$ (including the edges incident to it).
		\item An edge is selected uniformly and is removed with probability $p_4$.
	\end{enumerate}
	Examples include the models studied by Cooper et al.~\cite{CooFrieVer04} (with $m\in\N$) with further work of Lindholm and Vallier~\cite{LinVal11} and Vallier~\cite{Val13}, Chung and Lu~\cite{ChungLu04} (with $m=1$), Deo and Cami~\cite{DeoCami07} (with $m=1$, $p_2=p_4=0$, and vertex deletion is done anti-preferentially, i.e.\ favouring vertices with low degree), and Deijfen and Lindholm~\cite{DeiLind09} (with $m=1$, $p_3=0$). Another model of Britton and Lindholm~\cite{BritLind10}, with further work in~\cite{BritLin11,KuckSchu20}, assigns a random fitness value to each vertex, and edges are added between existing vertices with a probability proportional to the fitness values of the vertices. Work on duplication-divergence type models with edge deletion includes, among others, \cite{Thor15,HerPfaf19,BarLo21,LoReiZha25}. Additionally, there is some non-rigorous work on similar models~\cite{MooGhosNew06,SidMirEm23,SarRoy04}.
	
	The main focus in the analysis of these models is often the degree distribution, in particular for which parameter choices the power-law behaviour of the degree distribution is lost due to vertex/edge removal. Cooper et al.~\cite{CooFrieVer04} and Chung and Lu~\cite{ChungLu04} do study other properties such as typical distances and the diameter, but this is under rather limiting assumptions. 
	
	A different but related family of models is preferential attachment with \emph{ageing}. Here, vertices and edges are not removed, but vertices become less likely to make new connections over time as they age. The main motivation for these models comes from citation networks, where both `young' and well-cited papers are more likely to be cited than `old' and poorly-cited papers, and most papers cease to receive citations after some time. Most work focuses on fitting such models to datasets of citation networks~\cite{HajSen05,HajSen06,HazKulSkiDil17,WangSongBar13,WangYuYu08,WangYuYu09}, whilst the work of Garavaglia et al.\ studies the degree distribution of such an preferential attachment model with ageing and multiplicative fitness~\cite{GarHofWoe17}.

	The work on these models is \emph{limited} in scope for a number of reasons. First, it generally only considers the degree distribution of the model, and if further properties are studied, this is under restrictive conditions and assumptions.  Second, all models mentioned only study affine preferential attachment or uniform attachment for attaching edges. Third, in most models the choice to add or remove vertices and edges at each step is determined by fixed model parameters (the $p_i$) and does not depend on the evolution of the graph itself. 
	
	Some recent work partially addresses these concerns. Diaz, Lichev, and the second author study a model with uniform attachment and uniform vertex removal ($m$ is random, $p_2=p_4=0$), where the local weak limit, existence of a giant component, and the size and location of the maximum degree are studied~\cite{DiazLichLod22}. Bellin et al.~\cite{BelBlaKamKor23,BelBlaKamKor23II} study a model of uniform attachment and uniform vertex removal ($m=1$, $p_2=p_4=0$), but where vertices are not removed but are `frozen' and can then no longer make new connects, and where the choice to freeze vertices need not be random. This allows them to study the model in a `critical window' where they obtain precise result for the height of the tree and its scaling limit.
	
	Finally,  Deijfen~\cite{Dei10} studies a Preferential Attachment tree model with Vertex Death (PAVD). Here, death is equivalent to freezing as in the work of Bellin et al.\ but carries a different name. In this model, the probability of killing a vertex or adding a vertex, as well as to which alive vertex this new vertex connects itself, is dependent on the state of the tree. In particular, it depends on the in-degrees of the alive vertices in a general way. Deijfen studied the limiting degree distribution of the tree conditionally on survival, and shows with a number of examples how introducing death can yield novel behaviour compared to preferential attachment trees without death.  	
	
	In this article we focus on the PAVD model introduced and studied by Deijfen. To this end, let us provide a definition of the model. For a tree $T$, we naturally think of its edges as being directed towards the root. We then let $\deg_T(v)$ denote the in-degree of a vertex $v$ in $T$. For a sequence of trees $(T_n)_{n\in\N}$, we  write $\deg_n(v)$ for $\deg_{T_n}(v)$ for ease of writing.
	
	\begin{definition}[Preferential Attachment with Vertex Death]\label{def:pavd}
		Let $b\colon\N_0\to (0,\infty)$ and $d\colon \N_0\to [0,\infty)$ be two sequences. We recursively construct a sequence of trees $(T_n)_{n\in\N}$ and a sequence of sets of vertices $(\cA_n)_{n\in\N}$ as follows. We initialise $T_1$ as a single vertex labelled $1$ and $\cA_1=\{1\}$. For $n\geq 1$, conditionally on $T_n$ and $\cA_n$, if $\cA_n\neq \emptyset$, we select a vertex $i$ from $\cA_n$ with probability 
		\be 
		\frac{b(\deg_n(i))+d(\deg_n(i))}{\sum_{j\in \cA_n} b(\deg_n(j))+d(\deg_n(j))}.
		\ee 
		Then, conditionally on $T_n$ and $i$, we either \emph{kill} vertex $i$ with probability 
		\be 
		\frac{d(\deg_n(i))}{b(\deg_n(i))+d(\deg_n(i))}, 
		\ee 
		and set $T_{n+1}=T_n$ and $\cA_{n+1}=\cA_n\setminus\{i\}$, or otherwise construct $T_{n+1}$ from $T_n$ by introducing a new vertex $n+1$ which we connect by a directed edge to $i$ and set $\cA_{n+1}=\cA_n\cup\{n+1\}$.
		
		If $\cA_n=\emptyset$, we terminate the recursive construction, set $T_i=T_n$ for all $i>n$, and say that \emph{the tree has died}.
	\end{definition} 
	
	We see from the definition that the probabilities to both select and kill a vertex depend on the evolution of the tree itself, and that the sequences $b$ and $d$ are not restricted to the uniform case ($b$ and/or $d$ constant) or the affine case ($b(i)=b_1i+b_2$ and/or $d(i)=c_1i+c_2$).
	
	\textbf{Our contribution.} We further the theoretical knowledge of the PAVD model by analysing \emph{lack of persistence of the maximum degree}. Persistence (or lack thereof) of the maximum degree in evolving random graphs is the emergence of a \emph{fixed} vertex that attains the largest degree in the graph for all but finitely many steps. When there is lack of persistence, such a fixed vertex does not exist and the maximum degree changes hands infinitely often (i.e.\ the maximum degree is attained by different vertices infinitely often). Persistence of the maximum degree is also known as degree centrality and is often leveraged in network archaeology and root-finding algorithms (see e.g.~\cite{BanBha22,BriCalLug23,BubMosRac15} and references therein)  In this way, we thus address the three concerns raised earlier, as we analyse an intricate property that requires an in-depth understanding of the model in a very general setting under only mild assumptions. 
	
	The notion of persistence in evolving random graphs has been studied from different perspectives, starting with the work of Dereich and M\"orters~\cite{DerMor09}. Later, sufficient conditions under which persistence and the lack thereof hold have been weakened or omitted by Galashin~\cite{Gal13}, Banerjee and Bhamidi~\cite{BanBha21}, and Iyer~\cite{Iyer24}. In short (and omitting minor technical assumptions), using the PAVD formulation with $d\equiv 0$, so that there is no vertex death and we recover the classic preferential attachment model, persistence of the maximum degree occurs almost surely if and only if 
	\be \label{eq:sumcond}
	\sum_{i=0}^\infty\frac{1}{b(i)^2}<\infty.
	\ee 
	Heuristically, persistence occurs when high-degree vertices have a sufficiently strong advantage over low-degree vertices, so that they are more likely to make more connections and increase their in-degree further. This allows one vertex to attain the largest degree for all but finitely many steps, instead of the largest degree switching between vertices infinitely often. The transition between whether the advantage is strong enough or not lies exactly at the point when the series in~\eqref{eq:sumcond} goes from summable to non-summable. This is a \emph{robust} property, in the sense that changes to finitely many values $b(i)$ does not change the large-scale behaviour of the model.
	
	When vertices are killed and can no longer make new attachments afterwards, as in the PAVD model, the notion of persistence as stated above is not particularly interesting. Indeed, if a vertex is killed after a finite (random) number of steps almost surely, then lack of persistence always occurs. The largest degree in the first $\cO(1)$ many vertices is of order $\cO(1)$ as well. There are many more alive vertices after $n$ steps, however, one of which will be sufficiently lucky to obtain a degree larger than $\omega_n$, where $\omega_n$ tends to infinity at an arbitrarily slow rate. Only when vertices are never killed with positive probability is the question whether persistence does or does not occur interesting. However, this setting, as we shall see, is not too dissimilar from preferential attachment without death. 
	
	More interesting is the question whether the \emph{oldest alive individual}, that is, the alive individual with the smallest label, and the \emph{alive individual with the largest degree}, i.e.\ the \emph{richest} individual, have labels that are `close' or `far apart'. In the case of preferential attachment without death all individuals are alive, so that the oldest alive vertex is always the vertex with label $1$. The ratio of their labels thus equals the label of the maximum degree vertex, which converges in the case of persistence and does not (in fact, it tends to infinity) in the case of lack of persistence. Viewing persistence from this more general perspective thus allows for a more general problem that is non-trivial to study in general preferential attachment trees with death. 
	
	In this article we show when, under some mild technical assumptions, \emph{lack of persistence occurs}. In particular, we identify two regimes in which different behaviour can be observed, which we coin the \emph{rich are old} and \emph{rich die young} regimes. In the `rich are old' regime, behaviour is to some extent similar to persistence of the maximum degree in preferential attachment models without death, which can be viewed as a special case that we generalise to a larger class of models that include vertex death. Here, we show that persistence does not occur when the summability condition in~\eqref{eq:sumcond} is not met, and thus agrees with what is known for preferential attachment trees without death. The asymptotic behaviour of the labels of the oldest alive vertex and the vertex with the largest degree is distinct from classical preferential attachment, however. 
	
	The analysis of the `rich die young' regime is entirely novel and has not been studied previously, as it cannot be attained by preferential attachment models without death. Here we show, under mild assumptions on the regularity of the sequences $b$ and $d$, that \emph{lack of persistence always occurs} and that the condition in~\eqref{eq:sumcond} is no longer relevant. As the name of the regime already suggests, this stark contrast in behaviour in this regime is caused due to the fact that the values $d(i)$ are `too large' for all large $i$, which causes vertices who manage to obtain large degrees to be killed much faster than individuals with small degree. Such small degree individuals can avoid being selected for a long time in the first place, so that they do not increase their degree nor die, but consequently manage to stay alive for a larger amount of time compared to high-degree vertices. 
	
	Our analysis extends the methodology developed by Deijfen in~\cite{Dei10} using an embedding of the discrete tree process in a  continuous-time branching process known as a Crump-Mode-Jagers branching process. We use a more precise formulation of this embedding compared to Deijfen, which allows us to simplify certain proofs of results in~\cite{Dei10}, but mainly to develop novel and precise results regarding the behaviour of the branching process embedding that were unattainable previously. Furthermore, we extend ideas used by Banerjee and Bhamidi in~\cite{BanBha21} to precisely analyse the optimal window in which the vertex with the largest degrees are found in the `rich are old' regime. Here, we have to balance the likelihood that vertices attain a large degree \emph{and survives} with the growth rate of the tree. Taking into account the survival of vertices is novel for this model compared to preferential attachment without death and significantly complicates the analysis.
	
	Finally, the analysis in the `rich die young' regime uses similar ideas as for the `rich are old' regime, though we carry this out in greater generality in this regime. In particular, we show that the transition between the `rich are old' and `rich die young' regimes is \emph{not robust}, in the sense that changing even a single value of $b(i)$ and/or $d(i)$ can take the behaviour from one to the other regime, entirely altering the behaviour of persistence of the model. This is in stark contrast with the robustness of the summability condition in~\eqref{eq:sumcond}. 
	
	\textbf{Structure of the paper. } We state the main results and the necessary assumptions in Section~\ref{sec:model}. The methodology used in the analysis in presented in Section~\ref{sec:embed}, followed by a heuristic explanation of the results in Section~\ref{sec:heur}. We collect some preliminary results and tools for the proofs of the main results in Section~\ref{sec:prelim}. These are used in Sections~\ref{sec:old} through~\ref{sec:mainproofs}. Section~\ref{sec:old} studies the label of the oldest vertex, Section~\ref{sec:max} focusses on the largest degree and the label of the vertex that attains it in the `rich are old' regime, and Section~\ref{sec:maxrdy} focusses on the `rich die young' regime. These results are then combined to prove the main results in Section~\ref{sec:mainproofs}. Finally, we discuss the case of asymptotically constant functions $b$ and $d$ in Section~\ref{sec:const} and conclude with a discussion of our work and possibilities for future directions in Section~\ref{sec:disc}.
	
	\textbf{Notation. } 	Throughout the paper we use the following notation: we let $\N:=\{1,2,\ldots\}$ denote the natural numbers, set $\N_0:=\{0,1,\ldots\}$ to include zero and let $[t]:=\{i\in\N: i\leq t\}$ for any $t\geq 1$. For $x,y\in\R$, we let $\lceil x\rceil:=\inf\{n\in\Z: n\geq x\}$ and $\lfloor x\rfloor:=\sup\{n\in\Z: n\leq x\}$, and let $x\wedge y:=\min\{x,y\}$ and $x\vee y:=\max\{x,y\}$. For sequences $(a_n)_{n\in\N},(b_n)_{n\in\N}$ such that $b_n$ is positive for all $n$ we say that $a_n=o(b_n)$ and $a_n=\mathcal{O}(b_n)$ if $\lim_{n\to\infty} a_n/b_n=0$ and if there exists a constant $C>0$ such that $|a_n|\leq Cb_n$ for all $n\in\N$, respectively. We write $a_n=\Theta(b_n)$ if $a_n=\cO(b_n)$ and $b_n=\cO(a_n)$. For random variables $X,(X_n)_{n\in\N}$, and $Y$ we let $X_n\toindis X, X_n\toinp X$ and $X_n\toas X$ denote convergence in distribution, probability and almost sure convergence of $X_n$ to $X$, respectively. We say that $(X_n)_{n\in\N}$ is a tight sequence of random variables when for any $\eps>0$ there exists $K_\eps>0$ such that $\P{X_n>K_\eps}<\eps$ for all $n\in\N$. Further, $X\preceq Y$ (resp.\ $X\succeq Y$) denotes that $X$ is stochastically dominated by $Y$ (resp.\ $X$ stochastically dominates $Y$). For a sequence $(\cE_t)_{t\in \I}$ of events, where $\I=\N$ or $\I=[0,\infty)$, we say that $\cE_t$ holds with high probability when $\lim_{t\to\infty}\P{\cE_t}=1$.  Finally, for an event $\cS$ such that $\P{\cS}>0$ we let  $\Ps{\cdot}:=\mathbb{P}(\cdot\, |\,\cS)$ and  $\mathbb E_\cS{}{\cdot}:=\E{\cdot\,|\,\cS}$ denote the conditional probability measure and conditional expectation, respectively. Then, we let $\xrightarrow{\mathbb P_\cS-\mathrm{a.s.}}$ respectively $\overset{\mathbb  P_\cS}{\longrightarrow}$ denote almost sure convergence and convergence in probability with respect to the probability measure $\mathbb P_\cS$.

	\section{Results}\label{sec:model}
	
	In this section we present the main results and the necessary assumptions that we use. Recall the PAVD model, as in Definition~\ref{def:pavd}. We let 
	\be \label{eq:surv}
	\cS:=\bigcap_{n\in\N}\{\cA_n\neq \emptyset\}
	\ee 
	denote the event that the process \emph{survives}, that is, the construction is never terminated. We assume throughout that $\P{\cS}>0$. We let $\mathbb P_{\cS}$ denote the probability measure $\mathbb P$, conditionally on $\cS$. Upon $\cS$, the quantities of interest are the \emph{oldest alive individual} (i.e.\ with smallest label) and the \emph{alive individual with the largest degree} in $T_n$. We thus define
	\be 
	O_n:=\min \cA_n \qquad\text{and}\qquad \ I_n:=\min\{i\in\cA_n: \deg_n(i)\geq \deg_n(j) \text{ for all }j\in \cA_n\}.
	\ee 
	We are interested in the long-term behaviour of $I_n$ and $O_n$. In particular, whether
	\be\tag{P}\label{eq:pers} 
	\Big(\frac{I_n}{O_n}\Big)_{n\in\N}\text{ is a tight sequence of random variables with respect to $\mathbb P_\cS$}, 
	\ee 
	or whether 
	\be \tag{NP}\label{eq:nopers}
	\frac{I_n}{O_n}\xrightarrow{\mathbb P_\cS} \infty. 
	\ee
	We say \emph{persistence} occurs when~\eqref{eq:pers} holds, whereas \emph{lack of persistence} occurs when~\eqref{eq:nopers} holds. In this paper, we focus on showing that lack of persistence occurs for a large class of models.
	
	We define the following quantities.  Let $(E_i)_{i\in\N_0}$ be a sequence of independent exponential random variables, where $E_i$ has rate $b(i)+d(i)$, and let $(B_i)_{i\in\N_0}$ be a sequence of independent Bernoulli random variables (also independent of the $E_i$), such that 
	\be 
	\P{B_i=1}=\frac{b(i)}{b(i)+d(i)}, \qquad\text{for }i\in\N_0. 
	\ee 
	Finally, define 
	\be \label{eq:D}
	S_k:=\sum_{i=0}^{k-1} E_i, \qquad  \ D:=\inf\{i\in\N_0: B_i=0\}, \qquad \text{and}\qquad \cR:=\sum_{i=1}^D\delta_{S_i}, 
	\ee 
	where $\delta$ is a Dirac measure. Note that $\cR$ is an empty point process when $D=0$. We let  $\mu$ denote the density of the point process $\cR$, i.e.\  
	\be 
	\mu(t):=\lim_{\eps \downarrow 0}\eps^{-1}\P{\cR((t,t+\eps))\geq 1}, \qquad t\geq 0. 
	\ee 
	Also, let $\widehat\mu(\lambda)$ denote the Laplace transform of $\mu$, for $\lambda\geq0$. That is, 
	\be \label{eq:rhohat}
	\widehat \mu(\lambda):=\int_0^\infty \e^{-\lambda t}\mu(t)\, \dd t=\sum_{k=1}^\infty \prod_{i=0}^{k-1}\frac{b(i)}{b(i)+d(i)+\lambda}. 
	\ee 
	The explicit expression for $\widehat\mu$ in terms of the sequences $b$ and $d$ follows from~\cite[Proposition $1.1$]{Dei10}. A shorter and new proof of this fact is provided in Section~\ref{prop:rhoexpl}. Then, we define 
	\be \label{eq:undlambda}
	\underline \lambda:=\inf\{\lambda>0: \widehat \mu(\lambda)<\infty\}. 
	\ee 
	Furthermore, we define the following sequences. For $k\in\N$,
	\be \ba \label{eq:seqs}
	\varphi_1(k)&:= \sum_{i=0}^{k-1}\frac{1}{b(i)+d(i)}, \qquad & \varphi_2(k):=\sum_{i=0}^{k-1}\Big(\frac{1}{b(i)+d(i)}\Big)^2,& \\
	\rho_1(k)&:=\sum_{i=0}^{k-1}\frac{d(i)}{b(i)+d(i)},\qquad & \rho_2(k):=\sum_{i=0}^{k-1}\Big(\frac{d(i)}{b(i)+d(i)}\Big)^2.&
	\ea\ee 
	We extend the domain of $\varphi_1,\varphi_2,\rho_1$, and $\rho_2$ to $\R_+$ by linear interpolation so that, for example, $\varphi_1(t)=\int_0^t1/(b(\lfloor x\rfloor)+d(\lfloor x\rfloor))\,\dd x$. In particular, this implies that $\varphi_1$ and $\varphi_2$ are strictly increasing and thus invertible. In the case that $d(i)$ converges to some limit $d^*\in[0,\infty)$, we define the sequence $\alpha$ as
	\be \label{eq:alpha}
	\alpha(k):=\rho_1(k)-d^*\varphi_1(k)=\sum_{i=0}^{k-1}\frac{d(i)-d^*}{b(i)+d(i)}, \qquad k\in \N. 
	\ee 
	Again, extend the domain of $\alpha$ to $\R_+$ by linear interpolation. Note that $\alpha\equiv \rho_1$ when $d^*=0$, that is, when $d$ converges to zero. In the case that $\lim_{k\to\infty}\rho_1(k)$ exists (irrespective of whether $d$ converges to $d^*=0$ or not), we also define $\alpha(k):= \rho_1(k)$.  We then define 
	\be \label{eq:Ks}
	\cK(t):=\varphi_2\big(\varphi_1^{-1}(t)\big), \qquad \text{and}\qquad \ \cK_\alpha(t):=\alpha\big(\varphi_1^{-1}(t)\big), \qquad \text{for }t\geq 0. 
	\ee  
	Note that $\E{S_k}=\varphi_1(k)$ and $\Var(S_k)=\varphi_2(k)$, where $S_k$ is as in~\eqref{eq:D}. Heuristically, $\cK(t)$ denotes the variance of the sum $S_k$, where $k=\varphi_1^{-1}(t)$ is such that $\E{S_k}=t$.  Similarly, $\cK_\alpha(t)$ quantifies the difference between $\rho_1(\varphi_1^{-1}(t))$ and $d^*t$, and allows us to precisely quantify probabilities of the form $\P{D\geq \varphi_1^{-1}(t)}$ (see Lemma~\ref{lemma:Dtail}). 
	
	Before we present our main results, we state the following assumptions.
	
	\subsection{Assumptions} 
	
	Recall the Laplace transform $\widehat \mu$ of the density $\mu$ of the point process $\cR$, as in~\eqref{eq:rhohat}, and recall $\underline \lambda$ from~\eqref{eq:undlambda}. We assume that 
	\be \tag{Ma}\label{ass:C1}
	\widehat \mu(\lambda^*)=1\text{ has a solution }\lambda^*\in(0,\infty)\qquad\text{and}\qquad\  
	\lambda^* >\underline \lambda.
	\ee
	Note that this implicitly implies that $\underline \lambda<\infty$. The solution $\lambda^*$ to the equation $\widehat\mu(\lambda^*)=1$ is known as the \emph{Malthusian parameter}. 
	
	Then, we have the assumptions
	\begin{align} 
		\lim_{k\to\infty}\varphi_1(k)&=\sum_{i=0}^\infty \frac{1}{b(i)+d(i)}=\infty,\tag{N-E}\label{ass:A1} \\
		\lim_{k\to\infty}\varphi_2(k)&=\sum_{i=0}^\infty \frac{1}{(b(i)+d(i))^2}=\infty, \tag{D-V}\label{ass:varphi2}\\
		\lim_{k\to\infty}\rho_1(k)&=\sum_{i=0}^\infty \frac{d(i)}{b(i)+d(i)}=\infty.\tag{F-D}\label{ass:A2}
	\end{align}
	Throughout the paper we assume that Assumption~\eqref{ass:A1} ``Non-Explosion" holds. In fact, Assumption~\eqref{ass:C1} combined with the continuity of the mapping $\lambda\mapsto \widehat\mu(\lambda)$ implies Assumption~\eqref{ass:A1}. It guarantees that the tree process is `not exploding', in the sense that there does not exists a unique vertex that obtains an infinite degree, nor a unique path from the root of infinite length. Assumption~\eqref{ass:varphi2} ``Diverging-Variance" implies that the variance of $S_k$ tends to infinity with $k$, and is crucial for some of the main results, similar to the summability condition in~\eqref{eq:sumcond}. Finally, Assumption~\eqref{ass:A2} ``Finite-Degree" implies that the random variable $D$, as in~\eqref{eq:D}, is finite almost surely (see Lemma~\ref{lemma:Dlifedistr}), and we may or may not assume that~\eqref{ass:A2} is satisfied. 
	
	Furthermore, we have the following technical assumptions for the functions $\cK$ and $\cK_\alpha$, as in~\eqref{eq:Ks}.
	
	\begin{assK}\label{ass:K}
		The function $\cK$ satisfies
		\be\label{ass:K1}
		\lim_{\eps\downarrow 0}\limsup_{t\to\infty} \frac{\cK((1+\eps)t)}{\cK(t)}=1.
		\ee 
	\end{assK}

	\begin{assKalpha}\label{ass:Kalpha}
		Suppose Assumption~\eqref{ass:varphi2} is satisfied. For $\lambda^*>0,d^*\geq 0$, there exists a function $r\colon(0,\infty)\to\R_+$ such that
		\be\label{ass:Kalphas}
		\cK_\alpha\big(\tfrac{\lambda^*}{\lambda^*+d^*}t-\tfrac{1}{\lambda^*+d^*}\cK_\alpha(r(t))\big)-\cK_\alpha(r(t))=o(\cK(r(t))).
		\ee 
	\end{assKalpha}
	
	\begin{remark} 
		Assumption~\hyperref[ass:Kalpha]{$\cK_\alpha$}  can be interpreted as an assumption for $\rho_1\circ \varphi_1^{-1}$ in the case that $d$ converges to zero (as $\alpha\equiv \rho_1$ in that case). When $\rho_1$ converges, so that $\alpha\equiv \rho_1$, it follows that $\cK_\alpha$ converges, so that Assumption~\hyperref[ass:Kalpha]{$\cK_\alpha$} is trivially satisfied.\ensymboldremark
	\end{remark}
	
	\begin{remark}
		Assumption~\hyperref[ass:Kalpha]{$\cK_\alpha$} is satisfied when $r(t)=\frac{\lambda^*}{\lambda^*+d^*}t-x(t)$, where $x(t)$ satisfies $x(t)=\frac{1}{\lambda^*+d^*}\cK_\alpha(\frac{\lambda^*}{\lambda^*+d^*}t-x(t))+\cO(\cK(\tfrac{\lambda^*}{\lambda^*+d^*}t))$.\ensymboldremark
	\end{remark}
	
	\subsection{Main results} \label{sec:results}
	
	We present our results in two parts, which reflect two distinct regimes in which behaviour of the PAVD model is rather different. We denote these regimes as the \emph{rich are old} and the \emph{rich die young} regimes. In short, lack of persistence occurs in the \emph{rich are old} regime when Assumption~\eqref{ass:varphi2} is satisfied, i.e.\ when the function $\varphi_2$ tends to infinity. In the \emph{rich die young} regime, however, lack of persistence, under mild assumptions on the sequences $b$ and $d$, \emph{always} occurs. These two regimes arise due to different \emph{survival strategies} of vertices. In the `rich are old' regime, vertices are able to live for a long time by increasing their degree, so that old alive vertices tend to have large degrees. In the `rich die young' regime however, as the name suggests, vertices with large degrees are more likely to be killed and thus do not live long, whereas vertices with small degree can manage to survive for a much longer time, which causes lack of persistence to occur. 
	
	To distinguish these regimes, let us define the quantity 
	\be \label{eq:R}
	R:=\inf_{i\in\N_0}(b(i)+d(i)). 
	\ee
	
	\textbf{The rich are old. } In the `rich are old' regime, we assume that either Assumption~\eqref{ass:A2} is not satisfied, or that there exists $d^*\in[0,R)$, such that
	\be \label{eq:dconv}
	\lim_{i\to\infty} d(i)=d^*. 
	\ee 
	Here, it is crucial that the limit $d^*$ is strictly smaller than $R$. Our first result concerns choices of birth and death rates such that the model behaves almost as preferential attachment without death (i.e.\ $d\equiv 0$). That is, it considers a family of models of which preferential attachment without death is a special case, where Assumption~\eqref{ass:A2} is not satisfied. Here, a positive proportion of vertices is never killed, and it are these vertices that determine the behaviour of $O_n$ and $I_n$. 
	
	\begin{theorem}\label{thrm:asPA}
		Consider the PAVD model in Definition~\ref{def:pavd}. Suppose that $b$ and $d$ are such that Assumption~\eqref{ass:A1} is satisfied, but Assumption~\eqref{ass:A2} is not. Then, there exists an almost surely finite random variable $O$ such that 
		\be \label{eq:Otasconv}
		O_n\longrightarrow O\qquad \mathbb P_\cS\mathrm{-a.s.}
		\ee 
		Additionally, suppose that $b$ tends to infinity and suppose that Assumption~\eqref{ass:C1} and Assumption~\hyperref[ass:K]{$\cK$} are satisfied. When, moreover, Assumption~\eqref{ass:varphi2} is satisfied,
		\be \label{eq:Inplusmax}
		\frac{\log I_n}{\cK\big(\frac{1}{\lambda^*}\log n\big)} \toinps \frac{(\lambda^*)^2 }{2}, \qquad \text{and} \qquad \ \frac{1}{\cK\big(\frac{1}{\lambda^*}\log n\big)}\Big(\varphi_1(\max_{v\in \cA_n}\deg_n(v))-\frac{1}{\lambda^*}\log n \Big)\toinps \frac{\lambda^*}{2}.
		\ee
		In particular, persistence does not occur in the sense of~\eqref{eq:nopers}.
	\end{theorem}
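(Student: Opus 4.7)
The plan is to prove the two parts of the theorem via the Crump--Mode--Jagers (CMJ) embedding introduced in Section~\ref{sec:embed}, handling part~(1) by a short monotonicity argument and part~(2) by a moderate deviation optimisation of the type used in Section~\ref{sec:max}.

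For the almost sure convergence of $O_n$, I would first observe that $O_n$ is monotone non-decreasing in $n$: once a vertex is killed it is permanently removed from $\cA_n$, so the minimum label in $\cA_n$ can only grow. Hence $O_n$ has a limit in $\N \cup \{\infty\}$, and it suffices to produce a finite almost sure upper bound on $\cS$. Failure of Assumption~\eqref{ass:A2} is equivalent to
\[
\P{D = \infty} = \prod_{i=0}^{\infty}\frac{b(i)}{b(i)+d(i)} =: p > 0,
\]
and in the CMJ embedding $p$ is precisely the probability that a given vertex is never killed. Since the life trajectories of distinct vertices are i.i.d.\ and since on $\cS$ infinitely many vertices are born, a second Borel--Cantelli argument applied to the independent events $\{D_i = \infty\}$ yields that a.s.\ there is a first index $O := \min\{i : D_i = \infty\}$, and this index is finite. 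Each of the finitely many vertices $i < O$ dies in finite (random) time, so eventually $O_n = O$.

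For part~(2), under Assumption~\eqref{ass:C1} the CMJ framework provides a Malthusian parameter $\lambda^*$, a continuous time $T_n$ with $T_n = \tfrac{1}{\lambda^*}\log n + \Op(1)$, and a population of size $\Theta_\P(\e^{\lambda^* t})$ born by continuous time $t$. A vertex born at time $s$ has age $a = T_n - s$ and degree equal to the largest $k$ with $S_k \leq a$. Writing $\varphi_1(k) = a + y$, the moderate deviation estimates for $S_k$ (with mean $\varphi_1(k)$ and variance $\cK(\cdot)$) from Section~\ref{sec:prelim} give a probability of order $\exp(-y^2/(2\cK(a)))$, while the survival probability $\prod_{i=0}^{k-1} b(i)/(b(i)+d(i))$ converges to $p>0$ and so contributes only a bounded factor. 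Balancing the $\e^{\lambda^* s}$ candidate vertices against this tail yields the constraint $y \approx \sqrt{2\lambda^* s\,\cK(a)}$; maximising $\varphi_1(\deg) = a + y$ subject to $s + a = T_n$ and using the slow variation encoded in Assumption~\hyperref[ass:K]{$\cK$} produces
\[
s^* \approx \tfrac{\lambda^*}{2}\cK(T_n), \qquad a^* \approx T_n - \tfrac{\lambda^*}{2}\cK(T_n), \qquad y^* \approx \lambda^*\cK(T_n).
\]
Since $\log I_n \approx \lambda^* s^*$ and $\varphi_1(\max_v \deg_n(v)) \approx a^* + y^* = T_n + \tfrac{\lambda^*}{2}\cK(T_n)$, substituting $T_n = \tfrac{1}{\lambda^*}\log n$ (the $\Op(1)$ error being negligible on the scale $\cK(T_n)$ by Assumption~\eqref{ass:varphi2}) delivers the two stated limits. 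Lack of persistence follows: by the first part $O_n$ is bounded in $n$, while $\log I_n \to \infty$ by Assumption~\eqref{ass:varphi2}.

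I would carry out the upper bound via a union bound over a suitable discretisation of birth times $s$ and degree thresholds $y$, and the matching lower bound via a second moment (Palm-type) argument exhibiting with high probability at least one vertex near the optimal window $s \approx s^*$ attaining degree $\gtrsim \varphi_1^{-1}(a^* + y^*)$. The main obstacle will be to make the moderate deviations sufficiently uniform across the continuum of birth times $s \in (0, T_n)$, so that the union bound in the upper bound and the variance estimate in the lower bound both survive the maximisation over $s$; relatedly, Assumption~\hyperref[ass:K]{$\cK$} must be invoked to justify treating $\cK(a)$, $\cK(T_n)$, and $\cK(\tfrac{1}{\lambda^*}\log n)$ as asymptotically equal at the relevant scale, and the $\Op(1)$ slack between $T_n$ and $\tfrac{1}{\lambda^*}\log n$ must be shown to produce a genuinely negligible contribution. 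These are precisely the technical points developed in Sections~\ref{sec:old} and~\ref{sec:max}.
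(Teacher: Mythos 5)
Your proposal is correct and follows essentially the same route as the paper: the first part is the paper's argument for~\eqref{eq:Otas} in Proposition~\ref{prop:oldage} (first individual with $D=\infty$, geometric hence finite, all earlier individuals dying at finite times, transferred to discrete steps via non-explosion), and the second part is exactly the window optimisation of Proposition~\ref{prop:It} in its Case~\ref{item:1}, where aliveness costs only a constant factor (Lemma~\ref{lemma:survdeg}$(iii)$), combined with the $\tau_n$ time change of Section~\ref{sec:mainproofs}; your constants $s^*\approx\tfrac{\lambda^*}{2}\cK$, $y^*\approx\lambda^*\cK$ match the paper's $u^*=\tfrac{\lambda^*}{2}$, $H(u^*)=\tfrac{\lambda^*}{2}$ with $d^*=0$. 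The technical caveats you flag (uniformity of the moderate deviations over birth times, Assumption~\hyperref[ass:K]{$\cK$} to identify the variance scales, and the $\Op(1)$ slack in $\tau_n$) are precisely the points the paper resolves in Sections~\ref{sec:old}--\ref{sec:mainproofs}, so no genuine gap remains.
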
 
	
	\begin{remark}
		Theorem~\ref{thrm:asPA} \emph{generalises} Theorem $4.12$ in~\cite{BanBha21}, which concerns itself with lack of persistence in the case $d\equiv 0$, i.e.\ preferential attachment trees without death. Furthermore, we also prove this more general results \emph{without} the need of Assumption C$3$ in~\cite{BanBha21}. That is, we do not need to assume that there exist constants $t',D>0$ such that $\cK(3t)\leq D\cK(t)$ for all $t\geq t'$.\ensymboldremark
	\end{remark}
	
	The second result in the `rich are old' regime considers the case that $d$ converges to $d^*\in[0,R)$ such that Assumption~\eqref{ass:A2} is satisfied. This implies that any vertex is eventually killed almost surely, which causes $O_n$ to grow with $n$ and yields a different first-order term for $I_n$.
	
	\begin{theorem}[Converging death sequences, smaller than $R$]\label{thrm:conv}
		Consider the PAVD model, as in Definition~\ref{def:pavd}.  Suppose that $b$ and $d$ are such that Assumptions~\eqref{ass:A1} and \eqref{ass:C1}  are satisfied. Recall $R$ from~\eqref{eq:R} and suppose that $d$ satisfies~\eqref{eq:dconv} with $d^*\in[0,R)$, and that $b$ tends to infinity. Let Assumptions~\hyperref[ass:K]{$\cK$} and~\hyperref[ass:Kalpha]{$\cK_\alpha$} be satisfied. When Assumptions~\eqref{ass:A2} and~\eqref{ass:varphi2} are satisfied, then
		\begin{align}
			\frac{1}{\cK\big(\tfrac{1}{\lambda^*+d^*}\log n\big)}\Big(\log O_n-\frac{d^*}{\lambda^*+d^*}\log n-\frac{\lambda^*}{\lambda^*+d^*}\cK_\alpha\big(r\big(\tfrac{1}{\lambda^*}\log n\big)\big)\Big)&\toinps 0,\label{eq:Onconv}
			\intertext{and}
			\frac{1}{\cK\big(\tfrac{1}{\lambda^*+d^*}\log n\big)}\Big(\log I_n-\frac{d^*}{\lambda^*+d^*}\log  n-\frac{\lambda^*}{\lambda^*+d^*}\cK_\alpha \big(r\big(\tfrac{1}{\lambda^*}\log n\big)\big)\Big)&\toinps \frac{\lambda^*(\lambda^*+d^*)}{2}, \label{eq:Inconv}
			\intertext{and}
			\frac{\varphi_1(\max_{v\in \cA_n}\deg_n(v))-\frac{1}{\lambda^*+d^*}\log n+\frac{1}{\lambda^*+d^*}\cK_\alpha\big(r\big(\frac{1}{\lambda^*}\log n\big)\big)}{\cK\big(\tfrac{1}{\lambda^*+d^*}\log n\big)}&\toinps \frac{\lambda^*-d^*}{2}.\label{eq:maxdegconv}
		\end{align}
		In particular, persistence does not occur in the sense of~\eqref{eq:nopers}.
	\end{theorem}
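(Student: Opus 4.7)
Following the CMJ embedding outlined in Section~\ref{sec:embed} and the assembly scheme of Section~\ref{sec:mainproofs}, I would reduce the theorem to three probabilistic convergences proved separately in Sections~\ref{sec:old} and~\ref{sec:max}. First, I realise $T_n$ as the genealogical tree of a continuous-time Crump--Mode--Jagers branching process in which every individual reproduces according to an independent copy of the point process $\cR$ from~\eqref{eq:D} and is killed at relative time $S_D$. Assumption~\eqref{ass:C1} places the Malthusian parameter $\lambda^*$ strictly above the abscissa $\underline\lambda$, giving an $L^2$-bounded intrinsic martingale with a positive limit $W$ on $\cS$; hence $\lambda^*\tau_n=\log n+O(1)$ in $\mathbb P_\cS$-probability, where $\tau_n$ is the time of the $n$-th event. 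Writing $t_n:=(\log n)/\lambda^*$, the labels of alive vertices at time $\tau_n$ may be coupled with a rate-$\lambda^* W\e^{\lambda^* s}$ Poisson process thinned by the event $\{S_D>t_n-s\}$, and Lemma~\ref{lemma:Dtail} combined with a Lindeberg CLT for $S_D$ given $D$ yields the sharp lifetime tail
\[
\log\P{S_D>u}=-d^* u-\cK_\alpha(u)+O\bigl(\sqrt{\cK(u)}\bigr)\qquad\text{as }u\to\infty.
\]

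For~\eqref{eq:Onconv} I would compute the first moment of the number of alive vertices of label at most $L$, namely $\asymp W\int_0^{(\log L)/\lambda^*}\lambda^*\e^{\lambda^* s}\P{S_D>t_n-s}\,\d s$, set it equal to a constant, and solve. The critical age $u^\ast$ is the implicit solution of $(\lambda^*+d^*)u+\cK_\alpha(u)=\log n+O(\sqrt{\cK(u)})$, which by Assumption~\hyperref[ass:Kalpha]{$\cK_\alpha$} is exactly $r(t_n)$. Translating ages back to labels via $\log O_n=\lambda^*(t_n-u^\ast)+\op(\cK(t_n))$ gives~\eqref{eq:Onconv}, with convergence in $\mathbb P_\cS$-probability coming from a first/second moment sandwich in which Assumption~\hyperref[ass:K]{$\cK$} guarantees the error terms are of smaller order than the leading $\cK$-correction.

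For~\eqref{eq:Inconv} and~\eqref{eq:maxdegconv} the same programme is run with the survival probability replaced by the joint probability $\P{\cR([0,u])\ge k,\ S_D>u}$ of being alive with an atypically large degree. I would obtain this estimate by exponentially tilting the product law of $(E_i,B_i)_{i\ge0}$: a tilt $\e^{-\lambda\sum E_i}$ accelerates the renewal sequence, while a biased Bernoulli reshapes the death probabilities. Computing the Legendre transform shows that the rate function splits, to leading order, into the survival contribution $d^*u+\cK_\alpha(u)$ and a Gaussian degree-fluctuation contribution $(\varphi_1(k)-u)^2/(2\varphi_2(k))$, uniformly in the window $|k-\varphi_1^{-1}(u)|=O(\sqrt{\cK(u)})$. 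Substituting into the expected count and jointly optimising over $(u,k)$ under the constraint that the expectation is of order one pins down $u^\ast=r(t_n)$ once more and produces the corrections $\tfrac{\lambda^*(\lambda^*+d^*)}{2}\cK(\tfrac{1}{\lambda^*+d^*}\log n)$ in~\eqref{eq:Inconv} and $\tfrac{\lambda^*-d^*}{2}\cK(\tfrac{1}{\lambda^*+d^*}\log n)$ in~\eqref{eq:maxdegconv}. Lack of persistence~\eqref{eq:nopers} is then immediate: subtracting~\eqref{eq:Onconv} from~\eqref{eq:Inconv} gives $\log(I_n/O_n)=\tfrac{\lambda^*(\lambda^*+d^*)}{2}\cK(\tfrac{1}{\lambda^*+d^*}\log n)(1+\op(1))$, which diverges by Assumption~\eqref{ass:varphi2}.

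The main obstacle is the joint moderate deviation estimate in the paragraph above. One must simultaneously force the partial sum $S_k$ to be atypically small (so the vertex has acquired many offspring in a short time) and the Bernoulli sequence to avoid a kill (so the vertex is still alive); these are positively correlated events, and the leading-order rate function has to be identified \emph{uniformly} across a window of width $\sqrt{\cK(u)}$ rather than merely pointwise. Furthermore, turning the pointwise first-moment identity into a genuine $\toinps$ statement requires a delicate size-biased second moment on the CMJ tree, since the atypically long-lived and highly connected vertices responsible for $O_n$ and $I_n$ live deep in the tails of both $D$ and the renewal process. Assumptions~\hyperref[ass:K]{$\cK$} and~\hyperref[ass:Kalpha]{$\cK_\alpha$} are designed precisely to regularise these tails and to ensure that the optimiser $r(t_n)$ is self-consistent under $\eps$-perturbations of the observation window.
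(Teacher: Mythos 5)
Your overall architecture is the same as the paper's: CMJ embedding, a sharpened lifetime tail (the paper's Lemma~\ref{lemma:lifetime2nd}), first/second-moment localisation of the oldest alive individual (Proposition~\ref{prop:oldage}), a joint ``alive with atypically large offspring'' moderate-deviation estimate optimised over birth time and degree (Proposition~\ref{prop:It}, via Lemmas~\ref{lemma:Dtail}, \ref{lemma:survdeg} and Proposition~\ref{prop:mdptilt}), and the transfer to discrete labels through $\tau_n$ and $N(t)$. However, there is a genuine error in your key estimate. For an individual of age $u$ the relevant event is $\{D\ge k,\,S_k\le u,\,S_{D+1}>u\}$, and its exponential cost is \emph{not} $d^*u+\cK_\alpha(u)+\tfrac{(\varphi_1(k)-u)^2}{2\varphi_2(k)}$ as you claim. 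Two order-$\cK$ contributions are missing: the second-order term in the offspring tail, $\P{D\ge k}=\e^{-\rho_1(k)-(\frac12+o(1))\rho_2(k)}$ with $\rho_2(k)=((d^*)^2+o(1))\varphi_2(k)$, and the tilting cross term $d^*(\varphi_1(k)-u)$ coming from the requirement to survive \emph{after} the $k$-th birth (this is exactly what Lemma~\ref{lemma:survdeg} and Proposition~\ref{prop:mdptilt} encode). The correct rate is
\begin{equation}
d^*u+\cK_\alpha(u)+\frac{\big(\varphi_1(k)-u+d^*\varphi_2(k)\big)^2}{2\varphi_2(k)}+o(\cK(u)),
\end{equation}
i.e.\ the Gaussian term is recentred by $d^*\varphi_2(k)$. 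If you run your optimisation with the uncentred rate, writing $u=\tfrac{\lambda^*}{\lambda^*+d^*}t-\tfrac{1}{\lambda^*+d^*}\cK_\alpha-x_1\cK$ and $\varphi_1(k)=\tfrac{\lambda^*}{\lambda^*+d^*}t-\tfrac{1}{\lambda^*+d^*}\cK_\alpha+x_2\cK$, the expected count has exponent $[(\lambda^*+d^*)x_1-\tfrac12(x_1+x_2)^2]\cK$ instead of $[(\lambda^*+d^*)x_1-\tfrac12(x_1+x_2+d^*)^2]\cK$, and maximising $x_2$ subject to positivity gives $x_2=\tfrac{\lambda^*+d^*}{2}$, not $\tfrac{\lambda^*-d^*}{2}$. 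So for every $d^*>0$ your stated rate function contradicts~\eqref{eq:maxdegconv} (the birth-time exponent $x_1=\tfrac{\lambda^*+d^*}{2}$ in~\eqref{eq:Inconv} only coincidentally survives). The splitting into ``survival cost'' plus ``centred degree fluctuation'' must be replaced by the coupled estimate above.

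Two smaller points. The individual dies at $S_{D+1}$, not $S_D$. And your lifetime-tail claim $\log\P{L>u}=-d^*u-\cK_\alpha(u)+O(\sqrt{\cK(u)})$ cannot be obtained from a Lindeberg CLT for $S_D$ given $D$: the optimal strategy has the degree deviating by $\approx d^*\cK(u)$ from $\varphi_1^{-1}(u)$, i.e.\ $\Theta(\sqrt{\cK(u)})$ standard deviations, which is a moderate-deviation event requiring Lemma~\ref{lemma:mdp} (and an optimisation over the degree window, as in the proof of Lemma~\ref{lemma:lifetime2nd}); moreover only an $o(\cK(u))$ error is proved there, which is all the theorem needs, while your $O(\sqrt{\cK(u)})$ refinement is unsubstantiated.
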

	
	\begin{remark}
		Though we present Theorems~\ref{thrm:asPA} through~\ref{thrm:conv} as separate cases to aid the reader and highlight different behaviour, they can be viewed as \emph{one general result}. Indeed, the results in~\eqref{eq:Inplusmax}, \eqref{eq:Onconv}, \eqref{eq:Inconv}, and~\eqref{eq:maxdegconv} can be proved with the same methodology with only minor differences in the proof to distinguish between whether Assumption~\eqref{ass:A2} is satisfied (as in Theorem~\ref{thrm:conv}) or not (as in Theorem~\ref{thrm:asPA}). Furthermore, the results in~\eqref{eq:Inconv} and~\eqref{eq:maxdegconv} hold for \emph{any} $d^*\geq 0$. The restriction $d^*<R$ is necessary only for~\eqref{eq:Onconv}.\ensymboldremark
	\end{remark}
	
	\begin{remark}\label{rem:d0ass}
		When $\lim_{i\to\infty}d(i)=0$ such that $\alpha=\rho_1=o(\varphi_2)$, the condition that Assumption~\hyperref[ass:Kalpha]{$\cK_\alpha$} is satisfied can be omitted.\ensymboldremark
	\end{remark}
	\begin{remark}
		The result in~\eqref{eq:Onconv} yields that the third order asymptotic behaviour of $\log O_n$ is $o(\cK(\tfrac{1}{\lambda^*+d^*}\log n))$. It could be possible to obtain a more precise result, but this would require additional assumptions on the functions $\cK$ and $\cK_\alpha$, and is not necessary for our purposes here.\ensymboldremark
	\end{remark}
	
	\begin{remark}\label{rem:incldead}
		The results presented in~\eqref{eq:Inplusmax}, \eqref{eq:Inconv}, and~\eqref{eq:maxdegconv} all remain true if we consider the label of the vertex with the largest degree among \emph{all} vertices (i.e.\ both alive and dead). This can be verified by modifying parts of the proofs in later sections. It shows that vertices can obtain a large degree only if they also manage to stay alive for a long time, and do not get lucky due to another vertex with a possibly even larger degree dying.\ensymboldremark
	\end{remark} 
	
	\textbf{Rich die young.} We then consider the `rich die young' regime. Here, we can prove a general result regarding lack of persistence with only minor assumptions on the sequences $b$ and $d$. However, we do not provide precise asymptotic results for $I_n$, as more assumptions on the sequence $d$ (as in~\eqref{eq:dconv}, for example) would be required.  

	Before stating the result, we introduce the sequence $(\overline d(i))_{i\in\N_0}$, defined as 
	\be 
	\overline d(i):=\sup_{j\leq i}d(i)\qquad\text{for }i\in\N_0. 
	\ee 
	\begin{theorem}[The rich die young]\label{thrm:biggerR}
		Consider the PAVD model, as in Definition~\ref{def:pavd}.  Suppose that $b$ and $d$ are such that Assumptions~\eqref{ass:A1} and \eqref{ass:C1} are satisfied. Recall $R$ from~\eqref{eq:R}, and suppose that $d$ satisfies $\liminf_{i\to\infty}d(i)\geq R$. Then, 
		\be \label{eq:OnR}
		\frac{\log O_n}{\log n}\toinps \frac{R}{\lambda^*+R}. 
		\ee 
		Additionally, suppose $\liminf_{i\to\infty}d(i)>R$,  that $b$ tends to infinity so that $b(k)=\cO(k)$ and $\overline d(k)=o(b(k))$, and that $b(k)$ and $\overline d(k)$ are both regularly varying with a non-negative exponent. Then, conditionally on the event $\cS$, persistence does not occur in the sense of~\eqref{eq:nopers}.
	\end{theorem}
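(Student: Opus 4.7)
My plan is to work inside the Crump--Mode--Jagers (CMJ) embedding developed in Section~\ref{sec:embed}, under which the $n$-th vertex is born at continuous time $T_n=\frac{1}{\lambda^*}\log n+o_{\mathbb P_\cS}(\log n)$ by standard Malthusian theory, so that $\log(\text{label})\approx\lambda^*\cdot(\text{birth time})$ on the logarithmic scale. The central input for both parts of the theorem is the tail of the lifetime $L:=S_D$ of a generic vertex. Under $\liminf_{i\to\infty}d(i)\geq R$ I would establish
\begin{equation}\label{plan:lifetime}
\tfrac{1}{t}\log\mathbb P(L>t)\longrightarrow -R\qquad\text{as }t\to\infty.
\end{equation}
The lower bound follows by fixing a near-minimizer $i_0$ of $b(\cdot)+d(\cdot)$ with $b(i_0)+d(i_0)\leq R+\eta$, using that with positive probability the vertex reaches degree $i_0$ without dying and then sojourns there for time $\geq t$; letting $\eta\downarrow 0$ gives the exponent $-R$. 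For the matching upper bound, I would telescope as in the proof of~\eqref{eq:rhohat} to obtain
\begin{equation*}
\int_0^\infty e^{\lambda t}\mathbb P(L>t)\,\mathrm{d}t=\sum_{k\geq 1}\frac{1}{b(k-1)+d(k-1)}\prod_{i=0}^{k-1}\frac{b(i)}{b(i)+d(i)-\lambda},
\end{equation*}
which converges precisely for $\lambda<R$ and diverges at $\lambda=R$, yielding~\eqref{plan:lifetime} by an Abel--Tauberian argument.

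With~\eqref{plan:lifetime} in hand, I would prove~\eqref{eq:OnR} by a moment analysis of $N(s,t):=\#\{\text{alive at time }t\text{ born in }[0,s]\}$. By the many-to-one principle for the CMJ tree, $\mathbb E[N(s,t)]=\int_0^s e^{\lambda^* u}\mathbb P(L>t-u)\,\mathrm{d}u$, whose dominant exponential order is $e^{(\lambda^*+R)s-Rt+o(t)}$, pinning the critical window at $s^*(t):=\frac{R}{\lambda^*+R}t$. A first-moment bound gives $N((1-\eps)s^*(t),t)\toinps 0$. For the reverse direction I would use a second-moment argument restricted to descendants of disjoint early branches that are alive at time $(1+\tfrac{\eps}{2})s^*(t)$, whose continuations are essentially independent CMJ subtrees, yielding $N((1+\eps)s^*(t),t)\geq 1$ with high probability. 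Setting $t=T_n$ and translating to labels via the Malthusian approximation proves~\eqref{eq:OnR}.

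For lack of persistence under the sharpened hypothesis $\liminf d(i)>R$, the mechanism is that any alive vertex of large degree must have very short residual life, so it cannot also be old. I would fix $R'\in(R,\liminf d(i))$ and $k_0\in\mathbb N$ with $d(j)\geq R'$ for all $j\geq k_0$; then the residual life once at degree $\geq k_0$ is stochastically dominated by $\mathrm{Exp}(R')$, and combined with the concentration of $S_k$ around $\varphi_1(k)$ this yields, for $k\geq k_0$ and $a\gg\varphi_1(k)$,
\begin{equation}\label{plan:sharp}
\mathbb P(\text{alive at age }a\text{ with degree}\geq k)\leq e^{-R'(a-\varphi_1(k))+o(a)}.
\end{equation}
Under the regular-variation and $\overline d(k)=o(b(k))$ hypotheses, an offspring-density argument as in Section~\ref{sec:max} shows that the maximum degree among alive vertices at time $T_n$ exceeds some slowly diverging $k_n$ with high probability. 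Summing~\eqref{plan:sharp} with $k=k_n$ over the $\lesssim n^{R/(\lambda^*+R)+\delta}$ vertices born before $s^*(T_n)+\delta T_n$, a first-moment bound shows that none of them is simultaneously alive and of degree $\geq k_n$, provided $\delta$ is chosen small enough that $(\lambda^*+R')(s^*(T_n)+\delta T_n)-R' T_n\leq -cT_n$; such $\delta>0$ exists because $R'>R$ leaves a strict gap, concretely any $\delta<\frac{\lambda^*(R'-R)}{(\lambda^*+R)(\lambda^*+R')}$ works. Hence $I_n$ is born after $s^*(T_n)+\delta T_n$ with high probability, whence $I_n/O_n\toinps\infty$.

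The main obstacle is the sharp tail bound~\eqref{plan:sharp}: a vertex could in principle prolong its life by transient visits to low-death degrees before accumulating degree $\geq k_n$, so the bound must sum uniformly over degree histories, and the regular-variation assumptions on $b$ and $\overline d$ together with $\overline d(k)=o(b(k))$ are precisely what rule out such pathological long-survival-at-high-degree strategies. A secondary technical point is the second-moment step in the lower bound of~\eqref{eq:OnR}, where correlations among descendants of a common ancestor must be controlled; restricting to disjoint early branches, where independence is clean, resolves this.
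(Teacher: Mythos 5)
Your lifetime-tail analysis and the proof of \eqref{eq:OnR} follow essentially the paper's route (Lemma~\ref{lemma:lifetime}, then first- and second-moment bounds over birth windows combined with the growth estimates of Corollary~\ref{cor:growth}, as in Proposition~\ref{prop:oldage}); the Laplace-transform upper bound is workable as long as you only use finiteness for $\lambda<R$ (no Tauberian step is needed, and divergence at $\lambda=R$ need not hold), and note that the lifetime is $S_{D+1}$, not $S_D$. The genuine gap is in the lack-of-persistence part, specifically in your tail bound for the event ``alive at age $a$ with degree at least $k$''. The domination of the residual life by an $\mathrm{Exp}(R')$ variable applies only \emph{after} the vertex has reached degree $k_0$; the event also contains histories in which the vertex sits for time close to $a$ at a low degree $i_0$ with $b(i_0)+d(i_0)=R$ (such an $i_0$ necessarily has $d(i_0)<R'$, hence $i_0<k_0$) and only climbs to degree $k$ at the very end. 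This strategy has probability of order $e^{-R(a-\varphi_1(k))-\rho_1(k)}$ up to polynomial and bounded factors, which dwarfs $e^{-R'(a-\varphi_1(k))}$ since $R<R'$; so the correct exponential rate in your bound is $R$, not $R'$. The hypotheses $\overline d(k)=o(b(k))$, $b(k)=\cO(k)$ and regular variation do not rule this out: they concern large degrees, whereas the lingering happens at the small degree realizing the infimum in \eqref{eq:R} --- indeed this is precisely the survival mechanism of the oldest vertices in this regime.

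With the correct rate $R$, your first-moment count of vertices born before $s^*(t)+\delta t$, where $s^*(t)=\frac{R}{\lambda^*+R}t$, that are alive at time $t$ with degree at least $k_n$ has exponential order $(\lambda^*+R)\delta t-\rho_1(k_n)+\cO(\log t)$, which diverges for any slowly growing $k_n$; in fact a routine second-moment argument produces such ``linger-then-climb'' vertices born around $s^*(t)+\delta t$ with high probability, so your claim that $I_n$ is born after $s^*(t)+\delta t$ cannot be obtained this way (it would require controlling degrees at the much larger level $\rho_1(k)\asymp\delta t$, together with a matching existence statement at that level). The paper instead only excludes degrees $\geq k_n:=\lceil\rho_1^{-1}(K'\log t)\rceil$ for vertices born within a window of width $K\log t$ around $s^*(t)$ (Proposition~\ref{prop:nooldhighdeg}), choosing $K'$ large relative to $K$ so that the polynomial-in-$t$ factors balance; combined with Proposition~\ref{prop:oldage} (the oldest alive vertex lies in that window) and Proposition~\ref{prop:divdeathhighdeg} (some alive vertex born only $\cO(\log t)$ before $t$ has degree $\geq k_n$ and stays alive over a window covering the fluctuations of $\tau_n$ --- this is where $\overline d(k)=o(b(k))$, $b(k)=\cO(k)$ and regular variation are actually used, via Lemma~\ref{lemma:remlifediv}), this yields $I_n/O_n\toinps\infty$, but only at polylogarithmic speed, which suffices for \eqref{eq:nopers}. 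Your appeal to the analysis of Section~\ref{sec:max} for the existence of a high-degree alive vertex also does not apply directly, since that section assumes $d(i)\to d^*<\infty$ and Assumption~\eqref{ass:varphi2}.
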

	
	We stress that Assumption~\eqref{ass:varphi2} is stated in Theorems~\ref{thrm:asPA} and~\ref{thrm:conv}, whilst it is not included in Theorem~\ref{thrm:biggerR}. Indeed, work on persistence of the maximum degree in preferential attachment models without death (see~\cite{DerMor09,Gal13,BanBha21,Iyer24}) shows that Assumption~\eqref{ass:varphi2} is necessary and sufficient for lack of persistence in a large family of models. Here, we prove that it is a sufficient condition for a large family of models in the `rich are old' regime. We intend to address the necessary part, as well as generalise Theorem~\ref{thrm:asPA}, in future work.  
	
	In the `rich die young' regime, however, lack of persistence occurs regardless of whether Assumption~\eqref{ass:varphi2} is satisfied or not. This is essentially due to the following reason: When we have $\liminf_{i\to\infty}d(i)>R$, the optimal survival strategy for a vertex is as follows. Since there exist a finite number of indices $I_1,I_2,\ldots, I_j\in\N$ such that $b(I_\ell)+d(I_\ell)=\inf_{i\in\N_0}(b(i)+d(i))=R$ for all $\ell\in[j]$, a vertex that survives for a long time creates $I_\ell$ connections (for some $\ell\in[j]$) and is then no longer selected up to step $n$. This strategy is how the oldest individual in $T_n$ with label $n^{R/(\lambda^*+R)+o(1)}$ survives up to step $n$. Individuals that produce a large number of children, say $M$ many, are selected and then killed with probability proportional to $d(M)$, which is larger than $R$ (if $M$ is large) since $\liminf_{i\to\infty}d(i)>R$. Vertices with high degree are hence less likely to survive up to step $n$ and must therefore have been introduced much later than the oldest alive vertex.

	\subsection{Special case: converging birth and death functions} 
	
	To conclude this section, we consider the special case of \emph{converging} birth and death sequences as an interesting side-case. This case is not covered by any of the results presented so far. In particular, the case that $b$ and $d$ are constant represents a \emph{uniform} attachment tree with uniform vertex death, as studied in~\cite{Dei10,BelBlaKamKor23,BelBlaKamKor23II}.  
	
	\begin{theorem}[Converging birth and death sequences]\label{thrm:const}
		Consider the PAVD model, as in Definition~\ref{def:pavd}. Suppose that  $\lim_{i\to\infty}d(i)=1$ and $\lim_{i\to\infty}b(i)=c>1$, such that Assumption~\eqref{ass:C1} is satisfied and recall $R$ from~\eqref{eq:R}. Then,
		\be
		\frac{\log O_n}{\log n}\toinps \frac{\min\{1,R\}}{\lambda^*+\min\{1,R\}}, \quad  \frac{\log I_n}{\log n}\toinps 1-\frac{\lambda^*}{2(\lambda^*+1)\log2}, \quad\text{and}\quad  \frac{\max_{v\in \cA_n}\deg_n(v)}{\log n}\toinps \frac{1}{\log 2}. 
		\ee 
		In particular, as $1- \frac{\lambda^*}{2(\lambda^*+1)\log 2}>\frac{1}{\lambda^*+1}$ for any $\lambda^*>0$, there is no persistence in the sense of~\eqref{eq:nopers}.
	\end{theorem}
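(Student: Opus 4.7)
The plan is to apply the Crump--Mode--Jagers embedding from Section~\ref{sec:embed} and the moderate-deviation tools developed in Sections~\ref{sec:old}--\ref{sec:maxrdy}, adapted to this degenerate regime where $b$ and $d$ converge to finite limits and so fall outside the scope of Theorems~\ref{thrm:asPA}, \ref{thrm:conv}, and~\ref{thrm:biggerR}. A direct evaluation of~\eqref{eq:rhohat} in the exact-constant case $b\equiv c$, $d\equiv 1$ gives $\widehat{\mu}(\lambda)=c/(1+\lambda)$, and hence $\lambda^* = c - 1$, so the identity $c = \lambda^* + 1$ holds (and, asymptotically, in the convergent case too); this is the algebraic fact responsible for the $\log 2$ appearing in the limits. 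The standard CMJ estimate $\sigma_k = (\log k)/\lambda^* + o(\log k)$ continues to hold by the Malthusian growth arguments of the earlier sections.

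For the oldest alive vertex, I would first note that the lifetime $L = S_{D+1}$ satisfies $L \sim \mathrm{Exp}(1)$ in the constant case, by the independence of the birth- (rate $c$) and death- (rate $1$) Poisson streams. For the general convergent case, a pole analysis of the Laplace transform of $L$ identifies the tail rate as $\min\{d^*, R\} = \min\{1, R\}$: when $R \geq 1$, the late-time behaviour matching $d^* = 1$ dominates; when $R < 1$, the vertex lingers at the exceptional state $i_0$ realising $R$, producing the rate $R$. Writing $r:=\min\{1,R\}$ and $t_n = (\log n)/\lambda^*$, a first-moment computation yields $\mathbb{E}\bigl[\#\{i\leq k : i\in \cA_n\}\bigr] = \Theta\bigl(k\cdot (k/n)^{r/\lambda^*}\bigr)$; setting this of order unity and matching it with a variance bound delivers $\log O_n / \log n \toinps r/(\lambda^*+r)$.

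For the maximum degree and $I_n$, I would carry out a large-deviation optimisation over the joint (age, degree) distribution of alive vertices. In the constant case, conditional on being alive at local time $s$, a vertex has degree $\mathrm{Poi}(cs)$, so the expected number of alive vertices at time $t_n$ with degree at least $k$ equals
\[
	n\int_0^{t_n}\lambda^* e^{-(\lambda^*+1)u}\,\mathbb{P}\bigl(\mathrm{Poi}(cu)\geq k\bigr)\,\mathrm{d}u.
\]
The Cramér estimate $\mathbb{P}(\mathrm{Poi}(cu)\geq k) = \exp(-k\log(k/(cu)) + k - cu + o(k))$, combined with optimisation over $u$, pinpoints the optimiser at $u^* = k/(\lambda^*+1+c) = k/(2c)$ with exponent $-k\log((\lambda^*+1+c)/c) = -k\log 2$. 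Setting the expectation of order one forces $k = (\log n)/\log 2 + o(\log n)$, giving the max-degree result, and the optimum age $u^* = (\log n)/(2(\lambda^*+1)\log 2)$ then yields, via $\log I_n \sim \lambda^*(t_n - u^*)$, the claimed limit for $\log I_n/\log n$. A second-moment bound over a shifted window of ages and degrees confirms convergence in probability, and the inequality $1 - \lambda^*/(2(\lambda^*+1)\log 2) > 1/(\lambda^*+1)$, equivalent to $2\log 2 > 1$, establishes lack of persistence.

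The main obstacle is transferring the clean Poisson structure of the exact-constant case to the genuinely convergent setting: when $b, d$ are only asymptotically constant, births and deaths are not independent constant-rate Poisson processes, and the waiting-time rates $b(i)+d(i)$ vary with the degree. A coupling between the true process and a reference $(c,1)$-Poisson process, exploiting that only finitely many indices have $(b(i), d(i))$ meaningfully distant from $(c,1)$, should yield the required Poisson approximation up to multiplicative $O(1)$ corrections, absorbed into $o(\log n)$ errors at the relevant scales. The tail bound for $L$ requires an analogous perturbative refinement, routine given the behaviour of $\widehat{\mu}$ near its abscissa of convergence.
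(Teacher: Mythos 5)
Your proposal is correct in outline and reproduces all the constants the paper obtains, but it takes a genuinely different (dual) route for the degree optimisation, so a comparison is in order. The paper (Section~\ref{sec:const}) does not give a full proof either; it sketches how to adapt Propositions~\ref{prop:oldage} and~\ref{prop:It}: it works with the waiting-time sums $S_k$, establishes a Cram\'er-type LDP with rate function $I(\alpha)=\alpha(c+1)-1-\log(\alpha(c+1))$ together with a tilted version analogous to Proposition~\ref{prop:mdptilt}, controls survival via Lemma~\ref{lemma:survdeg} with exponent $1\mp\eps$, and encodes the resulting optimisation in a function $H$ (defined through a Lambert-type function $w$) whose unique maximiser is $u^*=1-\frac{\lambda^*}{2(\lambda^*+1)\log 2}$ with value $\lambda^*/\log 2$; the transfer to $O_n$, $I_n$ is then done as in the proof of Theorem~\ref{thrm:conv}. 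You instead parametrise by the \emph{age} of a vertex and use that, in the exact-constant case, the degree of a vertex conditioned to be alive at age $u$ is $\mathrm{Poi}(cu)$ (valid because the constant-rate birth stream is independent of the exponential death time), and you then optimise $-(\lambda^*+1)u-k\log(k/(cu))+k-cu$ over $u$. This is the Legendre dual of the paper's $S_k$-formulation, gives the same optimiser $u^*_{\mathrm{age}}=k/(2(\lambda^*+1))$ and exponent $-k\log 2$ (using $c=\lambda^*+1$), and is arguably more transparent about where the $\log 2$ comes from; the price is that the clean Poisson structure only exists for exactly constant rates. Your treatment of $O_n$ (lifetime tail rate $\min\{1,R\}$, first moment $\Theta(k(k/n)^{r/\lambda^*})$ plus a second-moment/independence argument) is the same strategy as the paper's adaptation of Proposition~\ref{prop:oldage}.

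Two caveats. First, your perturbation step for merely convergent $b,d$ is stated imprecisely: since the relevant probabilities are exponentially small with exponents of order $\log n$, an $\eps$-discrepancy in the rates does \emph{not} produce errors that are ``absorbed into $o(\log n)$''; it shifts the LDP exponents by $\Theta(\eps\log n)$. The correct (and standard) fix is a sandwich coupling with rates $(c\pm\eps,1\pm\eps)$ valid beyond a finite index, followed by continuity of the limiting constants in $(c,d^*)$ as $\eps\downarrow 0$ — this is in effect what the paper's direct LDP for sums of exponentials with converging rates, combined with Lemma~\ref{lemma:survdeg} applied with $x=1\mp\eps$, accomplishes. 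Second, you gloss over the uniformity needed to pass from continuous time to the discrete index: because $\tau_n$ is random, the paper proves its continuous-time statements simultaneously for all $s$ in a window around $t=\frac{1}{\lambda^*}\log n$ (the sets $F_{p,t}$) before invoking $\tau_n-\frac{1}{\lambda^*}\log n\toas$ const; your appeal to $\sigma_k=(\log k)/\lambda^*+o(\log k)$ and $\log I_n\sim\lambda^*(t_n-u^*)$ implicitly needs this uniform-in-$s$ version, which should be stated. With these two points repaired, your argument is a legitimate alternative to the paper's sketch.
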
  
	
	\begin{remark}[Constant birth and death rates]\label{rem:const}
		In the case that $b\equiv c$ and $d\equiv 1$, the above result holds with $\lambda^*=c-1$ and $\min\{1,R\}=\min\{1,1+c\}=1$. Letting $c$ tend to infinity informally yields the results for the random recursive tree (for which $O_n\equiv 1$, $\log(I_n)/\log n\toas 1-1/(2\log 2)$, and $\max_{v\in\cA_n}\deg_n(v)/\log n\toas 1/\log 2$, see~\cite[Theorem $4.14$]{BanBha21} and~\cite[Theorem $2.3$]{Lod24}), in which no death occurs and connections are made uniformly at random. Indeed, for large values of $c$, new vertices are introduced much more often than that vertices are killed, so that this approximates the case of no death and uniform connections.\ensymboldremark
	\end{remark}
	
	We do not formally prove Theorem~\ref{thrm:const}, but rather in Section~\ref{sec:const} provide an overview of the alterations required to the proofs of Theorems~\ref{thrm:conv} and~\ref{thrm:biggerR} to yield the desired results.

	\textbf{Examples. } 
	
	Whether the PAVD model is in the `rich are old' or `rich die young' regime subtly depends on the sequences $d$ and $b$. By changing only a small number of values in either sequences, it is  possible to go from one regime to the other. For example, consider the three models
	\be \ba \label{eq:ex}
	({}&\text{RaO}) & b&=(1,2,3,\ldots)&&\text{and } d=(1,2,\tfrac32,\tfrac32,\ldots),\\ 	({}&\text{RdY}1)& b&=(1,2,3,\ldots)&&\text{and } d=(\tfrac14,2,\tfrac32,\tfrac32,\ldots),\\
	({}&\text{RdY}2) & b&=(\tfrac14,2,3,\ldots)&&\text{and } d=(1,2,\tfrac32,\tfrac32,\ldots).
	\ea\ee 
	In the first example we have $R=2$ and $d^*=\frac32$, so that this case belongs to the `rich are old' regime. In the second and third line we have changed one instance of $d$ and $b$, respectively, so that $R=\frac54$ and $d^*=\frac32$. These cases thus belong to the `rich die young' regime. Whilst this change is rather subtle (for example, the limiting degree distribution remains the same, see Section~\ref{sec:embed}), the consequences for persistence are more drastic. It becomes much harder (resp.\ easier) for vertices to survive for a long time, in particular if they acquire a large degree, if one moves from the `rich are old' to the `rich die young' regime and vice versa.
	
	Assumption~\hyperref[ass:K]{$\cK$} is satisfied for any $b$ and $d$ such that $b+d=b_f\cdot b_r$, where $b_f$ is bounded away from zero and infinity, and $b_r$ is regularly varying with exponent in $[0,1)$, or $b_r$ is affine; see~\cite[Appendix A]{BanBha21}.
	
	Assumption~\hyperref[ass:Kalpha]{$\cK_\alpha$} is trivially satisfied when $K_\alpha=\alpha\circ \varphi_1^{-1}$ converges, with $r(t)=\frac{\lambda^*}{\lambda^*+d^*}t$. That is, when $b(i)^{-1}(d(i)-d^*)$ is summable in $i$. When $\cK_\alpha$ does not converge, then Assumption~\hyperref[ass:Kalpha]{$\cK_\alpha$} is satisfied for $b(i)=(i+1)^\beta$ with $\beta\in(0,1/2)$ and $d(i)=d^*+i^{-\gamma}$ with $\gamma\in(0,1-\beta]$. 
	
	Finally, Assumption~\eqref{ass:C1} is satisfied when 
	\begin{itemize} 
		\item $\E{D}\in(1,\infty)$, or
		\item $\E{D}=\infty$, $b$ tends to infinity,   $C_1:=\limsup_{i\to\infty}\frac{b(i)}{i}<\infty$, and $\widehat \mu(C_1)>1$, or 
		\item  $\E{D}=\infty$, $b$ tends to infinity,   $\limsup_{i\to\infty}\frac{b(i)}{i}=\infty$, $C_2:=\limsup_{i\to\infty}\frac{b(i)}{d(i)i}<1$, and $\widehat \mu(C_2)>1$. 
	\end{itemize}

	\subsection{Related and future work} 
	
	The results presented in this section demonstrate a distinction between the `rich are old' and `rich die young' regimes. Whilst the `rich are old' regime already received a significant amount of attention for preferential attachment models without death, the `rich die young' regime can only occur when introducing vertex death. However, the distinction between these regimes is rather subtle, as minor changes in the birth or death sequence can imply a switch from one regime to the other (see the examples in~\eqref{eq:ex}), whereas these regimes are not observed in the analysis of e.g.\ the degree distribution, as studied in the work of Deijfen~\cite{Dei10}. 
	
	The tools we use to prove the main results combine a more in-depth understanding of the PAVD model and a continuous-time embedding of the process into a Crump-Mode-Jagers branching process, as used for preferential attachment trees without death by Banerjee and Bhamidi in~\cite{BanBha21}. In particular, we study the lifetime and degree distribution of the root vertex and derive moderate deviation bounds for the event that a vertex survives for a long time and obtains a large degree.
	
	\emph{Persistence. } This paper investigates lack of persistence of the maximum degree in PAVD models. A logical continuation is to also determine when persistence can occur. We aim to address this in future research. The results presented here are obtain by studying the quantities $O_n$ and $I_n$ separately. We derive precise asymptotic expressions for these quantities and use that $I_n$ is much larger than $O_n$ to conclude there is lack of persistence. To prove persistence, however, this would require very tight control of both $O_n$ and $I_n$, which may only work under constraining assumptions. A novel perspective of this problem, recently developed by Iyer in~\cite{Iyer24} for preferential attachment trees without death, could perhaps be generalised to the setting with death to study the occurrence of persistence. 
	
	\section{Methodology: Embedding in CMJ branching process}\label{sec:embed}
	
	To prove the results presented in Section~\ref{sec:results}, we make use of a technique where one `embeds' a random discrete process into a continuous-time branching process, a Crump-Mode-Jagers (CMJ) branching process to be exact. The  random discrete structure is then equal in distribution to the CMJ branching process, when viewed at certain stopping times, whilst the CMJ branching process in continuous time provides more analytical advantages compared to the discrete process.
	
	\textbf{Ulam-Harris tree. } To define the relevant CMJ branching process, we first introduce some definitions and notation. We let $\cU_\infty$ denote the Ulam-Harris tree. That is,
	\be\label{eq:UH}
	\cU_\infty:=\{\varnothing\}\cup\bigcup_{k=1}^\infty \N^k.
	\ee 
	For $v=v_1v_2\cdots v_k\in \cU_\infty$, we view $v$ as the $v_k^{\th}$ child of $v_1\cdots v_{k-1}$ (where $v_1\cdots v_{k-1}$ denotes $\varnothing$ when $k=1$). Similarly, we view $j\in \N$ as the $j^{\th}$ child of $\varnothing$. We assign to each individual $v\in\cU_\infty$  an i.i.d.\ copy $\cR^{(v)}$ of $\cR$, as in~\eqref{eq:D} (constructed by i.i.d.\ copies $S_k^{(v)}$, $( E_i^{(v)})_{i\in\N_0}$, and $D^{(v)}$ of $S_k$, $(E_i)_{i\in\N_0}$, and $D$, respectively). Furthermore, we set 
	\be \label{eq:lifetime}
	L^{(v)}:=S^{(v)}_{D^{(v)}+1}, \qquad\qquad v\in \cU_\infty, 
	\ee 
	as the \emph{lifetime} of the individual $v$. We then define the branching process $(\bp(t))_{t\geq 0}$ as follows. At time $t=0$, $\bp(0)$ consists of a single alive individual $\varnothing$. The individual $\varnothing$ gives birth to children according to $\cR^{(\varnothing)}$. That is, the position of the $j^{\th}$ point in $\cR^{(\varnothing)}$ denotes the birth-time of child $j$ of $\varnothing$. Then, $\varnothing$ dies after $L^{(\varnothing)}$ time. Each such child $v$, once born, produces children according to $\cR^{(v)}$ translated by the birth-time of $v$ and lives for $L^{(v)}$ amount of time, after which it dies, independently of all other individuals. This process continues either forever, in which case we say that the branching process \emph{survives}, or until all individuals have died, in which case we say that the branching process has \emph{died}.
	
	If we let $\sigma_v$ denote the birth-time of the individual $v\in \cU_\infty$, then $\sigma_\varnothing:=0$  and, for any $v=v_1\cdots v_k\in \cU_\infty$ such that $D^{(v_1\cdots v_{i-1})}\geq v_i$ for all $i=1,\ldots,k$,  
	\be 
	\sigma_v:=\sum_{j=1}^kS_{v_j}^{(v_1\cdots v_{j-1})}=\sum_{j=1}^k\sum_{i=0}^{v_j-1}E_i^{(v_1\cdots v_{j-1})}.
	\ee 
	For other individuals $v\in \cU_\infty$, we set $\sigma_v=\infty$, as these individuals are never born. We define 
	\be \label{eq:At}
	\cA^\cont_t:=\{u\in \cU_\infty: \sigma_u \leq t, \sigma_u+L^{(u)}>t\}, \qquad t\geq 0, 
	\ee 
	as the set of individuals alive at time $t$. 
	
	We coin the branching process $\bp$ the continuous-time preferential attachment model with vertex death (CTPAVD).  When the sequence $d$ is the all-zero sequence, i.e.\ $d\equiv 0$, we refer to the PAVD (resp.\ CTPAVD) model as a Preferential Attachment (PA) model (resp.\ continuous-time preferential attachment (CTPA) model). To highlight that we speak of a PA model, we may also phrase this as `preferential attachment \emph{without} death'.
	
	\textbf{Characteristics. } To quantify the growth of the branching process, one can count statistics related to the process. These are known as \emph{characteristics} and track, for example, the number of births, the number of alive individuals, or the number of individual with a certain number of children in the process up to time $t$. Let $(\chi(t))_{t\in \R}$ be a real-valued stochastic process, such that $\chi(t)=0$ for $t\leq 0$. We say that $\chi(t)$ \emph{scores} an individual of age $t$, and we count the branching process $\bp$ with the random characteristic $\chi$ by setting
	\be 
	Z^\chi_t:=\sum_{u\in \cU_\infty} \chi(t-\sigma_u). 
	\ee 
	Examples of commonly used characteristics are
	\be\ba  \label{eq:char}
	(1)& \ \ \chi_{\mathrm b}(t):=\ind_{\{t\geq 0\}}, \qquad\qquad &&(2)\ \  \chi_{\mathrm a}(t):=\ind_{\{0\leq t<L\}},\\ 
	(3)& \ \ \chi_{k,\mathrm b}(t):=\ind_{\{t\geq 0,\cR(t)=k\}} &&(4)\ \ \chi_{k,\mathrm a}(t):=\ind_{\{0\leq t<L, \cR(t)=k\}}.
	\ea \ee 
	Then,  $Z^{\chi_{\mathrm b}}_t$ counts the individuals born up to time $t$, and $Z^{\chi_{\mathrm a}}_t=|\cA^\cont_t|$ counts individuals \emph{alive} at time $t$. Further,  $Z^{\chi_{k,\mathrm b}}_t$ and $Z^{\chi_{k,\mathrm a}}_t$ count the number of individuals  with $k\in\N_0$ children born up to time $t$, and the number of alive individuals with $k\in\N_0$ children born up to time $t$, respectively.  To simplify notation we write $Z^{\mathrm b}_t$, $Z^{\mathrm a}_t$, and $Z^{k,\mathrm b}_t,Z^{k,\mathrm a}_t$ to denote these quantities. We then let $(N(t))_{t\geq 0}$ be the stochastic process which counts the number of births and deaths in $\bp$, that is, 
	\be \label{eq:Nt}
	N(t):=Z^{\mathrm b}_t+(Z^{\mathrm b}_t-Z^{\mathrm a}_t), \qquad t\geq0.
	\ee 
	When Assumption~\eqref{ass:A1} is satisfied, the branching process $\bp$ does not \emph{explode}, i.e.\ $N(t)<\infty$ almost surely for all $t\geq0$. Under Assumption~\eqref{ass:A1}, Assumption~\eqref{ass:C1} is then a necessary and sufficient condition for the branching process $\bp$ to be super-critical. We abuse notation to redefine the event $\cS$ (from~\eqref{eq:surv}) as 
	\be \label{eq:S}
	\cS:=\{Z^{\mathrm a}_t>0\text{ for all }t\geq 0\}=\{\bp\text{ survives}\}.
	\ee 
	It is clear that the branching process $\bp$ is super-critical when $\cS$ holds with strictly positive probability. For super-critical CMJ branching processes with Malthusian parameter $\lambda^*>\underline \lambda $, the branching process grows at rate $\e^{\lambda^* t}$ (see~\cite{Ner81}). That is 
	\be \label{eq:maltconv}
	|\bp(t)|\e^{-\lambda^*t}\toas W,
	\ee 
	where $\P{W>0}=\P{\cS}$, i.e.\ the limit $W$ is non-trivial conditionally on survival. Furthermore, we have the following scaling limit for characteristics. 
	
	\begin{theorem}[Nerman~\cite{Ner81}]\label{thrm:nerman} Consider a super-critical CMJ branching process with Malthusian parameter $\lambda^*>\underline \lambda$ and let $\chi$ and $\psi$ be two characteristics such that 
		\be 
		\E{\sup_{t\geq 0}\e^{-\lambda t}\chi(t)}<\infty \quad\text{for some }\lambda<\lambda^*,
		\ee 
		and likewise for $\psi$. Then, conditionally  on $\cS$, 
		\be 
		\frac{Z^\chi_t}{Z^\psi_t}\toas \frac{\widehat \chi(\lambda^*)}{\widehat \psi(\lambda^*)}, 
		\ee  
		where 
		\be 
		\widehat \chi(\lambda):=\int_0^\infty \e^{-\lambda t}\E{\chi(t)}\,\dd t, 
		\ee 
		and likewise for $\psi$. 
	\end{theorem}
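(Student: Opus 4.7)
The plan is to follow Nerman's classical strategy: prove that, for \emph{every} characteristic $\chi$ satisfying the dominating-function hypothesis,
\[
\e^{-\lambda^* t} Z^\chi_t \toas \widehat\chi(\lambda^*)\,W_\infty \quad\text{on } \cS,
\]
where $W_\infty$ is a characteristic-free and a.s.-positive random variable (the terminal value of the intrinsic Malthusian martingale). The conclusion of the theorem then follows by applying this to both $\chi$ and $\psi$ and dividing, since the common factor $W_\infty$ cancels on $\cS$.

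First I would construct the intrinsic martingale. The Malthusian relation $\widehat\mu(\lambda^*)=1$ from Assumption~\eqref{ass:C1} says exactly that the offspring point process, weighted by $\e^{-\lambda^* s}$, has total mass one in expectation. By the branching property, for any stopping line $\cL\subset\cU_\infty$ (a maximal antichain whose elements are actually born in $\bp$), the quantity $W_\cL:=\sum_{v\in\cL}\e^{-\lambda^*\sigma_v}$ has mean one. Taking $\cL_t$ to be the first births strictly after time $t$ along each ancestral line, $(W_{\cL_t})_{t\geq 0}$ is a non-negative martingale with an a.s.\ limit $W_\infty$. The strict inequality $\lambda^*>\underline\lambda$ ensures that $\widehat\mu$ is finite on a neighborhood of $\lambda^*$, which gives the standard Biggins log-moment bound and hence uniform integrability of $(W_{\cL_t})$. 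Consequently $\E{W_\infty}=1$; the standard first-generation zero-one argument, decomposing $\{W_\infty=0\}$ along the subtrees of the children of $\varnothing$ and using their independence, then yields $W_\infty>0$ almost surely on $\cS$.

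Next I would establish the single-characteristic convergence. Conditioning on the first generation gives the recursion $Z^\chi_t = \chi(t) + \sum_{j=1}^{D}Z^{\chi,(j)}_{t-S_j}$, with $Z^{\chi,(j)}$ i.i.d.\ copies of $Z^\chi$. Taking expectations produces a renewal equation with kernel $\mu$, and the key renewal theorem (after the tilt $\e^{-\lambda^* t}$) yields $\e^{-\lambda^* t}\E{Z^\chi_t}\to c\,\widehat\chi(\lambda^*)$ for an explicit constant $c$. The almost-sure upgrade uses Nerman's receding-line construction: for a parameter $s$ one pushes the stopping line $\cL_s$ back into the past (taking $s\to\infty$ with $t-s\to\infty$), writes
\[
\e^{-\lambda^* t} Z^\chi_t \;=\; \sum_{v\in\cL_s}\e^{-\lambda^*\sigma_v}\cdot \e^{-\lambda^*(t-\sigma_v)}Z^{\chi,(v)}_{t-\sigma_v},
\]
and combines the martingale limit for the weighted empirical measure on $\cL_s$ with the sample-path version of the key renewal theorem applied inside each subtree. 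The hypothesis $\E{\sup_{t\geq 0}\e^{-\lambda t}\chi(t)}<\infty$ for some $\lambda<\lambda^*$ makes $\widehat\chi(\lambda^*)$ finite and, more importantly, provides the uniform tail control of the ``young'' subtrees needed to justify the interchange of limit and sum.

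The main obstacle is precisely this interchange: converting convergence in mean (essentially a direct output of the renewal equation) into almost-sure convergence requires uniform, pathwise control over subtrees that have not yet entered their exponential-growth regime, and it is here that Nerman's dominating-function assumption is indispensable. Once the single-characteristic convergence is secured, the conclusion is immediate:
\[
\frac{Z^\chi_t}{Z^\psi_t} \;=\; \frac{\e^{-\lambda^* t}Z^\chi_t}{\e^{-\lambda^* t}Z^\psi_t} \;\toas\; \frac{\widehat\chi(\lambda^*)W_\infty}{\widehat\psi(\lambda^*)W_\infty} \;=\; \frac{\widehat\chi(\lambda^*)}{\widehat\psi(\lambda^*)} \quad\text{on } \cS,
\]
using $W_\infty>0$ a.s.\ on $\cS$ to divide.
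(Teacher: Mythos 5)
First, a point of context: the paper does not prove this statement at all — it is quoted from Nerman \cite{Ner81} (Theorem 5.4 there), so the only meaningful comparison is with Nerman's original argument. Your outline does follow that classical route (intrinsic martingale along stopping lines, renewal equation for the mean, receding stopping lines plus the dominating-function hypothesis on $\chi$ to upgrade to almost-sure convergence), and as a road map it is accurate.

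However, the step on which your whole reduction rests is wrong as justified: the claim that $\lambda^*>\underline\lambda$ ``gives the standard Biggins log-moment bound and hence uniform integrability'' of the intrinsic martingale, so that $\E{W_\infty}=1$ and $W_\infty>0$ a.s.\ on $\cS$. Finiteness of $\widehat\mu(\lambda)$ for some $\lambda<\lambda^*$ does \emph{not} imply the $(x\log x)$ condition $\E{\widehat\cR^{\lambda^*}(\infty)\log^+\widehat\cR^{\lambda^*}(\infty)}<\infty$, and without it the martingale limit is degenerate. Concretely, let every individual produce all of its children at age $1$, with offspring number $N$ satisfying $\E{N}\in(1,\infty)$ but $\E{N\log N}=\infty$: then $\widehat\mu(\lambda)=\E{N}\e^{-\lambda}$ is finite for every $\lambda$, so $\underline\lambda=-\infty<\lambda^*=\log\E{N}$, yet by the Kesten--Stigum/Doney theorem $W_\infty=0$ almost surely. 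This is precisely why the paper records the $(x\log x)$ condition as a separate, equivalent condition in Proposition~\ref{prop:growth} (following \cite{Don72,Ner81}) rather than as a consequence of Assumption~\eqref{ass:C1}. Consequently your final step, dividing $\e^{-\lambda^*t}Z^\chi_t$ by $\e^{-\lambda^*t}Z^\psi_t$, is only justified under that additional hypothesis; in the degenerate case both normalized quantities tend to $0$ on $\cS$ and the ratio statement does not follow from your single-characteristic limit. To get the theorem in the stated generality one needs the ratio form of Nerman's result, proved via a Seneta--Heyde-type normalization (or Nerman's direct ratio arguments on the survival set) rather than by dividing two $\e^{-\lambda^*t}$-normalized limits; alternatively, your argument becomes complete if the $(x\log x)$ condition of Proposition~\ref{prop:growth}(a) is added to the hypotheses.
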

	
	As a direct corollary, the limits 
	\be \label{eq:charlim}
	\lim_{t\to\infty}\frac{Z^\mathrm b_t}{Z^\mathrm a_t}=\frac{\widehat \chi_\mathrm b(\lambda^*)}{\widehat \chi_\mathrm a(\lambda^*)}, \qquad \lim_{t\to\infty}\frac{Z^{k,\mathrm b}_t}{Z^\mathrm b_t}=\frac{\widehat \chi_{k,\mathrm b}(\lambda^*)}{\widehat \chi_\mathrm b(\lambda^*)}=:p_k^\mathrm b, \quad\text{and}\quad \ \lim_{t\to\infty}\frac{Z^{k,\mathrm a}_t}{Z^\mathrm a_t}=\frac{\widehat \chi_{k,\mathrm a}(\lambda^*)}{\widehat \chi_\mathrm a(\lambda^*)}=:p_k^\mathrm a,
	\ee 
	all exist conditionally on $\cS$ almost surely. Here, $p_k^\mathrm a$ and $p_k^\mathrm b$ are called the \emph{limiting offspring distribution} of \emph{alive} and \emph{all} individuals, respectively, and represent the limiting proportion of alive (resp.\ all) individuals that have $k$ children in $\bp$. When, $d\equiv 0$, then $p^\mathrm a_k$ and $p^\mathrm b_k$ are equal.
	
	\textbf{Continuous-time embedding. } With $N(t)$ as in~\eqref{eq:Nt}, we   define the stopping times $(\tau_n)_{n\in\N}$ as
	\be\label{eq:taun} 
	\tau_n:=\inf\{t\geq 0: N(t)=n\}, \qquad n\in\N. 
	\ee 
	That is, $\tau_n$ denotes the time at which the $n^{\th}$ birth or death event occurs. We then have the following correspondence between the discrete tree process $(T_n, \cA_n)_{n\in\N}$ and the CMJ branching process $(\bp(\tau_n), \cA_{\tau_n}^{\cont})_{n\in\N}$ viewed at the sequence of stopping times.
	
	\begin{proposition}[Embedding of PAVD in CTPAVD]\label{prop:embed}
		Let $(T_n,\cA_n)_{n\in\N}$ be the sequence of trees and sets of alive vertices of a PAVD model and $(\bp(t),\cA_t^\cont)_{t\geq 0}$ be a CTPAVD model and the set of alive individuals, both constructed with the same birth and death sequences $b$ and $d$, respectively. Then, 
		\be 
		(T_n,\cA_n)_{n\in\N}\overset d= (\bp(\tau_n),\cA^\cont_{\tau_n})_{n\in\N}. 
		\ee  
	\end{proposition}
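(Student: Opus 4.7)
The plan is to prove the distributional identity by induction on $n$, combining the strong Markov property of the CMJ branching process at the stopping times $\tau_n$ with the memorylessness of the exponentials $E_k^{(v)}$. To compare the two processes directly I would first relabel the individuals in the CTPAVD by birth order, identifying the root $\varnothing$ with label $1$ and the $k$-th individual born after the root with label $k+1$; with this identification both $(T_n, \cA_n)$ and $(\bp(\tau_n), \cA^\cont_{\tau_n})$ take values in the same space of finite rooted trees with a distinguished subset of alive vertices, so it suffices to match the one-step transition kernels.

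For the base case $n = 1$ the relabeling yields $T_1 = \{1\} = \bp(0)$ and $\cA_1 = \{1\} = \cA^\cont_0$. For the inductive step, assume $(T_n, \cA_n) \overset{d}{=} (\bp(\tau_n), \cA^\cont_{\tau_n})$ and condition on the $\sigma$-field $\cF_{\tau_n}$ generated by the CMJ process up to time $\tau_n$. The key observation is that for each alive individual $v \in \cA^\cont_{\tau_n}$ with current degree $k := \deg_{\tau_n}(v)$, the residual time until $v$'s next event equals $E_k^{(v)} - (\tau_n - \sigma_v - S^{(v)}_k)$, which by memorylessness of $E_k^{(v)}$ is $\mathrm{Exp}(b(k) + d(k))$ and independent of $\cF_{\tau_n}$; moreover the Bernoulli $B_k^{(v)}$ governing whether that event is a birth or a death has not yet been queried, so conditionally on $\cF_{\tau_n}$ it is $\mathrm{Ber}(b(k)/(b(k) + d(k)))$, independent of the residual. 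Residuals and Bernoullis for distinct alive vertices are likewise mutually independent, by the construction of $\bp$ from the i.i.d.\ families $(E_i^{(v)}, B_i^{(v)})_{i \in \N_0, v \in \cU_\infty}$.

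Applying the standard competition identity for a finite collection of independent exponentials, $\tau_{n+1}$ is then the time at which vertex $i \in \cA^\cont_{\tau_n}$ is selected with probability
\[
\frac{b(\deg_{\tau_n}(i)) + d(\deg_{\tau_n}(i))}{\sum_{j \in \cA^\cont_{\tau_n}} \bigl(b(\deg_{\tau_n}(j)) + d(\deg_{\tau_n}(j))\bigr)},
\]
and conditionally on $i$ being chosen the event is a death of $i$ with probability $d(\deg_{\tau_n}(i))/(b(\deg_{\tau_n}(i)) + d(\deg_{\tau_n}(i)))$ and otherwise the birth of a new child of $i$, which under the birth-order labeling is assigned label $n+1$. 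These transition probabilities coincide verbatim with those of Definition~\ref{def:pavd}, closing the induction. The only real subtlety, modest though it is, lies in justifying that the strong Markov property applies simultaneously across all currently alive individuals at $\tau_n$ to yield the joint independence of residuals and unseen Bernoullis; this holds because $\cF_{\tau_n}$ reveals, for each alive $v$, only the ordered partial sums $S_1^{(v)}, \ldots, S_{\deg_{\tau_n}(v)}^{(v)}$ and the fact that $B_0^{(v)} = \cdots = B_{\deg_{\tau_n}(v) - 1}^{(v)} = 1$, leaving the remaining exponentials and Bernoullis for $v$ untouched and independent across $v$.
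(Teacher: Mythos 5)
Your core argument is exactly the one the paper has in mind (the paper only sketches it in one line after the proposition): memorylessness of the exponential clocks at the jump times $\tau_n$, joint independence of the residual clocks and the as-yet-unrevealed Bernoullis across alive individuals, and the competition identity for minima of independent exponentials, which reproduces verbatim the selection and kill probabilities of Definition~\ref{def:pavd}; the induction on $n$ with matching one-step kernels is the right way to make the sketch precise.

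There is, however, one concrete slip in the bookkeeping: your identification of the two state spaces via \emph{birth order} (the $k$-th individual born after the root gets label $k+1$) is not the labelling under which the proposition holds, and it is internally inconsistent with your closing claim that the new child at the $(n+1)$-th event "is assigned label $n+1$". In the PAVD model the vertex created at step $n$ is labelled $n+1$, where $n$ counts \emph{both} births and deaths, so labels are skipped at death steps (e.g.\ after birth, death, birth the vertex set is $\{1,2,4\}$, not $\{1,2,3\}$). Under your birth-order relabelling the two sequences are therefore \emph{not} equal in distribution as labelled objects, and the identity would no longer feed correctly into~\eqref{eq:OtItequiv}, where labels are recovered as event counts. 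The fix is immediate: label the individual born at the event time $\tau_m$ by $m$, i.e.\ give $v$ the label $N(\sigma_v)$ with $N$ as in~\eqref{eq:Nt} (so the root gets label $N(0)=1$). With this identification your inductive step assigns the new child the label $n+1$ exactly as in Definition~\ref{def:pavd}, and the rest of your proof goes through unchanged. (A second, purely cosmetic point: $\cF_{\tau_n}$ also reveals the event that $E_{\deg_{\tau_n}(v)}^{(v)}$ exceeds the elapsed time since $v$'s last birth, which is precisely where memorylessness is used; your earlier sentence already handles this, so only the final summary sentence understates what is conditioned on.)
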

	
	The proposition follows from the memoryless property of exponential random variables and properties of minima of exponential random variables. The technique of embedding evolving discrete structures was originally pioneered by Athreya and Karlin~\cite{ArthKar68} and has found fruitful applications in a wide variety of discrete (combinatorial) models, such as P\'olya urns, discrete randomly growing trees such as preferential attachment trees and uniform attachment trees, evolving simplicial complexes, and many more. 
	
	The description of the branching process $\bp$ in which we embed the PAVD tree is used by Deijfen~\cite{Dei10} to determine an explicit description for the limiting offspring distribution of alive individuals $p^{\mathrm a}_k$, as defined in~\eqref{eq:charlim}. The description of the branching process used here is somewhat different (though equivalent), but provides an analytical advantage. In particular, defining the offspring and lifetime of an individual as an i.i.d.\ copy of $D$, see~\eqref{eq:D}, and $L$, defined in~\eqref{eq:lifetime}, respectively, is a new perspective which allows us to analyse these quantities to a greater extent.
	
	\textbf{Persistence in BP. } We can view the concept of persistence, and the lack thereof, from a continuous-time perspective as well. To this end, we define 
	\be \label{eq:offspring}
	\deg^{(u)}(t):=\min\{\sup\{k\in\N_0: S_k^{(u)}\leq t-\sigma_u\},D^{(u)}\},\qquad u\in \cU_\infty, t\geq 0,
	\ee 
	as the number of children $u$ has produced by time $t$. Here, we set the supremum equal to zero in case $t-\sigma_u\leq 0$.  As such, the minimum equals zero when $u$ is never born and thus never produces children, or $u$ is not born yet. With~\eqref{eq:offspring}, we then define
	\be\label{eq:contdef}
	O_t^\cont:=\!\!\min_{v\in \cA_t^\cont}\!\!\sigma_v\quad\text{and}\quad 	I_t^\cont:=\min\big\{\sigma_v:v\in \cA^\cont_t, \deg^{(v)}(t)\geq\deg^{(u)}(t)\text{ for all }u\in \cA^\cont_t\big\}.
	\ee
	Here, $O_t^\cont$ denotes the birth-time of the oldest alive individual at time $t$  and $I_t^\cont$ denotes the birth-time of the oldest alive individual that has the largest number of children at time $t$. Proposition~\ref{prop:embed} implies that 
	\be \label{eq:OtItequiv}
	\{(O_n,I_n): n\in\N\}\overset d= \{(N(O^\cont_{\tau_n}),N(I^\cont_{\tau_n})): n\in\N\}.
	\ee 
	Under Assumption~\eqref{ass:C1} it follows that $N(t)$, as defined in~\eqref{eq:Nt}, satisfies that $N(t)\e^{-\lambda^*\!t}$ converges almost surely (similar to~\eqref{eq:maltconv}), and hence that $\tau_n-\frac{1}{\lambda^*}\log n$ converges almost surely. As a result, we can, approximately, relate the quantities in~\eqref{eq:OtItequiv} via 
	\be \label{eq:corr}
	O_n\approx \exp\big( \lambda^* O^\cont_{\log(n)/\lambda^*}\big), \qquad\text{and},\qquad \ I_n\approx \exp\big( \lambda^* I^\cont_{\log(n)/\lambda^*}\big).
	\ee 
	This relation is made precise in Section~\ref{sec:mainproofs}. In particular, this indicates that persistence and the lack of persistence, in the sense of~\eqref{eq:pers} respectively~\eqref{eq:nopers} are implied when 
	\be\label{eq:persnoperscont} 
	(I^\cont_t-O^\cont_t)_{t\geq 0} \text{ is a tight sequence of r.v.'s, and }I^\cont_t-O^\cont_t\toinps \infty, \text{ respectively.}
	\ee 
	In the sections that follow, we study the CMJ branching process $\bp$, and the quantities $O^\cont_t$ and $I^\cont_t$. To prove the main results, we make the correspondence in~\eqref{eq:corr} precise, to translate results for the CMJ branching process back to the discrete process $(T_n,\cA_n)_{n\in\N}$.
	
	\textbf{Laplace transform of the density of the offspring point process. } We conclude this section by providing a shorter and simplified proof of a result of Deijfen~\cite[Proposition $1.1$]{Dei10} regarding the explicit expression of the Laplace transform $\widehat\mu$ in~\eqref{eq:rhohat} that leverages the description of the point process $\cR$, as in~\eqref{eq:D}, in terms of $(S_i)_{i\in\N}$ and $D$.
	
	\begin{proposition}[Proposition $1.1$~\cite{Dei10}]\label{prop:rhoexpl}
		For any $\lambda\geq 0$, 
		\be 
		\widehat\mu(\lambda)=\sum_{k=1}^\infty \prod_{i=0}^{k-1}\frac{b(i)}{\lambda+b(i)+d(i)}. 
		\ee 
	\end{proposition}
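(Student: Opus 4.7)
The plan is to exploit the simple structural description of $\cR$ as a random length-$D$ subset of the partial sums $(S_k)_{k\geq 1}$, together with the mutual independence of the sequences $(E_i)_{i\in\N_0}$ and $(B_i)_{i\in\N_0}$, in order to compute $\widehat\mu$ explicitly in one short expectation computation (rather than via the renewal-type argument used in~\cite{Dei10}).

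First, I will reinterpret $\widehat\mu(\lambda)$ as an expectation over the points of $\cR$. By definition, $\mu$ is the intensity of the point process $\cR$, so Campbell's formula (or equivalently interchanging the limit in the definition of $\mu$ with the integral against $\e^{-\lambda t}$) gives
\be
\widehat\mu(\lambda)=\int_0^\infty \e^{-\lambda t}\mu(t)\,\dd t=\E{\int_0^\infty \e^{-\lambda t}\,\cR(\dd t)}=\E{\sum_{i=1}^D \e^{-\lambda S_i}}=\sum_{k=1}^\infty \E{\e^{-\lambda S_k}\ind_{\{D\geq k\}}},
\ee
where in the last step Tonelli allows us to swap $\sum$ and $\E{\cdot}$ since all terms are non-negative.

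Next, I will use independence to factor each term. Since $S_k$ depends only on $E_0,\ldots,E_{k-1}$ while $\{D\geq k\}=\{B_0=\cdots=B_{k-1}=1\}$ depends only on $B_0,\ldots,B_{k-1}$, and the $E_i$ and $B_i$ are independent, the two factors split:
\be
\E{\e^{-\lambda S_k}\ind_{\{D\geq k\}}}=\E{\e^{-\lambda S_k}}\,\P{D\geq k}.
\ee
The second factor is immediate from independence of the Bernoullis, $\P{D\geq k}=\prod_{i=0}^{k-1} b(i)/(b(i)+d(i))$. For the first factor, since $E_i\sim\mathrm{Exp}(b(i)+d(i))$ independently,
\be
\E{\e^{-\lambda S_k}}=\prod_{i=0}^{k-1}\frac{b(i)+d(i)}{\lambda+b(i)+d(i)}.
\ee
Multiplying the two and telescoping the factor $b(i)+d(i)$ yields the claimed expression.

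The only potential subtlety is justifying the first identity relating $\widehat\mu$ to $\E{\sum_{i=1}^D \e^{-\lambda S_i}}$, which requires that $\cR$ is a simple point process so that $\mu$ is indeed its intensity in the sense of Campbell's formula. This is a routine consequence of the fact that the $S_i$ are almost surely distinct (as $S_{i+1}-S_i=E_i$ is a continuous random variable) and $D$ is almost surely finite or, if not, all partial sums diverge. No convergence issues arise because everything is non-negative and the identity is automatically valid as an equality in $[0,\infty]$; finiteness at a given $\lambda$ is then equivalent to $\lambda\geq\underline\lambda$.
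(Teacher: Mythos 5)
Your proof is correct and is essentially identical to the paper's own argument: both rewrite $\widehat\mu(\lambda)=\E{\int_0^\infty \e^{-\lambda t}\,\cR(\dd t)}=\E{\sum_{k=1}^D \e^{-\lambda S_k}}$ via Fubini/Tonelli, then use the independence of $D$ and $(S_k)_{k\in\N}$ together with the explicit Laplace transform of a sum of independent exponentials and the product form of $\P{D\geq k}$, telescoping to the stated formula. The short expectation computation you describe \emph{is} the paper's new proof (the renewal-type argument you mention to avoid is only in the original reference~\cite{Dei10}), so there is nothing further to add.
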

	
	\begin{proof}
		First, we observe that $\mu(t)\,\dd t =\E{ \cR(\dd t)}$. As a result, by the definition of $\widehat \mu$ in~\eqref{eq:rhohat} and using Fubini's theorem, 
		\be 
		\widehat\mu(\lambda) =\int_0^\infty \e^{-\lambda t} \E{\cR(\dd t)}=\E{\int_0^\infty \e^{-\lambda t} \cR(\dd t)}=\E{\sum_{k=1}^D \e^{-\lambda S_k}}=\E{\sum_{k=1}^\infty \ind_{\{D\geq k\}}\e^{-\lambda S_k}}. 
		\ee 
		By taking the summation out of the expected value, using that $D$ is independent of the $(S_k)_{k\in\N}$, and that $S_k$ is a sum of independent exponentials, we arrive at 
		\be 
		\widehat\mu(\lambda)=\sum_{k=1}^\infty \P{D\geq k}\prod_{i=0}^{k-1}\frac{b(i)+d(i)}{\lambda+b(i)+d(i)}=\sum_{k=1}^\infty \prod_{i=0}^{k-1}\frac{b(i)}{\lambda+b(i)+d(i)},
		\ee 
		which concludes the proof.
	\end{proof}
	
	\section{Heuristic explanation of the main results}\label{sec:heur}
	
	In this section we provide a short and heuristic explanation of the main results in Section~\ref{sec:results}. We refer to intermediate results proved in later sections to aid the reader in finding the rigorous proofs of claims and statements made here. 
	
	\textbf{The rich are old vs.\ the rich die young. } Recall the random variable $L:=S_{D+1}$ from~\eqref{eq:lifetime} as the lifetime of an individual in the CMJ branching process BP. We can interpret $L$ as follows: An individual, say $v$, has two independent exponential clocks, a `birth' and `death' clock. When $v$ has $i\in\N_0$ many children, the birth and death clock ring at rate $b(i)$ and $d(i)$, respectively. If the birth clock rings before the death clock, $v$ gives birth to a child and the rates are updated. If the death clock rings before the birth clock, $v$ dies.  When $v$ has $i$ children, the first clock to ring, rings at rate $b(i)+d(i)$ (as it is a minimum of two exponential times). Furthermore, the first index $i$ such that the death clock rings before the birth clock is equal in distribution to $D$, as in~\eqref{eq:D}, which is independent of the time the clocks need to ring.
	
	Now, suppose that $d\equiv d^*$ for some $d^*>0$ and that $v$ has no children. If the death clock rings first, $v$ has died after an exponential time with rate $d^*$. If the birth clock rings first, we resample the birth and death clocks with adjusted rates. However, as $d\equiv d^*$, resampling the death clock is equal in distribution to letting it continue to run, by the memoryless property. We can thus repeat the above case distinction, so that $v$ dies after a rate $d^*$ exponential time. See Lemma~\ref{lemma:stochdomexp}. In short, 
	\be \label{eq:lifetimeconst}
	d\equiv d^* \ \Longrightarrow \ L\sim \text{Exp}(d^*).
	\ee  
	Now, suppose that $d$ is such that $d(i)$ converges to $d^*$ as $i$ tends to infinity. Though the above argument no longer exactly holds, one could imagine that the lifetime should, asymptotically, be exponentially distributed with rate $d^*$. This is not quite correct, however, as there are two strategies for an individual to survive for a long time:
	\begin{enumerate}
		\item The birth clock rings many times before the death clock does. As the rate of the death clock gets closer to $d^*$, an individual approximately lives for an exponentially distributed time with rate $d^*$. 
		\item The individual gives birth to $I$ children, for some $I\in\N_0$, after which neither the birth clock nor death clock ring for a long time. An individual thus approximately lives for an exponentially distributed time with rate $b(I)+d(I)$.
	\end{enumerate} 
	Now, recall that $R:=\inf_{i\in\N_0}(b(i)+d(i))$. If $d^*< R$, strategy $(a)$ yields the best way of surviving for a long time. Indeed, as $b(I)+d(I)\geq R> d^*$ for any $I\in\N_0$, strategy $(b)$ is significantly less likely to occur. On the other hand, if $d^*>R$ there exists an index $I$ such that $b(I)+d(I)=R<d^*$, so that strategy $(b)$ becomes the more likely scenario and an individual approximately lives for an exponentially distributed time with rate $R$. See Lemma~\ref{lemma:lifetime}. We note that $d\equiv d^*$ implies that $R\geq d^*$, so that strategy $(b)$ is always sub-optimal when the death rates are constant. In short, 
	\be\label{eq:lifetimeconv}
	\lim_{i\to\infty}d(i)=d^* \ \Longrightarrow \ L\approx \text{Exp}(\min\{d^*,R\}).
	\ee 
	Furthermore, strategy $(a)$ entails that an individual produces a large number of children, whereas strategy $(b)$ requires that an individual only produces a small number of children. Combined, we thus conclude that strategy $(a)$ being optimal corresponds to the `rich are old' regime, in which the oldest individuals survive for an exceptionally long time by producing a large offspring. Similarly, strategy $(b)$ corresponds to the `rich die young' regime, in which the oldest individuals survive for an exceptionally long time by only producing a small offspring, and individuals with a large offspring (which thus follow strategy $(a)$) survive for a much shorter amount of time and must therefore be born long after the oldest alive individuals to be able to survive.
	
	\textbf{The oldest alive individual. }  To find the birth-time of the oldest alive individual in $\bp(t)$, we set $M:=\min\{d^*,R\}$ and let $s=s(t)\ll t$. Suppose an individual is born at time $s$. This individual is alive at time $t$ if its lifetime is at least $t-s$, which occurs with probability approximately $\exp(-M(t-s))$ by~\eqref{eq:lifetimeconv}. Furthermore, the Malthusian parameter $\lambda^*$ in Assumption~\eqref{ass:C1} determines the growth-rate of the branching process $\bp$, see~\eqref{eq:maltconv}. This implies that the number of individuals born `around' time $s$ is of the order $\exp(\lambda^* s)$. Combined, a second moment method yields that there exists an individual born at time $s$ that is alive at time $t$ when $s$ solves
	\be \label{eq:Otcontasymp}
	\exp\big(\lambda^*s-M(t-s)\big)\geq 1, \qquad\text{so that}\qquad O_t^\cont=\frac{M}{\lambda^*+M}t=\begin{cases}
		\frac{d^*}{\lambda^*+d^*}t &\mbox{if } d^*<R, \\
		\frac{R}{\lambda^*+R}t&\mbox{if } d^*\geq R.
	\end{cases}
	\ee   
	Together with the approximate relation between $O_t^\cont$ and $O_n$ in~\eqref{eq:corr} this yields the first-order term in the scaling limit for $O_n$ in Theorems~\ref{thrm:conv} and~\ref{thrm:biggerR}. In the `rich die young' regime, the assumption that $d$ converges to $d^*>R$ can be weakened to $\liminf_{i\to\infty}d(i)>R$, for which a similar argument can be applied. 
	
	When $d^*=0$, i.e.\ when $d$ converges to zero, we distinguish two sub-cases. \\$(i)$ If Assumption~\eqref{ass:A2} is satisfied, then a similar reasoning as strategy $(a)$ can be used to show that the lifetime of individuals has sub-exponential tails, see Lemma~\ref{lemma:lifetime2nd}. The above argument for the oldest alive individual then yields that $O_t^\cont=o(t)$, and this can be made more precise to yield the growth-rate of $O_n$ in Theorem~\ref{thrm:conv} for $d^*=0$, where we then see that the first-order term in the scaling limit, as in~\eqref{eq:Otcontasymp}, disappears. \\	
	$(ii)$ If Assumption~\eqref{ass:A2} is not satisfied, it is readily checked that $D=\infty$ and thus $S_{D+1}=\infty $ (under Assumption~\eqref{ass:A1}) with positive probability, see Lemmas~\ref{lemma:Dlifedistr} and~\ref{lemma:stochdomexp}, respectively. As a result $O_t^\cont$ equals the birth-time of the first individual that has an infinite lifetime for all $t$ large, which is finite almost surely. Equivalently, $O_n$ equals the label of the first individual that never dies for all $n$ large, almost surely, as in Theorem~\ref{thrm:asPA}.
	
	\textbf{Lack of persistence in the `rich are old' regime. }  We argued that in the `rich die young' regime the oldest individuals survive by producing a small number of children, whereas individuals that produce a large number of children live significantly shorter. This, at least heuristically, implies lack of persistence in the `rich die young' regime. In the `rich are old' regime, however, we argued that the oldest individuals are able to survive for a long time by producing a large offspring. Lack of persistence then occurs under the \emph{additional} condition that Assumption~\eqref{ass:varphi2} is satisfied, i.e.\ when $\varphi_2$ tends to infinity. This is a generalisation of some known results in the literature (see~\cite{DerMor09,Gal13,Iyer24}, and~\cite{BanBha21} in particular), which we discuss now. 
	
	In the `rich are old' regime, the oldest individual is born at time $O_t^\cont\approx \frac{d^*}{\lambda^*+d^*}t$ by~\eqref{eq:Otcontasymp}, so that it has $t-O_t^\cont\approx \frac{\lambda^*}{\lambda^*+d^*}t$ amount of time to produce children. We recall $\varphi_1$ and $\varphi_2$ from~\eqref{eq:seqs} and $S_k$ from~\eqref{eq:D}, and note that $\E{S_k}=\varphi_1(k)$ and $\Var(S_k)=\varphi_2(k)$. In expectation, the oldest individual thus produces approximately $\varphi_1^{-1}(t-O_t^\cont)\approx \varphi_1^{-1}(\frac{\lambda^*}{\lambda^*+d^*}t)$ many children by time $t$.
	
	To show there are younger individuals (i.e.\ that are born later) that obtain a larger offspring by time $t$, we need to balance two things. $(i)$ Younger individuals typically produce fewer children by time $t$, as they have less time at their disposal. $(ii)$ There are many more younger individuals that survive until time $t$, as the branching process grows exponentially fast and younger individuals need to survive for a smaller amount of time. The aim is thus to find a time in which many individuals are born, among which one is sufficiently lucky to produce a large number of children, much faster than is typical. Let us make this idea more precise now. 
	
	Suppose another individual is born at time 
	\be 
	s=s(t):=\frac{d^*}{\lambda^*+d^*}t+\frac{1}{\lambda^*+d^*}\cK_\alpha\big(\tfrac{\lambda^*}{\lambda^*+d^*}t\big)+x_1\cK\big(\tfrac{\lambda^*}{\lambda^*+d^*}t\big),
	\ee 
	where $x_1>0$ is a constant and we recall $\cK_\alpha$ and $\cK$ from~\eqref{eq:Ks}. An individual has at least
	\be 
	k=k(t):=\varphi_1^{-1}\big(\tfrac{\lambda^*}{\lambda^*+d^*}t-\tfrac{1}{\lambda^*+d^*}\cK_\alpha\big(\tfrac{\lambda^*}{\lambda^*+d^*}t\big)+x_2\cK\big(\tfrac{\lambda^*}{\lambda^*+d^*}t\big) \big)
	\ee 
	children, for some constant $x_2>0$, and survives until time $t$ when $D\geq k$, $S_k\leq t-s$, and $L=S_{D+1}>t-s$.  Indeed, $D\geq k$ means that the individual produces at least $k$ children before its death, $S_k\leq t-s$ means that at least $k$ children are born by time $t$, and $S_{D+1}>t-s$ implies that the individual is alive at time $t$ (as the individual is born at time $s$). We claim that we can omit the last requirement that $S_{D+1}>t-s$. First, because it simplifies the heuristic explanation, but also because we have chosen $k$ and $s$ in such a way that $S_k$ is typically `very close to $t-s$', so that surviving after producing $k$ children is not `too unlikely' and therefore does not influence the result `too much'. One can make this heuristic reasoning precise and forms the ground for Remark~\ref{rem:incldead}. 
	
	As $D$ and $S_k$ are independent, we thus obtain
	\be \ba 
	\mathbb P(\text{Individual born at time }s\text{ has $\geq k$ children and is alive at time }t)\leq \P{D\geq k}\P{S_k<t-s}.
	\ea \ee
	Then,
	\begin{align*}
		\E{S_k}&=\varphi_1(k)= \frac{\lambda^*}{\lambda^*+d^*}t-\frac{1}{\lambda^*+d^*}\cK_\alpha\big(\tfrac{\lambda^*}{\lambda^*+d^*}t\big)+x_2\cK\big(\tfrac{\lambda^*}{\lambda^*+d^*}t\big),
		\intertext{and} \Var(S_k)&=\varphi_2(k)=\cK\big(\tfrac{\lambda^*}{\lambda^*+d^*}t-\tfrac{1}{\lambda^*+d^*}\cK_\alpha\big(\tfrac{\lambda^*}{\lambda^*+d^*}t\big)+x_2\cK\big(\tfrac{\lambda^*}{\lambda^*+d^*}t\big)\big)\approx  \cK\big(\tfrac{\lambda^*}{\lambda^*+d^*}t\big), 
	\end{align*}
	where we use~\eqref{ass:K1} in Assumption~\hyperref[ass:K]{$\cK$} in the last step. We can thus rewrite 
	\be \ba 
	\P{S_k<t-s}&=\P{S_k<\frac{\lambda^*}{\lambda^*+d^*}t-\frac{1}{\lambda^*+d^*}\cK_\alpha\big(\tfrac{\lambda^*}{\lambda^*+d^*}t\big)-x_1\cK\big(\tfrac{\lambda^*}{\lambda^*+d^*}t\big)}\\ 
	&\approx\P{\frac{S_k-\E{Sk}}{\sqrt{\Var(S_k)}}<-(x_1+x_2)\sqrt{\cK\big(\tfrac{\lambda^*}{\lambda^*+d^*}t\big)}}.
	\ea \ee 
	Since $\varphi_2$, and hence $\cK$, tends to infinity, we can use a moderate deviation principle, see Lemma~\ref{lemma:mdp}, to estimate this probability by 
	\be \label{eq:mdpapprox}
	\P{S_k<t-s}\approx\exp\Big(-\frac12(x_1+x_2)^2 \cK\big(\tfrac{\lambda^*}{\lambda^*+d^*}t\big)\Big).
	\ee 
	Finally, we can approximate the tail distribution of $D$, see Lemma~\ref{lemma:Dtail}, by 
	\be 
	\P{D\geq k}=\prod_{i=0}^{k-1}\frac{b(i)}{b(i)+d(i)}\approx \exp\big(-\rho_1(k)-\tfrac12 \rho_2(k)\big)=\exp\big(-d^*\varphi_1(k)-\alpha(k)-\tfrac12\rho_2(k)\big), 
	\ee 
	where we recall $\rho_1$ and $\rho_2$ from~\eqref{eq:seqs} and $\alpha$ from~\eqref{eq:alpha}. Also using the definition of $\cK_\alpha$ in~\eqref{eq:Ks}, that $\rho_2(k)\approx (d^*)^2 \varphi_2(k)$ when $d$ converges to $d^*$, and the choice of $k$, this approximately equals 
	\be \ba  \exp\Big({}&-\frac{\lambda^*d^*}{\lambda^*+d^*}t+\frac{d^*}{\lambda^*+d^*}\cK_\alpha\big(\tfrac{\lambda^*}{\lambda^*+d^*}t\big)-\cK_\alpha\big(\tfrac{\lambda^*}{\lambda^*+d^*}t-\tfrac{1}{\lambda^*+d^*}\cK_\alpha\big(\tfrac{\lambda^*}{\lambda^*+d^*}t\big)+x_2\cK\big(\tfrac{\lambda^*}{\lambda^*+d^*}t\big)\big)\\ 
	&-\big(x_2d^*+\tfrac12 (d^*)^2\big) \cK\big(\tfrac{\lambda^*}{\lambda^*+d^*}t\big) \Big).
	\ea\ee 
	Using Assumption~\hyperref[ass:Kalpha]{$\cK_\alpha$}, we can simplify this to 
	\be 
	\P{D\geq k}\approx \exp\Big(-\frac{\lambda^*d^*}{\lambda^*+d^*}t-\frac{\lambda^*}{\lambda^*+d^*}\cK_\alpha\big(\tfrac{\lambda^*}{\lambda^*+d^*}t\big) -\big(x_2d^*+\tfrac12(d^*)^2\big)\cK\big(\tfrac{\lambda^*}{\lambda^*+d^*}t\big) \Big).
	\ee 
	Combining this with~\eqref{eq:mdpapprox}, we thus arrive at 
	\be \ba 
	\mathbb P({}&\text{Individual born at time }s(t)\text{ has $\geq k(t)$ children and is alive at time }t)\\ 
	&\approx \exp\Big(-\frac{\lambda^*d^*}{\lambda^*+d^*}t-\frac{\lambda^*}{\lambda^*+d^*}\cK_\alpha\big(\tfrac{\lambda^*}{\lambda^*+d^*}t\big) -\big(\tfrac12(x_1+x_2)^2+x_2d^*+\tfrac12(d^*)^2\big)\cK\big(\tfrac{\lambda^*}{\lambda^*+d^*}t\big) \Big).
	\ea\ee 
	We then use that the exponential growth-rate of the branching process $\bp$, as in~\eqref{eq:maltconv}, implies that `around' time $s$ roughly $\exp(\lambda^*\!s)$ many individuals are born. As a result, the number of individuals born `around time $s(t)$' that is alive at time $t$ with at least $k(t)$ children is approximately
	\be \ba 
	\exp{}&\Big(\big[\lambda^*x_1 -\big(\tfrac12(x_1+x_2)^2+x_2d^*+\tfrac12(d^*)^2\big)\big]\cK\big(\tfrac{\lambda^*}{\lambda^*+d^*}t\big) \Big)\\ 
	&=\exp\Big(\big[(\lambda^*+d^*)x_1 -\tfrac12(x_1+x_2+d^*)^2\big]\cK\big(\tfrac{\lambda^*}{\lambda^*+d^*}t\big) \Big).
	\ea \ee 
	This expression is maximised for $x_1=\lambda^*-x_2$ and equals one when $x_1=\frac{\lambda^*+d^*}{2}$ and $x_2=\frac{\lambda^*-d^*}{2}$. This leads to the desired birth-time of the individual with the largest offspring, and the corresponding size of the offspring, as in Theorems~\ref{thrm:asPA} and~\ref{thrm:conv}.

	\section{Preliminaries: offspring and the remaining lifetime distribution}\label{sec:prelim}
	
	In this section, we collect a number of preliminary results that are  that we leverage in the analysis of $O^\cont_t$ and $I^\cont_t$ later on. We state the results here only and provide their proofs in Appendix~\ref{app:proofs}. 
	
	Recall the random variables $D$ and $S_k$ from~\eqref{eq:D}. An individual produces $D$ many children during its lifetime. We thus coin $D$ the \emph{offspring}. $S_k$ is the time it requires to produce $k\in\N$ many children and we coin $L-S_k$ the \emph{remaining lifetime}. That is, the amount of time an individual lives after producing $k$ children. We also let $\deg(t)$ denote the number of children of the root at time $t$, $D$ its offspring, $L$ its lifetime, and $S_k$ the time for the root to produce $k$ children provided $k\leq D$.
	
	The following corollary is immediate from the definitions in~\eqref{eq:lifetime} and~\eqref{eq:offspring}.
	
	\begin{corollary}[Degree and lifetime distribution of PAVER]\label{cor:deglifedistr}
		Jointly, 
		\be 
		(L,\deg(t))\overset \dd= \big(S_{D+1},\min\{\sup\{k\in\N_0: S_k\leq t\},D\}\big).
		\ee 
	\end{corollary}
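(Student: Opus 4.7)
The corollary is a direct unpacking of the definitions associated with the root individual $\varnothing$, so the plan is essentially to verify that both coordinates on the right-hand side are, by construction, the same functionals of a single copy of $(S_k)_{k\in\N_0}$ and $D$.

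First I would specialise the definitions to the root. Since $\sigma_\varnothing = 0$, equation \eqref{eq:offspring} gives
\[
\deg(t) \;=\; \deg^{(\varnothing)}(t) \;=\; \min\bigl\{\sup\{k\in\N_0 : S_k^{(\varnothing)} \leq t\},\, D^{(\varnothing)}\bigr\},
\]
and equation \eqref{eq:lifetime} gives
\[
L \;=\; L^{(\varnothing)} \;=\; S_{D^{(\varnothing)}+1}^{(\varnothing)}.
\]
By the construction in Section~\ref{sec:embed}, $(S_k^{(\varnothing)})_{k\in\N_0}$ and $D^{(\varnothing)}$ are an i.i.d.\ copy of the pair $\bigl((S_k)_{k\in\N_0},\,D\bigr)$ defined in \eqref{eq:D}, built from the same exponential clocks $(E_i^{(\varnothing)})_{i\in\N_0}$ and Bernoulli variables $(B_i^{(\varnothing)})_{i\in\N_0}$.

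The key observation is that both $L$ and $\deg(t)$ are measurable functions of this \emph{single} copy of the underlying variables, not of independent copies. Hence the joint distribution on the left is literally the pushforward of the joint law of $\bigl((S_k)_{k\in\N_0},D\bigr)$ under the map
\[
\bigl((s_k),d\bigr) \longmapsto \Bigl(s_{d+1},\; \min\{\sup\{k\in\N_0: s_k\leq t\},\, d\}\Bigr),
\]
which is exactly the distribution of the right-hand side. This yields the joint equality in distribution.

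There is no real obstacle here; the only point that needs to be kept in mind is that the claim concerns a \emph{joint} distribution, so one must identify $L$ and $\deg(t)$ as built from the same underlying $(S_k,D)$ rather than from independent copies — a fact that is transparent from the Ulam–Harris construction and the definitions in \eqref{eq:lifetime} and \eqref{eq:offspring}.
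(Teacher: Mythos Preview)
Your proposal is correct and is exactly the approach the paper takes: the paper simply states that the corollary ``is immediate from the definitions in~\eqref{eq:lifetime} and~\eqref{eq:offspring}'' without writing out any further argument. Your unpacking of those definitions for the root $\varnothing$ (where $\sigma_\varnothing=0$) and the observation that both coordinates are functionals of the same copy of $((S_k),D)$ is precisely what ``immediate'' means here.
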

	
	\subsection{Offspring}
	To study the distribution of the offspring $D$, we first introduce the following class of random variables. 
	
	\begin{definition}[Inhomogeneous geometric random variable]\label{def:inhomgeom}
		Let $(p_i)_{i\in\N_0}$ and $(q_i)_{i\in\N_0}$ be two sequences of non-negative real numbers such that $p_i+q_i>0$ for all $i\in\N_0$. We say that a random variable $G=G((p_i)_{i\in\N_0},(q_i)_{i\in\N_0})$ is an \emph{inhomogeneous geometric random variable} characterised by the sequences $(p_i)_{i\in\N_0}$ and $(q_i)_{i\in\N_0}$ when 
		\be 
		\P{G=k}=\frac{q_k}{p_k+q_k}\prod_{i=0}^{k-1}\frac{p_i}{p_i+q_i}, \qquad \text{for }k\in \N_0\qquad\text{and} \qquad \P{G=\infty}=\prod_{i=0}^\infty \frac{p_i}{p_i+q_i}. 
		\ee 
		In case $p_i=p$ for all $i\in\N_0$, we say that $G$ is characterised by $p$ and $(q_i)_{i\in\N_0}$ (and similarly if $q_i=q$ for all $i\in\N_0$).
	\end{definition}
	
	With this definition at hand, we have the following result.
	
	\begin{lemma}\label{lemma:Dlifedistr}
		The random variable $D$ is an inhomogeneous geometric random variable, characterised by the sequences $(b(i))_{i\in\N_0}$ and $(d(i))_{i\in\N_0}$. Moreover,  $\P{D=\infty}=0$ if and only if Assumption~\eqref{ass:A2} is satisfied. 
	\end{lemma}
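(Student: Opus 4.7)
The plan is to argue directly from the construction $D=\inf\{i\in\N_0:B_i=0\}$ with independent Bernoulli variables $B_i$. Since the events $\{B_0=1\},\{B_1=1\},\ldots,\{B_{k-1}=1\},\{B_k=0\}$ are independent, the event $\{D=k\}$ is precisely their intersection, and so for every $k\in\N_0$,
\begin{equation*}
\P{D=k}=\Big(\prod_{i=0}^{k-1}\P{B_i=1}\Big)\P{B_k=0}=\frac{d(k)}{b(k)+d(k)}\prod_{i=0}^{k-1}\frac{b(i)}{b(i)+d(i)}.
\end{equation*}
Matching this against Definition~\ref{def:inhomgeom} with $p_i=b(i)$ and $q_i=d(i)$ (which is a valid choice since $b(i)>0$ ensures $p_i+q_i>0$) identifies $D$ as an inhomogeneous geometric random variable with parameters $(b(i))_{i\in\N_0}$ and $(d(i))_{i\in\N_0}$.

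For the second claim I would compute $\P{D=\infty}$ either as a telescoping tail estimate or simply as
\begin{equation*}
\P{D=\infty}=\P{B_i=1\text{ for all }i\in\N_0}=\prod_{i=0}^{\infty}\frac{b(i)}{b(i)+d(i)}=\prod_{i=0}^{\infty}\Big(1-\frac{d(i)}{b(i)+d(i)}\Big),
\end{equation*}
using independence of the $B_i$ (or equivalently continuity of probability applied to the decreasing events $\bigcap_{i\le n}\{B_i=1\}$).

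The final step is the standard fact that for a sequence $(a_i)_{i\in\N_0}\subset[0,1)$, the infinite product $\prod_{i}(1-a_i)$ equals zero if and only if $\sum_i a_i=\infty$. Here $a_i:=d(i)/(b(i)+d(i))$ lies in $[0,1)$ precisely because $b(i)>0$ (so the factors never vanish individually), so the dichotomy applies and identifies $\P{D=\infty}=0$ with Assumption~\eqref{ass:A2}. There is no real obstacle in this proof; the only thing to be mindful of is citing the infinite-product criterion correctly and noting that $b(i)>0$ rules out the degenerate case $a_i=1$ that would otherwise need separate handling.
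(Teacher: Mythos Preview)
Your proof is correct and follows essentially the same approach as the paper. The only cosmetic difference is that the paper computes $\sum_{k\geq 0}\P{D=k}$ via a telescoping sum to identify the defect mass $\P{D=\infty}$ as the infinite product, whereas you compute $\P{D=\infty}$ directly from independence of the $B_i$; both routes land on the same product criterion and are equally valid.
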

	
	Let us further distinguish between different behaviour of the lifetime and offspring based on Assumptions~\eqref{ass:A1} and~\eqref{ass:A2}. First, when Assumption~\eqref{ass:A2} is satisfied, $D$ is finite almost surely, as follows from Lemma~\ref{lemma:Dlifedistr}. So, irrespective of whether Assumption~\eqref{ass:A1} holds or not, $L=S_{D+1}$ is finite almost surely under Assumption~\eqref{ass:A2}, so individuals live for an almost surely finite time. 
	
	When, instead, Assumption~\eqref{ass:A2} does not hold, an individual can give birth to an infinite number of children with positive probability. In this case, we make two further distinctions, based on whether Assumption~\eqref{ass:A1} is satisfied. If Assumption~\eqref{ass:A1} holds, then $S_\infty=\infty$ almost surely. Indeed, for any $x>0$, 
	\be \label{eq:Sinf}
	\P{S_\infty \leq x}\leq \e^x \E{\e^{-S_\infty}}=\e^x \prod_{i=0}^\infty \frac{b(i)+d(i)}{1+b(i)+d(i)}\leq \e^x \frac{1}{1+\sum_{i=0}^\infty (b(i)+d(i))^{-1}}=0.
	\ee 
	Hence, when Assumption~\eqref{ass:A1} is satisfied but Assumption~\eqref{ass:A2} is not, an individual has an infinite lifetime with positive probability, during which it produces an infinite offspring. If Assumption~\eqref{ass:A1} is also not met, then 
	\be 
	\E{S_\infty}=\E{\sum_{i=0}^\infty E_i}=\sum_{i=0}^\infty \frac{1}{b(i)+d(i)}<\infty, 
	\ee 
	Hence, each individual has a finite lifetime almost surely, but with positive probability an individual produces an infinite offspring during its lifetime. This is summarised in Table~\ref{table:deglife}. Throughout the paper, we assume that Assumption~\eqref{ass:A1} is satisfied, so that no individual can produce an infinite offspring in finite time.
	
	\begin{table}[h]
		\centering
		\begin{tabular}{|l|c|c|}
			\hline
			& \eqref{ass:A1} holds & \eqref{ass:A1} does not hold \\ 
			\hline
			\eqref{ass:A2} does not hold & $S_\infty=\infty$ a.s., $\P{D=\infty}\in(0,1)$ & $S_\infty<\infty$ a.s., $\P{D=\infty}\in(0,1)$ \\
			\hline
			\eqref{ass:A2} holds & \multicolumn{2}{c|}{$D$ and $ S_{D+1}$ are finite a.s.} \\ 
			\hline
		\end{tabular}
		\caption{An overview of the behaviour of the random variables $D$ and $S_{D+1}$, based on Assumptions~\eqref{ass:A1} and~\eqref{ass:A2}.}\label{table:deglife} 
	\end{table} 
	
	To conclude this subsection, we provide bounds for the tail distribution of $D$ in terms of the sequences $\rho_1$ and $\rho_2$, defined in~\eqref{eq:seqs}.  Observe that for any $k\in\N_0$ and irrespective of whether Assumption~\eqref{ass:A2} holds,
	\be \label{eq:tailD}
	\P{D\geq k}=\prod_{i=0}^{k-1}\frac{b(i)}{b(i)+d(i)}.
	\ee

	\begin{lemma}[Tail bounds for the distribution of $D$]\label{lemma:Dtail}
		Let $D$ be as in~\eqref{eq:D}. Then,
		\be 
		\P{D\geq k}\leq \e^{-\rho_1(k)-\frac12 \rho_2(k)}. 
		\ee 
		Furthermore, assume that $b$ and $d$ are such that Assumption~\eqref{ass:A2} is satisfied. When $\rho_2$ diverges and $d=o(b)$, 
		\be 
		\P{D\geq k}=\e^{-\rho_1(k)-(\frac12+o(1))\rho_2(k)}.
		\ee 
		If, instead, Assumption~\eqref{ass:A2} is not satisfied $($i.e.\ $\lim_{k\to\infty}\rho_2(k)$ exists$)$, 
		\be 
		\P{D\geq k}=\e^{-\rho_1(k) -\cO(1)}.
		\ee 
	\end{lemma}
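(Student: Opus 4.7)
The starting point is the exact product formula \eqref{eq:tailD}, namely $\P{D\geq k}=\prod_{i=0}^{k-1}\frac{b(i)}{b(i)+d(i)}$. Taking logarithms and setting $x_i:=\frac{d(i)}{b(i)+d(i)}\in[0,1)$, we have
\be
\log\P{D\geq k}=\sum_{i=0}^{k-1}\log(1-x_i),
\ee
and we note that $\sum_{i=0}^{k-1}x_i=\rho_1(k)$ and $\sum_{i=0}^{k-1}x_i^2=\rho_2(k)$. The whole proof reduces to controlling the Taylor expansion of $\log(1-x)$ at the correct order on each side.

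For the upper bound, I would use the elementary inequality $\log(1-x)\leq -x-\tfrac12 x^2$ valid for all $x\in[0,1)$, which follows from the nonnegativity of the tail $\sum_{k\geq 3}x^k/k$ in the series $-\log(1-x)=\sum_{k\geq 1}x^k/k$. Summing over $i=0,\dots,k-1$ gives at once $\log\P{D\geq k}\leq -\rho_1(k)-\tfrac12\rho_2(k)$, which yields the first claim.

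For the sharp asymptotics under Assumption~\eqref{ass:A2} with $\rho_2\to\infty$ and $d=o(b)$, I complement the upper bound with the matching lower bound coming from $-\log(1-x)\leq x+\tfrac{x^2}{2(1-x)}$, which follows by bounding $\sum_{k\geq 2}x^k/k\leq\tfrac12\sum_{k\geq 2}x^k=\tfrac{x^2}{2(1-x)}$. Since $d=o(b)$ forces $x_i=\frac{d(i)}{b(i)+d(i)}\to 0$, we have $\tfrac{1}{1-x_i}=1+\eps_i$ with $\eps_i\to 0$. Therefore
\be
\log\P{D\geq k}\geq -\rho_1(k)-\tfrac12\sum_{i=0}^{k-1}x_i^2(1+\eps_i)=-\rho_1(k)-\tfrac12\rho_2(k)-\tfrac12\sum_{i=0}^{k-1}x_i^2\eps_i.
\ee
A standard Ces\`aro/Toeplitz argument, using that $\eps_i\to 0$ and the weights $x_i^2$ have divergent partial sums $\rho_2(k)$, gives $\sum_{i=0}^{k-1}x_i^2\eps_i=o(\rho_2(k))$, so the lower bound matches the upper bound up to $(\tfrac12+o(1))\rho_2(k)$, proving the second claim.

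Finally, if Assumption~\eqref{ass:A2} fails then $\rho_1(k)$ is bounded, hence $x_i\to 0$ and in particular $x_i^2\leq x_i$ eventually, so $\rho_2(k)$ is also bounded. Fixing $i_0$ with $x_i\leq\tfrac12$ for $i\geq i_0$ (so that $\tfrac{1}{1-x_i}\leq 2$), the same lower bound argument yields $\log\P{D\geq k}\geq -\rho_1(k)-\rho_2(k)+C$ for a constant $C$ depending on the initial terms, which together with the upper bound gives $\log\P{D\geq k}=-\rho_1(k)+\cO(1)$ as required. The only potentially delicate point is the Toeplitz step in the middle case, but since $x_i^2$ is summable against itself iff $\rho_2$ is bounded, the divergence assumption $\rho_2(k)\to\infty$ makes the argument routine; no deeper tool than Taylor's theorem is needed.
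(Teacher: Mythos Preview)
Your proof is correct and follows essentially the same approach as the paper: both arguments take logarithms of the product formula~\eqref{eq:tailD} and control the Taylor expansion of $\log(1-x_i)$ term by term, using $\log(1-x)\leq -x-\tfrac12 x^2$ for the upper bound and a cubic-order lower bound together with $x_i\to 0$ and a Ces\`aro-type averaging to absorb the remainder into $o(\rho_2(k))$. The only cosmetic difference is that the paper uses $\log(1-x)\geq -x-\tfrac12 x^2 - x^3/(1-x)$ for the lower bound, whereas your inequality $\log(1-x)\geq -x - \tfrac{x^2}{2(1-x)}$ is in fact slightly sharper; both lead to the same conclusion.
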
 	
	
	\subsection{Remaining lifetime  and large degrees}
	The lifetime equals $L=S_{D+1}$. We first study distributional properties of the \emph{remaining lifetime}, defined as 
	\be 
	S_{D+1}-S_k \text{ conditionally on }D\geq k \qquad \text{for }k\in \N_0. 
	\ee 
	The remaining lifetime is equal to $L$ when $k=0$, but equals the amount of time an individual has left to live after producing $k$ children when $k>0$. Conditionally on $\{D\geq k\}$, let $D_k:=D-k$ denote the \emph{excess offspring}. It is clear that $D_k$ has distribution
	\be \label{eq:Dk}
	\P{D_k=\ell}=\P{D=k+\ell\,|\, D\geq k}=\frac{d(k+\ell)}{b(k+\ell)+d(k+\ell)}\prod_{i=k}^{k+\ell-1}\frac{b(i)}{b(i)+d(i)}, \qquad\text{for } \ell\in\N_0. 
	\ee 
	That is, $D_k$ is an inhomogeneous geometric random variable, characterised by the sequences $(b(k+\ell))_{\ell\in\N_0}$ and $(d(k+\ell))_{\ell\in\N_0}$.  Upon $\{D\geq k\}$ we can write $D=k+D_k$. Hence, the remaining lifetime after giving birth to $k$ children equals 
	\be 
	\sum_{i=k}^{k+D_k}E_i. 
	\ee 
	Regarding this random variable, we have the following results. 
	
	\begin{lemma}[Remaining lifetime]\label{lemma:stochdomexp}
		Suppose that $b$ and $d$ are such that Assumption~\eqref{ass:A1} is satisfied. Suppose there exists $I\in\N_0$ and $d^*\in(0,\infty)$ such that $d(i)=d^*$ for all $i\geq I$. Then,
		\be \label{eq:Lexp}
		\sum_{i=k}^{k+D_k}E_i\sim \mathrm{Exp}(d^*) \quad \text{for any }k\geq I.
		\ee 
		In particular, when $I=0$ so that $d\equiv d^*$, then $L \sim \mathrm{Exp}(d^*)$. Fix $k\in \N_0,\lambda>0$, and let $E_\lambda \sim \mathrm{Exp}(\lambda)$. Suppose  that $d(i)\geq \lambda$ $($resp.\ $d(i)\leq \lambda)$ for all $i\geq k$. Then,
		\be \label{eq:remainstochdom}
		\sum_{i=k}^{k+D_k}E_i\preceq E_\lambda \qquad \bigg(\text{resp.\ }\sum_{i=k}^{k+D_k}E_i\succeq E_\lambda\bigg).
		\ee   
		Finally, when $b$ and $d$ are such that Assumption~\eqref{ass:A1} is satisfied but Assumption~\eqref{ass:A2} is not,
		\be \label{eq:infremlife}
		\P{\sum_{i=k}^{k+D_k}E_i=\infty}>0\qquad\text{for any }k\in\N_0.
		\ee 
	\end{lemma}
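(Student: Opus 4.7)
The plan is to prove all three claims via couplings of independent exponential clocks that exploit the memoryless property, with \eqref{eq:infremlife} then reducing to already-established results.

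For \eqref{eq:Lexp}, the key is to introduce a single global death clock $T^\ast\sim\mathrm{Exp}(d^\ast)$ together with independent birth clocks $T_i^b\sim\mathrm{Exp}(b(i))$ for $i\geq k$, and to simulate the process starting at state $k$ as follows: at state $i$ entered at time $t$, if $t+T_i^b<T^\ast$ declare a birth and move to $i+1$, otherwise die at time $T^\ast$. Memorylessness of $T^\ast$ gives that the time in state $i$ is $\mathrm{Exp}(b(i)+d^\ast)$ with birth probability $b(i)/(b(i)+d^\ast)$, which reproduces the joint law of $(E_i,B_i)_{i\geq k}$; by construction the remaining lifetime equals $T^\ast\sim\mathrm{Exp}(d^\ast)$.

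For \eqref{eq:remainstochdom}, both directions use a split of the death rate. When $d(i)\geq\lambda$, write $d(i)=\lambda+\delta(i)$ and run a global $T^\lambda\sim\mathrm{Exp}(\lambda)$ together with independent state-indexed $T_i^b\sim\mathrm{Exp}(b(i))$ and $T_i^\delta\sim\mathrm{Exp}(\delta(i))$; at state $i$ entered at time $t$, the smallest of $t+T_i^b$, $t+T_i^\delta$, $T^\lambda$ dictates the next event. A hazard-rate check recovers the correct marginals, and by construction the lifetime is bounded above by $T^\lambda\sim\mathrm{Exp}(\lambda)$. When $d(i)\leq\lambda$, write $\lambda=d(i)+(\lambda-d(i))$ and augment the original clocks $T_i^b$ and $T_i^d\sim\mathrm{Exp}(d(i))$ with independent fake-death clocks $T_i^f\sim\mathrm{Exp}(\lambda-d(i))$; an auxiliary enhanced process that dies on either $T_i^d$ or $T_i^f$ then has constant death rate $\lambda$ at every state, so by \eqref{eq:Lexp} (already proved, with $I=0$ and $d^\ast=\lambda$) its lifetime is $\mathrm{Exp}(\lambda)$. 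An induction on the state shows that as long as both processes are alive they occupy the same state with the same entry time, so the original process (which ignores the $T_i^f$) can only die later, giving the claimed lower bound. The main technical subtlety lies precisely in this pathwise monotonicity step: it relies on the birth clocks $T_i^b$ and the real-death clocks $T_i^d$ being shared between the two processes and on perfect synchronisation of their entry times into shared states, and any break in this synchronisation would destroy the stochastic ordering.

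Finally, \eqref{eq:infremlife} follows by combining two already-available observations. Failure of Assumption~\eqref{ass:A2} gives $\sum_{i\geq k}d(i)/(b(i)+d(i))<\infty$, so by the inhomogeneous-geometric description of $D_k$ in~\eqref{eq:Dk}, $\P{D_k=\infty}=\prod_{i=k}^\infty b(i)/(b(i)+d(i))>0$. On $\{D_k=\infty\}$ the remaining lifetime equals $\sum_{i=k}^\infty E_i$, which by the computation carried out for~\eqref{eq:Sinf}, shifted to start at $k$, is almost surely infinite under Assumption~\eqref{ass:A1}. Hence $\P{\sum_{i=k}^{k+D_k}E_i=\infty}\geq\P{D_k=\infty}>0$.
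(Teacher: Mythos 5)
Your proof is correct. Parts \eqref{eq:Lexp} and \eqref{eq:infremlife} are essentially the paper's own arguments: the single global death clock $T^*$ exploiting memorylessness is the same device the paper uses (it phrases it as collapsing the i.i.d.\ rate-$d^*$ death clocks into one), and the bound $\P{\sum_{i=k}^{k+D_k}E_i=\infty}\geq\P{D_k=\infty}>0$ via the shifted version of \eqref{eq:Sinf} is exactly the paper's computation. For \eqref{eq:remainstochdom} you take a genuinely different, though equally valid, route: the paper couples the comparison quantities directly, noting $D_k\preceq\wt D_k$ and $E_i\preceq\wt E_i$ where $\wt D_k,\wt E_i$ come from the auxiliary system with constant death rate $\lambda$, and then invokes \eqref{eq:Lexp} for that system (and reverses inequalities for the lower bound), whereas you realise the comparison pathwise by rate-splitting: a global $\mathrm{Exp}(\lambda)$ clock plus residual-rate clocks $\delta(i)=d(i)-\lambda$ caps the lifetime from above, and fake-death clocks at rate $\lambda-d(i)$ build an enhanced process with constant death rate $\lambda$ that is synchronised with, and dies no later than, the original. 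Your construction is more explicit and makes the monotonicity visible at the level of sample paths, at the cost of the synchronisation bookkeeping you rightly flag; the paper's version is shorter but leaves the joint coupling of the random number of summands implicit. Two small points worth recording in either approach: Assumption~\eqref{ass:A1} is what ensures the birth clocks cannot ring infinitely often before the death clock, so that the remaining lifetime really equals $T^*$ (the paper states this explicitly), and when you apply \eqref{eq:Lexp} to the enhanced process with constant death rate $\lambda$ you implicitly need $\sum_i 1/(b(i)+\lambda)=\infty$, which does follow from Assumption~\eqref{ass:A1}, but deserves a line.
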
 

	When $d$ converges to some value $d^*\in(0,\infty)$, the above result suggests that  the remaining lifetime, after having giving birth to a large number of children, is roughly  exponentially distributed with rate $d^*$. Similarly, when $d$ converges to zero, the remaining lifetime has sub-exponential tails and even equals infinity with positive probability when Assumption~\eqref{ass:A2} is not satisfied. We make this intuition precise and build on this even further when studying the \emph{entire} lifetime in the next section. For now, we discuss the limiting distribution of the remaining lifetime when $d$ diverges, in which case the remaining lifetime has super-exponential tails. 
	
	\begin{lemma}[Remaining lifetime for diverging $d$]\label{lemma:remlifediv}
		Suppose that $b$ and $d$ both tend to infinity, such that $d$ is increasing, $d=o(b)$, and $b(k)=o(kd(k))$, and that Assumption~\eqref{ass:A1} is satisfied. Furthermore, assume that both $b(k)$ and $d(k)$ are regularly varying in $k$ with a non-negative exponent. Let $t_k$ be such that $\liminf_{k\to\infty}t_k>0$ and $t_k=o(kd(k)/b(k))$. Then,
		\be \label{eq:unifconv}
		\P{d(k)\sum_{i=k}^{k+D_k}E_i>t_k}=\e^{-(1+o(1))t_k}.
		\ee  
		In particular, with $E$ a rate-one exponential random variable, 
		\be \label{eq:convddiv}
		d(k)\sum_{i=k}^{k+D_k}E_i \toindis E, \qquad\text{as }k\to\infty. 
		\ee 
	\end{lemma}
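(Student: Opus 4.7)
The plan is to compute the moment generating function $M_k(\lambda):=\Ev\big[\e^{\lambda d(k)Y_k}\big]$, where $Y_k:=\sum_{i=k}^{k+D_k}E_i$, for $\lambda\in(-\infty,1)$ fixed, and show that $M_k(\lambda)\to (1-\lambda)^{-1}$; this is the MGF of a rate-one exponential, and it yields both the tail estimate via Chernoff's inequality and the distributional convergence via the continuity theorem. Using independence of $D_k$ and $(E_i)_{i\ge k}$ together with the formula for $\Pv(D_k=\ell)$ from~\eqref{eq:Dk} and the exponential MGFs $\Ev[\e^{\lambda d(k)E_i}]=(b(i)+d(i))/(b(i)+d(i)-\lambda d(k))$, one obtains (for $\lambda d(k)$ below $\inf_{i\ge k}(b(i)+d(i))$, which is guaranteed for large $k$ by $d=o(b)$ and $d$ increasing)
\[
M_k(\lambda)=\sum_{\ell=0}^\infty \frac{d(k+\ell)}{b(k+\ell)+d(k+\ell)-\lambda d(k)}\prod_{j=k}^{k+\ell-1}\frac{b(j)}{b(j)+d(j)-\lambda d(k)}.
\]

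The second step is to analyse this series. The key inputs are: (i) by regular variation of $b$ and $d$ with non-negative exponent, $b(k+\ell)/b(k)=1+o(1)$ and $d(k+\ell)/d(k)=1+o(1)$ uniformly in $0\le \ell\le \eps k$ as $k\to\infty$, for any fixed $\eps>0$; (ii) for $\ell\ge \eps k$, the factor $b(j)/(b(j)+d(j)-\lambda d(k))$ is at most $1-c\,d(k)/b(k)$ for some $c>0$, which combined with the hypothesis $b(k)=o(k d(k))$ forces $\ell d(k)/b(k)\to\infty$, so the tail of the series is negligible. Truncating at $\ell=\eps k$ and substituting the uniform approximations reduces the sum to a geometric series that evaluates exactly to $(1-\lambda)^{-1}$, giving $M_k(\lambda)=(1-\lambda)^{-1}(1+o(1))$ uniformly in $\lambda$ on compact subsets of $(-\infty,1)$.

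With the MGF in hand, the upper tail bound follows from Chernoff: $\Pv(d(k)Y_k>t_k)\le \e^{-\lambda t_k}M_k(\lambda)$. When $t_k\to\infty$ we choose $\lambda=1-1/t_k$ to obtain $\Pv(d(k)Y_k>t_k)\le t_k\e^{1-t_k}(1+o(1))=\e^{-(1+o(1))t_k}$; when $t_k$ is bounded (but $\liminf t_k>0$), distributional convergence alone gives the desired estimate. For the lower bound, I use independence to write
\[
\Pv(Y_k>t_k/d(k))\ge \Pv(D_k\ge \ell_k^*)\,\Pv\Big(\sum_{i=k}^{k+\ell_k^*-1}E_i>t_k/d(k)\Big),\qquad \ell_k^*:=\big\lceil(1+\de_k)t_k\,b(k)/d(k)\big\rceil,
\]
with $\de_k\to 0$ slowly. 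The constraint $t_k=o(kd(k)/b(k))$ ensures $\ell_k^*=o(k)$, so regular variation applies; Lemma~\ref{lemma:Dtail} combined with $d=o(b)$ gives $-\log \Pv(D_k\ge \ell_k^*)=\rho_1(k+\ell_k^*)-\rho_1(k)+o(t_k)=(1+\de_k)t_k(1+o(1))$. The second factor tends to $1$ by Chebyshev, since $\sum_{i=k}^{k+\ell_k^*-1}E_i$ has mean $(1+\de_k)t_k/d(k)\cdot(1+o(1))$ and variance of order $t_k/(b(k)d(k))$, so the deviation $\de_k t_k/d(k)$ is much larger than the standard deviation once $\de_k^{-2}=o(t_k b(k)/d(k))$, which holds for suitable $\de_k\to 0$. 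Letting $\de_k\to 0$ then yields $\Pv(d(k)Y_k>t_k)\ge \e^{-(1+o(1))t_k}$, and combining with the upper bound proves~\eqref{eq:unifconv}. The distributional convergence~\eqref{eq:convddiv} is immediate from the Laplace transform $M_k(-s)\to (1+s)^{-1}$ for $s\ge 0$.

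The main technical obstacle is the bookkeeping in the MGF analysis: one must show that regular variation gives the required uniform approximation $b(k+\ell)\sim b(k)$, $d(k+\ell)\sim d(k)$ precisely on the range $\ell=o(k)$ where the series concentrates, and simultaneously argue that the contribution from $\ell\gtrsim k$ is geometrically small — the latter relying crucially on $b(k)=o(kd(k))$, the same quantitative hypothesis that enters as $t_k=o(kd(k)/b(k))$ and pins down the regime where $\ell_k^*$ in the lower bound remains $o(k)$. Once this is handled uniformly in $\lambda$ on compact subsets of $(-\infty,1)$, both directions of the estimate follow cleanly.
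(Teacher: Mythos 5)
Your lower bound is sound and is essentially the argument the paper itself uses: the same factorisation $\P{D_k\geq \ell_k^*}\,\P{\sum_{i=k}^{k+\ell_k^*-1}E_i>t_k/d(k)}$ with $\ell_k^*\approx(1+\delta_k)t_k b(k)/d(k)=o(k)$, regular variation to evaluate $\rho_1(k+\ell_k^*)-\rho_1(k)=(1+o(1))(1+\delta_k)t_k$, and a law-of-large-numbers estimate for the truncated sum of exponentials (your Chebyshev bound with $\delta_k\to0$ in place of the paper's SLLN with fixed $\eps$ is harmless, since $t_kb(k)/d(k)\to\infty$ makes a suitable $\delta_k$ available; just note that the $o(1)$ errors from regular variation in the mean must also be absorbed by $\delta_k$). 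The distributional convergence~\eqref{eq:convddiv} also follows immediately from the two-sided tail estimate applied with constant $t_k\equiv t$, so the Laplace-transform detour is not needed there either.

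The genuine gaps are in your upper bound. First, the claim that for $\ell\geq\eps k$ each factor $b(j)/(b(j)+d(j)-\lambda d(k))$ is at most $1-c\,d(k)/b(k)$ is false in general: take $b(j)=j$ and $d(j)=\log j$, which satisfies every hypothesis of the lemma; for $j\gg k$ the factor is $1-\Theta(\log j/j)$ and $\log j/j<\log k/k$, so no such uniform per-factor bound holds. One can salvage smallness of each tail term by using only the first $\lceil\eps k\rceil$ factors, but you must then still sum infinitely many terms of the series, which your sketch does not address. Second, and more seriously, the Chernoff step evaluates $M_k$ at $\lambda_k=1-1/t_k\to1$, whereas the convergence you establish is only uniform on compact subsets of $(-\infty,1)$; this gives no control of $M_k(\lambda_k)$, so the bound $M_k(\lambda_k)=t_k(1+o(1))$ is unjustified as written. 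Both problems vanish once you observe that, because $d$ is increasing, Lemma~\ref{lemma:stochdomexp} applied with rate $d(k)$ gives the exact stochastic domination $d(k)\sum_{i=k}^{k+D_k}E_i\preceq E$ with $E$ a rate-one exponential, hence $\P{d(k)\sum_{i=k}^{k+D_k}E_i>t}\leq \e^{-t}$ for every $t$ and every $k$ (equivalently $M_k(\lambda)\leq(1-\lambda)^{-1}$ for all $\lambda<1$, uniformly in $k$). This one-line bound is precisely the paper's upper bound and renders the whole MGF computation superfluous; if you insist on the MGF route, you must prove such a uniform bound valid up to $\lambda\uparrow1$, which in effect is the same domination argument (the series telescopes after bounding $d(k+\ell)/(d(k+\ell)-\lambda d(k))\leq(1-\lambda)^{-1}$, using again that $d$ is increasing).
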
 
	
	\begin{remark}
		It is possible to relax the assumption that $d$ is increasing by using similar quantities $\overline d_k$ and $\underline d_k$ throughout the proof, and assuming that $d(k)=(1+o(1))\inf_{i\geq k}d(i)$ in the case that $d$ is slowly varying (i.e.\ regularly varying with exponent $0$). We used the assumption on $d$ to ease notation and aid the reader, as well as since applications of Lemma~\ref{lemma:remlifediv} require only increasing death rates (see Section~\ref{sec:maxrdy}).\ensymboldremark
	\end{remark}
	
	\begin{remark}[Linear birth rates]\label{rem:linb1}
		We can weaken the assumption that $t_k=o(kd(k)/b(k))$ somewhat in the case that $b(k)=\cO(k)$. Namely, we can assume that $t_k=\cO(d(k))$. This weaker assumption is required in the proof of Proposition~\ref{prop:divdeathhighdeg}. See Remark~\ref{rem:linb} in Appendix~\ref{app:proofs} for more details on how to adjust the proof.\ensymboldremark
	\end{remark}
	
	So far, we have either looked at the distribution of the offspring $D$ or the remaining lifetime separately. We are also interested in events that combine both quantities, in particular how likely (or unlikely) it is for an individual to survive for a long time and obtain a large offspring.  By Corollary~\ref{cor:deglifedistr}, we can write the event that the root has degree at least $k$ at time $t$ and is alive at time $t'\geq t$ as 
	\be \label{eq:degliferewrite}
	\{d(t)\geq k, L> t'\}=\{D\geq k, S_k\leq t, S_{D+1}>t'\}. 
	\ee		
	We then have  the following result.
	
	\begin{lemma}[Survival with a high degree]\label{lemma:survdeg}
		$(i)$ Suppose that Assumptions~\eqref{ass:A1} and~\eqref{ass:A2} are satisfied. Suppose that there exist $x\geq 0$ and $K=K(x)\in\N_0$ such that $d(i)\geq x$ for all $i\geq K$. Then, for all $k\geq K$ and all $t'\geq t\geq0$,
		\be 
		\P{D\geq k, S_k\leq t, S_{D+1}>t'}\leq \e^{-x(t'-t)}\P{D\geq k}\E{\ind_{\{S_k\leq t\}}\e^{x(S_k-t)}}.
		\ee 
		$(ii)$ Suppose that Assumptions~\eqref{ass:A1} and~\eqref{ass:A2} are satisfied. Suppose that there exist $x\geq 0$ and $K=K(x)\in\N$ such that $d(i)\leq x$ for all $i\geq K$. Then, for all $k\geq K$ and all $t'\geq t\geq0$,  
		\be 
		\P{D\geq k, S_k\leq t, S_{D+1}>t'}\geq \e^{-x(t'-t)}\P{D\geq k}\E{\ind_{\{S_k\leq t\}}\e^{x(S_k-t)}}.
		\ee 
		$(iii)$ Suppose that Assumption~\eqref{ass:A1} is satisfied but Assumption~\eqref{ass:A2} is not. Then, 
		\be 
		\P{D\geq k, S_k\leq t, S_{D+1}>t'}=\Theta(\P{S_k\leq t}) \qquad \text{for any $k\in\N_0$ and $t'\geq t\geq 0$,} 
		\ee 
		where the constants in the $\Theta$ notation are independent of $k,t'$, and $t$.
	\end{lemma}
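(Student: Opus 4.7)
The plan is to exploit the independence structure inherent in the construction~\eqref{eq:D} of $D$, $S_k$, and $S_{D+1}$. Writing $R := \sum_{i=k}^{k+D_k} E_i$ for the remaining lifetime after $k$ births, I observe that on the event $\{D \geq k\}$ we have the decomposition $S_{D+1} = S_k + R$. Crucially, $S_k$ is a function of $(E_0,\dots,E_{k-1})$ only; $R$ is a function of $(E_k,E_{k+1},\dots)$ and $(B_k,B_{k+1},\dots)$ only; and $\ind_{\{D\geq k\}}=\prod_{i=0}^{k-1}\ind_{\{B_i=1\}}$ depends only on $(B_0,\dots,B_{k-1})$. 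By the mutual independence of all the $E_i$ and $B_i$, these three quantities are independent, yielding the factorisation
\begin{equation}
\P{D\geq k,\, S_k\leq t,\, S_{D+1}>t'} \;=\; \P{D\geq k}\cdot\E{\ind_{\{S_k\leq t\}}\,\P{R > t'-S_k\,\big|\, S_k}}.
\end{equation}

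For (i), the hypothesis $d(i)\geq x$ for $i\geq K$ combined with $k\geq K$ allows me to invoke Lemma~\ref{lemma:stochdomexp} to conclude $R\preceq \mathrm{Exp}(x)$, so the inner conditional probability is bounded above by $\e^{-x(t'-S_k)}$ on the event $\{S_k\leq t\}\subseteq\{S_k\leq t'\}$ (using $t\leq t'$). Substituting into the factorisation above and pulling out the constant factor $\e^{-x(t'-t)}$ yields the claimed upper bound. Part (ii) is entirely parallel: the opposite stochastic domination $R\succeq \mathrm{Exp}(x)$ from Lemma~\ref{lemma:stochdomexp} produces the reverse inequality for the inner probability, and the same manipulation gives the matching lower bound.

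For (iii), when Assumption~\eqref{ass:A1} holds but Assumption~\eqref{ass:A2} does not, Lemma~\ref{lemma:Dlifedistr} gives $\P{D=\infty}>0$, and by~\eqref{eq:Sinf} we have $S_\infty=\infty$ almost surely, so on $\{D=\infty\}$ the event $\{S_{D+1}>t'\}$ holds automatically. Using the independence of $\ind_{\{D=\infty\}}$ and $S_k$, I obtain the lower bound
\begin{equation}
\P{D\geq k,\, S_k\leq t,\, S_{D+1}>t'} \;\geq\; \P{D=\infty,\, S_k\leq t} \;=\; \P{D=\infty}\,\P{S_k\leq t},
\end{equation}
while the trivial upper bound is $\P{S_k\leq t}$. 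Since $\P{D=\infty}\in(0,1]$ is a constant independent of $k,t,t'$, this yields the $\Theta(\P{S_k\leq t})$ conclusion. The only conceptually delicate step is verifying the independence factorisation by carefully tracking which subset of the underlying randomness each of $\ind_{\{D\geq k\}}$, $S_k$, and $R$ depends on; everything else reduces to a single application of Lemma~\ref{lemma:stochdomexp} (for (i) and (ii)) or to the almost-sure infiniteness of $S_\infty$ under~\eqref{ass:A1} (for (iii)).
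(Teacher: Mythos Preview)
Your proof is correct and follows the same route as the paper for parts (i) and (ii): factor out $\P{D\geq k}$ by independence, condition on $S_k$, and apply Lemma~\ref{lemma:stochdomexp} to the remaining lifetime $R=\sum_{i=k}^{k+D_k}E_i$. Your explicit identification of the three independent blocks $\ind_{\{D\geq k\}}$, $S_k$, $R$ in terms of disjoint subsets of $(B_i)_{i\geq 0}$ and $(E_i)_{i\geq 0}$ makes the independence step cleaner than the paper's presentation.

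For part (iii) your argument is slightly more direct than the paper's and yields a better constant. The paper first runs through the same factorisation as in (i)--(ii), then bounds $\P{R=\infty}\geq\P{D_k=\infty}\geq\P{D=\infty}$ and $\P{D\geq k}\geq\P{D=\infty}$ to arrive at the lower bound $\P{D=\infty}^2\,\P{S_k\leq t}$. You instead observe immediately that $\{D=\infty,\,S_k\leq t\}\subseteq\{D\geq k,\,S_k\leq t,\,S_{D+1}>t'\}$ (using $S_\infty=\infty$ under~\eqref{ass:A1}) and invoke independence of $\{D=\infty\}$ and $\{S_k\leq t\}$, giving the tighter constant $\P{D=\infty}$. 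Both arguments are valid; yours avoids an unnecessary pass through the conditional remaining-lifetime computation.
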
 
	
	To use the bounds presented in Lemma~\ref{lemma:survdeg}, the asymptotic behaviour of the expected values is required (for a suitable choice of $t$). To this end, we present the following result from~\cite{BanBha21}, regarding moderate deviations of sums of independent exponential random variables. 
	
	\begin{lemma}[Moderate deviation principle, Lemma $7.10$~\cite{BanBha21}]\label{lemma:mdp} Let $f\colon \N_0\to(0,\infty)$ and assume $f(k)$ tends to infinity with $k$. Let $(E_i)_{i\in\N_0}$ be a sequence of independent exponential random variables, where $E_i$ has rate $f(i)$, and set $S_k:=\sum_{i=0}^{k-1}E_i$. Assume that $\lim_{k\to\infty}\Var(S_k)=\infty$. Then, for $z\geq 0$, 
		\begin{align*}			\lim_{k\to\infty}\frac{1}{\Var(S_k)}\log\P{S_k-\E{S_k}\geq z\Var(S_k)}&=-\frac{z^2}{2}, 
			\intertext{and} 
			\lim_{k\to\infty}\frac{1}{\Var(S_k)}\log\P{S_k-\E{S_k}\leq -z\Var(S_k)}&=-\frac{z^2}{2}.
		\end{align*}
	\end{lemma}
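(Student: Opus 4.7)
The plan is to apply the classical Cram\'er/Chernoff exponential tilting method, exploiting that the cumulant generating function of $S_k$ is fully explicit. Writing $v_k := \Var(S_k) = \sum_{i=0}^{k-1} f(i)^{-2}$, I would first observe that for any fixed $\lambda \geq 0$, once $N$ is large enough that $f(i) > \lambda + 1$ for all $i \geq N$ (possible because $f(i) \to \infty$),
\begin{equation*}
\psi_k(\lambda) := \log \E{\e^{\lambda(S_k - \E{S_k})}} = \sum_{i=0}^{k-1}\Bigl(-\tfrac{\lambda}{f(i)} - \log\bigl(1 - \tfrac{\lambda}{f(i)}\bigr)\Bigr) = \tfrac{\lambda^2}{2}\, v_k + R_k(\lambda),
\end{equation*}
with remainder satisfying $|R_k(\lambda)| \leq C(\lambda)\bigl(1 + \sum_{i=N}^{k-1} f(i)^{-3}\bigr)$, where the constant absorbs the finitely many initial terms. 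Since $f(i) \to \infty$, for every $\eps > 0$ eventually $f(i)^{-3} \leq \eps f(i)^{-2}$, so $\sum_{i<k} f(i)^{-3} \leq \eps v_k + O(1)$; combined with $v_k \to \infty$ this yields $R_k(\lambda)/v_k \to 0$. This single observation is the only place where both hypotheses of the lemma are used, and it drives the rest of the argument.

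\textbf{Upper bound.} For the upper tail I would apply Chernoff: $\log \P{S_k - \E{S_k} \geq z v_k} \leq -\lambda z v_k + \psi_k(\lambda) = v_k\bigl(-\lambda z + \tfrac{\lambda^2}{2}\bigr) + o(v_k)$, and then optimise at the interior minimum $\lambda = z$ to obtain $\limsup_{k \to \infty} v_k^{-1}\log \P{S_k - \E{S_k} \geq z v_k} \leq -z^2/2$.

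\textbf{Lower bound and main obstacle.} For the matching lower bound I would carry out a Cram\'er-type exponential tilt: let $\bQ_z$ be the law under which, for every index $i$ with $f(i) > z$, the $E_i$ is independent exponential of rate $f(i) - z$ (and unchanged for the finitely many exceptional $i$). A routine computation shows $\bE^{\bQ_z}[S_k] - \E{S_k} = z v_k + o(v_k)$ and $\Var^{\bQ_z}(S_k) = v_k + o(v_k)$, while the Radon--Nikodym derivative on $\{S_k = s\}$ equals $\exp\bigl(-z(s - \E{S_k}) + \tfrac{z^2}{2} v_k + o(v_k)\bigr)$. I would then consider the band $A_\eps := \{z v_k \leq S_k - \E{S_k} \leq (z+\eps) v_k\}$: its width $\eps v_k$ dwarfs the $\bQ_z$-standard deviation $\sqrt{v_k}$ around the $\bQ_z$-mean, so Chebyshev yields $\bQ_z(A_\eps) \geq \tfrac12 + o(1)$. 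Changing measure produces $\P{S_k - \E{S_k} \geq z v_k} \geq \P{A_\eps} \geq \tfrac12 \exp\bigl(-(z^2/2 + z\eps) v_k + o(v_k)\bigr)$, and letting $\eps \downarrow 0$ gives the matching $-z^2/2$. The lower tail $\P{S_k - \E{S_k} \leq -z v_k}$ is handled identically with the (fully admissible) tilt $f(i) + z$. The hard part is purely the remainder control $R_k(\lambda) = o(v_k)$; once this is in hand everything reduces to bookkeeping, and in fact the whole statement falls out at once from G\"artner--Ellis applied to $(S_k - \E{S_k})/v_k$ with speed $v_k$ and limiting normalised log-MGF $\lambda^2/2$.
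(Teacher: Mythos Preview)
The paper does not prove this lemma; it is quoted verbatim as Lemma~7.10 of \cite{BanBha21} and used as a black box, so there is no in-paper argument to compare against. Your proposal is the standard and correct route: the key computation is that the centred log-MGF satisfies $\psi_k(\lambda)=\tfrac{\lambda^2}{2}v_k+o(v_k)$ for each fixed $\lambda$, which follows exactly as you say from $f(i)\to\infty$ (so $\sum f(i)^{-3}=o(v_k)$) together with $v_k\to\infty$. With this in hand, the G\"artner--Ellis theorem applied to $(S_k-\E{S_k})/v_k$ at speed $v_k$ gives both displays immediately with rate function $z\mapsto z^2/2$; your final sentence is the cleanest way to package the argument.

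One small quibble with the hand-rolled tilting paragraph: the claim that Chebyshev yields $\bQ_z(A_\eps)\geq \tfrac12+o(1)$ is not quite justified as written, because the $\bQ_z$-mean of $S_k-\E{S_k}$ is $zv_k+o(v_k)$ and the $o(v_k)$ correction could sit on the wrong side of the left endpoint $zv_k$ of your band. The usual fix is to tilt at $z'=z+\tfrac{\eps}{2}$ instead, so the tilted mean lands strictly inside $A_\eps$ and Chebyshev gives $\bQ_{z'}(A_\eps)\to 1$; the change-of-measure exponent then becomes $-(z+\tfrac{\eps}{2})(z+\eps)+\tfrac12(z+\tfrac{\eps}{2})^2$, which still tends to $-z^2/2$ as $\eps\downarrow 0$. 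This is a bookkeeping detail and is in any case subsumed by the G\"artner--Ellis invocation you already make.
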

	
	\begin{remark}
		We shall use Lemma~\ref{lemma:mdp} throughout with $f\equiv b+d$, so that $\E{S_k}=\varphi_1(k)$ and $\Var(S_k)=\varphi_2(k)$. \ensymboldremark
	\end{remark}
	
	We extend this result by including exponential weighting or tilting of the sum $S_k$. We phrase the following result in terms of the sequences $b$ and $d$ for ease of writing, since we apply this in combination with Lemma~\ref{lemma:survdeg} in the analysis in Section~\ref{sec:max}.
	
	\begin{proposition}[Moderate deviation principle with exponential tilting]\label{prop:mdptilt} Suppose that $b$ and $d$ are such that $b$ diverges and that Assumptions~\eqref{ass:A1} and~\eqref{ass:varphi2} are satisfied. Then, for $z> 0$ and $y,\theta \in \R$, 
		\be \label{eq:mdptiltmin}
		\lim_{k\to\infty}\frac{1}{\varphi_2(k)}\log\E{\ind_{\{S_k\leq \varphi_1(k)-z\varphi_2(k)\}}\e^{\theta(S_k-(\varphi_1(k)-y\varphi_2(k)))}}=\theta (y-z)-\frac{z^2}{2}.
		\ee 
	\end{proposition}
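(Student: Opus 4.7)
The plan is to reduce~\eqref{eq:mdptiltmin} to a centred moment generating function problem and then combine a Chernoff tilting argument (for the upper bound) with a thin-slab restriction (for the lower bound), in both directions invoking the moderate deviation asymptotics of Lemma~\ref{lemma:mdp}. I would first introduce $T_k:=S_k-\varphi_1(k)$, which has mean $0$ and variance $\varphi_2(k)$ by independence of the $E_i$. Factoring out the deterministic piece $e^{\theta y\varphi_2(k)}$ reduces the claim to proving
\[
\frac{1}{\varphi_2(k)}\log \E{\ind_{\{T_k\leq -z\varphi_2(k)\}}e^{\theta T_k}}\longrightarrow -\theta z-\tfrac{z^2}{2}.
\]

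For the upper bound I would apply the Chernoff-type inequality $\ind_{\{T_k\leq -z\varphi_2(k)\}}\leq e^{-\lambda(T_k+z\varphi_2(k))}$ for a free parameter $\lambda\geq 0$, yielding $\E{\ind\,e^{\theta T_k}}\leq e^{-\lambda z\varphi_2(k)}\,\E{e^{(\theta-\lambda)T_k}}$. The MGF factor admits the explicit product form $\E{e^{\mu T_k}}=e^{-\mu\varphi_1(k)}\prod_{i<k}(b(i)+d(i))/(b(i)+d(i)-\mu)$, which is finite for any fixed $\mu$ and all $k$ large, since $b$ diverges implies $\min_{i<k}(b(i)+d(i))\to\infty$. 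Taylor-expanding $-\log(1-\mu/(b(i)+d(i)))$ termwise yields $\log \E{e^{\mu T_k}}=\tfrac12\mu^2\varphi_2(k)+R_k(\mu)$ with $|R_k(\mu)|\leq C|\mu|^3\sum_{i<k}(b(i)+d(i))^{-3}$. An $\epsilon$-splitting of the cubic sum, exploiting $(b(i)+d(i))^{-1}\to 0$ together with Assumption~\eqref{ass:varphi2}, gives $\sum_{i<k}(b(i)+d(i))^{-3}=o(\varphi_2(k))$, and hence $\varphi_2(k)^{-1}\log\E{e^{\mu T_k}}\to \mu^2/2$ for every fixed $\mu\in\R$. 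Substituting into the Chernoff bound and optimising the exponent $-\lambda z+(\theta-\lambda)^2/2$ over $\lambda\geq 0$ at the interior minimiser $\lambda^{*}=z+\theta$ delivers exactly $-\theta z-z^2/2$.

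For the matching lower bound I would restrict the expectation to the thin slab $\{T_k\in[-(z+\eta)\varphi_2(k),-z\varphi_2(k)]\}$ with $\eta>0$ small. A short case split on the sign of $\theta$ shows that on this slab $e^{\theta T_k}\geq e^{-\theta z\varphi_2(k)-|\theta|\eta\varphi_2(k)}$ uniformly. The slab probability is the difference of two MDP probabilities, and Lemma~\ref{lemma:mdp} gives $\varphi_2(k)^{-1}\log\P{T_k\leq -z\varphi_2(k)}\to -z^2/2$ while $\varphi_2(k)^{-1}\log\P{T_k\leq -(z+\eta)\varphi_2(k)}\to-(z+\eta)^2/2<-z^2/2$, so the second probability is exponentially negligible compared with the first. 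The slab probability therefore has the same logarithmic order as $\P{T_k\leq -z\varphi_2(k)}$, and combining with the pointwise lower bound and letting $\eta\downarrow 0$ produces $\liminf_k\varphi_2(k)^{-1}\log\E{\ind\,e^{\theta T_k}}\geq -\theta z-z^2/2$.

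The main technical obstacle is the uniform control of the cubic remainder $R_k(\mu)$ in the MGF expansion: this is where the combined force of the divergence of $b$ and of Assumption~\eqref{ass:varphi2} really matters, the former ensuring that the Taylor expansion is valid termwise and the latter ensuring that the quadratic sum dominates the cubic tail. A minor subtlety in the upper bound is the admissibility constraint $\theta\geq -z$ on the interior Chernoff optimiser $\lambda^{*}=z+\theta$; in the applications of Proposition~\ref{prop:mdptilt} in Section~\ref{sec:max} the tilting parameter $\theta$ is a birth/death or Malthusian-type rate for which $\theta+z\geq 0$ holds automatically, so the stated formula applies as needed.
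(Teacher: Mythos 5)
Your proof is correct on the range it covers, and on the upper-bound side it takes a genuinely different route from the paper. The paper's upper bound for $\theta>0$ is a one-liner: on the event $\{S_k\le\varphi_1(k)-z\varphi_2(k)\}$ the tilt is capped by its boundary value, $\e^{\theta(S_k-(\varphi_1(k)-y\varphi_2(k)))}\le \e^{\theta(y-z)\varphi_2(k)}$, and Lemma~\ref{lemma:mdp} finishes the job; no moment generating function estimates are needed because the indicator already bounds the tilt. Your Chernoff bound with the auxiliary parameter $\lambda$ plus the termwise expansion of $\log\E{\e^{\mu(S_k-\varphi_1(k))}}$ is more machinery (you essentially reprove a Gaussian-type MGF asymptotic, with the cubic remainder killed by the same $o(\varphi_2)$ argument that appears in the proof of Lemma~\ref{lemma:func}), but it buys the cases $-z\le\theta<0$ in one stroke, where the paper's boundary bound points the wrong way and the paper only says ``mutatis mutandis''. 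The lower bounds are essentially identical: a thin slab below level $z$ together with Lemma~\ref{lemma:mdp} applied to the two tail probabilities.

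Two remarks. First, a small slip: your justification that the MGF factor is finite ``for any fixed $\mu$ \ldots since $b$ diverges implies $\min_{i<k}(b(i)+d(i))\to\infty$'' is wrong --- the minimum over $i<k$ is nonincreasing in $k$, and $\E{\e^{\mu(S_k-\varphi_1(k))}}=\infty$ as soon as $\mu\ge b(i)+d(i)$ for some $i<k$. This is harmless for your argument, because the only evaluation you need is at $\mu=\theta-\lambda^*=-z<0$ (just fix $\lambda=z+\theta$ from the start), where the MGF is finite and the termwise expansion is valid for all but finitely many indices, whose total contribution is $\cO(1)=o(\varphi_2(k))$; but it should be stated that way. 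Second, your admissibility caveat $\theta\ge -z$ is not merely a limitation of your method: for $\theta<-z$ the claimed limit is in fact false, since your own slab argument centred at level $u=-\theta>z$ gives $\liminf_k\varphi_2(k)^{-1}\log\E{\ind_{\{S_k\le\varphi_1(k)-z\varphi_2(k)\}}\e^{\theta(S_k-(\varphi_1(k)-y\varphi_2(k)))}}\ge \theta y+\theta^2/2>\theta(y-z)-z^2/2$. So your restriction coincides with the true range of validity; the blanket ``$\theta\in\R$'' in the statement (and the paper's ``mutatis mutandis'' for $\theta<0$) overstates the result, though, as you observe, every application in Section~\ref{sec:max} uses $\theta\ge0$, where both your proof and the paper's apply.
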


\subsection{Functional inequalities} 

We conclude this section with a few elementary results regarding the functions $\varphi_1, \varphi_2$, and $\rho_1, \rho_2$, as well as $\alpha$ and $\cK$ and $\cK_\alpha$, as introduced in Section~\ref{sec:model}. 

\begin{lemma}[Functional inequalities]\label{lemma:func} 
	\begin{enumerate} 	
		\item Suppose that Assumption~\eqref{ass:A1} is satisfied and $b$ tends to infinity. Then, $\varphi_2=o(\varphi_1)$ and  $\cK(t)=o(t)$. Similarly, suppose that Assumption~\eqref{ass:A2} is satisfied and $d=o(b)$. Then, $\rho_2=o(\rho_1)$. 
		\item Suppose that Assumption~\eqref{ass:A1} is satisfied and that $\overline d:=\limsup_{i\to\infty}d(i)<\infty$. Then, $\limsup_{k\to\infty} \rho_1(k)/\varphi_1(k)\leq \overline d$. In particular, when $d$ converges to zero, then $\rho_1=o(\varphi_1)$. Similarly, if $\underline d:=\limsup_{i\to\infty}d(i)>0$, then $\liminf_{k\to\infty}\rho_1(k)/\varphi_1(k)\geq \underline d$. In particular, if $d$ tends to infinity, then $\varphi_1=o(\rho_1)$.
		\item Suppose that Assumption~\eqref{ass:varphi2} is satisfied and $\lim_{i\to\infty}d(i)=d^*\in[0,\infty)$. Then, $\rho_2(k)=((d^*)^2+o(1))\varphi_2(k)$. 
		\item Suppose that Assumption~\eqref{ass:A1} is satisfied and $\lim_{i\to\infty}d(i)=d^*\in[0,\infty)$. Then, $\cK_\alpha(t)=o(t)$. Moreover, for any function $s\colon (0,\infty)\to\R_+$ such that $s$ tends to infinity and $s(t)=o(t)$, we have $\cK_\alpha(t)-\cK_\alpha(t-s(t))=o(s(t))$.
	\end{enumerate}
\end{lemma}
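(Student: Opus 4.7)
All four parts reduce to the same elementary averaging principle: if $(w_i)_{i \ge 0}$ are non-negative weights with $W(k) := \sum_{i=0}^{k-1} w_i \to \infty$ and $(a_i)$ is a bounded sequence with $\limsup_i a_i \le \overline a$ (respectively $\liminf_i a_i \ge \underline a$), then $\limsup_k W(k)^{-1}\sum_{i=0}^{k-1} a_i w_i \le \overline a$ (respectively $\ge \underline a$). This follows by splitting the sum at an index $N$ past which $a_i < \overline a + \varepsilon$: the initial segment contributes a constant while the tail is bounded by $(\overline a + \varepsilon)(W(k)-W(N))$; dividing by $W(k) \to \infty$ and letting $\varepsilon \downarrow 0$ gives the claim.

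For part (1), I would take $w_i = a_i = 1/(b(i)+d(i))$, which tends to $0$ since $b$ diverges, and observe that \eqref{ass:A1} gives $W(k) = \varphi_1(k) \to \infty$. Hence $\varphi_2(k)/\varphi_1(k) \to 0$, and substituting $t = \varphi_1(k)$ yields $\cK(t) = \varphi_2(k) = o(t)$. The statement $\rho_2 = o(\rho_1)$ follows identically with $w_i = a_i = d(i)/(b(i)+d(i))$, using $d = o(b)$ so that $a_i \to 0$ and \eqref{ass:A2} so that $W(k) = \rho_1(k) \to \infty$.

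Parts (2) and (3) are direct applications of the principle. For (2), take $a_i = d(i)$ and $w_i = 1/(b(i)+d(i))$ in both the $\limsup$ and $\liminf$ versions; the divergent case $d(i) \to \infty$ is handled by letting $\underline a$ be arbitrary. For (3), apply the principle to $a_i = d(i)^2 \to (d^*)^2$ and $w_i = 1/(b(i)+d(i))^2$, using \eqref{ass:varphi2} to get $W(k) = \varphi_2(k) \to \infty$; both the $\limsup$ and $\liminf$ give $(d^*)^2$, which is the claimed asymptotic $\rho_2(k) = ((d^*)^2 + o(1))\varphi_2(k)$.

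For part (4), the bound $\cK_\alpha(t) = o(t)$ follows by applying the principle to $a_i = d(i) - d^* \to 0$ and $w_i = 1/(b(i)+d(i))$, giving $\alpha(k) = o(\varphi_1(k))$. For the difference statement, let $k = \varphi_1^{-1}(t)$ and $k' = \varphi_1^{-1}(t-s(t))$; since $s(t) = o(t)$ and $t \to \infty$, both $k, k' \to \infty$. Then
\[
\cK_\alpha(t) - \cK_\alpha(t-s(t)) = \sum_{i=k'}^{k-1}\frac{d(i)-d^*}{b(i)+d(i)},
\]
while by construction $\sum_{i=k'}^{k-1} 1/(b(i)+d(i)) = s(t)$. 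For any $\varepsilon > 0$ one has $|d(i)-d^*|<\varepsilon$ for all $i \ge k'$ once $t$ is large, so the difference is bounded in absolute value by $\varepsilon\, s(t)$, proving it is $o(s(t))$. I do not expect a serious obstacle; all four parts collapse to the same averaging argument, with the assumptions \eqref{ass:A1}, \eqref{ass:A2}, and \eqref{ass:varphi2} supplying divergence of the appropriate weight sum.
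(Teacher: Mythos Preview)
Your proposal is correct and follows essentially the same approach as the paper: both isolate the identical Ces\`aro-type averaging principle (the paper phrases it as $\sum \ell(i)/f(i)=o\bigl(\sum 1/f(i)\bigr)$ when $\ell\to 0$ and the weight sum diverges) and then specialise it to each of the four parts with the same choices of weights and coefficients. The only cosmetic difference is that in part~(d) the paper writes the increment $\cK_\alpha(t)-\cK_\alpha(t-s(t))$ as an integral rather than a sum, to accommodate the fact that $\varphi_1^{-1}(t)$ need not be an integer under the linear-interpolation convention; your sum formulation is off by a bounded interpolation error, which is harmless since $s(t)\to\infty$.
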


\section{The oldest alive individual}\label{sec:old}

In this section, we study the quantity $O_t^\cont$; the birth-time of the oldest alive individual in $\bp(t)$ conditionally on survival. To this end, we first obtain precise asymptotic distributional properties of the lifetime $L$ of individuals in the branching process. We then study the growth-rate of the entire branching process, to finally prove when the oldest  individual alive at time $t$ is born. 

\subsection{Lifetime distribution} \label{sec:lifetime}

In the previous section, we obtained results for the remaining lifetime after giving birth to a number of children. Here, we use those results to now focus on the \emph{entire} lifetime. 

The first result of Lemma~\ref{lemma:stochdomexp} already shows that the lifetime is exponentially distributed when the death rates are constant (and non-zero). We now provide some further asymptotic results when the death rates are not constant. Here we observe that the lifetime distribution undergoes a phase transition between the `rich are old` and the `rich die young' regimes. This is made precise in the following result.

\begin{lemma}[Asymptotic exponential lifetime distribution]\label{lemma:lifetime}
	Assume that the sequences $b$ and $d$ are such that Assumption~\eqref{ass:A1} is satisfied. Recall  $R$ from~\eqref{eq:R}. If $\liminf_{i\to\infty}d(i)\geq R$, then
	\be \label{eq:LlimR}
	\lim_{t\to\infty}\frac1t\log\P{L>t}=-R.
	\ee  
	Moreover, when $\liminf_{i\to\infty}d(i)>R$,
	\be \label{eq:Lprecise}
	\liminf_{t\to\infty} \big[\log(\P{L>t})+Rt\big]>-\infty\qquad\text{and}\qquad \ \limsup_{t\to\infty}\frac{\log(\P{L>t})+Rt}{\log t}<\infty.
	\ee 
	If $\overline d:=\limsup_{i\to\infty}d(i)<R$ and $b$ tends to infinity, then 
	\be \label{eq:Lliminfd}
	\liminf_{t\to\infty}\frac1t\log\P{L>t}\geq -\overline d.
	\ee  
	In particular, if $\lim_{i\to\infty}d(i)=d^*\in[0,R)$ exists and $b$ tends to infinity, 
	\be \label{eq:Llimdstar}
	\lim_{t\to\infty}\frac1t\log\P{L>t}= -  d^*.
	\ee 
\end{lemma}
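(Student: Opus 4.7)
The strategy is to exploit the decomposition $L = S_{D+1} = \sum_{i=0}^{D} E_i$ together with the independence of $D$ (a function of the Bernoullis) from the sequence $(E_i)$, combining the stochastic-domination bounds of Lemma~\ref{lemma:stochdomexp} with exponential Markov inequalities.

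\textit{Lower bounds.} All lower bounds I plan to derive rest on the elementary ``stay-put'' inequality
\begin{equation*}
	\P{L > t}\;\geq\;\P{D \geq I^*}\,\P{E_{I^*} > t}\;=\;\P{D \geq I^*}\,e^{-(b(I^*)+d(I^*))\,t},
\end{equation*}
valid for any $I^* \in \N_0$, with $\P{D \geq I^*} > 0$ since $b > 0$. For~\eqref{eq:LlimR} I would pick $I^* = I^*(\eps)$ with $b(I^*)+d(I^*) \leq R + \eps$ and send $\eps \downarrow 0$. In the setting of~\eqref{eq:Lprecise}, $\liminf_i d(i) > R$ forces $b(i)+d(i) > R$ for all $i$ beyond some $I_0$, so the infimum $R$ is attained at a fixed index $I^* < I_0$ and $\P{L > t}\geq c\,e^{-Rt}$ for a constant $c>0$, giving the sharper $\liminf_t[\log\P{L>t}+Rt] > -\infty$. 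For~\eqref{eq:Lliminfd} I will replace the single exponential $E_{I^*}$ by the full residual lifetime: by Lemma~\ref{lemma:stochdomexp}\eqref{eq:remainstochdom} with $\lambda = \overline d + \eps$, the sum $\sum_{i=k}^{k+D_k}E_i$ stochastically dominates $\mathrm{Exp}(\overline d + \eps)$ on $\{D \geq k\}$; hence $\P{L > t}\geq \P{D\geq k}\,e^{-(\overline d + \eps)\,t}$, and sending $\eps \downarrow 0$ yields $\liminf \geq -\overline d$. The lower-bound half of~\eqref{eq:Llimdstar} is the special case $\overline d = d^*$.

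\textit{Upper bounds without polynomial correction.} For the upper halves of~\eqref{eq:LlimR} and~\eqref{eq:Llimdstar}, the plan is to invoke Lemma~\ref{lemma:stochdomexp}\eqref{eq:remainstochdom} in the other direction. Fixing $\eps,\delta > 0$ and picking $I_0$ with $d(i) \geq R - \eps$ (respectively $d(i) \geq d^* - \eps$) for $i \geq I_0$: on $\{D \geq I_0\}$, $L = S_{I_0} + \sum_{i=I_0}^{I_0+D_{I_0}}E_i$ and the second piece is stochastically dominated by an independent $E' \sim \mathrm{Exp}(R-\eps)$ (respectively $\mathrm{Exp}(d^*-\eps)$); on $\{D < I_0\}$, trivially $L \leq S_{I_0}$. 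In either case $L \preceq S_{I_0} + E'$ with $S_{I_0}$ and $E'$ independent. An exponential Markov bound at $\lambda = R - \eps - \delta$ (resp.\ $\lambda = d^* - \eps - \delta$, strictly below $R$ because $d^* < R$) then yields $\P{L > t} \leq e^{-\lambda t}\,\E{e^{\lambda S_{I_0}}}\,\E{e^{\lambda E'}}$ with both MGFs finite, since $\lambda$ lies strictly below every rate $b(i)+d(i)$ for $i < I_0$ and below the rate of $E'$. Sending $\eps,\delta\downarrow 0$ concludes (the case $d^*=0$ reduces to the trivial $\P{L>t}\leq 1$).

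\textit{Polynomial upper bound in~\eqref{eq:Lprecise} --- the hard part.} The delicate step is $\P{L > t} \leq C\,t^{C'}\,e^{-Rt}$. I would split the index set along $\cI^* := \{i \in \N_0 : b(i)+d(i) = R\}$, which is finite under $\liminf_i d(i) > R$. Writing $L \leq A + B$ with $A := \sum_{i \in \cI^*} E_i \sim \mathrm{Gamma}(|\cI^*|, R)$ and $B := \sum_{i \notin \cI^*,\, i \leq D} E_i$, one observes that $A$ and $B$ are \emph{independent} because they depend on disjoint sub-families of $(E_i)$ and because $D$ is independent of $(E_i)$. Assuming $\E{e^{\lambda_0 B}} < \infty$ for some $\lambda_0 > R$, exponential Markov at $\lambda = R - 1/t$ together with $\E{e^{\lambda A}} = (R/(R-\lambda))^{|\cI^*|} = (Rt)^{|\cI^*|}$ would deliver
\begin{equation*}
	\P{L > t}\;\leq\;e^{-\lambda t}\,\E{e^{\lambda A}}\,\E{e^{\lambda B}}\;\leq\;C\,t^{|\cI^*|}\,e^{-Rt},
\end{equation*}
so that $\limsup_t \bigl(\log\P{L>t}+Rt\bigr)/\log t \leq |\cI^*| < \infty$, as required. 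The hard part will be establishing $\E{e^{\lambda_0 B}} < \infty$ without any further assumption on the growth of $b$. Writing the MGF out as $\sum_k \P{D=k}\prod_{i \notin \cI^*,\, i \leq k}\tfrac{b(i)+d(i)}{b(i)+d(i)-\lambda_0}$ and merging it with $\P{D=k}$ via the telescoping identity $\tfrac{b(i)}{b(i)+d(i)}\cdot\tfrac{b(i)+d(i)}{b(i)+d(i)-\lambda_0} = \tfrac{b(i)}{b(i)+d(i)-\lambda_0}$ reduces summability to controlling $\prod_{i=I_0}^{k-1}\tfrac{b(i)}{b(i)+d(i)-\lambda_0}$; for $\lambda_0 \in (R,R+\delta)$ and $i \geq I_0$ one has $d(i)-\lambda_0 \geq \delta/2 > 0$, so each factor is strictly below $1$, and a short case analysis on the growth of $b$ (bounded, linear, or super-linear) combined with the residual weight $d(k)/(b(k)+d(k)-\lambda_0)=O(1/b(k))$ shows that the series always converges, completing the proof.
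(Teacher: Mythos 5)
Your lower bounds and the crude upper bounds for~\eqref{eq:LlimR} and~\eqref{eq:Llimdstar} are essentially the paper's argument: the ``stay-put'' bound is exactly the paper's lower bound, and dominating $L$ by $S_{I_0}$ plus an independent exponential of rate $R-\eps$ (resp.\ $d^*-\eps$) is just the undilated form of the tilted bound the paper gets from Lemma~\ref{lemma:survdeg}. Where you genuinely diverge is the polynomial upper bound in~\eqref{eq:Lprecise}. The paper truncates at a deterministic index $I$ beyond which $d(i)>R-1/t$, applies Lemma~\ref{lemma:survdeg} with the $t$-dependent tilt $x=R-1/t$, and pays for this with two polynomial factors: $(Rt)^{|[I-1]_R|}$ from the exponential moment of the finite head $S_I$ and $(Rt)^{I-1}$ from the separate Gamma-tail term $\P{S_I>t}$, giving exponent at most $I$. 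You instead isolate only the minimal-rate coordinates $\cI^*$, write $L\leq A+B$ with $A\sim\mathrm{Gamma}(|\cI^*|,R)$ independent of $B$, and push the whole of $B$ through a Chernoff bound at a fixed $\lambda_0>R$; this needs no $t$-dependent index, produces the cleaner exponent $|\cI^*|\leq|[I-1]_R|$, and buys you a genuinely one-shot Markov inequality, at the price of having to prove finiteness of the exponential moment of the infinite random sum $B$ — precisely the statement the paper's truncation is designed to avoid.

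That finiteness claim is true, but your sketch of it has one inaccurate assertion and one vague step. The claim ``$d(k)/(b(k)+d(k)-\lambda_0)=\cO(1/b(k))$'' fails when $b(k)$ and $d(k)$ both diverge, and the proposed ``case analysis on the growth of $b$'' is not needed. After the merging identity, the tail of the series is
\be
\sum_{k\geq K_0}\frac{d(k)}{b(k)+d(k)-\lambda_0}\prod_{\substack{i=K_0\\ i\notin\cI^*}}^{k-1}\frac{b(i)}{b(i)+d(i)-\lambda_0},
\ee
where $K_0$ is chosen so that $K_0>\max\cI^*$ and $d(i)\geq\lambda_0+\delta$ for $i\geq K_0$ (possible since $\lambda_0<\liminf_i d(i)$). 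Writing $q_i:=b(i)/(b(i)+d(i)-\lambda_0)<1$ and using that $x\mapsto x/(x-\lambda_0)$ is decreasing on $(\lambda_0,\infty)$, one has $\frac{d(k)}{b(k)+d(k)-\lambda_0}=\frac{d(k)}{d(k)-\lambda_0}(1-q_k)\leq\frac{\lambda_0+\delta}{\delta}(1-q_k)$, so the series telescopes and is bounded by $(\lambda_0+\delta)/\delta$, uniformly and with no assumption on the growth of $b$. With that replacement your argument is complete and correct; also note that your hypotheses do force $\cI^*\neq\emptyset$ and finite, and that $\liminf_i d(i)>R$ together with~\eqref{ass:A1} forces~\eqref{ass:A2}, so $D<\infty$ almost surely and $B$ is well defined — worth stating explicitly, since the lemma itself only assumes~\eqref{ass:A1}.
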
 

\begin{remark}
	Whilst~\eqref{eq:LlimR} holds in a slightly more general setting, we need the more precise results in~\eqref{eq:Lprecise} in later analysis. We included both results, as their proof are relatively short.	Furthermore, the bounds in~\eqref{eq:Lprecise} are optimal, in the sense that the rescaling is of the correct order in both the liminf and limsup.\ensymboldremark
\end{remark} 

In the case that $\lim_{i\to\infty}d(i)=d^*<R$, we can also provide higher-order asymptotic behaviour. 

\begin{lemma}[Third-order asymptotic lifetime distribution]\label{lemma:lifetime2nd}
	Suppose that $b$ and $d$ are such that Assumptions~\eqref{ass:A1} and~\eqref{ass:A2} are satisfied, and recall $R$ from~\eqref{eq:R}. Also suppose that $b$ diverges and that $\lim_{i\to\infty}d(i)=d^*\in[0,R)$. Recall the functions $\cK$ and $\cK_\alpha$ from~\eqref{eq:Ks}, and suppose that Assumptions~\hyperref[ass:K]{$\cK$} and~\hyperref[ass:Kalpha]{$\cK_\alpha$} and Assumption~\eqref{ass:varphi2} are satisfied. Then, 
	\be \label{eq:Llim2nd}
	\lim_{t\to\infty}\frac{\log(\P{L>t})+d^*t+\cK_\alpha(t)}{\cK(t)}=0.
	\ee 
\end{lemma}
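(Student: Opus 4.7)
The plan hinges on the identity
\begin{equation*}
\P{L > t} \;=\; \sum_{k=0}^\infty \P{D = k}\, \P{S_{k+1} > t},
\end{equation*}
which follows from $L = S_{D+1}$ together with the independence of $D$ (a function of the Bernoullis $(B_i)_{i\in\N_0}$ only) from the partial sums $(S_j)_{j\in\N}$ (functions of the exponentials $(E_i)_{i\in\N_0}$ only). Guided by the heuristic in Section~\ref{sec:heur}, I organise both bounds around the optimal index $k^* := \lfloor \varphi_1^{-1}(t - d^*\cK(t))\rfloor$ (which is just $\lfloor\varphi_1^{-1}(t)\rfloor$ when $d^*=0$).

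For the \textbf{lower bound} I exploit that $S_{k+1}$ is pathwise nondecreasing in $k$, so $\P{S_{k+1} > t}$ is nondecreasing in $k$ and therefore $\P{L > t} \geq \P{S_{k^*+1} > t}\, \P{D \geq k^*}$. Lemma~\ref{lemma:Dtail} combined with $\rho_2(k) = ((d^*)^2 + o(1))\varphi_2(k)$ from Lemma~\ref{lemma:func}(3), the identity $\rho_1 = d^*\varphi_1 + \alpha$, the definition of $\cK_\alpha$, and Lemma~\ref{lemma:func}(4) applied at the shifted argument $t - d^*\cK(t)$ gives
\begin{equation*}
\log\P{D\geq k^*} \;=\; -d^*t \;-\; \cK_\alpha(t) \;+\; \tfrac{(d^*)^2}{2}\cK(t) \;+\; o(\cK(t)).
\end{equation*}
By Assumption~\hyperref[ass:K]{$\cK$}, $\varphi_2(k^*+1) = \cK(t)(1+o(1))$ and $t - \varphi_1(k^*+1) = d^*\cK(t) + o(\cK(t))$, so Lemma~\ref{lemma:mdp} (with $z = d^*$) yields $\log\P{S_{k^*+1} > t} = -\tfrac{(d^*)^2}{2}\cK(t) + o(\cK(t))$. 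The two $\tfrac{(d^*)^2}{2}\cK(t)$ contributions cancel exactly, leaving $\log\P{L>t} \geq -d^*t - \cK_\alpha(t) + o(\cK(t))$, as required.

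For the \textbf{upper bound} I bound each summand and sum. For $k$ in the moderate-deviation range around $k^*$, the Chernoff bound with the optimised parameter $\lambda = (t-\varphi_1(k+1))/\varphi_2(k+1)$ (which lies in $[0,R)$ because $d^*<R$) gives $\log\P{S_{k+1}>t} \leq -\tfrac{(t-\varphi_1(k+1))^2}{2\varphi_2(k+1)} + o(\cK(t))$. Combining with the upper bound in Lemma~\ref{lemma:Dtail}, Lemma~\ref{lemma:func}(3),(4), and Assumptions~\hyperref[ass:K]{$\cK$} and \hyperref[ass:Kalpha]{$\cK_\alpha$}, parametrising $v := \varphi_1(k) - t$, and completing the square produces the Gaussian-type bound
\begin{equation*}
\log\!\bigl(\P{D=k}\,\P{S_{k+1}>t}\bigr) \;\leq\; -d^*t \;-\; \cK_\alpha(t) \;-\; \tfrac{(v + d^*\cK(t))^2}{2\cK(t)} \;+\; o(\cK(t)).
\end{equation*}
The sum is then dominated by the Gaussian peak at $v = -d^*\cK(t)$ of width $O(\sqrt{\cK(t)})$ in $v$; translating to $k$, this is at most a polynomial-in-$t$ number of indices whose logarithm is $o(\cK(t))$.

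The \textbf{main obstacle} is controlling the tail of the sum outside the moderate-deviation range. For $k \gg \varphi_1^{-1}(t)$, the rapid growth of $\rho_1$ (hence decay of $\P{D=k}$) dominates and the contribution is $o(e^{-d^*t-\cK_\alpha(t)})$. For $k$ much smaller than $\varphi_1^{-1}(t)$, the optimised Chernoff exponent exceeds $R$ and is no longer admissible; here I fall back on Chernoff with a fixed $\lambda \in (d^*, R)$, giving $\log\P{S_{k+1}>t} \leq -\lambda(t-\varphi_1(k+1)) + O(\cK(t))$, so that $\log(\P{D=k}\P{S_{k+1}>t}) \leq -\lambda t + (\lambda - d^*)\varphi_1(k) - \cK_\alpha(\varphi_1(k)) + O(\cK(t))$, which is maximised at the upper end of the range and is strictly below $-d^*t$ by a margin of order $t$ since $\lambda > d^*$. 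Finally, the case $d^* = 0$ is a degenerate simplification: by Lemma~\ref{lemma:func}(3) one has $\rho_2(k^*) = o(\cK(t))$, the deviation in Lemma~\ref{lemma:mdp} vanishes (so $\P{S_{k^*+1}>t}$ converges to a constant in $(0,1)$), and both bounds reduce immediately to $\log\P{L>t} = -\cK_\alpha(t) + o(\cK(t))$.
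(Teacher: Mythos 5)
Your lower bound is, up to plugging in the optimal parameter directly, the paper's own argument: restrict to $\{D\geq k^*\}$ with $k^*\approx\varphi_1^{-1}(t-d^*\cK(t))$, use Lemma~\ref{lemma:Dtail} together with Lemma~\ref{lemma:func} and Assumption~\hyperref[ass:K]{$\cK$} for $\P{D\geq k^*}$, and the upper-tail moderate deviation of Lemma~\ref{lemma:mdp} for $\P{S_{k^*+1}>t}$; this part is fine (the paper optimises over a free $x$ and finds $x=d^*$). Your per-index upper bound in the central range, $\log\big(\P{D=k}\P{S_{k+1}>t}\big)\leq -d^*t-\cK_\alpha(t)-\tfrac{(v+d^*\cK(t))^2}{2\cK(t)}+o(\cK(t))$ with $v=\varphi_1(k)-t$, is also essentially correct where the tilted bound is admissible.

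The genuine gap is the passage from per-index bounds to $\P{L>t}$. You sum by count-times-maximum, asserting the peak contains ``at most a polynomial-in-$t$ number of indices whose logarithm is $o(\cK(t))$''. Neither half of this is available under the lemma's hypotheses: no upper bound on $b$ is assumed, so the increments $\varphi_1(k+1)-\varphi_1(k)=1/(b(k)+d(k))$ can be arbitrarily small and $\varphi_1^{-1}(t)$ can grow faster than any polynomial (one can construct $b$ diverging, with $\varphi_2\to\infty$, $d\equiv d^*<R$ and Assumption~\hyperref[ass:K]{$\cK$} satisfied, for which $\varphi_1^{-1}(t)$ is doubly exponential in $t$ while $\cK(t)\asymp\log t$); and even a polynomial count would not help, since Assumption~\eqref{ass:varphi2} only gives $\cK(t)\to\infty$ at an unspecified rate, so $\log t$ is in general not $o(\cK(t))$. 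The same objection hits your far range, where moreover the claim that the fixed-$\lambda$ bound sits below $-d^*t$ ``by a margin of order $t$'' is false near the junction $t-\varphi_1(k)\asymp\cK(t)$: there the margin is only $\Theta(\cK(t))$. The cure is to group indices into blocks and telescope $\sum_{k\in\mathrm{block}}\P{D=k}\leq\P{D\geq k_{\min}}$ before multiplying by the worst $\P{S_\cdot>t}$ in the block, and then to sum the block bounds using their geometric decay in the block index; this is exactly the paper's decomposition $\P{L>t}\leq\P{D\geq k_0}+\P{S_{k_M}>t}+\sum_i\P{D\geq k_{i+1},S_{k_i}>t}$ over a mesh of spacing $\zeta\cK(t)$, where the decay comes from a fixed Chernoff tilt $\theta\in(d^*,R)$ taken close to $d^*$ (never close to $R$, which also sidesteps the blow-up of the tilted moment generating function at indices with $b(i)+d(i)$ near $R$, an $O(\log t)$ term your optimised tilt would have to confront), together with a separate argument (the paper's case $(b)$, via $\cK_{\wt\alpha}$ and the auxiliary function $q$) for blocks far from the peak where $\cK_\alpha$ at the shifted argument is no longer $\cK_\alpha(t)+o(\cK(t))$. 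Your quadratic bound is morally the right block bound, but as written the summation step is unjustified and its stated justification is false in general; a minor additional point is that in the case $d^*=0$ the claim that $\P{S_{k^*+1}>t}$ converges to a constant is an unproved CLT statement, though a one-sided moderate-deviation bound suffices there.
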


Finally, we consider the behaviour of the lifetime when Assumption~\eqref{ass:A1} is satisfied but Assumption~\eqref{ass:A2} is not. This is a direct application of~\eqref{eq:infremlife} in Lemma~\ref{lemma:stochdomexp} when $k=0$.

\begin{corollary}[Infinite lifetime]\label{cor:inflifetime}
	Suppose that $b$ and $d$ are such that Assumption~\eqref{ass:A1} is satisfied, but Assumption~\eqref{ass:A2} is not. Then, $\P{L=\infty}>0$. 
\end{corollary}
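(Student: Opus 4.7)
The plan is to apply the last assertion of Lemma~\ref{lemma:stochdomexp}, namely equation~\eqref{eq:infremlife}, at the base case $k=0$. Recall from~\eqref{eq:lifetime} that $L = S_{D+1} = \sum_{i=0}^{D} E_i$. Setting $k=0$ in~\eqref{eq:Dk}, the conditioning event $\{D \geq 0\}$ has full probability, so $D_0 = D$ almost surely and the sum $\sum_{i=0}^{0+D_0} E_i$ is nothing other than $L$. Hence~\eqref{eq:infremlife} at $k=0$ reads $\P{L=\infty}>0$, which is exactly the claim, and the hypotheses of Lemma~\ref{lemma:stochdomexp} used in that bullet (Assumption~\eqref{ass:A1} holds, Assumption~\eqref{ass:A2} fails) are precisely those assumed in the corollary.

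An alternative, more transparent route bypasses Lemma~\ref{lemma:stochdomexp} and uses only ingredients already collected. By Lemma~\ref{lemma:Dlifedistr}, the failure of Assumption~\eqref{ass:A2} is equivalent to $\P{D=\infty}>0$. By the short Laplace-transform computation displayed in~\eqref{eq:Sinf}, Assumption~\eqref{ass:A1} forces $S_\infty = \infty$ almost surely. Since $D$ is independent of the $(E_i)_{i\in\N_0}$, one obtains
\[
\P{L=\infty} \;\geq\; \P{D=\infty,\, S_\infty = \infty} \;=\; \P{D=\infty}\cdot 1 \;>\; 0,
\]
because on the event $\{D=\infty\}$ one has $L = S_{D+1} = S_\infty = \infty$ a.s.

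There is no real obstacle: both hypotheses of the corollary match precisely what the cited results require, and the argument amounts to identifying $L$ with $\sum_{i=0}^{D}E_i$ and reading off the corresponding statement. I would present the proof via the first route, as suggested by the authors, since it keeps the exposition uniform with the preceding remainder-lifetime analysis, but I would note the second route in passing because it makes transparent \emph{which} event is responsible for the positive mass at $\infty$, namely $\{D=\infty\}$.
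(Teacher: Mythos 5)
Your proposal is correct and its primary route is exactly the paper's proof: the corollary is stated there as a direct application of~\eqref{eq:infremlife} in Lemma~\ref{lemma:stochdomexp} with $k=0$, identifying $\sum_{i=0}^{D_0}E_i$ with $L=S_{D+1}$. Your alternative route via Lemma~\ref{lemma:Dlifedistr} and~\eqref{eq:Sinf} is also sound, but it essentially reproduces the argument the paper already uses to prove~\eqref{eq:infremlife} itself, so it adds no new content.
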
 

Lemma~\ref{lemma:lifetime} provides several general results in case Assumption~\eqref{ass:A1} holds. The `rich die young' and `rich are old' regimes correspond to the results in~\eqref{eq:LlimR} and~\eqref{eq:Lprecise}, and the results in~\eqref{eq:Lliminfd} and~\eqref{eq:Llimdstar}, respectively, and we make the heuristic explanation of~\eqref{eq:lifetimeconv} in Section~\ref{sec:heur} precise in this Lemma. The result in~\eqref{eq:Llimdstar} is then strengthened to a third order asymptotic expansion under additional assumptions in Lemma~\ref{lemma:lifetime2nd}. Here, Assumption~\eqref{ass:varphi2} is crucial, whereas Assumptions~\hyperref[ass:K]{$\cK$} and~\hyperref[ass:Kalpha]{$\cK_\alpha$} are used for technical reasons. This precise result is necessary to study the behaviour of $O_t^\cont$, among others. Finally, Corollary~\ref{cor:inflifetime} provides necessary and sufficient conditions for individuals to have an infinite lifetime with positive probability, which is used in the proof of Theorem~\ref{thrm:asPA}.

We now prove the two lemmas. 

\begin{proof}[Proof of Lemma~\ref{lemma:lifetime}]		
	We start by proving~\eqref{eq:LlimR} and~\eqref{eq:Lprecise}, and first prove a general lower bound. Suppose there exists $I\in\N_0$ such that $b(I)+d(I)=R$.  Since $L\overset \dd= S_{D+1}$ by Corollary~\ref{cor:deglifedistr}, 
	\be 
	\P{L>t}=\P{S_{D+1}>t}\geq \P{D\geq I-1,S_I>t}\geq \P{D\geq I-1}\P{E_I>t}=\P{D\geq I-1}\e^{-Rt}. 
	\ee 
	Note that $\P{D\geq I-1}$ has strictly positive probability, since $b(i)>0$ for all $i\in\N_0$. Hence, there exists $C>0$ such that for all $t\geq 0$, 
	\be \label{eq:preciseLB}
	\log(\P{L>t})+Rt\geq -C. 
	\ee 		
	When $\liminf_{i\to\infty}d(i)>R$, the existence of such an index $I$ is guaranteed, which yields the first bound in~\eqref{eq:Lprecise}. The lower bound in~\eqref{eq:preciseLB} also implies that
	\be \label{eq:Lliminf}
	\liminf_{t\to\infty}\frac1t\log\P{L>t}\geq -R,
	\ee 
	which yields the desired lower bound for~\eqref{eq:LlimR} in the case the index $I$ exists.		
	
	Now suppose such an index $I$ does not exist. It follows that $b(j)+d(j)>R:=\inf_{i\in\N_0}(b(i)+d(i))$ for all $j\in\N_0$, so that $\liminf_{i\to\infty}b(i)+d(i)=R$ must hold.  Hence, for any $\eps>0$ there exists $I'\in\N_0$ such that $b(I')+d(I')\leq R+\eps$. Then, by the same argument,
	\be 
	\P{L>t} \geq \P{D\geq I'-1}\P{E_{I'}>t}\geq \P{D\geq I'-1}\e^{-(R+\eps)t}. 
	\ee 
	Again, the probability on the right-hand side is strictly positive, so that 
	\be
	\liminf_{t\to\infty}\frac1t\log\P{L>t}\geq -(R+\eps).
	\ee 
	As $\eps$ is arbitrary, the desired lower bound follows. We remark that the bound in~\eqref{eq:Lliminf} is thus true in general and that we have not used the assumption that $\liminf_{i\to\infty}d(i)\geq R$ for the proof (only for the stronger bound in~\eqref{eq:preciseLB} do we use that $\liminf_{i\to\infty}d(i)>R$), as it is indeed not necessary. Though true in general, the lower bound in~\eqref{eq:Lliminf} need not be sharp in case this assumption is not met. The assumption \emph{is} necessary for the upper bound, which we prove now.
	
	For an upper bound, let $I\in\N_0$ be such that $d(i)>R-\eps$ for all $i\geq I$, which exists since $\liminf_{i\to\infty}d(i)\geq R$. Then,
	\be \ba 
	\P{L>t}&\leq \P{S_{I}>t}+\P{S_{I}\leq t, S_{D+1}>t}=  \P{S_{I}>t}+\P{D\geq I,S_{I}\leq t, S_{D+1}>t}.
	\ea \ee 
	We start by bounding the first probability on the right-hand side. Without loss of generality we can assume that $I\neq0$. As $b(i)+d(i)\geq R$ for all $i\in\N_0$,  let $G(I,R)$ denote a sum of  $I$ i.i.d.\ exponential random variables with rate $R$, to bound
	\be 
	\P{S_{I}>t}\leq \P{G(I,R)>t}=\int_{t}^\infty \frac{R^{I}}{(I-1)!} s^{I-1}\e^{-R s}\, \dd s=\frac{1+o(1)}{(I-1)!}(Rt)^{I-1}\e^{-Rt}.
	\ee 
	We then bound the second probability on the right-hand side. By the choice of $I$ and using Lemma~\ref{lemma:survdeg}, we arrive at the upper bound
	\be  
	\P{D\geq I, S_{I}\leq t, S_{D+1}>t}\leq  \e^{-(R-\eps)t}\E{\e^{(R-\eps)S_{I}}}. 
	\ee 
	Combined, we thus arrive at
	\be \label{eq:Lub}
	\P{L>t}\leq \frac{1+o(1)}{(I-1)!}(Rt)^{I-1}\e^{-Rt}+\e^{-(R-\eps)t}\E{\e^{(R-\eps)S_{I}}}. 
	\ee 
	By the definition of $R$ and since $I$ is finite, the expected value equals a constant $C>0$, independent of $t$. We thus conclude that 
	\be \label{eq:Llimsup}
	\limsup_{t\to\infty} \frac 1t\log \P{L>t}\leq -(R-\eps), 
	\ee 
	Since $\eps$ is arbitrary, combining this with~\eqref{eq:Lliminf} yields~\eqref{eq:LlimR}. 
	
	We then prove the second bound in~\eqref{eq:Lprecise}. As we assume $\liminf_{i\to\infty}d(i)>R$, we observe that we can choose $\eps=1/t$ to depend on $t$ and that the index $I\in\N_0$ such that $b(i)+d(i)>R-\eps=R-1/t$ for all $i\geq I$ still exists and is \emph{independent} of $t$. Then, the upper bound in~\eqref{eq:Lub} changes to
	\be \ba 
	\P{L>t}&\leq\frac{1+o(1)}{(I-1)!}(Rt)^{I-1}\e^{-Rt}+\e^{-Rt+1}\E{\e^{(R-1/t)S_{I}}}\\ 
	&=\frac{1+o(1)}{(I-1)!}(Rt)^{I-1}\e^{-Rt}+\e^{-Rt+1}(Rt)^{|[I-1]_R|}\prod_{\substack{i=0\\ i\not\in [I-1]_R}}^{I-1}\frac{b(i)+d(i)}{b(i)+d(i)-R+1/t},
	\ea \ee 
	where 
	\be\label{eq:I-1R}
	[I-1]_R:=\{i\in\{0,\ldots, I-1\}: b(i)+d(i)=R\}.
	\ee 
	The product on the right-hand side can be bounded from above by a constant $C>0$, since $\liminf_{i\to\infty} d(i)>R$ and hence 
	\be\label{eq:largerRinf} 
	\inf\{b(i)+d(i): i\in \N_0\setminus [I-1]_R\}>R.
	\ee
	As $|[I-1]_R|\leq I$, we thus arrive at the upper bound
	\be 
	\limsup_{t\to\infty}\frac{\log(\P{L>t})+Rt}{\log t}\leq I, 
	\ee 
	which concludes the proof of~\eqref{eq:Lprecise}.
	
	We continue by proving~\eqref{eq:Lliminfd}. We let $I=I(\eps)\in \N$ be such that $d(i)\leq \overline d+\eps$ for all $i\geq I$. Then,
	\be 
	\P{L>t}\geq \P{S_{D+1}>t, D\geq I}=\mathbb P\bigg(S_{I}+\!\!\sum_{i=I}^{I+D_{I}}\!\!\!E_i>t\bigg)\P{D\geq I}\geq \mathbb P\bigg( \sum_{i=I}^{I+D_{I}}\!\!\!E_i>t\bigg)\P{D\geq I}.
	\ee 
	Since $b(i)>0$ for all $i\in\N_0$, we have $\P{D\geq I}>0$. Then, by Lemma~\ref{lemma:stochdomexp}, we can bound the other term on the right-hand side from below by $\exp(-(\overline d+\eps)t)$, so that 
	\be 
	\liminf_{t\to\infty}\frac1t \log\P{L>t}\geq -(\overline d+\eps). 
	\ee 
	As $\eps$ is arbitrary, we arrive at~\eqref{eq:Lliminfd}. 
	
	We finally prove~\eqref{eq:Llimdstar}. It remains to prove an upper bound, as~\eqref{eq:Lliminfd} now holds with $\overline d=d^*$. For $d^*=0$, an upper bound readily follows from the fact that $\log\P{L>t}\leq 0$. For $d^*>0$, the argument that leads to the upper bound in~\eqref{eq:Llimsup} is still valid when applying the Chernoff bound with a parameter $\theta\in(0,d^*-\eps)$ and any $\eps\in(0,  d^* )$, in the sense that there exists  $I\in\N_0$ such that $d(i)>d^*-\eps$ for all $i\geq I$. The remainder of the argument can be repeated with this $I$ and by substituting $d^*-\eps$ for $R-\eps$. Hence, in this case, 
	\be \label{eq:limsupdstar}
	\limsup_{t\to\infty}\frac1t \log \P{L>t}\leq- d^*.
	\ee 
	Combined with the lower bound in~\eqref{eq:Lliminfd}, this proves~\eqref{eq:Llimdstar} and concludes the proof.
\end{proof} 

\begin{proof}[Proof of Lemma~\ref{lemma:lifetime2nd}]
	\textbf{Lower bound. } We fix $x>0$ and set $k:=\ceil{\varphi_1^{-1}(t-x\cK(t))}$. If $\rho_2$ tends to infinity, by Lemma~\ref{lemma:Dtail},
	\be \label{eq:Llb}
	\P{L>t}\geq \P{D\geq k}\P{S_k>t}= \e^{-\rho_1(k)-(\frac12+o(1))\rho_2(k)}\P{S_k>t}. 
	\ee 
	In case $\rho_2$ converges instead, then $\rho_2(k)=o(\varphi_2(k))$ since $\varphi_2$ tends to infinity. Hence, we can use the lower bound in~\eqref{eq:Llb} with $\rho_2(k)$ replaced by $o(\varphi_2(k))$. Now, by the choice of $k$ and using Assumption~\hyperref[ass:K]{$\cK$} and $(a)$ in Lemma~\ref{lemma:func}, for any $\eps>0$ and all $t$ sufficiently large, 
	\be 
	t\leq \varphi_1(k)+x\cK(t)=\varphi_1(k)+x\varphi_2(\varphi_1^{-1}(t))\leq \varphi_1(k)+(x+\eps/3)\varphi_2(k).
	\ee 
	Hence,
	\be \label{eq:Skbound}
	\P{S_k>t}\geq \P{S_k>\varphi_1(k)+(x+\tfrac\eps3)\varphi_2(k)}\geq \exp\big(-(\tfrac12x^2+\eps)\varphi_2(k)\big),
	\ee 
	where the last step follows from Lemma~\ref{lemma:mdp} for $\eps$ small and $k$ sufficiently large. Since $d$ converges to $d^*$, we have $\rho_2(k)=( (d^*)^2+o(1))\varphi_2(k)$ by $(c)$ in Lemma~\ref{lemma:func}. Combined with~\eqref{eq:Skbound} in~\eqref{eq:Llb}, we obtain the lower bound
	\be 
	\P{L>t}\geq \exp\big(-\rho_1(k)-\tfrac12 (x^2+(d^*)^2+2\eps+o(1))\varphi_2(k)\big).
	\ee 
	We now write $\rho_1(k)=d^*\varphi_1(k)+\alpha(k)$ and use that, since $k=\ceil{\varphi_1^{-1}(t-x\cK(t))}$, it follows that $|\alpha(k)-\cK_\alpha(t)|=o(\cK(t))$ by Assumption~\hyperref[ass:Kalpha]{$\cK_\alpha$} and that $\cK(t)=(1+o(1))\varphi_2(k)$ by Assumption~\hyperref[ass:K]{$\cK$} and $(a)$ in Lemma~\ref{lemma:func}. This yields, by the choice of $k$,
	\be \ba 
	\exp{}&\big(-d^*\varphi_1(k)-\cK_\alpha(t)-\tfrac12 (x^2+(d^*)^2+2\eps+o(1))\cK(t)\big)\\ 
	&= \exp\big(-d^*t- \cK_\alpha(t)-\big[\tfrac12(x-d^*)^2+\eps+o(1)\big]\cK(t)\big)
	\ea \ee 
	We maximise the lower bound by choosing $x=d^*$, so that we arrive at 		
	\be \label{eq:Ldzero2nd}
	\liminf_{t\to\infty} \frac{\log(\P{L>t})+d^*t+\cK_\alpha(t)}{\cK(t)}\geq -\eps. 
	\ee 
	As $\eps$ is arbitrarily, this yields the desired lower bound. 
	
	\textbf{Upper bound. } We fix $\eps,\zeta>0$ small and define
	\be 
	M:=\Big\lceil \frac1\zeta \Big(\frac{(1-\eps)t}{\cK(t)}\Big)\Big\rceil, \quad x_i=i\zeta, \quad \text{and}\quad k_i:=\lceil \varphi_1^{-1}(t-x_i\cK(t))\rceil, 
	\ee  
	where $i\in\{0,1,\ldots, M\}$. Then, we partition $[0,\infty)$ into the intervals $[k_{i+1},k_i)$ for $i\in\{0,\ldots ,M\}$ and $[k_0,\infty)$ (with $k_{M+1}:=0$) to obtain the upper bound
	\be \label{eq:Lubsplit}
	\P{L>t}\leq \P{D\geq k_0}+\P{S_{k_M}>t}+\sum_{i=0}^{M-1}\P{D\geq k_{i+1},S_{k_i}>t}.
	\ee 
	We bound each term separately. 
	
	\textbf{First term. } By using Lemma~\ref{lemma:Dtail}, we arrive at 
	\be 
	\P{D\geq k_0}\leq \e^{-\rho_1(k_0)-\tfrac12\rho_2(k_0)}. 
	\ee 
	We use that $\rho_1$ and $\rho_2$ are increasing, the definition of $\alpha$ in~\eqref{eq:alpha}, and the fact that $d$ converges to $d^*$, so that $\rho_2(k)=((d^*)^2+o(1))\varphi_2(k)$ by $(c)$ in Lemma~\ref{lemma:func}, to bound the exponent from above by 
	\be \label{eq:firstbound}
	\exp\big(-\rho_1(\varphi_1^{-1}(t))-\tfrac12\rho_2(\varphi_1^{-1}(t))\big)=\exp\big(-d^*t -\cK_\alpha(t)-(\tfrac12(d^*)^2+o(1))\cK(t)\big).
	\ee 
	\textbf{Second term. } A Chernoff bound with $\theta\in(d^*,R)$ such that $\theta(1-\eps)>d^*$ (which is possible for $\eps$ sufficiently small)  combined with the inequality $\log(1+x)\leq x-x^2/2+x^3/3$ yields
	\be \ba
	\P{S_{k_M}\!>t}&\leq \e^{-\theta t}\prod_{i=0}^{k_M-1}\frac{b(i)+d(i)}{b(i)+d(i)-\theta}\\
	&\leq \exp\bigg(\!\!-\theta t+\!\!\sum_{i=0}^{k_M-1}\!\bigg[\frac{\theta}{b(i)+d(i)-\theta}-\frac{1}{2}\Big(\frac{\theta}{b(i)+d(i)-\theta}\Big)^2+\frac{1}{3}\Big(\frac{\theta}{b(i)+d(i)-\theta}\Big)^3\bigg]\bigg).
	\ea \ee 
	Since
	\be \ba \label{eq:sumineq}
	\sum_{i=0}^{k_M-1}{}&\frac{\theta}{b(i)+d(i)-\theta}-\frac{\theta^2}{2}\sum_{i=0}^{k_M-1}\Big(\frac{1}{b(i)+d(i)-\theta}\Big)^2\\ 
	={}&\theta \varphi_1(k)+\frac{\theta^2}{2}\varphi_2(k)-\frac{\theta^4}{2}\sum_{i=0}^{k_M-1}\frac{1}{(b(i)+d(i))^2(b(i)+d(i)-\theta)^2}\leq \theta\varphi_1(k)+\frac{\theta^2}{2}\varphi_2(k), 
	\ea \ee 
	we arrive at the upper bound
	\be 
	\P{S_{k_M}>t}\leq \exp\big(-\theta t +\theta \varphi_1(k_M)+(\tfrac12\theta^2+o(1))\varphi_2(k_M)\big). 
	\ee 
	By the choice of $k_M\leq \varphi_1^{-1}(\eps t)\leq\varphi_1^{-1}(t)$ and the fact that $\varphi_2=o(\varphi_1)$ since $b$ diverges, and hence $\cK(t)=o(t)$, we arrive at the upper bound
	\be \label{eq:secondbound}
	\exp\big(-((1-\eps)\theta+o(1))t\big). 
	\ee 
	Since $\cK_\alpha(t)=o(t)$ by $(d)$ in Lemma~\ref{lemma:func}, it follows that this lower bound is much smaller than $\exp(-d^*t-\cK_\alpha(t))$.
	
	\textbf{Summands. } Combining the bounds of the first and second term in each of the summands in~\eqref{eq:Lubsplit} (with again $\theta\in(d^*,R)$), we arrive at 
	\be 
	\P{D\geq k_{i+1}, S_{k_i}>t}\leq \exp\big(-\rho_1(k_{i+1})-\tfrac12\rho_2(k_{i+1})-\theta t+\theta \varphi_1(k_i)+(\tfrac12 \theta^2+o(1))\varphi_2(k_i)\big).
	\ee 
	Again using that $\rho_1$ and $\rho_2$ are increasing and that $\varphi_1(k_i)=t-x_i\cK(t)+o(1)$ as $b$ diverges and $\varphi_2(k_{i+1})=(1+o(1))\varphi_2(k_i)=(1+o(1))\cK(t-x_i\cK(t))$ by Assumption~\hyperref[ass:K]{$\cK$} and $(a)$ in Lemma~\ref{lemma:func}, where the $o(1)$ is uniform in $i$, we obtain the upper bound 
	\be 
	\exp\big(-d^*t -\cK_\alpha(t-x_{i+1}\cK(t))+\big[\tfrac12 \theta^2-\tfrac12 (d^*)^2+o(1)\big]\cK(t-x_i\cK(t))-(\theta-d^*)x_i\cK(t)+\zeta d^*\cK(t)\big).
	\ee 
	We distinguish between two cases, for which we first introduce the following functions. We define 
	\be 
	\wt\alpha(k)=\sum_{i=0}^{k-1}\frac{|d(i)-d^*|}{b(i)+d(i)}\qquad \text{for }k\in\N_0,
	\ee 
	and extend $\wt\alpha$ to $\R_+$ by linear interpolation. It is clear that $\wt \alpha(x)\geq\alpha(x)$ for all $x\geq 0$. Moreover, we let $\cK_{\wt\alpha}(t):=\wt\alpha(\varphi_1^{-1}(t))$ and $q(t):=C_1(\cK_{\wt\alpha}(t)+\cK(t))/t$, where $C_1>2(\zeta(\theta-d^*)(1-\eps))^{-1}$ is a constant. We note that $\lim_{t\to\infty}q(t)=0$ since $d$ converges to $d^*$ and $b$ tends to infinity. We then have the following two cases:
	\be 
	(a)\quad i< \frac{(1-\eps)t}{\cK(t)}q(t),\qquad (b)\quad \frac{(1-\eps)t}{\cK(t)}q(t)\leq i\leq M.
	\ee 
	\textbf{Case (a). } As $\theta\in(d^*,R)$, we use that $x_i\geq 0$ and that $\cK$ is increasing to obtain the upper bound
	\be 
	\exp\big(-d^*t -\cK_\alpha(t-x_{i+1}\cK(t))+\big[\tfrac12 \theta^2-\tfrac12 (d^*)^2+o(1)\big]\cK(t)-(\theta-d^*)x_i\cK(t)+\zeta d^*\cK(t)\big).
	\ee 
	By the bound on $i$, it follows that $x_i\cK(t)=o(t)$. Hence, by  $(d)$ in Lemma~\ref{lemma:func} we have $\cK_\alpha(t-x_{i+1}\cK(t))=\cK_\alpha(t)+o(x_i\cK(t))$, where we use that $x_{i+1}=x_i+\zeta$ in the final step. As a result, we can rewrite the upper bound to obtain 
	\be 
	\exp\big(-d^*t -\cK_\alpha(t)+\big[\tfrac12 \theta^2-\tfrac12 (d^*)^2+o(1)\big]\cK(t)-(\theta-d^*+o(1))x_i\cK(t)+\zeta d^*\cK(t)\big).
	\ee 
	Finally, as $x_i$ grows linearly in $i$, we have for some large constant $C_2>0$, 
	\be 
	\sum_{i=0}^{\lfloor(1-\eps)tq(t)/\cK(t)\rfloor}\!\!\!\!\!\!\!\!\P{D\geq k_{i+1},S_{k+i}>t}\leq C_2\exp\big(-d^*t -\cK_\alpha(t)+\big[\tfrac12 \theta^2-\tfrac12 (d^*)^2+\zeta d^*+o(1)\big]\cK(t)\big).
	\ee 
	\textbf{Case (b). } With $\theta\in(d^*,R)$, since $-\cK_\alpha(t-x_{i+1}\cK(t))\leq \cK_{\wt\alpha}(t)$, and as $x_i\geq 0$, 
	\be \ba 
	\exp{}&\big(-d^*t -\cK_\alpha(t-x_{i+1}\cK(t))+\big[\tfrac12 \theta^2-\tfrac12 (d^*)^2+o(1)\big]\cK(t-x_i\cK(t))-(\theta-d^*)x_i\cK(t)+\zeta d^*\cK(t)\big)\\ 
	&\leq \exp\big(-d^*t +\cK_{\wt\alpha}(t)+\big[\tfrac12 \theta^2-\tfrac12 (d^*)^2-(\theta-d^*)x_i+\zeta d^*+o(1)\big]\cK(t)\big)
	\ea \ee 
	As a result, since $x_i=i\zeta $ increases linearly in $i$, 
	\be \ba 
	\sum_{i=\lceil(1-\eps)tq(t)/\cK(t)\rceil}^{M-1}\!\!\!\!\!\!\!\!\!\!\!\!\!\!{}&\P{D\geq k_{i+1}, S_{k_i}>t}\\ 
	\leq{}& \exp\big(-d^*t +\cK_{\wt\alpha}(t)-(\theta-d^*)\zeta (1-\eps) q(t)t+\big[\tfrac12\theta^2-\tfrac12(d^*)^2+\zeta d^*+o(1)\big]\cK(t)\big).
	\ea\ee 
	We now use the specific choice of the function $q$. For any $\theta\in(d^*,R)$ and since $\cK_{\wt \alpha}(t)\geq \cK_\alpha(t)$ and $C_1>2(\zeta(\theta-d^*)(1-\eps))^{-1}$, we thus arrive at 
	\be 
	\sum_{i=\lceil(1-\eps)tq(t)/\cK(t)\rceil}^{M-1}\!\!\!\!\!\!\!\!\!\!\!\!\!\!\P{D\geq k_{i+1},S_{k_i}>t}\leq \exp\big(-d^*t -\cK_\alpha(t)+\big[\tfrac12\theta^2-\tfrac12(d^*)^2+\zeta d^*-2+o(1)\big]\cK(t)\big). 
	\ee 
	We combine this with the bound in Case $(a)$ and the bounds for first and second term in~\eqref{eq:firstbound} and~\eqref{eq:secondbound}, respectively. We observe that the main contribution of these four bounds comes from Case $(a)$. Using all four bounds in~\eqref{eq:Lubsplit}, we finally obtain 
	\be 
	\limsup_{t\to\infty} \frac{\log(\P{L>t})-d^*+\cK_\alpha(t)}{\cK(t)}\leq \frac12\big(\theta^2-(d^*)^2\big)+\zeta d^*. 
	\ee 
	The upper bound can be made arbitrarily small by choosing $\zeta$ close to zero and $\theta$ close to $d^*$. Together with the lower bound in~\eqref{eq:Ldzero2nd}, we arrive at the desired result.
\end{proof} 

\subsection{Growth-rate of the branching process}

In this section we provide some results for the growth-rate of the branching process $\bp$.  We require a precise understanding of the growth-rate of the branching process to be able to analyse the quantities $O_t^\cont$ and $I_t^\cont$ in later Sections. We introduce the following notation. First, recall that $\cS$ denotes the event that the branching process survives and that
\be \label{eq:Ps} 
\Ps{\cdot}:=\P{\cdot\,|\,\cS} \quad\text{and}\quad \mathbb E_{\cS}[\cdot]:=\E{\cdot\,|\,\cS}
\ee 
denote the conditional probability measure and its corresponding expected value, under the event $\cS$. For $0\leq s<t<\infty$, we further define 
\be 
\cB(s,t):=\{u\in \cU_\infty: \sigma_u\in [s,t]\}
\ee 
as the set of individuals born in the time interval $[s,t]$. Also, recall $\cA_t^\cont$ from~\eqref{eq:At} as the set of individuals alive at time $t$, and $\cR$ from~\eqref{eq:D} as the point process that governs the production of offspring of individuals. We have the following results regarding the growth-rate of  $\cA^\cont_t$ and $\cB(s,t)$. The first is similar to the well-known Kesten-Stigum theorem and is a direct consequence of results for general CMJ branching processes in~\cite{Don72,Ner81}; the second uses a similar approach as the proof of~\cite[Lemma $7.12$]{BanBha21}.

\begin{proposition}[Growth rate of branching process conditionally on survival]\label{prop:growth}
	Suppose that $b$ and $d$ are such that Assumptions~\eqref{ass:A1} and \eqref{ass:C1} are satisfied, and recall $\cS$ from~\eqref{eq:S} as the event that the process $\mathrm{BP}$ survives. There exists a non-negative random variable $W$, such that 
	\be \label{eq:Aconv}
	\e^{-\lambda^*\!t} |\cA^\cont_t|\toas W. 
	\ee 
	Moreover, with
	\be 
	\widehat\cR^{\lambda^*}(t):=\int_0^t \e^{-\lambda^* u}\,\cR(\dd u), 
	\ee 
	the following are equivalent:
	\begin{enumerate}
		\item $\E{ \widehat\cR^{\lambda^*}(\infty)\log^+ \widehat\cR^{\lambda^*}(\infty)}<\infty$, 
		\item $\E{W}>0$,
		\item $\E{\e^{-\lambda^*\!t}|\cA^\cont_t|}\to \E{W}$ as $t\to\infty$, 
		\item $W>0$ almost surely on $\cS$. 
	\end{enumerate}
\end{proposition}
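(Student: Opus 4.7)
The plan is to identify $W$ with the limit of a suitable Malthusian-rescaled fundamental martingale and to derive the equivalences $(a)$--$(d)$ from the classical Kesten-Stigum dichotomy for super-critical CMJ processes, as developed in~\cite{Don72,Ner81}.

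For the almost-sure convergence~\eqref{eq:Aconv}, I would invoke Nerman's single-characteristic limit theorem (the companion to the ratio form quoted as Theorem~\ref{thrm:nerman}) applied to the characteristic $\chi_{\mathrm a}(t) = \mathbbm{1}_{\{0\leq t<L\}}$ from~\eqref{eq:char}, whose counting process is precisely $Z^{\chi_{\mathrm a}}_t = |\cA^\cont_t|$. The integrability hypothesis $\E{\sup_{t\geq 0}\e^{-\lambda t}\chi_{\mathrm a}(t)} < \infty$ for some $\lambda<\lambda^*$ is immediate because $\chi_{\mathrm a}\leq 1$, while Assumption~\eqref{ass:C1} supplies the Malthusian parameter with $\lambda^*>\underline\lambda$ required by Nerman's theorem. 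This yields a non-negative almost-sure limit $W$, which factorises as $W = \widehat{\chi}_{\mathrm a}(\lambda^*)\cdot Y_\infty$, where $Y_\infty$ is the almost-sure limit of the fundamental non-negative martingale associated with the reproduction point process $\cR$.

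For the equivalences~$(a)$--$(d)$, the object driving the dichotomy is the total discounted reproduction $\widehat\cR^{\lambda^*}(\infty)$ of a single ancestor, since the increments of the fundamental martingale across generations are governed by independent copies of this random variable. The implication $(a)\Leftrightarrow(b)$ is Doney's direct application of the Kesten-Stigum $X\log X$ criterion to this martingale; $(b)\Leftrightarrow(c)$ is the standard equivalence between uniform integrability of an almost-surely convergent sequence and convergence of its means; $(b)\Rightarrow(d)$ follows from the classical branching-process zero-one argument, which expresses $\{W=0\}$ via i.i.d.\ copies of $W$ attached to the root's children to deduce that $\P{W=0}\in\{\P{\cS^c},1\}$, excluding the value $1$ using $\E{W}>0$; and $(d)\Rightarrow(b)$ is trivial since $\P{\cS}>0$ by Assumption~\eqref{ass:C1}.

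The main obstacle is not conceptual but bookkeeping: one must check that the reproduction point process $\cR$ from~\eqref{eq:D} satisfies the regularity hypotheses of Nerman and Doney in the precise formulation required. The most delicate point is that $\lambda^*$ lies strictly inside the domain of analyticity of $\widehat\mu$, so that $\widehat\mu'(\lambda^*)$ is finite and non-zero; this is encoded exactly in the strict inequality $\lambda^*>\underline\lambda$ of Assumption~\eqref{ass:C1}, which simultaneously guarantees the well-posedness of the fundamental martingale and the non-triviality of its limit whenever the $X\log X$ moment condition $(a)$ holds. Once these hypotheses are verified, the proposition is a direct consequence of~\cite{Don72,Ner81}, as already noted immediately before the statement.
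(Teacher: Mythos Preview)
Your proposal is correct and matches the paper's approach exactly: the paper does not give a proof of this proposition at all but simply states that it is a direct result of \cite[Theorem~5.4]{Ner81} and \cite{Don72} (with \cite[Proposition~1.1]{Ner81} as a condensed reference). Your write-up in fact goes further than the paper by spelling out which parts of those references supply which implications and by checking the relevant hypotheses via Assumption~\eqref{ass:C1}.
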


Proposition~\ref{prop:growth} is a direct result of~\cite[Theorem $5.4$]{Ner81} and~\cite{Don72} (see also~\cite[Proposition $1.1$]{Ner81} for a condensed version). We have the following useful corollary. 

\begin{corollary}\label{cor:growth}
	Suppose that $b$ and $d$ are such that Assumptions~\eqref{ass:A1} and \eqref{ass:C1} are satisfied, and recall the event $\cS$ that the process $\mathrm{BP}$ survives from~\eqref{eq:S}. Let $r,s,u=r(t),s(t),u(t)\geq0$ be such that $s(t)-r(t)\to\infty$ and $u(t)\to\infty$ as $t\to\infty$. Then,  
	\begin{align}  
		\lim_{t\to\infty}\Ps{|\cB(r,s)|\leq \e^{\lambda^*\!s-u}}&=0,\label{eq:lbB}
		\intertext{and} 
		\lim_{t\to\infty}\P{|\cB(r,s)|\geq \e^{\lambda^*\!s+u}}&=0.\label{eq:ubB}
		\intertext{Finally, }
		\lim_{M\to\infty} \P{\sup_{t\geq 0} |\cB(0,t)|\e^{-\lambda^*\!t}\geq M}&=0,\label{eq:supB} 
		\intertext{and} 
		\lim_{M\to\infty} \Ps{\inf_{t\geq 0} |\cB(0,t)|\e^{-\lambda^*\!t}\leq 1/M}&=0.\label{eq:infB}
	\end{align}
\end{corollary}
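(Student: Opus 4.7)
The plan is to reduce all four statements to a single input: almost sure convergence of the rescaled birth count. Applying Theorem~\ref{thrm:nerman} with the characteristic $\chi_\mathrm{b}$ from~\eqref{eq:char}, which trivially satisfies $\E{\sup_{t\geq 0}\e^{-\lambda t}\chi_\mathrm{b}(t)}=1<\infty$ for any $\lambda\geq 0$, and combining with Proposition~\ref{prop:growth}, one obtains
\[
|\cB(0,t)|\,\e^{-\lambda^*\!t}=Z_t^{\chi_\mathrm{b}}\,\e^{-\lambda^*\!t}\toas \widehat\chi_\mathrm{b}(\lambda^*)\,W=:W_\mathrm{b},
\]
where $W_\mathrm{b}\geq 0$ a.s.\ and, under the standing integrability (one of the equivalent conditions of Proposition~\ref{prop:growth} must be in force so that the limit is non-degenerate), $W_\mathrm{b}>0$ almost surely on $\cS$.

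For~\eqref{eq:lbB}, I would decompose $|\cB(r,s)|=|\cB(0,s)|-|\cB(0,r)|$ and rescale to write
\[
|\cB(r,s)|\,\e^{-\lambda^*\!s}=|\cB(0,s)|\,\e^{-\lambda^*\!s}-\big(|\cB(0,r)|\,\e^{-\lambda^*\!r}\big)\,\e^{-\lambda^*(s-r)}.
\]
On $\cS$, the first term converges a.s.\ to $W_\mathrm{b}>0$, and the second vanishes a.s.\ because the almost sure convergence of $|\cB(0,\cdot)|\e^{-\lambda^*\cdot}$ implies boundedness in the time variable while $s-r\to\infty$. Hence $|\cB(r,s)|\e^{-\lambda^*\!s}\to W_\mathrm{b}>0$ $\mathbb P_{\cS}$-a.s., which is incompatible with the event $\{|\cB(r,s)|\leq\e^{\lambda^*\!s-u}\}=\{|\cB(r,s)|\e^{-\lambda^*\!s}\leq \e^{-u}\}$ as $u\to\infty$, giving~\eqref{eq:lbB}. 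For~\eqref{eq:ubB}, no conditioning is needed: the trivial bound $|\cB(r,s)|\leq |\cB(0,s)|$ together with tightness of $\{|\cB(0,s)|\e^{-\lambda^*\!s}\}_{s\geq 0}$ (inherited from almost sure convergence to the finite limit $W_\mathrm{b}$) yields $\P{|\cB(0,s)|\e^{-\lambda^*\!s}\geq\e^{u}}\to 0$ since $u\to\infty$.

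For~\eqref{eq:supB} and~\eqref{eq:infB}, I would split $[0,\infty)$ into a compact initial segment $[0,T]$ and a tail $[T,\infty)$. On the tail, almost sure convergence puts $|\cB(0,t)|\e^{-\lambda^*\!t}$ within $1$ of $W_\mathrm{b}$ uniformly, provided $T=T(\omega)$ is chosen large enough; on $[0,T]$, monotonicity of $t\mapsto |\cB(0,t)|$ gives the upper bound $|\cB(0,T)|$ for the supremum, while $|\cB(0,t)|\geq 1$ (the root is born at $t=0$) together with $\e^{-\lambda^*\!t}\geq \e^{-\lambda^*\!T}$ gives a strictly positive lower bound for the infimum. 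Hence $\sup_{t\geq 0}|\cB(0,t)|\e^{-\lambda^*\!t}<\infty$ almost surely, yielding~\eqref{eq:supB}, and $\inf_{t\geq 0}|\cB(0,t)|\e^{-\lambda^*\!t}>0$ almost surely on $\cS$ (using $W_\mathrm{b}>0$ there), yielding~\eqref{eq:infB}.

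The only delicate point is invoking the convergence result with the exact rate $\e^{-\lambda^*\!t}$ rather than merely as a ratio: this is where the equivalent characterisations in Proposition~\ref{prop:growth} are essential, as we need $W_\mathrm{b}$ to be strictly positive on $\cS$ for~\eqref{eq:lbB} and~\eqref{eq:infB}. Once that input is in hand, the four statements are routine consequences of almost sure convergence and of the monotonicity of the birth process $t\mapsto|\cB(0,t)|$.
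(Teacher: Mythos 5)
Your proposal is correct, and its skeleton (Malthusian almost sure growth of the counting process, monotonicity of $t\mapsto|\cB(0,t)|$, positivity of the limit on $\cS$) is the same as the paper's, but two steps are carried out along a genuinely different route. For~\eqref{eq:lbB} you work entirely with the birth count, writing $|\cB(r,s)|=|\cB(0,s)|-|\cB(0,r)|$ and using $|\cB(0,t)|\e^{-\lambda^*t}\toas W_{\mathrm b}$ with $W_{\mathrm b}>0$ on $\cS$; the paper instead bounds $|\cB(r,s)|\geq|\cA^\cont_s|-|\cA^\cont_r|$ and runs the same two-term argument through the alive-count limit $W$ of Proposition~\ref{prop:growth}. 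Both are sound: yours has the advantage of a single limiting object serving all four claims, while the paper's avoids identifying the birth-count limit (note that combining~\eqref{eq:charlim} with Proposition~\ref{prop:growth} gives the constant $\widehat\chi_{\mathrm b}(\lambda^*)/\widehat\chi_{\mathrm a}(\lambda^*)$ rather than $\widehat\chi_{\mathrm b}(\lambda^*)$, though only positivity on $\cS$ matters; also your claim that $|\cB(0,\cdot)|\e^{-\lambda^*\cdot}$ is bounded on all of $[0,\infty)$ needs finiteness on compacts, which is exactly non-explosion, Assumption~\eqref{ass:A1}). For~\eqref{eq:supB} and~\eqref{eq:infB} you argue softly that the supremum is an almost surely finite random variable and the infimum is almost surely positive on $\cS$, so their tails vanish as $M\to\infty$; the paper instead makes a quantitative split at $T=\log(M)/(2\lambda^*)$, resp.\ $T=\log(M)/\lambda^*$, reusing~\eqref{eq:ubB} and tightness of $W$. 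Your version is shorter and suffices because the statements are purely qualitative in $M$; the paper's version is what you would need if an explicit rate in $M$ were ever required. Finally, your caveat that positivity of the limit on $\cS$ requires one of the equivalent conditions of Proposition~\ref{prop:growth} is precisely the same implicit step the paper takes (it invokes item (d) there without further verification under Assumption~\eqref{ass:C1}), so you are at parity with the paper on that point.
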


\begin{proof} 
	We observe that the number of births between time $r$ and time $s$ equals the number of alive individuals at time $s$ minus the number of individuals alive at time $r$ plus the number of individuals born between time $r$ and $s$ that have died before time $s$. That is,
	\be
	|\cB(r,s)|=|\cA_s^\cont|-|\cA_r^\cont|+|\{u\in \cU_\infty: \sigma_u\in [r,s], L_u<s-\sigma_u\}|\geq |\cA_s^\cont|-|\cA_r^\cont|. 
	\ee 
	Consequentially,
	\be \ba 
	\Ps{|\cB(r,s)|\leq \e^{\lambda^*\!s-u}}&\leq \Ps{|\cA_s^\cont|-|\cA_r^\cont|\leq \e^{\lambda^*\!s-u}}\\ 
	&=\Ps{|\cA_s^\cont|\e^{-\lambda^*\!s} -|\cA_r^\cont|\e^{-\lambda^*\!r}\e^{-\lambda^*(s-r)}\leq \e^{-u}}.
	\ea \ee 
	By~\eqref{eq:Aconv}, it follows that $|\cA_s^\cont|\e^{-\lambda^*\!s}$ converges almost surely to $W$, which is $\mathbb P_\cS$-almost surely positive. Similarly, $|\cA_r^\cont|\e^{-\lambda^*\!r}$  is a tight sequence of random variables. Hence, since $s-r$ tends to infinity with $t$, it follows that $|\cA_r^\cont|\e^{-\lambda^*\!r}\e^{-\lambda^*(s-r)}$ converges to zero $\mathbb P_\cS$-almost surely. As $u$ tends to infinity with $t$, the right-hand side of the event in the probability tends to zero with $t$ and we arrive at~\eqref{eq:lbB}. 
	
	In a similar manner, we observe that $|\cB(r,s)|\leq |\cB(0,s)|$. It follows from~\cite[Theorem $5.4$]{Ner81} that $|\cB(0,s)|\e^{-\lambda^*\!s}$ converges almost surely (or, equivalently, from~\eqref{eq:charlim}, since $|\cB(0,s)|=Z^{\mathrm b}_s$ and $|\cA^\cont_s|=Z^{\mathrm a}_s$), so that 
	\be 
	\lim_{t\to\infty}\P{|\cB(r,s)|\geq \e^{\lambda^*\!s+u}}\leq \lim_{t\to\infty}\P{|\cB(0,s)|\e^{-\lambda^*\!s}\geq \e^{u}}=0,
	\ee 
	since $u$  tends to infinity with $t$. 
	
	Finally, we prove~\eqref{eq:supB} and~\eqref{eq:infB}. Fix $\eps>0$. First, by a union bound, 
	\be 
	\P{\sup_{t\geq 0} |\cB(0,t)|\e^{-\lambda^*\!t}\geq M}\leq \P{\sup_{t\in[0,\log(M)/(2\lambda^*)]}\!\!\!\!\!\!\!\! |\cB(0,t)| \geq M}+\P{\sup_{t\geq \log(M)/(2\lambda^*)}\!\!\!\!\!\!\!\! |\cB(0,t)|\e^{-\lambda^*\!t}\geq M},
	\ee
	where we bound the exponential term in the first probability on the right-hand side from above by one. As $|\cB(0,t)|$ is increasing in $t$, we can bound this term from above further by 
	\be 
	\P{|\cB(0,\log(M)/(2\lambda^*))|\geq M}=\P{|\cB(0,\log(M)/(2\lambda^*))|\geq \e^{2\lambda^*\log(M)/(2\lambda^*)}}. 
	\ee 
	It thus follows from~\eqref{eq:ubB} (with $r=0$, $s=\log(M)/(2\lambda^*)$, and $u=\log(M)/2$) that the right-hand side is at most $\eps/3$ for all $M$ large. For the second probability, we first take $M_0=M_0(\eps)>0$ large enough so that $\P{W\geq M_0}<\eps/2$. Then, we take $M\geq M_0$ large such that
	\be \ba 
	\P{\sup_{t\geq \log(M)/(2\lambda^*)}\!\!\!\!\!\!\!\! |\cB(0,t)|\e^{-\lambda^*\!t}\geq M}&\leq \P{\sup_{t\geq \log(M)/(2\lambda^*)}\!\!\!\!\!\!\!\! |\cB(0,t)|\e^{-\lambda^*\!t}\geq M_0}\\ 
	&\leq \P{W\geq M_0}+\eps/3<2\eps/3. 
	\ea \ee 
	Here, the second step follows from the fact that $\inf_{t\geq \log(M)/(2\lambda^*)}|\cB(0,t)|\e^{-\lambda^* \! t}\toas W$ as $M$ tends to infinity. As $\eps$ is arbitrary, this yields the desired result and proves~\eqref{eq:supB}. 
	
	The proof of~\eqref{eq:infB} uses a similar approach, where we now use that $\cB(0,t)\geq 1$ for all $t\geq 0$, so that $|\cB(0,s)|\e^{-\lambda^* \!s}\geq \e^{-\lambda^* T}$ for all $s\leq T$. As a result, with $T=\log(M)/\lambda^*$,
	\be
	\Ps{\inf_{t\geq 0} |\cB(0,t)|\e^{-\lambda^*\!t}\leq 1/M}=\Ps{\inf_{t\geq \log(M)/\lambda^*}\!\!\!\!\! |\cB(0,t)|\e^{-\lambda^*\!t}\leq 1/M}.
	\ee  
	Then, fix $\eps>0$ and take $M_0=M_0(\eps)>0$ large so that $\Ps{W\leq 1/M_0}\leq \eps/2$. We note that this is possible, conditionally on survival, since then $W>0$ almost surely by $(d)$. Now, take $M\geq M_0$ large so that
	\be \ba 
	\Ps{\inf_{t\geq \log(M)/\lambda^*}\!\!\!\!\! |\cB(0,t)|\e^{-\lambda^*\!t}\leq 1/M}&\leq \Ps{\inf_{t\geq \log(M)/\lambda^*}\!\!\!\!\! |\cB(0,t)|\e^{-\lambda^*\!t}\leq 1/M_0}\\ 
	&\leq \Ps{W\leq 1/M_0}+\eps/2\leq \eps. 
	\ea \ee  
	Here, the second step follows from the fact that $\inf_{t\geq \log(M)/\lambda^*} |\cB(0,t)|\e^{-\lambda^*\!t}\xrightarrow{\mathbb P_\cS-\mathrm{a.s.}} W$ as $M$ to infinity. As $\eps$ is arbitrary, this proves~\eqref{eq:infB} and concludes the proof.
\end{proof} 

\subsection{Non-survival of old individuals}\label{sec:oldproof}

We conclude this section by combining the results from the previous sub-sections to prove asymptotic results for $O_t^\cont$. In essence, we show that all individuals that are `too old' have all died by time $t$ with high probability and that there exists `young enough' individuals that survive up to time $t$. To formalise this intuition, we introduce the following notation. Fix $d^*\geq 0, \lambda^*>0$, and $p>0$, and define 
\be \label{eq:Ft}
F_{p,t}:=\big[t-p\cK\big(\tfrac{\lambda^*}{\lambda^*+d^*}t\big),t+p\cK\big(\tfrac{\lambda^*}{\lambda^*+d^*}t\big)\big].
\ee 
We observe that the behaviour of $O_t^\cont$ is different in the `rich are old' and the `rich die young' regimes, since the lifetime of an individual has a different (asymptotic) distribution in these regimes. This is made precise in the following result.

\begin{proposition}\label{prop:oldage}
	Suppose that the sequences $b$ and $d$ are such that Assumptions~\eqref{ass:A1}, \eqref{ass:A2}, and~\eqref{ass:C1} are satisfied. Recall $R$ from~\eqref{eq:R} and the event $\cS$ from~\eqref{eq:S} that the branching process $\bp$ survives. Assume that $\liminf_{i\to\infty} d(i)> R$. Then, there exists $K_0>0$ such that for any $K>K_0$ there exists $p>0$ sufficiently small so that 
	\be \label{eq:ROt}
	\lim_{t\to\infty}\Ps{\forall s\in[t-p\log t, t+p\log t]: \Big| O^{\cont}_s-\frac{R}{\lambda^*+R}t\Big|\leq K\log t}=1.
	\ee 		
	Now, assume that $b$ diverges and that $\lim_{i\to\infty} d(i)=d^*\in[0,R)$. Also, suppose that Assumptions~\hyperref[ass:K]{$\cK$} and~\hyperref[ass:Kalpha]{$\cK_\alpha$} and Assumption~\eqref{ass:varphi2} are satisfied. Then, for any $\eps>0$ there exists $p>0$ such that 
	\be \label{eq:Otconv}
	\lim_{t\to\infty}\Ps{\forall s\in F_{p,t}:\Big|\cK\big(\tfrac{\lambda^*}{\lambda^*+d^*}t\big)^{-1}\big(O_s^\cont-\tfrac{d^*}{\lambda^*+d^*}t-\tfrac{1}{\lambda^*+d^*}\cK_\alpha(r(t))\big)\Big|<\eps}=1.
	\ee  		
	Finally, if Assumption~\eqref{ass:A2} is not satisfied, 
	\be \label{eq:Otas}
	O_t^\cont\xrightarrow{\mathbb P_\cS-\mathrm{a.s.}} O', 
	\ee 
	for some almost surely finite random variable $O'$.
\end{proposition}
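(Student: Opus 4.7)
My plan is to treat the three assertions in turn, handling the simplest last case \eqref{eq:Otas} first, since it rests on entirely different ideas from the other two. For \eqref{eq:Otas}, Corollary~\ref{cor:inflifetime} gives $p:=\P{L=\infty}>0$; since an immortal individual ($L=\infty$) keeps $\bp$ alive forever we have the inclusion $\cS^c\subseteq\{\text{no immortal individual in }\bp\}$, while a standard branching-process argument shows that both $\P{\text{no immortal in }\bp}$ and $\P{\bp\text{ dies}}$ satisfy the same fixed-point equation $x=(1-p)\E{x^D\mid D<\infty}$, whose unique fixed point in $[0,1)$ forces equality and hence a.s.\ existence of an immortal individual on $\cS$. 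Letting $v^\ast$ be the one with smallest birth time and $O':=\sigma_{v^\ast}$, Assumption~\eqref{ass:A1} ensures only finitely many individuals have $\sigma_v<O'$; each has finite lifetime by minimality of $\sigma_{v^\ast}$, so all of them die by an a.s.\ finite random time $T^\ast$. For $t\geq T^\ast\vee O'$ this gives $O_t^\cont=O'$, yielding \eqref{eq:Otas}. For the remaining two cases I will follow a two-sided matching: an upper bound on $O_s^\cont$ by exhibiting a single survivor born just above the target that stays alive across the window (second-moment/concentration), and a lower bound on $O_s^\cont$ by a first-moment count ruling out any individual born below the target being alive anywhere in the window; Corollary~\ref{cor:growth} and Section~\ref{sec:lifetime} supply exactly the two inputs needed.

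For the rich-die-young case \eqref{eq:ROt}, set $s^\pm:=\tfrac{R}{\lambda^*+R}t\pm K\log t$. For the upper bound, \eqref{eq:lbB} forces $|\cB(s^+-1,s^+)|\geq\e^{\lambda^*s^+-u_t}$ on $\cS$ w.h.p.\ (any slowly diverging $u_t$), and \eqref{eq:preciseLB} gives each such individual probability $\geq c\e^{-R(t+p\log t-s^++1)}$ of being alive throughout $[t-p\log t,t+p\log t]$; since lifetimes are conditionally independent, the expected survivor count is at least $c\e^{-u_t-R}\cdot t^{(\lambda^*+R)K-Rp}$, diverging for $p$ small, and a binomial Chebyshev bound produces at least one survivor w.h.p. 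For the lower bound, fix $M$ via~\eqref{eq:supB} so that $|\cB(i,i+1)|\leq M\e^{\lambda^*(i+1)}$ holds uniformly in $i$ w.h.p., and use the upper bound in~\eqref{eq:Lprecise} to get $\P{L>\tau}\leq C\tau^\kappa\e^{-R\tau}$ for some $\kappa\geq 0$. A discretised first moment then yields
\begin{align*}
\E{\#\{v:\sigma_v<s^-,\,L_v>t-p\log t-\sigma_v\}}\leq C'Mt^\kappa\e^{-R(t-p\log t)}\sum_{i\leq s^-}\e^{(\lambda^*+R)i}\leq C''t^{\kappa+Rp-(\lambda^*+R)K},
\end{align*}
which is $o(1)$ as soon as $K>K_0:=\kappa/(\lambda^*+R)$ and $p$ is sufficiently small; Markov concludes.

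The rich-are-old case \eqref{eq:Otconv} follows the same architecture with the sharper lifetime estimate $\log\P{L>\tau}=-d^*\tau-\cK_\alpha(\tau)+o(\cK(\tau))$ from Lemma~\ref{lemma:lifetime2nd} replacing~\eqref{eq:Lprecise}. Setting $T:=\tfrac{\lambda^*}{\lambda^*+d^*}t$, $U:=\cK(T)$, and $s_\ast:=\tfrac{d^*}{\lambda^*+d^*}t+\tfrac{1}{\lambda^*+d^*}\cK_\alpha(r(t))$, the dominant contribution to the discretised first-moment sum for the lower bound $O_s^\cont\geq s_\ast-\eps U$ comes from $i\approx s_\ast-\eps U$, with exponent
\begin{align*}
(\lambda^*+d^*)(s_\ast-\eps U)-d^*(t-pU)-\cK_\alpha\bigl(T-\tfrac{1}{\lambda^*+d^*}\cK_\alpha(r(t))+(\eps-p)U\bigr)+o(U).
\end{align*}
Here Assumption~\hyperref[ass:Kalpha]{$\cK_\alpha$}, combined with Assumption~\hyperref[ass:K]{$\cK$} (which yields $\cK(r(t))=(1+o(1))U$ since $r(t)=(1+o(1))T$) and with Lemma~\ref{lemma:func}$(d)$ for absorbing the additional $O(U)$-shift, reduces the last $\cK_\alpha$-term to $\cK_\alpha(r(t))+o(U)$, so that the explicit $\cK_\alpha(r(t))$ contributions cancel and the exponent collapses to $-(\lambda^*+d^*)\eps U+d^*pU+o(U)$, which is negative for $p\ll\eps$. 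A symmetric computation near $s_\ast+\eps U$ produces a diverging expected survivor count for the upper bound. The main obstacle is precisely this analytic bookkeeping: controlling how a shift of order $U$ in the argument of $\cK_\alpha$ changes its value to within $o(U)$ does not follow from Lemma~\ref{lemma:func}$(d)$ alone, and it is exactly for this cancellation that the technical Assumption~\hyperref[ass:Kalpha]{$\cK_\alpha$} is imposed; once it is in place, the Markov/Chebyshev arguments from the rich-die-young case transfer verbatim to give~\eqref{eq:Otconv}.
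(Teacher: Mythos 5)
Your treatment of \eqref{eq:ROt} and \eqref{eq:Otconv} is essentially the paper's own argument: a union/first-moment bound over individuals born before the target time, using the upper lifetime tail (\eqref{eq:Lprecise}, resp.\ Lemma~\ref{lemma:lifetime2nd}) together with the upper birth-count bound of Corollary~\ref{cor:growth}, matched by exhibiting a survivor born just above the target via the lower birth-count bound and independence of lifetimes (your ``binomial Chebyshev'' is the paper's $(1-\P{L\geq\cdot})^m\leq\e^{-m\P{L\geq\cdot}}$); the exponent bookkeeping, the origin of $K_0$ as (polynomial correction in \eqref{eq:Lprecise})$/(\lambda^*+R)$, and the cancellation of the $\cK_\alpha(r(t))$ terms via Assumptions~\hyperref[ass:K]{$\cK$}, \hyperref[ass:Kalpha]{$\cK_\alpha$} and Lemma~\ref{lemma:func}(d) all coincide with the paper's computation. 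Only for \eqref{eq:Otas} do you take a genuinely different route: the paper locates the first individual born with $D=\infty$, whose birth rank is geometric with parameter $\P{D=\infty}$, and lets the finitely many earlier individuals die; you instead show that on $\cS$ an immortal exists a.s.\ because the extinction probability and the no-immortal probability both solve $x=\E{\ind_{\{D<\infty\}}x^D}$, a convex equation with at most one root in $[0,1)$ (both probabilities being $<1$), and then take the earliest immortal. Both are valid; the paper's version is more constructive and avoids the fixed-point uniqueness step, which you should spell out (convexity plus the value at $1$ being $1-\P{D=\infty}<1$).

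Two small repairs and one correction to your commentary. First, you invoke \eqref{eq:lbB} for the unit-length interval $[s^+-1,s^+]$, whereas Corollary~\ref{cor:growth} is stated only for intervals of diverging length; either widen the window to $[\tfrac{R}{\lambda^*+R}t,s^+]$ as the paper does (the exponent $(\lambda^*+R)K$ becomes $\lambda^*K$, still positive for $p$ small), or note that the corollary's proof extends to fixed-length intervals since $|\cA^\cont_s|\e^{-\lambda^*s}-|\cA^\cont_r|\e^{-\lambda^*r}\e^{-\lambda^*(s-r)}$ converges to $W(1-\e^{-\lambda^*})>0$ on $\cS$. Second, your first-moment bound mixes an expectation of the survivor count with the high-probability event from \eqref{eq:supB}; since birth counts are not independent of lifetimes (an individual's offspring times and lifetime come from the same point process), you should restrict to the event that at most $m$ individuals are born before $s^-$ and union bound over the first $m$ birth ranks, whose own marks are i.i.d.\ copies of $\cR$ and independent of the information determining their birth ranks — the same device underlies the paper's conditioning on $\cF_{\wt u_t}$ and $|\cB(\wt u_t,u_t)|$. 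Finally, your closing remark has the roles reversed: Lemma~\ref{lemma:func}(d) \emph{does} control any diverging $o(t)$ shift, in particular shifts of order $\cK(t)$; Assumption~\hyperref[ass:Kalpha]{$\cK_\alpha$} is needed for the shift by $\tfrac{1}{\lambda^*+d^*}\cK_\alpha(r(t))$ itself, which may be much larger than $\cK(t)$ — this is in fact how your computation uses the two inputs, so only the commentary, not the proof, needs amending.
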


\begin{remark}\label{rem:strongerOt}
	Under the assumption that $\liminf_{i\to\infty}d(i)\geq R$ we can adapt the proof of~\eqref{eq:ROt} (and by using~\eqref{eq:LlimR} rather than~\eqref{eq:Lprecise}) to show that for any $\eps>0$ there exists $p>0$ such that 
	\be \label{eq:ROtweak}
	\lim_{t\to\infty}\Ps{\forall s\in[(1-p)t,(1+p)t]: \Big|\frac{O_s^\cont}{t}- \frac{ R}{\lambda^*+R}\Big|<\eps}=1.
	\ee 
	It yields a weaker bound on $O_s^\cont$, but with a weaker condition on $d$ and a larger range of $s$.
	\ensymboldremark
\end{remark} 
\begin{remark} \label{rem:improvetoas}
	When setting $s=t$ in~\eqref{eq:ROt} and~\eqref{eq:Otconv}, the quantity in the absolute values converges to zero in probability. Though in some cases where $\cK$ grows sufficiently fast it may be possible to strengthen the convergence in probability to almost sure convergence, we are unable to derive the stronger bounds due to the use of~\eqref{eq:lbB} and~\eqref{eq:ubB} in Corollary~\ref{cor:growth}. Stronger versions of these results would be required to improve to almost sure convergence.	\ensymboldremark
\end{remark} 

\begin{remark}
	Proposition~\ref{prop:oldage} provides the main step in proving the asymptotic behaviour of $O_n$, as presented in Theorems~\ref{thrm:asPA} through~\ref{thrm:biggerR}. It remains to translate the asymptotic behaviour of $O_t^\cont$ into the asymptotic behaviour of $O_n$, which is carried out in Section~\ref{sec:mainproofs}.
	\ensymboldremark
\end{remark}

Intuitively, an individual born at time $T<t$ survives up to time $t$ with probability $\P{L>t-T}$, independently of all other individuals. At the same time, the number of individuals born around time $T$ is roughly $\e^{\lambda^* T}$ by Proposition~\ref{prop:growth}. The proof uses first and second moment bounds to establish the optimal choice of $T$ such that no individuals born before $T$ are alive at time $t$, whereas many individuals born after $T$ are alive at time $t$. 

\begin{proof}
	We first prove~\eqref{eq:ROt}. Fix $K,p>0$ and define 
	\be\label{eq:ellu}
	\ell_t:=\frac{R}{\lambda^*+R}t-K\log t, \qquad u_t:=\frac{R}{\lambda^*+R}t+K\log t,\qquad \wt u_t:=\frac{R}{\lambda^*+R}t, 
	\ee 
	and
	\be 
	\underline r(t):=t-p\log t, \qquad  \overline r(t):=t+p\log t.
	\ee
	We then observe that $O_t^\cont$ is increasing in $t$, so that 
	\be \label{eq:Otsplit}
	\mathbb P_{\cS}\bigg(\Big|O_s^\cont - \frac{ R}{\lambda^*+R}t\Big|<K\log t \text{ for all }s\in[\underline r(t),\overline r(t)]\bigg)\geq 1-\Ps{O_{\underline r(t)}^\cont\leq \ell_t}-\Ps{O_{\overline r(t)}^\cont\geq u_t}.
	\ee 
	We thus aim to bound the probabilities on the right-hand side from  above, starting with the leftmost one. For any $\delta>0$, recalling that $\cA^\cont_t$ denotes the set of alive individuals at time $t$,
	\be \ba \label{eq:elltbound}
	\Ps{O_{\underline r(t)}^\cont\leq \ell_t }={}&\Ps{\cB(0,\ell_t)\cap \cA_{\underline r(t)}\neq\emptyset }\\ 
	\leq{}& \frac{1}{\P{\cS}}\Big[\P{\cB(0,\ell_t)\cap\cA_{\underline r(t)}\neq\emptyset, |\cB(0,\ell_t)|\leq \e^{\lambda^*\ell_t+\delta \log t}}\\ 
	&\hphantom{\frac{1}{\P{\cS}}\ }+\P{|\cB(0,\ell_t)|\geq \e^{\lambda^*\ell_t+\delta \log t}}\Big].
	\ea \ee 
	The second probability in the brackets converges to zero by Corollary~\ref{cor:growth} (with $r=0, s=\ell_t$, and $u=\delta \log t$). Furthermore, lifetimes among individuals are i.i.d.\ and each individual born before time $\ell_t$ needs to live for at least $\underline r(t)-\ell_t$ time to be alive at time $\underline r(t)$. Hence, by a union bound and using the upper bound in~\eqref{eq:Lprecise} of Lemma~\ref{lemma:lifetime}, we arrive for some large constant $C>0$ at the upper bound
	\be \label{eq:LasympR}
	\P{\cS}^{-1}\e^{\lambda^*\ell_t+\delta \log t}\P{L\geq \underline r(t)-\ell_t}+o(1)\leq \e^{\lambda^*\ell_t+\delta \log t-R(\underline r(t)-\ell_t)+C\log(\underline r(t)-\ell_t)}+o(1).
	\ee 
	By the definition of $\ell_t$ and $\underline r(t)$, this equals
	\be 
	\exp\Big(\big[-(\lambda^*+R)K+\delta+Rp +C\big]\log t+\cO(1)\Big)+o(1). 
	\ee 
	Hence, for any $K$ larger than $K_0:=C(\lambda^*+R)^{-1}$, we can choose $\delta$ and $p$ sufficiently small so that the terms in the square brackets are negative. As a result, the upper bound in~\eqref{eq:LasympR} tends to zero with $t$ for these choices of $K, p$, and $\delta$.   
	
	In a similar way, we bound the second probability on the right-hand side of~\eqref{eq:Otsplit}. We recall $\wt u_t$ from~\eqref{eq:ellu} and write 
	\be \label{eq:utbound}
	\mathbb P_{\cS}(O_{\overline r(t)}^\cont\geq  u_t)\leq \mathbb P_{\cS}(\cB(\wt u_t,u_t)\cap \cA_{\overline r(t)}= \emptyset)\leq  \mathbb P_{\cS}\big(\forall v\in \cB(\wt u_t,u_t): L^{(v)}\leq \overline r(t)-\wt u_t\big).
	\ee 
	For ease of writing, we define the events 
	\be 
	\cE_t:=\{\forall v\in \cB(\wt u_t,u_t): L^{(v)}\leq \overline r(t)-\wt u_t\}, \qquad \cD_t:=\{|\cB(\wt u_t,u_t)|>m\} \quad \text{for }m\in\N,t \geq0.
	\ee 
	We then write the right-hand side of~\eqref{eq:utbound} as 
	\be 
	\Ps{\cE_t}=\P{\cS}^{-1}\E{\ind_\cS\ind_{\cE_t}}\leq \P{\cS}^{-1}\E{\ind_\cS\ind_{\cE_t}\ind_{\cD_t}}+\P{\cS}^{-1}\E{\ind_\cS\ind_{\cD_t^c}}. 
	\ee 
	The second term on the right-hand side equals the probability of the event $\cD_t^c$ under the conditional probability measure $\mathbb P_\cS$. For the first term, we condition on two things. First, we condition $\cF_{\wt u_t}$, the $\sigma$-algebra generated by the branching process up to time $\wt u_t$. Second, we condition on $|\cB(\wt u_t, u_t)|$, the number of individuals born in the branching process in the interval $(\wt u_t,u_t)$. We can thus write
	\be \ba 
	\P{\cS}^{-1}\E{\ind_\cS\ind_{\cE_t}\ind_{\cD_t}}&=\P{\cS}^{-1}\E{\E{\ind_\cS\ind_{\cE_t}\ind_{\cD_t}\,|\, \cF_{\wt u_t}, |\cB(\wt u_t,u_t)|}}\\ 
	&\leq \P{\cS}^{-1}\E{\ind_{\cD_t}\E{\ind_{\cE_t}\,|\, \cF_{\wt u_t}, |\cB(\wt u_t,u_t)|}}\\ 
	&=\P{\cS}^{-1}\E{\ind_{\cD_t}\P{\cE_t\,|\, \cF_{\wt u_t}, |\cB(\wt u_t,u_t)|}}.
	\ea \ee 
	Conditionally on $|\cB(\wt u_t, u_t)|$ and upon the event $\cD_t$, we can use the independence of the lifetimes of distinct individuals to bound the terms in the expected value from above by 
	\be 
	\ind_{\cD_t}\P{\cE_t\,|\, \cF_{\wt u_t}, |\cB(\wt u_t,u_t)|}\leq \ind_{\cD_t}\big(1-\P{L\geq \overline r(t)-\wt u_t}\big)^m\leq \exp\big(-m\P{L\geq \overline r(t)-\wt u_t}\big),
	\ee 
	where we have omitted the indicator random variable and used that $1-x\leq \e^{-x}$ for $x\in\R$ in the last step. Combining all of the above in~\eqref{eq:utbound}, we thus arrive at 
	\be 
	\Ps{O_{\overline r(t)}^\cont\geq u_t}\leq \P{\cS}^{-1}\exp\big(-m\P{L\geq \overline r(t)-\wt u_t}\big)+\Ps{|\cB(\wt u_t,u_t)|\leq m}.
	\ee 
	We choose $m=\exp(\lambda^* u_t-\delta \log t)$ and apply Corollary~\ref{cor:growth} (with $r=\wt u_t$, $s=u_t$, and $u=\delta \log t$) to the second term on the right-hand side. This yields that the second term tends to zero with $t$. For the first term we arrive at 
	\be \label{eq:utbound2}
	\P{\cS}^{-1}\exp\big(-\P{L\geq \overline r(t)-\wt u_t}\e^{\lambda^*u_t-\delta \log t}\big).
	\ee 
	By the lower bound in~\eqref{eq:Lprecise} of Lemma~\ref{lemma:lifetime} and the definition of $u_t$ and $\wt u_t$, we arrive at
	\be 
	\exp\Big(-(\exp\Big(\big[\lambda^*K-\delta -Rp\big]\log t+\cO(1)\Big)\Big).
	\ee 
	For any $K>0$ (and thus for any $K>K_0$, in particular) we can choose $\delta$ and $p$  sufficiently small so that the upper bound tends to zero. This yields that the left-hand side of~\eqref{eq:Otsplit} converges to one as $t$ tends to infinity, so that the desired result follows.
	
	We then prove~\eqref{eq:Otconv}. The proof follows a similar argument as the above, but with a refined result. To this end, we use  Lemma~\ref{lemma:lifetime2nd} and Assumption~\hyperref[ass:Kalpha]{$\cK_\alpha$}. From the latter we obtain $r=r(t)$ such that 
	\be \label{eq:sass}
	\cK_\alpha(r(t))-\cK_\alpha\big(\tfrac{\lambda^*}{\lambda^*+d^*}t-\tfrac{1}{\lambda^*+d^*}\cK_\alpha(r(t))\big)=o(\cK(r(t))).
	\ee 
	We fix $\eps>0$ and set 
	\be \ba 
	\ell_t&:=\frac{d^*}{\lambda^*+d^*}t+\tfrac{1}{\lambda^*+d^*}\cK_\alpha(r(t))-\eps\cK(r(t)),& u_t&:= \frac{d^*}{\lambda^*+d^*}t+\tfrac{1}{\lambda^*+d^*}\cK_\alpha(r(t))+\eps\cK(r(t)),\\ 
	\underline r(t){}&:=t-p\cK\big(\tfrac{\lambda^*}{\lambda^*+d^*}t\big), &\overline r(t)&:=t+p\cK\big(\tfrac{\lambda^*}{\lambda^*+d^*}t\big).
	\ea \ee 
	We then use a similar bound as in~\eqref{eq:Otsplit}, to obtain 
	\be \ba \label{eq:Ot2ndsplit}
	\mathbb P_{\cS}\Big({}&\Big|\cK\big(\tfrac{\lambda^*}{\lambda^*+d^*}t\big)^{-1}\big(O_s^\cont-\tfrac{d^*}{\lambda^*+d^*}t-\tfrac{1}{\lambda^*+d^*}\cK_\alpha(r(t))\big)\Big|<\eps, \text{ for all }s\in [\underline r(t), \overline r(t)]\Big)\\ 
	&\geq 1-\Ps{O_{\underline r(t)}^\cont\leq \ell_t}-\Ps{O_{\overline r(t)}^\cont\geq u_t}.
	\ea \ee 
	We again bound the probabilities on the right-hand side from above, and start with the leftmost one. We use a similar bound as in~\eqref{eq:elltbound}, but with the event 
	\be 
	\cE_t:=\big\{|\cB(0,\ell_t)|\leq \exp\big(\lambda^*\ell_t+\delta\cK(r(t))\big)\big\},
	\ee 
	where $\delta>0$ is small. We then have the upper bound
	\be 
	\Ps{O_{\underline r(t)}^\cont\leq \ell_t}\leq \Ps{\cB(0,\ell_t)\cap \cA_{\underline r(t)}\neq\varnothing}\leq  \frac{1}{\P{\cS}}\big[\P{\{\cB(0,\ell_t)\cap\cA_{\underline r(t)}\neq\emptyset\}\cap \cE_t}+\P{\cE_t^c}\big].
	\ee 
	Again, by Corollary~\ref{cor:growth} (with $r=0$, $s=\ell_t$, and $u=\delta \cK(r(t))$) and as $\cK$ and $s$ diverge, the term $\P{\cE_t^c}$ tends to zero with $t$. We can thus focus on the first term in the square brackets. With a similar approach as in~\eqref{eq:LasympR} and with $\xi>0$ fixed, but using Lemma~\ref{lemma:lifetime2nd} instead, we bound this term from above by
	\be \ba 
	\exp\big(\lambda^*\ell_t{}&+\delta \cK(r(t))-d^*(\underline r(t)-\ell_t)-\cK_\alpha(\underline r(t)-\ell_t)+\xi\cK(\underline r(t)-\ell_t)\big)\\ 
	=\exp\Big({}&\cK_\alpha(r(t))-\cK_\alpha\big(\tfrac{\lambda^*}{\lambda^*+d^*}t-\tfrac{1}{\lambda^*+d^*}\cK_\alpha(r(t))+\eps\cK(r(t))-p\cK\big(\tfrac{\lambda^*}{\lambda^*+d^*}t\big)\big)\\ 
	&+(\delta -(\lambda^*+d^*)\eps)\cK(r(t))+\xi \cK\big(\tfrac{\lambda^*}{\lambda^*+d^*}t-\tfrac{1}{\lambda^*+d^*}\cK_\alpha(r(t))+\eps\cK(r(t))-p\cK\big(\tfrac{\lambda^*}{\lambda^*+d^*}t\big)\big)\\ 
	&+d^*p\cK\big(\tfrac{\lambda^*}{\lambda^*+d^*}t\big)\Big).
	\ea \ee 
	We combine~\eqref{eq:sass} with $(d)$ in Lemma~\ref{lemma:func} to obtain that $r(t)=\frac{\lambda^*}{\lambda^*+d^*}t+o(t)$. Further, as $b$ diverges, it follows that $\cK(t)=o(t)$ by $(a)$ in Lemma~\ref{lemma:func}. We then apply Assumption~\hyperref[ass:K]{$\cK$} to write 
	\be \label{eq:Kapprox}
	\cK\big(\tfrac{\lambda^*}{\lambda^*+d^*}t-\tfrac{1}{\lambda^*+d^*}\cK_\alpha(r(t))+\eps\cK(r(t))-p\cK\big(\tfrac{\lambda^*}{\lambda^*+d^*}t\big)\big)=(1+o(1))\cK\big(\tfrac{\lambda^*}{\lambda^*+d^*}t\big), 
	\ee 
	and
	\be \label{eq:Kalphaapprox}
	\cK_\alpha(r(t))-\cK_\alpha\big(\tfrac{\lambda^*}{\lambda^*+d^*}t-\tfrac{1}{\lambda^*+d^*}\cK_\alpha(r(t))\big)=o\big(\cK(r(t))\big)=o\big(\cK\big(\tfrac{\lambda^*}{\lambda^*+d^*}t\big)\big).
	\ee 
	We thus arrive at
	\be 
	\exp\Big((\delta+\xi+d^*p-(\lambda^*+d^*)\eps+o(1))\cK\big(\tfrac{\lambda^*}{\lambda^*+d^*}t\big)\Big).
	\ee 
	By choosing $\delta$, $p$, and $\xi$ sufficiently small with respect to $\eps$, the upper bound tends to zero with $t$.
	
	We then bound the second probability on the right-hand side of~\eqref{eq:Ot2ndsplit}.  For ease of writing, we set 
	\be 
	\wt u_t:=\frac{d^*}{\lambda^*+d^*}t+\tfrac{1}{\lambda^*+d^*}\cK_\alpha(r(t)).
	\ee 
	Then, using the event 
	\be
	\wt\cE_t:=\big\{|\cB(\wt u_t,u_t)|\geq \exp\big(\lambda^* u_t-\delta \cK(r(t))\big)\big\},
	\ee 
	with $\delta>0$ small, we follow the same steps as in~\eqref{eq:utbound} through~\eqref{eq:utbound2} to arrive at 
	\be \ba \label{eq:Butbound}
	\Ps{O_{\overline r(t)}^\cont\geq u_t}&\leq \Ps{\cB\big(\wt u_t,u_t\big)\cap \cA_{\overline r(t)}=\varnothing}\\ 
	&\leq \P{\cS}^{-1}\exp\big(-\P{L\geq \overline r(t)-\wt u_t}\exp\big(\lambda^*u_t-\delta \cK(r(t))\big)\big)+\Ps{\wt\cE_t^c}.
	\ea\ee 
	Again, by Corollary~\ref{cor:growth} (with $r=\wt u_t$, $s=u_t$, and $u=\delta \cK(r(t))$), the final term tends to zero. By using Lemma~\ref{lemma:lifetime2nd}, we bound the argument of the exponential term from above, with $\xi>0$, by 
	\be\ba
	-{}&\exp\Big(\lambda^*u_t-\delta \cK(r(t))-d^*(\overline r(t)-\wt u_t)-\cK_\alpha(\overline r(t)-\wt u_t)-\xi \cK(\overline r(t)-\wt u_t)\Big)\\ 
	&=-\exp\Big(\cK_\alpha(r(t))-\cK_\alpha\big(\tfrac{\lambda^*}{\lambda^*+d^*}t-\tfrac{1}{\lambda^*+d^*}\cK_\alpha(r(t))+p\cK\big(\tfrac{\lambda^*}{\lambda^*+d^*}t\big)\big)+(\eps(\lambda^*+d^*)-\delta)\cK(r(t))\\ 
	&\hphantom{=-\exp\Big(\,}-\xi\cK\big(\tfrac{\lambda^*}{\lambda^*+d^*}t-\tfrac{1}{\lambda^*+d^*}\cK_\alpha(r(t))+p\cK\big(\tfrac{\lambda^*}{\lambda^*+d^*}t\big)\big)-d^*p\cK\big(\tfrac{\lambda^*}{\lambda^*+d^*}t\big)\Big).
	\ea \ee 
	We then combine this with~\eqref{eq:Kapprox} and~\eqref{eq:Kalphaapprox} in~\eqref{eq:Butbound} to arrive at
	\be 
	\exp\Big(-\exp\big([\eps(\lambda^*+d^*)-\delta-\xi-d^*p+o(1)]\cK\big(\tfrac{\lambda^*}{\lambda^*+d^*}t\big)\big)\Big)+o(1), 
	\ee 
	which tends to zero with $t$ when we choose $\delta$, $p$, and $\xi$ sufficiently small with respect to $\eps$, since $\cK$ tends to infinity. We thus obtain that the left-hand side of~\eqref{eq:Ot2ndsplit} tends to one as $t$ tends to infinity, which yields the desired result. 
	
	Finally, we prove~\eqref{eq:Otas}. It follows from Lemma~\ref{lemma:Dlifedistr} that $\P{D=\infty}>0$ when Assumption~\eqref{ass:A2} is not satisfied. Furthermore, Corollary~\ref{cor:inflifetime} implies that $L$ is finite almost surely, conditionally on $\{D<\infty\}$, and that $L$ is infinite almost surely, conditionally on $\{D=\infty\}$. As a result, let $N$ denote the number of individuals born in the branching process, when the first individual $v$, such that $D^{(v)}=\infty$, is born. From the independence of $(D^{(v)})_{v\in\cU_\infty}$, it follows that $N$ is geometrically distributed with parameter $\P{D=\infty}>0$. Hence, $N$ is finite almost surely. Then, let $v_i$ be the $i^{\mathrm{th}}$ individual born in the branching process with birth-time $\sigma_i:=\sigma_{v_i}$, for $i\in[N]$. Note that, since $\sigma_i\neq\sigma_j$ almost surely and there is a finite number of births in any neighbourhood of $0$ almost surely, this enumeration is well-defined. Moreover, $L_{v_i}<\infty$ almost surely for all $i\in[N-1]$ and $L_{v_N}=\infty$, by definition. It thus follows that, conditionally on $\cS$,  
	\be 
	O_t^\cont=\sigma_N\quad \text{for all }t\geq \max_{i\in[N-1]}(\sigma_i+L_{v_i}). 
	\ee 
	Indeed, the event $\cS$ ensures that the individual $v_N$ is born before all other individuals $v_i$, with $i<N$, die. Note that the birth of an individual with an infinite lifetime then guarantees survival of the branching process. Further, once all individuals $v_1,\ldots, v_{N-1}$ have died, $v_N$ is the oldest alive individual and, as it lives forever, remains to be so for all time after. This yields~\eqref{eq:Otas} with $O'=\tau_N$ and thus completes the proof. 
\end{proof}

\section{The individual with the largest offspring in the `rich are old' regime}\label{sec:max}

In this section we analyse the growth-rate of $I_t^\cont$ in the `rich are old' regime. Recall $R$ from~\eqref{eq:R}, and $\varphi_2$ from~\eqref{eq:seqs}. We assume that $\varphi_2$ tends to infinity and that either Assumption~\eqref{ass:A2} is not met, or otherwise that $\lim_{i\to\infty}d(i)=d^*$ for some $d^*\in[0,\infty)$. Finally, we recall the set $F_{p,t}$ from~\eqref{eq:Ft}. We then have the following result.

\begin{proposition}\label{prop:It}
	Suppose that $b$ and $d$ are such that $b$ tends to infinity and such that Assumptions~\eqref{ass:A1}  and~\eqref{ass:C1} are satisfied. Further, suppose that one of the following holds: 
	\begin{enumerate}[label=(\arabic*)]
		\item\label{item:1} Assumption~\eqref{ass:A2} is not satisfied. We set $d^*:=0$.
		\item\label{item:2} Assumption~\eqref{ass:A2} is satisfied, $\lim_{i\to\infty}d(i)=d^*\in[0,\infty)$, and  Assumption~\hyperref[ass:Kalpha]{$\cK_\alpha$} is satisfied.
	\end{enumerate} 
	Finally, suppose that Assumptions~\hyperref[ass:K]{$\cK$} and~\eqref{ass:varphi2} are satisfied. Then, conditionally on $\cS$, there exists $\eps_0$ such that for any $\eps\in(0,\eps_0)$ there exists $p>0$ such that the following event holds with high probability as $t$ tends to infinity: For all $s\in F_{p,t}$, 
	\begin{align}
		\bigg|\frac{1}{\cK(\frac{\lambda^*}{\lambda^*+d^*}t)}\Big(I_s^\cont-\frac{d^*}{\lambda^*+d^*}t-\frac{1}{\lambda^*+d^*}\cK_\alpha (r(t))\Big)- \frac{\lambda^*+d^*}{2}\bigg|&<\eps,
		\intertext{and} 
		\bigg|\frac{1}{\cK(\frac{\lambda^*}{\lambda^*+d^*}t)}\Big(\varphi_1(\max_{v\in \cA_s^\cont}\deg^{(v)}(s))-\frac{\lambda^*}{\lambda^*+d^*}t+\frac{1}{\lambda^*+d^*}\cK_\alpha(r(t))\Big)- \frac{\lambda^*-d^*}{2}\bigg|&<\eps.
	\end{align} 
\end{proposition}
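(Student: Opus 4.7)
The plan is to make the heuristic in Section~\ref{sec:heur} rigorous via first- and second-moment estimates on the number of alive individuals with large degree. Set $T:=\tfrac{\lambda^*}{\lambda^*+d^*}t$ (interpreting $T:=t$ in case~\ref{item:1}, where we take $d^*:=0$), and abbreviate
\[
s_*(t):=\tfrac{d^*}{\lambda^*+d^*}t+\tfrac{1}{\lambda^*+d^*}\cK_\alpha(r(t))+\tfrac{\lambda^*+d^*}{2}\cK(T),\quad
\varphi_1(k_*(t)):=T-\tfrac{1}{\lambda^*+d^*}\cK_\alpha(r(t))+\tfrac{\lambda^*-d^*}{2}\cK(T)
\]
for the conjectured location of $I_t^\cont$ and the $\varphi_1$-image of the conjectured maximum degree. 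The key building block is a uniform estimate, for $\sigma=s_*(t)+x_1\cK(T)$ and $\varphi_1(k)=\varphi_1(k_*(t))+x_2\cK(T)$ with $(x_1,x_2)$ ranging over compact subsets of $\R$, of the single-individual probability $q(k,\sigma,s):=\mathbb{P}(D\geq k,\,S_k\leq s-\sigma,\,S_{D+1}>s-\sigma)$, which by~\eqref{eq:degliferewrite} equals the probability that an individual born at time $\sigma$ is alive at time $s\geq\sigma$ with offspring at least $k$. Combining Lemma~\ref{lemma:survdeg} (removing the survival constraint at the cost of an exponential tilt $\theta$ close to $d^*$, and replacing this by the estimates of part~(iii) in case~\ref{item:1}), Lemma~\ref{lemma:Dtail} (replacing $\log\P{D\geq k}$ by $-\rho_1(k)-\tfrac12\rho_2(k)+o(\rho_2(k))$), Proposition~\ref{prop:mdptilt} (for the remaining tilted expectation in $S_k$), part~(c) of Lemma~\ref{lemma:func}, and Assumption~\hyperref[ass:Kalpha]{$\cK_\alpha$} to align the $\cK_\alpha$-part of $\rho_1(k)$ with the time shift in $\sigma$, one obtains
\[
\log q(k,\sigma,s)=-\lambda^*\sigma+\big((\lambda^*+d^*)x_1-\tfrac12(x_1+x_2+d^*)^2+o(1)\big)\cK(T),
\]
uniformly in $(x_1,x_2)$. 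The bracketed quadratic attains its maximum value $0$ at $(x_1,x_2)=(\tfrac{\lambda^*+d^*}{2},\tfrac{\lambda^*-d^*}{2})$, precisely the conjectured critical point.

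The upper bound $\varphi_1(M_s)\leq\varphi_1(k_*(t))+\eps\cK(T)$ on $M_s:=\max_{v\in\cA_s^\cont}\deg^{(v)}(s)$ follows from a first-moment argument: partition $[O_s^\cont,s]$ into slabs of width $\zeta\cK(T)$ (using Proposition~\ref{prop:oldage} to control $O_s^\cont$ from below) and the degrees above $\varphi_1^{-1}(\varphi_1(k_*(t))+\eps\cK(T))$ into bands of width $\zeta\cK(T)$; bound the number of births per slab via Corollary~\ref{cor:growth} by $\e^{\lambda^*\sigma+\delta\cK(T)}$ with high probability; multiply by the $q$-estimate and sum. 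For $\eps$ exceeding $\zeta+\delta$, the geometric summation yields total expected count $o(1)$. Uniformity over $s\in F_{p,t}$ is obtained from the monotonicity $M_s\leq\max_{v\in\cB(0,s)}\deg^{(v)}(s)$, evaluated at the right endpoint $s=t+p\cK(T)$.

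The matching lower bound on $M_s$ and upper bound on $I_s^\cont$ come from a Paley--Zygmund argument on the count $N_t$ of individuals $v$ with $\sigma_v\in[s_*(t)-\eps\cK(T),\,s_*(t)]$, lifetime $L^{(v)}>t+p\cK(T)-\sigma_v$ (so that $v\in\cA_s^\cont$ for all $s\in F_{p,t}$), and $\deg^{(v)}(t)\geq\varphi_1^{-1}(\varphi_1(k_*(t))-\eps\cK(T))$. Conditioning on the branching-process $\sigma$-algebra at time $s_*(t)-\eps\cK(T)$, the branching property combined with the $q$-estimate and Corollary~\ref{cor:growth} gives conditional expectation at least $\exp(\eps(\lambda^*+d^*)\cK(T)+o(\cK(T)))$ on an event of $\mathbb{P}_\cS$-probability tending to one. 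The conditional second moment is controlled by splitting ordered pairs $(v,w)$ according to whether their most recent common ancestor was born before $s_*(t)-\eps\cK(T)$: those pairs are conditionally independent and contribute the first moment squared, while pairs with a later common ancestor are bounded by the growth of the branching process and contribute at most of the order of the first moment. Hence $N_t\geq 1$ with high probability, giving both the lower bound on $M_s$ and the upper bound $I_s^\cont\leq s_*(t)$. The lower bound $I_s^\cont\geq s_*(t)-\eps\cK(T)$ is a first-moment estimate of exactly the same shape as the degree upper bound, applied to alive individuals born before $s_*(t)-\eps\cK(T)$ with $\deg^{(v)}(s)\geq\varphi_1^{-1}(\varphi_1(k_*(t))-\eps\cK(T))$: the expected count is $\exp(-\eps(\lambda^*+d^*)\cK(T)+o(\cK(T)))=o(1)$.

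The main obstacle lies in the bookkeeping leading to the clean formula for $\log q(k,\sigma,s)$: the three competing contributions (the Malthusian factor $-\lambda^*\sigma$, the offspring tail $-\rho_1(k)-\tfrac12\rho_2(k)$, and the tilted moderate-deviation contribution of $S_k$) live on the scales $t$, $\cK_\alpha(r(t))$, and $\cK(T)$, and the cancellation of the $t$-scale and $\cK_\alpha$-scale terms is precisely what Assumption~\hyperref[ass:Kalpha]{$\cK_\alpha$} is designed to deliver (and which is automatic in case~\ref{item:1} since $\rho_1$ converges there, so $\cK_\alpha$ converges, cf.\ the remark after~\eqref{ass:Kalphas}). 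Verifying that the residual error is $o(\cK(T))$ further requires Assumption~\hyperref[ass:K]{$\cK$} and part~(d) of Lemma~\ref{lemma:func}. A secondary technical point is the uniformity over $s\in F_{p,t}$: replacing the lifetime constraint by $L^{(v)}>t+p\cK(T)-\sigma_v$ adds only an additive $O(p\cK(T))$ correction in the tilted moderate deviation, which is absorbed into the $\eps$-slack by taking $p\ll\eps$.
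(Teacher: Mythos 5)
Your overall skeleton (single-individual estimate via Lemma~\ref{lemma:survdeg}, Lemma~\ref{lemma:Dtail}, Proposition~\ref{prop:mdptilt}, Lemma~\ref{lemma:func} and Assumptions~\hyperref[ass:K]{$\cK$}, \hyperref[ass:Kalpha]{$\cK_\alpha$}; first-moment bounds over birth-time slabs of width $\Theta(\cK(T))$; an existence argument in the optimal window; Proposition~\ref{prop:oldage} to cut off times before $O_s^\cont$; uniformity over $F_{p,t}$ by monotonicity and a small $p$) is the same as the paper's (Lemmas~\ref{lemma:optwin}--\ref{lemma:outoptwin}). But your existence step has a genuine gap. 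You replace the paper's argument by Paley--Zygmund for the count $N_t$, claiming that pairs whose most recent common ancestor is born after $s_*(t)-\eps\cK(T)$ ``contribute at most of the order of the first moment''. This is exactly the problematic case: if $u$ is a window-born individual and $w$ a window-born descendant of $u$, then the event that $u$ succeeds (has $\geq k$ children fast) is positively correlated with $w$ being born at all, and a crude bound on the number of window-born descendants of a successful $u$ involves $\varphi_1^{-1}(T)$, which can be polynomially large in $t$ while $\cK(T)$ (and hence your first moment $e^{\Theta(\eps)\cK(T)}$) may grow much more slowly than $\log t$. Making this cross term small requires a tilted analysis of the early part of the reproduction process conditioned on the success event, which you do not supply. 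The paper avoids the second moment altogether: it conditions on $\cF$ at the window start and on the number $|\cB(\cdot,\cdot)|$ of births in a sub-window (controlled by Corollary~\ref{cor:growth}), and uses that, given the birth times, the success event of each window-born individual depends only on its own i.i.d.\ pair $(\cR^{(v)},L^{(v)})$, so that the probability of no success is at most $\P{|\cB|\leq m}+(1-q)^m\leq \P{|\cB|\leq m}+e^{-mq}$. You should either adopt that route or prove the missing conditional estimate.

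A second, smaller but still substantive, gap is the treatment of individuals born far after the optimal window. Your key estimate for $\log q(k,\sigma,s)$ is stated ``uniformly in $(x_1,x_2)$ ranging over compact subsets of $\R$'', but the degree upper bound requires controlling birth times all the way up to $t$, i.e.\ $\Theta(t/\cK(T))$ slabs with $x_1$ unbounded; there the deviation of $S_k$ is no longer in the moderate-deviation regime and Proposition~\ref{prop:mdptilt} does not apply, and since $\cK(T)$ may be $o(\log t)$ a union bound with only an $e^{-c\cK(T)}$ per-cell gain does not close. The paper's Lemma~\ref{lemma:outoptwin} handles this with a Chernoff bound with optimised parameter $\theta=(j-1-p)\cK(t^*)/\varphi_2(k)$, producing an exponent that grows quadratically in the slab index $j$ so that the sum over the unboundedly many slabs converges, and it uses the uniform-in-time bound \eqref{eq:supB} (rather than a slab-by-slab application of \eqref{eq:ubB}) for the birth counts; the paper also needs the truncation $\wt H$ for slabs where the moderate-deviation probability is vacuous. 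Your ``geometric summation'' gloss, and the extra partition of degrees into bands (unnecessary once you use monotonicity of the event in $k$), do not address these points. Finally, a cosmetic slip: with your parametrisation relative to $s_*(t)$ and $k_*(t)$, the quadratic is maximised at $(x_1,x_2)=(0,0)$, not at $(\tfrac{\lambda^*+d^*}{2},\tfrac{\lambda^*-d^*}{2})$.
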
 

\begin{remark}
	When setting $s=t$ in the above result, we obtain convergence to zero of the quantities in the absolute values. Similar to Remark~\ref{rem:improvetoas}, it may be possible to strengthen this to almost sure convergence if $\cK$ grows sufficiently fast. However, this again requires stronger results compared to those in Corollary~\ref{cor:growth}. \ensymboldremark
\end{remark} 

\begin{remark}
	Proposition~\ref{prop:It} also provides a precise asymptotic expansion for the birth-time of the individual with the largest offspring when $d^*\geq R$, i.e.\ in the \emph{rich die young regime}. However, we prove a (weaker) lower bound for $I^\cont_t$ in this regime that holds in greater generality in Section~\ref{sec:maxrdy}. \ensymboldremark
\end{remark}

\begin{remark}
	In Case~\ref{item:1} Assumption~\eqref{ass:A2} is not satisfied, i.e.\ when $\rho_1$ converges. By definition $\alpha\equiv \rho_1$ in this case. As a result, $\cK_\alpha$ converges in this case. Since $\cK$ diverges by Assumption~\eqref{ass:varphi2}, the results in Proposition~\ref{prop:It} clearly hold when one omits the term $\cK_\alpha(r(t))$. Similarly and related to Remark~\ref{rem:d0ass}, when Assumption~\eqref{ass:A2} is not satisfied but $\lim_{i\to\infty}d(i)=0$ such that $\alpha=\rho_1=o(\varphi_2)$, the condition that Assumption~\hyperref[ass:Kalpha]{$\cK_\alpha$} is satisfied can be omitted.\ensymboldremark
\end{remark}

We split the proof of Proposition~\ref{prop:It} into several parts. Let us first outline the proof strategy. 

We define the functions $H,\wt H\colon [0, \infty)\to\R$ as
\be \label{eq:H}
H(x):=-(x+d^*)+\sqrt{2(\lambda^*+d^*)x}\qquad\text{and}\qquad \wt H(x):=\begin{cases} 
	H(x) &\mbox{if } x\geq \frac{(d^*)^2}{2(\lambda^*+d^*)},\\ 
	-\frac{(d^*)^2}{2(\lambda^*+d^*)}&\mbox{if } x<\frac{(d^*)^2}{2(\lambda^*+d^*)}.
\end{cases}
\ee 
It is readily checked that $H$ and $\wt H$ have a unique maximum at $u^*>0$, with 
\be 
u^*:=\frac{\lambda^*+d^*}{2}, \quad\text{and}\quad \ H(u^*)=\wt H(u^*)=\frac{\lambda^*-d^*}{2}. 
\ee 
See also Figure~\ref{fig:Hplot}.
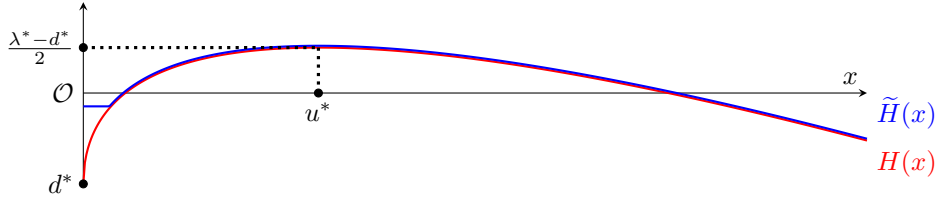
\begin{figure}[h]
	\centering 
	\begin{tikzpicture}
		\begin{axis}[
			axis lines=middle,
			width=12cm,
			height=4cm,
			xmin=0, xmax=5,
			ymin=-1, ymax=1,
			samples=500,
			domain=0:5,
			clip=false,
			legend pos=north east,
			xlabel={$x$},
			xtick=\empty,
			ytick=\empty
			]
			\addplot[red, thick] {-(x+1) + sqrt(6*x)} node[below right] {$H(x)$};
			\addplot[blue, thick] {(x<=3/2)*max(-(x+1) + sqrt(6*x), -1/6)+(x>3/2)*(-(x+1)+sqrt(6*x)) + 0.02} node[above right] {$\widetilde{H}(x)$};

			\draw[black, fill=black] (axis cs:0,0.5) circle[radius=1.5pt] node[left] {$\frac{\lambda^*-d^*}{2}$};
			\draw[black, fill=black] (axis cs:1.5,0) circle[radius=1.5pt] node[below] {$u^*$};
			\draw[black, fill=black] (axis cs:0,0) node[left] {$\mathcal{O}$}; 
			\draw[black, fill=black] (axis cs:0,-1) circle[radius=1.5pt] node[left] {$d^*$}; 
			
			\draw[black, dotted,very thick] (axis cs:1.5,0) -- (axis cs:1.5,0.5);
			\draw[black, dotted,very thick] (axis cs:0,0.5) -- (axis cs:1.5,0.5);
		\end{axis}
		
	\end{tikzpicture}
	\caption{The functions $H$ and $\wt H$, with their unique maximum $u^*$.}\label{fig:Hplot}
\end{figure}

We then partition the time interval $[0,t]$ into a variety of intervals. Recall the functions $\cK_\alpha$ and $\cK$, defined in~\eqref{eq:Ks}. First, for $\eps>0$ small, we omit the interval
\be 
\Big[0,\frac{d^*}{\lambda^*+d^*}t+\frac{1}{\lambda^*+d^*}\cK_\alpha(r(t))-\eps\cK\Big(\frac{\lambda^*}{\lambda^*+d^*}t\Big)\Big], 
\ee 
since we know by Proposition~\ref{prop:oldage} that no individual born in this interval is alive at time $s$, for any $s\in F_{p,t}$ with high probability. Then, we consider
\begin{align} 
	U^{\e}_t&:=\Big[\frac{d^*}{\lambda^*+d^*}t+\frac{1}{\lambda^*+d^*}\cK_\alpha(r(t))-\eps\cK\Big(\frac{\lambda^*}{\lambda^*+d^*}t\Big),\frac{d^*}{\lambda^*+d^*}t+\frac{1}{\lambda^*+d^*}\cK_\alpha(r(t))\Big], 
	\intertext{and, for some large integer $A\in\N$ to be determined,}
	U^{\ell}_t(A)&:=\Big [\frac{d^*}{\lambda^*+d^*}t+\frac{1}{\lambda^*+d^*}\cK_\alpha(r(t))+A\cK\Big(\frac{\lambda^*}{\lambda^*+d^*}t\Big),t\Big]
\end{align}
as two intervals where individuals with `too few'  offspring are born. We further partition these intervals into smaller parts later. What remains is the interval 
\be \label{eq:mainint}
\Big[\frac{d^*}{\lambda^*+d^*}t+\frac{1}{\lambda^*+d^*}\cK_\alpha(r(t)),\frac{d^*}{\lambda^*+d^*}t+\frac{1}{\lambda^*+d^*}\cK_\alpha(r(t))+A\cK\Big(\frac{\lambda^*}{\lambda^*+d^*}t\Big)\Big]. 
\ee 
We  partition $[0,u^*-\eps)\cup[u^*+\eps,A)$ into intervals $[x_i,x_{i+1})$ of length $\zeta<u^*-\eps$ for $i\in \mathbb I$ (with $\mathbb I$ an index set depending on $A$ and $\eps$), and let $t^*:=\frac{\lambda^*}{\lambda^*+d^*}t$ for ease of writing. Then, we partition the interval in~\eqref{eq:mainint} into 
\be
\wt U_{t,i}:=\Big[\frac{d^*}{\lambda^*+d^*}t+\frac{1}{\lambda^*+d^*}\cK_\alpha(r(t))+x_i\cK(t^*),\frac{d^*}{\lambda^*+d^*}t+\frac{1}{\lambda^*+d^*}\cK_\alpha(r(t))+x_{i+1}\cK(t^*)\Big)\quad \text{for }i\in \I,
\ee 
and 
\be   U_t:=\Big[\frac{d^*}{\lambda^*+d^*}t+\frac{1}{\lambda^*+d^*}\cK_\alpha(r(t))+(u^*-\eps)\cK(t^*),\frac{d^*}{\lambda^*+d^*}t+\frac{1}{\lambda^*+d^*}\cK_\alpha(r(t))+(u^*+\eps)\cK(t^*)\Big),
\ee
see Figure~\ref{fig:part}.

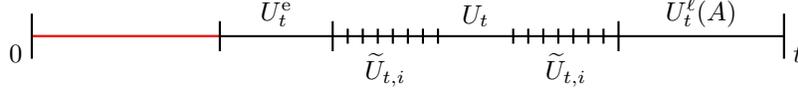
\begin{figure}[h]
	\centering
	\begin{tikzpicture}
		\draw[thick] (0,0) -- (10,0);
		\draw[thick,red] (0,0) -- (2.5,0);
		
		\draw[thick] (0,-0.3) -- (0,0.3);
		\draw[thick] (10,-0.3) -- (10,0.3);
		
		\foreach \x in {2.5,4,7.8} {
			\draw[thick] (\x,-0.2) -- (\x,0.2);
		}
		
		\foreach \x in {4.2,4.4,4.6,4.8,5,5.2,5.4,6.4,6.6,6.8,7,7.2,7.4,7.6} {
			\draw[thick] (\x,-0.1) -- (\x,0.1);
		}
		
		\node[below left] at (0,0) {$0$};
		\node[below right] at (10,0) {$t$};
		\node[above] at (3.25,0) {$U^{\mathrm{e}}_t$};
		\node[above] at (8.9,0) {$U^{\mathrm{\ell}}_t(A)$};
		\node[above] at (5.9,0) {$U_t$};
		\node[below] at (4.7,-0.1) {$\wt U_{t,i}$};
		\node[below] at (7.1,-0.1) {$\wt U_{t,i}$};			
	\end{tikzpicture}
	\caption{Partition of the interval $[0,t]$. The red interval is the part we omit. The smallest intervals are the $\wt U_{t,i}$ with $i\in\I$.}\label{fig:part}
\end{figure} 

The aim of the proof is to show that the probability that there exists an individual who, at time $s$, is alive, has the largest offspring, and has a birth-time in $U_t$, for all $s\in F_{p,t}$, converges to one, for some sufficiently small $p>0$ (with $F_{p,t}$ as in~\eqref{eq:Ft}). To make this precise, we let 
\begin{itemize}
	\item $D^{\max}_{u^*-\eps,u^*+\eps}(s)$ denote the largest offspring of individuals born in $U_t$ and alive at time $s$,
	\item $D^{\max}_{x_i,x_{i+1}}(s)$ denote the largest offspring among individuals born in $\wt U_{t,i}$ and alive at time $s$,
	\item $D^{\max}_{\eps,t}(s)$ denote the largest offspring among individuals born in $U^\e_t$ and alive at time $s$,
	\item $D^{\max}_{A,t}(s)$ denote the largest offspring among individuals born in $U^\ell_t(A)$ and alive at time $s$.
\end{itemize} 
Then, we take 
\be \label{eq:xi}
\xi=\xi(\eps):=\frac12\min\{H(u^*)-H(u^*-\eps),H(u^*)-H(u^*+\eps)\}=\frac14 \frac{\eps^2}{\lambda^*+d^*}-\cO(\eps^3).
\ee 
As a result, we have $H(u^*)-\xi>\wt H(x_{i+1})+\xi$ for all $i\in \mathbb I$ when $\eps$ is sufficiently small, since $-\frac{(d^*)^2}{2(\lambda^*+d^*)}<\frac{\lambda^*-d^*}{2}=H(u^*)$. As a result,
\be \ba 
\mathbb P_\cS(I_s^\cont{}&\in U_t\text{ for all }s\in  F_{p,t})\\ 
\geq \mathbb P_\cS\big({}& D^{\max}_{u^*-\eps,u^*+\eps}(s)\geq \lceil \varphi_1^{-1}\big(t^*-\tfrac{1}{\lambda^*+d^*}\cK_\alpha(r(t))+(H(u^*)-\xi )\cK(t^*)\big)\rceil, \\
{}& D^{\max}_{x_i,x_{i+1}}(s)\leq \lceil\varphi_1^{-1}\big(t^*-\tfrac{1}{\lambda^*+d^*}\cK_\alpha(r(t))+(\wt H(x_{i+1})+\xi)\cK(t^*)\big)\rceil \text{ for all }i\in\mathbb I,\\ 
{}& D^{\max}_{\eps,t}(s)\leq \lceil\varphi_1^{-1}\big( t^*-\tfrac{1}{\lambda^*+d^*}\cK_\alpha(r(t))-(d^*-\xi)\cK(t^*)\big)\rceil,\\
{}& D^{\max}_{A,t}(s)\leq\lceil\varphi_1^{-1}\big(t^*-\tfrac{1}{\lambda^*+d^*}\cK_\alpha(r(t))  \big)\rceil,\text{ for all }s\in F_{p,t}\big).
\ea\ee 
Indeed, the event in the lower bound guarantees that $I_s^\cont\in U_t$ for all $s\in F_{p,t}$, since $H$ has a unique maximum at $u^*$ and $\wt H(x_{i+1})+\xi<H(u^*)-\xi$. A union bound thus yields that 
\be \ba \label{eq:probsplit4}
\mathbb P_\cS{}&(\{I_s^\cont\in U_t\text{ for all }s\in  F_{p,t}\}^c)\\ 
\leq{}& \mathbb P_\cS\big(\{D^{\max}_{u^*-\eps,u^*+\eps}(s)\geq \lceil\varphi_1^{-1}\big(t^*-\tfrac{1}{\lambda^*+d^*}\cK_\alpha(r(t))+(H(u^*)-\xi)\cK(t^*)\big)\rceil \text{ for all }s\in F_{p,t}\}^c) \\
&+\sum_{i\in \mathbb I}\mathbb P_\cS\big(\{ D^{\max}_{x_i,x_{i+1}}(s)\leq\lceil \varphi_1^{-1}\big(t^*-\tfrac{1}{\lambda^*+d^*}\cK_\alpha(r(t))+(\wt H(x_{i+1})+\xi)\cK(t^*)\big)\rceil\text{ for all }s\in F_{p,t}\}^c\big)\\ 
&+ \mathbb P_\cS\big(\{D^{\max}_{\eps,t}(s)\leq \lceil\varphi_1^{-1}\big( t^*-\tfrac{1}{\lambda^*+d^*}\cK_\alpha(r(t))-(d^*-\xi)\cK(t^*)\big)\rceil \text{ for all }s\in F_{p,t}\}^c\big)\\
&+\mathbb P_\cS\big(\{ D^{\max}_{A,t}(s)\leq \lceil\varphi_1^{-1}\big(t^*-\tfrac{1}{\lambda^*+d^*}\cK_\alpha(r(t))\big)\rceil \text{ for all }s\in F_{p,t}\}^c\big).
\ea \ee 
We show in the next three lemmas that each term on the right-hand side  converges to zero as $t$ tends to infinity. As $\mathbb I$ is a finite set, this proves the first result in Proposition~\ref{prop:It}.

\begin{lemma}[Lower bound on offspring of individuals in the the optimal window]\label{lemma:optwin}
	Suppose that $b$ and $d$ satisfy the conditions in Proposition~\ref{prop:It} and recall $\xi$ from~\eqref{eq:xi}. There exists $\eps_0>0$ such that for all $\eps\in(0,\eps_0)$ there exist  $p=p(\eps)>0$ such that
	\be 
	\lim_{t\to\infty}\mathbb P_\cS \big(D^{\max}_{u^*-\eps,u^*+\eps}(s)\geq \lceil \varphi_1^{-1}\big(t^*-\tfrac{1}{\lambda^*+d^*}\cK_\alpha(r(t))+(H(u^*)-\xi)\cK(t^*)\big)\rceil \text{ for all }s\in F_{p,t})=1.
	\ee
\end{lemma}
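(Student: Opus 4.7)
The plan is a first-moment-plus-independence argument that makes the heuristic in Section~\ref{sec:heur} rigorous. Set $t^*:=\tfrac{\lambda^*}{\lambda^*+d^*}t$, $t_\pm:=t\pm p\cK(t^*)$, $s^*:=\tfrac{d^*}{\lambda^*+d^*}t + \tfrac{1}{\lambda^*+d^*}\cK_\alpha(r(t)) + u^*\cK(t^*)$, and denote by $k(t)$ the offspring target of the lemma. Since $\deg^{(v)}(\cdot)$ is non-decreasing in age and an individual alive at $t_+$ is alive at every $s\in F_{p,t}$, it suffices to show that, with $\mathbb P_\cS$-probability tending to one, there exists $v$ with $\sigma_v$ in a narrow sub-band $W_t:=[s^*-\zeta\cK(t^*),s^*+\zeta\cK(t^*)]\subset U_t$ satisfying $D^{(v)}\geq k(t)$, $S^{(v)}_{k(t)}\leq t_- -\sigma_v$, and $S^{(v)}_{D^{(v)}+1}> t_+ -\sigma_v$. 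By Corollary~\ref{cor:deglifedistr} these constraints depend only on the i.i.d.\ characteristic $\cR^{(v)}$.

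First, I would derive a uniform lower bound on the one-vertex probability $q(t)$ of these three constraints for $\sigma_v\in W_t$. In case~\ref{item:2} with $d^*>0$, fix $\eta>0$ small and $I_0\in\N_0$ with $d(i)\leq d^*+\eta$ for $i\geq I_0$. Then Lemma~\ref{lemma:survdeg}$(ii)$ factors the joint event as $\e^{-(d^*+\eta)(t_+ - t_-)}\Pv(D\geq k(t))\cdot \mathbb E[\ind_{\{S_{k(t)}\leq t_- -\sigma_v\}}\e^{(d^*+\eta)(S_{k(t)}-(t_- -\sigma_v))}]$, the first factor being close to $1$ for small $p$. Use Lemma~\ref{lemma:Dtail} for $\Pv(D\geq k(t))$, write $\rho_1=d^*\varphi_1+\alpha$, invoke Lemma~\ref{lemma:func}(c) to get $\rho_2\sim(d^*)^2\varphi_2$, and use Assumptions~\hyperref[ass:K]{$\cK$} and~\hyperref[ass:Kalpha]{$\cK_\alpha$} together with Lemma~\ref{lemma:func}(d) to identify $\alpha(k(t))=\cK_\alpha(r(t))+o(\cK(t^*))$. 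Apply Proposition~\ref{prop:mdptilt} with $\theta=d^*+\eta$ and $y=z=\lambda^*-\xi+p+o(1)$ for the tilted expectation. Collecting and sending $\eta\downarrow 0$,
\be
q(t)\geq \exp\Big(-d^* t^* - \tfrac{\lambda^*}{\lambda^*+d^*}\cK_\alpha(r(t)) + \big[-\tfrac{\lambda^*(\lambda^*+d^*)}{2}+(\lambda^*+d^*)\xi+\cO(p+\xi^2)+o(1)\big]\cK(t^*)\Big),
\ee
uniformly for $\sigma_v\in W_t$. In case~\ref{item:1} ($d^*:=0$, $\cK_\alpha$ bounded), Lemma~\ref{lemma:survdeg}$(iii)$ gives the same bound with $d^*=0$ by an analogous but simpler computation.

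Next, by~\eqref{eq:lbB} of Corollary~\ref{cor:growth} applied with the endpoints of $W_t$ and $u=\delta\cK(t^*)$, the number $N$ of births in $W_t$ satisfies $\Ps{N< \e^{\lambda^* s^* -\lambda^*\zeta\cK(t^*)-\delta\cK(t^*)}}\to 0$ for any $\delta>0$. Given $\cF_{s^* -\zeta\cK(t^*)}$, $N$, and the birth times in $W_t$, the characteristics $(\cR^{(v)},L^{(v)})$ of these newborns are i.i.d., so the one-vertex events are mutually independent and
\be
\Ps{\text{no suitable }v}\leq \mathbb E_{\cS}\big[(1-q(t))^N\big]+o(1)\leq \mathbb E_{\cS}\big[\e^{-Nq(t)}\big]+o(1).
\ee
Using the identities $\lambda^*\tfrac{d^*}{\lambda^*+d^*}t=d^* t^*$ and $\lambda^* u^*=\tfrac{\lambda^*(\lambda^*+d^*)}{2}$, all $\Theta(t)$ and $\Theta(\cK_\alpha(r(t)))$ contributions cancel in $\log(Nq(t))$, leaving the $\cK(t^*)$-coefficient $(\lambda^*+d^*)\xi - \cO(\zeta+\delta+p+\eta+\xi^2)+o(1)$. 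Choosing first $\eps$ (which fixes $\xi=\xi(\eps)>0$) and then $\zeta,\delta,p,\eta$ small relative to $\xi$, this coefficient is at least some $c>0$, so $Nq(t)\to\infty$ and $\e^{-Nq(t)}\to 0$ on $\cS$.

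The main technical obstacle is ensuring the clean cancellation of the $\cK_\alpha$-contributions against the growth factor $\e^{\lambda^* s^*}$, which is precisely the role of the function $r(t)$ in Assumption~\hyperref[ass:Kalpha]{$\cK_\alpha$}. A secondary subtlety is absorbing the ``for all $s\in F_{p,t}$'' clause, which reduces to the endpoint pair $(t_-,t_+)$ by monotonicity of $\deg^{(v)}$ and of the alive set and is then accommodated by choosing $p$ small relative to $\xi$.
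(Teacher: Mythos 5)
Your proposal is correct and follows essentially the same route as the paper's proof: restrict to a narrow birth-time band around $s^*$, reduce the "for all $s\in F_{p,t}$" clause to the two endpoints by monotonicity, lower-bound the single-vertex probability via Lemma~\ref{lemma:survdeg}, Lemma~\ref{lemma:Dtail}, Proposition~\ref{prop:mdptilt} and Assumptions~\hyperref[ass:K]{$\cK$}/\hyperref[ass:Kalpha]{$\cK_\alpha$}, lower-bound the number of births in the band via Corollary~\ref{cor:growth}, and conclude through conditional independence and $(1-q)^N\leq\e^{-Nq}$ with the same cancellation of leading terms and the same ordering of parameter choices. The only differences (a symmetric band of half-width $\zeta$ instead of the paper's one-sided band of width $c\eps^2\cK(t^*)$, and taking $y=z$ in the tilted MDP) are cosmetic and are correctly absorbed into the $\cO(\zeta+p+\eta+\delta)$ bookkeeping.
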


The lemma shows that there exist individuals born in the `optimal window' $U_t$ that are alive and have a large offspring around time $t$. The next lemma provides upper bound for the size of the offspring of individuals born close to the optimal window (i.e.\ in a sub-optimal window), as well as for individuals in the optimal window. 

\begin{lemma}[Upper bound on offspring of individuals in the (sub-)optimal window]\label{lemma:suboptwin}
	Suppose $b$ $\,$ and $d$ satisfy the conditions in Proposition~\ref{prop:It} and recall $\xi$ from~\eqref{eq:xi}. There exist $\zeta=\zeta(\eps),p=p(\eps)>0$ such that for all $i\in\I$,
	\be 
	\lim_{t\to\infty}\mathbb P_\cS\big( D^{\max}_{x_i,x_{i+1}}(s)\leq \lceil\varphi_1^{-1}\big(t^*-\tfrac{1}{\lambda^*+d^*}\cK_\alpha(r(t))+(\wt H(x_{i+1})+\xi)\cK(t^*)\big)\rceil\text{ for all }s\in F_{p,t}\big)=1.
	\ee 
	Similarly, there exists $\eps_0>0$ such that for all $\eps\in(0,\eps_0)$ there exists $p=p(\eps)>0$ such that 
	\be 
	\lim_{t\to\infty}\mathbb P_\cS \big(D^{\max}_{u^*-\eps,u^*+\eps}(s)\leq\lceil \varphi_1^{-1}\big(t^*-\tfrac{1}{\lambda^*+d^*}\cK_\alpha(r(t))+(H(u^*)+\xi)\cK(t^*)\big)\rceil \text{ for all }s\in F_{p,t})=1,
	\ee 
	Finally, exists $p(\eps)>0$ such that 
	\be 
	\lim_{t\to\infty}\mathbb P_\cS\big(D^{\max}_{\eps,t}(s)\leq \lceil\varphi_1^{-1}\big( t^*-\tfrac{1}{\lambda^*+d^*}\cK_\alpha(r(t))-(d^*-\xi)\cK(t^*)\big)\rceil \text{ for all }s\in F_{p,t}\big)=1.
	\ee 
\end{lemma}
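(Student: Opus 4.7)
For each of the three claims the strategy is a first-moment / union-bound argument. The key reduction is that for an individual $v$ with birth time $\sigma_v$, the existence of some $s \in F_{p,t}$ at which $v$ is alive with $\deg^{(v)}(s)>k$ is equivalent to
\begin{equation*}
D^{(v)}>k,\qquad S_k^{(v)}\leq \sup F_{p,t}-\sigma_v,\qquad S_{D+1}^{(v)}>\inf F_{p,t}-\sigma_v,
\end{equation*}
since offspring counts are non-decreasing and lifetimes are deterministic given the underlying exponentials. By Corollary~\ref{cor:growth}~\eqref{eq:ubB}, for any fixed $\delta>0$ we have $|\cB(W)|\leq \exp(\lambda^*\sup W+\delta \cK(t^*))$ with high probability for each of the three windows $W\in\{\wt U_{t,i},U_t,U^{\mathrm{e}}_t\}$. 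Taking the union bound over $v\in\cB(W)$ and the supremum of the conditional probability over $\sigma_v\in W$ reduces each claim to showing that a single exponential rate is uniformly negative.

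For $\wt U_{t,i}$ and $U_t$ (the sub-optimal and optimal windows in the `rich are old' regime), I drop the survival constraint. Parameterising $\sigma=\tfrac{d^*}{\lambda^*+d^*}t+\tfrac{1}{\lambda^*+d^*}\cK_\alpha(r(t))+x\cK(t^*)$ and $\varphi_1(k)=t^*-\tfrac{1}{\lambda^*+d^*}\cK_\alpha(r(t))+y\cK(t^*)$, I bound $\P{D>k}$ via Lemma~\ref{lemma:Dtail} (using $\rho_2(k)=((d^*)^2+o(1))\varphi_2(k)$ from $(c)$ of Lemma~\ref{lemma:func}) and $\P{S_k\leq\sup F_{p,t}-\sigma}$ via Lemma~\ref{lemma:mdp}, together with Assumption~\hyperref[ass:K]{$\cK$} to replace $\varphi_2(k)$ by $\cK(t^*)$. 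The main $t$-terms cancel identically and the $\cK_\alpha(r(t))$-terms cancel via Assumption~\hyperref[ass:Kalpha]{$\cK_\alpha$}, leaving a residual $\cK(t^*)$-coefficient of
\begin{equation*}
g(x,y,\eta):=\lambda^* x-d^* y-\tfrac12(d^*)^2-\tfrac12(y+x-\eta)_+^2,\qquad \eta\in[-p,p].
\end{equation*}
Using $H(u^*)+u^*=\lambda^*$ and the definition of $\xi$ in \eqref{eq:xi}, which guarantees $\wt H(x_{i+1})+\xi < H(u^*)-\xi$ outside the optimal window, a direct optimisation shows $\sup_{x,\eta}g\leq -c(\eps)<0$ provided $\zeta$ and $p$ are chosen sufficiently small relative to $\xi$; a convenient choice is $p<(\lambda^*+d^*)\xi/\lambda^*$, giving $p=\Theta(\eps^2)$. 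The argument distinguishes interior maximisers (where $\sup g=(\lambda^*+d^*)[H(u^*)-y]+\lambda^* p$) from boundary maximisers, and Case A ($\wt H=H$) from Case B (flat region of $\wt H$), but in every sub-case the exponent is negative.

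For $U^{\mathrm{e}}_t$ the pure first-moment bound is positive, because the offspring threshold lies below the typical level at such early birth times. Instead, I decompose using the independence of $(E_i)$ and $D$:
\begin{equation*}
\P{D>k,\,S_k\leq s_R-\sigma,\,S_{D+1}>s_L-\sigma}=\sum_{\ell>k}\P{D=\ell}\,\P{S_{\ell+1}>s_L-\sigma},
\end{equation*}
estimate $\P{D=\ell}$ via Lemma~\ref{lemma:Dtail} and the upper-tail version of Lemma~\ref{lemma:mdp} to $\P{S_{\ell+1}>s_L-\sigma}$ (an upper-tail deviation for $\ell$ near $k^{\mathrm{thr}}$), and optimise the resulting Lagrangian over $\ell$. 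The unconstrained optimum reproduces the rate $-d^*\tau-\cK_\alpha(\tau)$ from Lemma~\ref{lemma:lifetime2nd}, but lies at $\varphi_1(\ell)=\tau-d^*\cK$, which is strictly below $\varphi_1(k^{\mathrm{thr}})$, so the constrained sum is dominated by $\ell$ close to $k^{\mathrm{thr}}$ and incurs an additional quadratic penalty of $\tfrac12(p+\xi)^2\cK(t^*)(1+o(1))$. After the same cancellations as above, the residual $\cK(t^*)$-coefficient becomes $(\lambda^*+d^*)x+d^*p-\tfrac12(p+\xi)^2+\delta+o(1)$, which is maximised at $x=0$ and is negative for $p$ of order $\xi^2/d^*=\Theta(\eps^4)$.

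The main obstacle is the exponent computation in all three cases: verifying negativity uniformly across interior vs boundary maximisers, across the two regimes of $\wt H$, and (for $U^{\mathrm{e}}_t$) the sum-over-$\ell$ analysis that effectively reconstructs the third-order lifetime asymptotics, and then choosing $\xi,\zeta,p,\delta$ simultaneously consistent with all three bounds; taking $p$ to be the minimum of the three admissible ranges suffices. Finally Markov's inequality converts the expected-count bound into a high-probability statement, which, combined with the birth-count bound, yields the result.
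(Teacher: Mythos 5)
Your treatment of the two windows $\wt U_{t,i}$ and $U_t$ is essentially the paper's own proof: a union bound over the at most $\exp(\lambda^*\sup W+\delta\cK(t^*))$ births in the window (via Corollary~\ref{cor:growth}), dropping the survival constraint, bounding $\P{D\geq k}$ with Lemma~\ref{lemma:Dtail} and the lower tail of $S_k$ with Lemma~\ref{lemma:mdp}, cancelling the leading $t$- and $\cK_\alpha(r(t))$-terms via Assumptions~\hyperref[ass:K]{$\cK$} and~\hyperref[ass:Kalpha]{$\cK_\alpha$}, and checking negativity of the residual $\cK(t^*)$-coefficient using the margin $\xi$; your function $g$ with the $(\cdot)_+$ in the quadratic is exactly the paper's Case $(a)$/$(b)$ split, and your choice $\zeta,p=\Theta(\eps^2)$ matches the paper's $2p+\zeta<\xi$. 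Where you genuinely depart from the paper is the early window $U^{\mathrm e}_t$: the paper dismisses this case in one sentence (``follow the same proof with $x_i=-\eps$, $x_{i+1}=0$'' and the new $k$), but you are right that when $d^*>0$ the pure first-moment bound without the survival constraint is useless there, since the threshold $\varphi_1(k)=t^*-\tfrac{1}{\lambda^*+d^*}\cK_\alpha(r(t))-(d^*-\xi)\cK(t^*)$ lies below the time available to early-window individuals, so the $S_k$-constraint costs nothing and the birth count times $\P{D\geq k}$ has $\cK(t^*)$-exponent $\tfrac12(d^*)^2-d^*\xi+o(1)>0$. Retaining $S^{(v)}_{D^{(v)}+1}>\inf F_{p,t}-\sigma_v$ and charging for survival is indeed what is needed; your route (decompose over the value of $D$ and use an upper-tail moderate deviation for the lifetime, effectively re-deriving the third-order asymptotics of Lemma~\ref{lemma:lifetime2nd}) is equivalent to applying part $(i)$ of Lemma~\ref{lemma:survdeg} together with Proposition~\ref{prop:mdptilt} as in the proof of Lemma~\ref{lemma:optwin}, and both yield the residual coefficient $(\lambda^*+d^*)x+d^*p-\tfrac12(p+\xi)^2+\delta+o(1)$, negative once $p,\delta=\cO(\xi^2)$; since the lemma only asserts existence of some $p(\eps)>0$, the smaller admissible range (and taking the minimum over the three claims) is harmless, and for $d^*=0$ the paper's shortcut does suffice since the exponent then reduces to $-\tfrac12\xi^2$ at the right edge even without the survival term. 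In short, your proof is correct, coincides with the paper's on the first two claims, and for the third claim supplies precisely the survival-cost step that the paper's own proof glosses over.
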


\begin{remark} 
	The first result in Lemma~\ref{lemma:suboptwin} combined with~\eqref{eq:probsplit4} implies the second result of Proposition~\ref{prop:It}. \ensymboldremark
\end{remark}

Finally, the third lemma shows that individuals that are born far away from the optimal window have small offspring compared to those in the optimal window. 

\begin{lemma}[Individuals far outside the optimal window have few offspring]\label{lemma:outoptwin}
	Suppose that $b$ and $d$ satisfy the conditions in Proposition~\ref{prop:It}.  There exist $A>0$ large and $p>0$ small such that
	\be 
	\lim_{t\to\infty}\mathbb P_\cS\big( D^{\max}_{A,t}(s)\leq\lceil \varphi_1^{-1}\big(t^*-\tfrac{1}{\lambda^*+d^*}\cK_\alpha(r(t))\big)\rceil \text{ for all }s\in F_{p,t}\big)=1.
	\ee 
\end{lemma}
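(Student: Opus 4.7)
The plan is a first-moment argument. The key observation is that if $D^{\max}_{A,t}(s) > k$ with $k := k(t) := \lceil\varphi_1^{-1}(t^* - \tfrac{1}{\lambda^*+d^*}\cK_\alpha(r(t)))\rceil$ for some $s \in F_{p,t}$, then, setting $T := t + p\cK(t^*)$, there exists $v \in \cU_\infty$ with $\sigma_v \in U^\ell_t(A)$, $D^{(v)} \geq k+1$ and $S^{(v)}_{k+1} \leq T - \sigma_v$. It therefore suffices to show that the expected number of such individuals vanishes as $t\to\infty$; since $k$ and $k+1$ have identical leading-order asymptotics, I shall write $k$ throughout and the argument goes through unchanged.

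First I would partition $U^\ell_t(A) \cap [0,T]$ into sub-intervals $J_j := [\sigma^{(t)}_j, \sigma^{(t)}_{j+1})$ of width $\cK(t^*)$, with $\sigma^{(t)}_j := \tfrac{d^*}{\lambda^*+d^*}t + \tfrac{1}{\lambda^*+d^*}\cK_\alpha(r(t)) + (A+j)\cK(t^*)$, $j = 0,1,\ldots,j_{\max}$. Fix $\eps > 0$; by~\eqref{eq:supB} there exists $M=M(\eps)$ such that the event $\cG_M := \{\sup_{u\geq 0}|\cB(0,u)|\e^{-\lambda^* u} \leq M\}$ has probability at least $1-\eps$. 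On $\cG_M$ one has $|\cB(J_j)| \leq M\e^{\lambda^* \sigma^{(t)}_{j+1}}$, and conditionally on the $\sigma$-algebra $\cF_{\sigma^{(t)}_{j+1}}$ generated by $\bp$ up to time $\sigma^{(t)}_{j+1}$, the future offspring point processes of individuals born in $J_j$ are i.i.d.\ copies of $\cR$, independent of $\cF_{\sigma^{(t)}_{j+1}}$. Using $T - \sigma_v \leq T - \sigma^{(t)}_j$ for $v \in \cB(J_j)$ and independence of $S_k$ and $D$,
\begin{equation*}
\mathbb{E}\!\left[\#\{v \in \cB(J_j): D^{(v)} \geq k,\, S^{(v)}_k \leq T - \sigma_v\} \,\big|\, \cG_M, \cF_{\sigma^{(t)}_{j+1}}\right] \leq M\e^{\lambda^* \sigma^{(t)}_{j+1}}\,\P{D \geq k}\,\P{S_k \leq T - \sigma^{(t)}_j}.
\end{equation*}

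Next I would estimate the two probabilities. Lemma~\ref{lemma:Dtail} combined with $(c)$ of Lemma~\ref{lemma:func} and Assumption~\hyperref[ass:Kalpha]{$\cK_\alpha$}, exactly as in the opening of the proof of Lemma~\ref{lemma:lifetime2nd}, yields
\begin{equation*}
\P{D \geq k} \leq \exp\!\Big(-d^* t^* - \tfrac{\lambda^*}{\lambda^*+d^*}\cK_\alpha(r(t)) - \tfrac{(d^*)^2}{2}\cK(t^*) + o(\cK(t^*))\Big).
\end{equation*}
Since $T - \sigma^{(t)}_j = \varphi_1(k) - (y_j - p)\cK(t^*) + \cO(1)$ with $y_j := A+j$, a Chernoff bound using $\log(1+x) \geq x - x^2/2$ (as in~\eqref{eq:sumineq}) gives
\begin{equation*}
\P{S_k \leq T - \sigma^{(t)}_j} \leq \exp\!\Big(-\tfrac{1}{2}(y_j - p)^2 (1 + o(1))\cK(t^*)\Big).
\end{equation*}
Inserting $\lambda^* \sigma^{(t)}_{j+1} = d^* t^* + \tfrac{\lambda^*}{\lambda^*+d^*}\cK_\alpha(r(t)) + \lambda^*(y_j+1)\cK(t^*)$ and multiplying, the exponent of the $j$th summand becomes
\begin{equation*}
\big[\lambda^*(y_j+1) - \tfrac{(d^*)^2}{2} - \tfrac{1}{2}(y_j - p)^2 + o(1)\big]\cK(t^*),
\end{equation*}
a concave quadratic in $y_j$ maximised at $y_j = \lambda^* + p$.

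Finally, I would choose $A > \lambda^* + p + 1$ so that the exponent is strictly decreasing in $j \geq 0$ with consecutive values differing by at least $\cK(t^*)(1+o(1))$; the sum $\sum_{j \geq 0}$ is then bounded by $(1+o(1))$ times its $j=0$ term. Choosing $A$ further so that $\lambda^*(A+1) - \tfrac{(d^*)^2}{2} - \tfrac{1}{2}(A-p)^2 < 0$ forces the bound to vanish as $t \to \infty$, since $\cK(t^*) \to \infty$. Applying Markov's inequality on $\cG_M$, together with $\P{\cG_M^c} < \eps$ and $\P{\cS} > 0$, and sending $t \to \infty$, then $M \to \infty$, then $\eps \to 0$, yields the claim. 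The main obstacle is tracking the $o(\cK(t^*))$ error terms uniformly in $j$ in the estimate of $\P{D \geq k}$, which requires the combined use of Assumptions~\hyperref[ass:K]{$\cK$} and~\hyperref[ass:Kalpha]{$\cK_\alpha$}; in case~\ref{item:1} of Proposition~\ref{prop:It} (where Assumption~\eqref{ass:A2} fails so that $\alpha \equiv \rho_1$ and $\rho_2$ are bounded), one sets $d^* = 0$, absorbs $\cK_\alpha(r(t)) = \cO(1)$ into the error, and the same quadratic structure in $y_j$ survives.
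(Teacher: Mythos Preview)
Your proposal is correct and follows essentially the same route as the paper's proof: both partition $U^\ell_t(A)$ into sub-intervals of width $\cK(t^*)$, work on the event $\{\sup_{u\ge 0}|\cB(0,u)|\e^{-\lambda^* u}\le M\}$, bound $\P{D\ge k}$ via Lemma~\ref{lemma:Dtail} and $\P{S_k\le\cdot}$ via an optimised Chernoff bound, and then choose $A$ so that the resulting quadratic in the interval index is negative and decreasing. One minor remark: your closing comment locates the uniformity difficulty in the estimate of $\P{D\ge k}$, but since $k$ is independent of $j$ that estimate is automatically uniform; the place where uniformity in $j$ actually matters is the $o(1)$ from $\cK(t^*)/\varphi_2(k)=1+o(1)$ inside the Chernoff bound, which is again uniform for the same reason.
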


It is clear that Lemmas~\ref{lemma:optwin}, \ref{lemma:suboptwin}, and~\ref{lemma:outoptwin}, combined with Proposition~\ref{prop:oldage}, imply Proposition~\ref{prop:It}. Furthermore, Proposition~\ref{prop:It} is the main part for the proof of~\eqref{eq:Inplusmax} in Theorem~\ref{thrm:asPA} and~\eqref{eq:Inconv} and~\eqref{eq:maxdegconv} in Theorem~\ref{thrm:conv}.

We proceed by proving the three lemmas.

\begin{proof}[Proof of Lemma~\ref{lemma:optwin}]		
	For ease of writing, we set
	\be 
	k:=\ceil{\varphi_1^{-1}\big(t^*-\tfrac{1}{\lambda^*+d^*}\cK_\alpha(r(t))+(H(u^*)-\xi)\cK(t^*)\big)},
	\ee 
	and for some small constant $c\in(0,1)$ to be determined, we let $\cB(u^*-c\eps^2,u^*)$ denote  the set of individuals born in the interval 
	\be 
	\Big[\frac{d^*}{\lambda^*+d^*}t+\frac{1}{\lambda^*+d^*}\cK_\alpha(r(t))+(u^*-c\eps^2)\cK(t^*),\frac{d^*}{\lambda^*+d^*}t+\frac{1}{\lambda^*+d^*}\cK_\alpha(r(t))+u^*\cK(t^*) \Big],
	\ee 
	which we note is a subset of the individuals whose offspring we take the maximum of in the random variable $D^{\max}_{u^*-\eps,u^*+\eps}(s)$. We can thus bound
	\be\ba 
	\mathbb P_\cS{}&\big(D^{\max}_{u^*-\eps,u^*+\eps}(s)\geq k, \text{ for all }s\in F_{p,t})\\ 
	&\geq \Ps{\exists v\in \cB(u^*-c\eps^2,u^*): v\text{ alive at time }t+p\cK(t^*), \deg^{(v)}(t-p\cK(t^*))\geq k}.
	\ea\ee 
	Indeed, restring ourselves to individuals born in $\cB(u^*-c\eps^2,u^*)$ and requiring that individuals live until the end of the interval $F_{p,t}$ and produce at least $k$ offspring by the start of the interval yields a lower bound. We first write $\{v\text{ is alive at time }t+p\cK(t^*)\text{ and }\deg^{(v)}(t-p\cK(t^*))\geq k\}$ as
	\be 
	\{D^{(v)}\geq k, S^{(v)}_k\leq t-p\cK(t^*)-\sigma_v, S^{(v)}_{D^{(v)}+1}>t+p\cK(t^*)-\sigma_v\}.
	\ee 
	By using that $v\in \cB(u^*-c\eps^2, u^*)$, we can bound its birth-time $\sigma_v$ from below and above in the second and third part of the event, respectively. Hence, the desired event is a subset of
	\be \ba 
	\{{}&D^{(v)}\geq k, S^{(v)}_k\leq  t^*-\tfrac{1}{\lambda^*+d^*}\cK_\alpha(r(t))-(u^*+p)\cK(t^*),\\ 
	&S^{(v)}_{D^{(v)}+1}> t^*-\tfrac{1}{\lambda^*+d^*}\cK_\alpha(r(t))-(u^*-c\eps^2-p)\cK(t^*)\}.
	\ea\ee  
	With a similar approach as in the steps between~\eqref{eq:utbound} and~\eqref{eq:utbound2} and by leveraging the independence of the reproduction process of distinct individuals (when not conditioning on survival), we obtain for any $m\in \N$ the lower bound
	\be\ba \label{eq:mbound}
	1-\mathbb P_{\cS}(|\cB(u^*-c\eps^2, u^*{}&)|\leq m)\\
	-\P{\cS}^{-1}\Big[1-\mathbb P\Big({}&D\geq k, S_k\leq  t^*-\tfrac{1}{\lambda^*+d^*}\cK_\alpha(r(t))-(u^*+p)\cK(t^*),\\ 
	&S_{D+1}> t^*-\tfrac{1}{\lambda^*+d^*}\cK_\alpha(r(t))-(u^*-c\eps^2-p)\cK(t^*)\Big)\Big]^m.
	\ea\ee 
	We then set 
	\be 
	m:=\exp\Big(\frac{\lambda^*d^*}{\lambda^*+d^*}t+\frac{\lambda^*}{\lambda^*+d^*}\cK_\alpha(r(t))+(\lambda^*-\delta)u^*\cK(t^*)\Big),
	\ee
	and conclude that the probability on the first line tends to zero with $t$ for any $\delta>0$ by Corollary~\ref{cor:growth} (with $r=\frac{d^*}{\lambda^*+d^*}t+\frac{1}{\lambda^*+d^*}\cK_\alpha(r(t))+(u^*-\eps)\cK(t^*)$, $s=\frac{d^*}{\lambda^*+d^*}t+\frac{1}{\lambda^*+d^*}\cK_\alpha(r(t))+u^*\cK(t^*)$, and $u=\delta u^*\cK(t^*)$) and since $\cK$ diverges. What remains, is to bound the term in the straight brackets in~\eqref{eq:mbound} from above. By using that $1-x\leq \e^{-x}$ for any $x\in\R$, we obtain the upper bound
	\be\ba\label{eq:expBlb}  
	\exp\bigg({}&-\mathbb P\Big(D\geq k, S_k\leq  t^*-\frac{1}{\lambda^*+d^*}\cK_\alpha(r(t))-(u^*+p)\cK(t^*),\\
	&\hphantom{-\mathbb P\Big(}\ S_{D+1}> t^*-\frac{1}{\lambda^*+d^*}\cK_\alpha(r(t))-(u^*-c\eps^2-p)\cK(t^*)\Big)\\ 
	&\times\exp\Big(\frac{\lambda^*d^*}{\lambda^*+d^*}t+\frac{\lambda^*}{\lambda^*+d^*}\cK_\alpha(r(t))+(\lambda^*-\delta)u^*\cK(t^*)\Big)\bigg).
	\ea \ee 
	We bound the probability from below to obtain an upper bound. From this point onwards, we consider Case~\ref{item:2} in the statement of Proposition~\ref{prop:It}. We comment on the required adaptations in Case~\ref{item:1} at the end of the proof. 
	
	In Case~\ref{item:2} and for any $\eta>0$, there exists $t'>0$ such that, for all $t\geq t'$, we have $d(i)\leq d^*+\eta $ for all $i \geq k$ (as $k$ grows with $t$). We can thus use $(ii)$ in Lemma~\ref{lemma:survdeg} to obtain the lower bound
	\be \ba 
	\P{D\geq k}\e^{-(d^*+\eta)(2p+c\eps^2)\cK(t^*)} \mathbb E\bigg[{}&\ind\{S_k\leq t^*-\tfrac{1}{\lambda^*+d^*}\cK_\alpha(r(t))-(u^*+p)\cK(t^*) \}\\ 
	&\exp\Big((d^*+\eta)\big(S_k-\big(t^*-\tfrac{1}{\lambda^*+d^*}\cK_\alpha(r(t))-(u^*+p)\cK(t^*)\big)\big)\Big)\bigg].
	\ea \ee 
	By the choice of $k$, it follows that 
	\be 
	\varphi_1(k-1)\leq t^*-\frac{1}{\lambda^*+d^*}\cK_\alpha(r(t)) +(H(u^*)-\xi)\cK(t^*) \leq \varphi_1(k).
	\ee 
	Hence, as $\cK(t^*)=(1+o(1))\varphi_2(k)$ by Assumption~\hyperref[ass:K]{$\cK$} and $(a)$ and $(d)$ in Lemma~\ref{lemma:func}, and $\varphi(k-1)=\varphi_1(k)-o(1)$ as $b$ diverges,
	\be \ba 
	\varphi_1(k)-(H(u^*)+u^* +\tfrac32p-\xi)\varphi_2(k)&\leq t^*-\frac{1}{\lambda^*+d^*}\cK_\alpha(r(t))-(u^*+p)\cK(t^*) \\
	&\leq \varphi_1(k)-(H(u^*)+u^*+\tfrac12p- \xi)\varphi_2(k),
	\ea \ee 
	for all $t$ large. We thus obtain the lower bound
	\be \ba 
	\P{D\geq k}\e^{-(d^*+\eta)(2p+c\eps^2)\cK(t^*)} \mathbb E\bigg[{}&\ind\{S_k\leq \varphi_1(k)-(H(u^*)+u^* +\tfrac32p-\xi)\varphi_2(k)  \}\\ 
	&\exp\Big((d^*+\eta)\big(S_k-\big(\varphi_1(k)-(H(u^*)+u^*+\tfrac12p- \xi)\varphi_2(k)\big)\big)\Big)\bigg].
	\ea \ee 
	Note that $H(u^*)+u^*+\tfrac32p- \xi=\lambda^*+\tfrac32p- \xi>0$ for $\xi$ sufficiently small. As a result, we can use Proposition~\ref{prop:mdptilt}, with $z=H(u^*)+u^*+\tfrac32p-\xi>0$, $y=H(u^*)+u^*+\tfrac12p- \xi$, and $\theta=d^*+\eta$, to bound the expected value from below by 
	\be \label{eq:sklb}
	\exp\big(-\big[(1+(d^*+\eta))p +\tfrac12 (H(u^*)+u^*+\tfrac32p- \xi)^2\big]\varphi_2(k)\big).
	\ee 
	We combine this with Lemma~\ref{lemma:Dtail}. By Assumption~\eqref{ass:A2}, $\rho_1$ diverges. If, furthermore, $\rho_2$ also diverges, then 
	\be 
	\P{D\geq k}=\e^{-\rho_1(k)-(\frac12+o(1))\rho_2(k)}=\e^{-\rho_1(k)-(\frac12(d^*)^2+o(1))\cK(t^*)}, 
	\ee 
	where we combine that $d$ converges to $d^*\in[0,\infty)$ and thus $\rho_2(k)=((d^*)^2+o(1))\varphi_2(k)$ by $(c)$ in Lemma~\ref{lemma:func} together with Assumption~\hyperref[ass:K]{$\cK$} in the last step. If, on the other hand, $\rho_2$ converges (which can only occur for $d^*=0$), then again by Lemma~\ref{lemma:Dtail},
	\be 
	\P{D\geq k}=\e^{-\rho_1(k)-\cO(1)}=\e^{-\rho_1(k)-o(\cK(t^*))},
	\ee 
	where the final step follows since $\varphi_2$ diverges by Assumption~\eqref{ass:varphi2}. We can unify both cases by writing $o(\cK(t^*))=(\frac12 (d^*)^2+o(1))\cK(t^*)$ in the second case, for which $d^*=0$. Together with~\eqref{eq:sklb}, this yields the lower bound
	\be \ba 
	\mathbb P\Big({}&D\geq k,S_k\leq  t^*-\tfrac{1}{\lambda^*+d^*}\cK_\alpha(r(t))-(u^*+p)\cK(t^*),\\ 
	&S_{D+1}> t^*-\tfrac{1}{\lambda^*+d^*}\cK_\alpha(r(t))-(u^*-c\eps^2-p)\cK(t^*)\Big)\\
	\geq{}& \exp\Big(-\rho_1(k)-(\tfrac12(d^*)^2+o(1))\cK(t^*)-(d^*+\eta)(2p+c\eps^2)\cK(t^*)\\ 
	&\hphantom{\exp\Big(\ }-\big[(1+(d^*+\eta))p +\tfrac12 (H(u^*)+u^*+\tfrac32p-\xi)^2\big]\varphi_2(k)\Big).
	\ea\ee 
	Writing $\rho_1(k)=d^*\varphi_1(k)+\alpha(k)$ and applying Assumption~\hyperref[ass:K]{$\cK$} to the term $\varphi_2(k)$ together with $(a)$ and $(d)$ in Lemma~\ref{lemma:func}, we arrive at 
	\be 
	\exp\Big(-d^*\varphi_1(k)-\alpha(k)-\big[\tfrac12 (d^*)^2+\tfrac12 (H(u^*)+u^*+\tfrac32p-\xi)^2+p+(3p+c\eps^2)(d^*+\eta) +o(1)\big]\cK(t^*)\Big).
	\ee 
	Finally, by the choice of $k$ and by using Assumptions~\hyperref[ass:K]{$\cK$} and~\hyperref[ass:Kalpha]{$\cK_\alpha$}, we obtain 
	\be \ba 
	\exp\Big(-\frac{\lambda^*d^*}{\lambda^*+d^*}t-\cK_\alpha(r(t))-\big[{}&d^*(H(u^*)-\xi)+\tfrac12 (d^*)^2+\tfrac12 (H(u^*)+u^*+\tfrac32p-\xi)^2+p \\ 
	&+(3p+c\eps^2)(d^*+\eta) +o(1)\big]\cK(t^*)\Big).
	\ea \ee 
	Multiplying this with the exponential term on the second line of~\eqref{eq:expBlb} and using that $H(u^*)+u^*=\lambda^*$, we bound the inner exponential in~\eqref{eq:expBlb} from below
	\be \ba\label{eq:boundchange}
	\exp\Big({}&-\big[-(\lambda^*+d^*)u^*+\tfrac12 (H(u^*)+u^*+d^*)^2\big]\cK(t^*)+\xi\big[H(u^*)+u^*+d^*\big]\cK(t^*)\\ 
	&-\big[\tfrac32 p\lambda^*+\tfrac12(\tfrac32p-\xi)^2+p+(3p+c\eps^2)(d^*+\eta)  +\delta u^*+o(1)\big]\cK(t^*)\Big).
	\ea \ee
	The terms in the first brackets equal zero by the choice of $H$ and $u^*$, and the terms in the second brackets equal $\lambda^*+d^*>0$. Then, we recall that $\xi$ is of the order $\eps^2$, as follows from~\eqref{eq:xi}. Hence, by choosing $c$ sufficiently small, that is, $c<\frac14(d^*+\eta)^{-1}$, it follows that $(\lambda^*+d^*)\xi-c(d^*+\eta)\eps^2$ is strictly positive when $\eps$ and $\eta$ are sufficiently small.  The remaining terms on the second line can be made arbitrarily small with respect to $\eps^2$ by choosing $p,\eta$, and $\delta$ sufficiently small. As a result, the entire term diverges with $t$, so that we arrive at the desired result. 
	
	To deal with Case~\ref{item:1}, we apply $(iii)$ in Lemma~\ref{lemma:survdeg} to the probability in~\eqref{eq:expBlb} to obtain the upper bound
	\be \ba
	\exp\bigg({}&-\mathbb P\Big(S_k\leq  t^*-\frac{1}{\lambda^*+d^*}\cK_\alpha(r(t))-(u^*+p)\cK(t^*)\Big)\\ 
	&\times\exp\Big(\frac{\lambda^*d^*}{\lambda^*+d^*}t+\frac{\lambda^*}{\lambda^*+d^*}\cK_\alpha(r(t))+(\lambda^*-\delta)u^*\cK(t^*)\Big)\bigg).
	\ea\ee 
	The remaining steps in the proof above can then be followed, using Lemma~\ref{lemma:mdp} rather than Proposition~\ref{prop:mdptilt}, to arrive at~\eqref{eq:boundchange} with $d^*$ set to equal $0$, which  concludes the proof.
\end{proof} 

\begin{proof}[Proof of Lemma~\ref{lemma:suboptwin}]
	We set
	\be\label{eq:k2}
	k:=\ceil{\varphi_1^{-1}\big(t^*-\tfrac{1}{\lambda^*+d^*}\cK_\alpha(r(t))+(\wt H(x_{i+1})+\xi)\cK(t^*)\big)},
	\ee 
	and 
	\be 
	B(x_i,x_{i+1}):=\cB\Big(\frac{d^*}{\lambda^*+d^*}t+\frac{1}{\lambda^*+d^*}\cK_\alpha(r(t))+x_i\cK(t^*),\frac{d^*}{\lambda^*+d^*}t+\frac{1}{\lambda^*+d^*}\cK_\alpha(r(t))+x_{i+1}\cK(t^*)\Big).
	\ee 
	Then, we bound
	\be \ba \label{eq:suboptinwindow}
	\mathbb P_\cS{}&\big( D^{\max}_{x_i,x_{i+1}}(s)\leq k,\text{ for all }s\in F_{p,t}\big)\\ 
	&\geq 1-\Ps{\exists v\in B(x_i,x_{i+1})\ \exists s\in F_{p,t}: v\text{ alive at time }s\text{ and }\deg^{(v)}(s)\geq k\big)}\\ 
	&= 1-\Ps{\exists v\in B(x_i,x_{i+1})\ \exists s\in F_{p,t}: D\geq k, S_k\leq s-\sigma_v, S_{D+1}>s-\sigma_v}.
	\ea\ee 
	We bound the probability on the last line from above to show that it tends to zero with $t$. First, as $s\in F_{p,t}$ and omitting the requirement that $S_{D+1}>s-\sigma_v$, we have the inclusion 
	\be 
	\{D\geq k, S_k\leq s-\sigma_v, S_{D+1}>s-\sigma_v\}\subseteq \{D\geq k, S_k\leq t+p\cK(t^*)-\sigma_v\}.
	\ee 
	We then use that $v\in B(x_i,x_{i+1})$ to bound $\sigma_v$ from  below. We also introduce the event 
	\be 
	\cE_t:=\Big\{|B(x_i,x_{i+1})|\leq \exp\Big(\frac{\lambda^*d^*}{\lambda^*+d^*}t+\frac{\lambda^*}{\lambda^*+d^*}\cK_\alpha(r(t))+(\lambda^*+\delta)x_{i+1} \cK(t^*)\Big)\Big)\Big\}. 
	\ee 
	By using a union bound, we then arrive at the upper bound
	\be\ba \label{eq:unionupper}
	\mathbb P_{\cS}(\exists v\in B(x_i,x_{i+1})\ {}&\exists s\in F_{p,t}: D\geq k, S_k\leq s-\sigma_v, S_{D+1}>s-\sigma_v)\\ 
	\leq \P{\cS}^{-1}\exp{}&\Big(\frac{\lambda^*d^*}{\lambda^*+d^*}t+\frac{\lambda^*}{\lambda^*+d^*}\cK_\alpha(r(t))+(\lambda^*+\delta)x_{i+1} \cK(t^*)\Big)\\ 
	\times \mathbb P\Big({}&D\geq k,S_k\leq t^*-\tfrac{1}{\lambda^*+d^*}\cK_\alpha(r(t))-(x_i-p)\cK(t^*)\Big)+\Ps{\cE_t^c}.
	\ea \ee 
	The last term tends to zero by Corollary~\ref{cor:growth} (with $r=\frac{d^*}{\lambda^*+d^*}t+\frac{1}{\lambda^*+d^*}\cK_\alpha(r(t))+x_i\cK(t^*)$, $s=\frac{d^*}{\lambda^*+d^*}t+\frac{1}{\lambda^*+d^*}\cK_\alpha(r(t))+x_{i+1}\cK(t^*)$, and $u=\delta x_{i+1}\cK(t^*)$), since the event $\cS$ has positive probability and because $\cK$ diverges by Assumption~\eqref{ass:varphi2}. By the choice of $k$, it follows that 
	\be 
	\varphi_1(k)\geq t^*-\tfrac{1}{\lambda^*+d^*}\cK_\alpha(r(t))+(\wt H(x_{i+1})+\xi)\cK(t^*).
	\ee 
	Hence, as $\cK(t^*)=(1+o(1))\varphi_2(k)$ by Assumption~\hyperref[ass:K]{$\cK$} and $(a)$ in Lemma~\ref{lemma:func},
	\be
	\varphi_1(k)-(\wt H(x_{i+1})+x_i-\tfrac32p+\xi)\varphi_2(k)\geq t^*-\tfrac{1}{\lambda^*+d^*}\cK_\alpha(r(t))-(x_i-p)\cK(t^*),
	\ee 
	for all $t$ large. Using this in the first probability on the third line of~\eqref{eq:unionupper} combined with the independence of $D$ and $S_k$, we thus obtain the upper bound
	\be \ba \label{eq:largedegbound}
	\mathbb P\Big({}&D\geq k,S_k\leq t^*-\tfrac{1}{\lambda^*+d^*}\cK_\alpha(r(t))-(x_i-p)\cK(t^*)\Big)\\ 
	&\leq \P{D\geq k}\P{S_k-\varphi_1(k)\leq -(\wt H(x_{i+1})+x_i-\tfrac32p+\xi)\varphi_2(k) }.
	\ea\ee 
	We now distinguish between two cases: $(a)$ $x_{i+1}\geq \frac{(d^*)^2}{2(\lambda^*+d^*)}$ and $(b)$ $x_{i+1}<\frac{(d^*)^2}{2(\lambda^*+d^*)}$. In Case $(a)$, it is readily checked that $\wt H(x_{i+1})+x_{i+1}\geq 0$. Hence, by choosing $p$ and $\zeta$ small such that $2p+\zeta<  \xi$, noting that $x_i=x_{i+1}-\zeta$, it follows that $\wt H(x_{i+1})+x_i-\frac32 p+\xi>\frac12 p>0$ for all $i\in\mathbb I$. We can thus use Lemma~\ref{lemma:mdp} to bound
	\be 
	\P{S_k-\varphi_1(k)\leq -(\wt H(x_{i+1})+x_i-\tfrac32p+\xi)\varphi_2(k) }\leq \exp\big(-\tfrac12(\wt H(x_{i+1})+x_{i+1}-\zeta-2p+\xi)^2\varphi_2(k)\big).
	\ee  
	In Case $(b)$, we instead bound the probability on the right-hand side from above by  one. We then apply Lemma~\ref{lemma:Dtail} to also bound $\P{D\geq k}\leq \exp\big(-\rho_1(k)-\tfrac12\rho_2(k)\big)$. As $\rho_1$ is increasing,
	\be \ba 
	\rho_1(k)\geq{}& \rho_1\big(\varphi_1^{-1}\big(t^*-\tfrac{1}{\lambda^*+d^*}\cK_\alpha(r(t))+(\wt H(x_{i+1})+\xi)\cK(t^*)\big)\big)\\ 
	={}& d^*t^*-\tfrac{d^*}{\lambda^*+d^*}\cK_\alpha(r(t))+d^*(\wt H(x_{i+1})+\xi)\cK(t^*)\\ 
	&+\cK_\alpha\big(t^*-\tfrac{1}{\lambda^*+d^*}\cK_\alpha(r(t))+(\wt H(x_{i+1})+\xi)\cK(t^*)\big),
	\ea\ee 
	where we use the definitions of $\alpha$ and $\cK_\alpha$ as in~\eqref{eq:alpha} and~\eqref{eq:Ks}, respectively, in the last step. Finally, we also note that $\rho_2(k)=((d^*)^2+o(1))\varphi_2(k)=((d^*)^2+o(1))\cK(t^*)$ since $d$ converges to $d^*$ and by using $(c)$ in Lemma~\ref{lemma:func}and Assumption~\hyperref[ass:K]{$\cK$}. Using both bounds in~\eqref{eq:largedegbound} and Assumption~\hyperref[ass:K]{$\cK$}, we arrive at 
	\be \ba\label{eq:expandub}
	\mathbb P\Big(D\geq k{}&,S_k\leq t^*-\tfrac{1}{\lambda^*+d^*}\cK_\alpha(r(t))-(x_i-p)\cK(t^*)\Big)\\ 
	\leq\exp\big({}&-d^*t^*+\tfrac{d^*}{\lambda^*+d^*}\cK_\alpha(r(t))-\cK_\alpha\big(t^*-\tfrac{1}{\lambda^*+d^*}\cK_\alpha(r(t))+(\wt H(x_{i+1})+\xi)\cK(t^*)\big)\\
	&-\big[d^*(\wt H(x_{i+1})+\xi)+\tfrac12(\wt H(x_{i+1})+x_{i+1}-\zeta-2p+\xi)^2+\tfrac12(d^*)^2+o(1)\big]\cK(t^*)\big)
	\ea\ee
	in Case $(a)$ and 
	\be \ba\label{eq:expandub2}
	\mathbb P\Big(D\geq k{}&,S_k\leq t^*-\tfrac{1}{\lambda^*+d^*}\cK_\alpha(r(t))-(x_i-p)\cK(t^*)\Big)\\ 
	\leq\exp\big({}&-d^*t^*+\tfrac{d^*}{\lambda^*+d^*}\cK_\alpha(r(t))-\cK_\alpha\big(t^*-\tfrac{1}{\lambda^*+d^*}\cK_\alpha(r(t))+(\wt H(x_{i+1})+\xi)\cK(t^*)\big)\\
	&-\big[d^*(\wt H(x_{i+1})+\xi)+\tfrac12(d^*)^2+o(1)\big]\cK(t^*)\big)
	\ea\ee 
	in Case $(b)$. We now make two further case distinctions according to the conditions in the statement of Proposition~\ref{prop:It}. In Case~\ref{item:2} we apply $(d)$ in Lemma~\ref{lemma:func} and Assumption~\hyperref[ass:Kalpha]{$\cK_\alpha$} to obtain 
	\be \label{eq:Kalphadif}
	\cK_\alpha\big(t^*-\tfrac{1}{\lambda^*+d^*}\cK_\alpha(r(t))+(\wt H(x_{i+1})+\xi)\cK(t^*)\big)=\cK_\alpha(r(t))+o(\cK(t)).
	\ee 
	As a result, we can simplify the upper bound in~\eqref{eq:expandub} to 
	\be 
	\exp\big(-d^*t^*-\tfrac{\lambda^*}{\lambda^*+d^*}\cK_\alpha(r(t))-\big[d^*(\wt H(x_{i+1})+\xi)+\tfrac12(\wt H(x_{i+1})+x_{i+1}-\zeta-2p+\xi)^2+\tfrac12(d^*)^2+o(1)\big]\cK(t^*)\big)
	\ee 
	and the one in~\eqref{eq:expandub2} to
	\be 
	\exp\big(-d^*t^*-\tfrac{\lambda^*}{\lambda^*+d^*}\cK_\alpha(r(t))-\big[d^*(\wt H(x_{i+1})+\xi)+\tfrac12(d^*)^2+o(1)\big]\cK(t^*)\big).
	\ee 
	These upper bounds remain valid in Case~\ref{item:1}, since we can change the $o(\cK(t))$ in~\eqref{eq:Kalphadif} to $o(1)$ (without the need for Assumption~\hyperref[ass:Kalpha]{$\cK_\alpha$}), as $\cK_\alpha(t)$ converges with $t$ in this case. This completes the upper bound for the first term on the second line of~\eqref{eq:unionupper}. Multiplying this with the exponential term on the first line of~\eqref{eq:unionupper} yields
	\be \ba 
	\exp\Big({}&\big[(\lambda^*+d^*)x_{i+1}-\tfrac12 (\wt H(x_{i+1})+x_{i+1}+d^*)^2\big]\cK(t^*)-(\xi-(\zeta+2p))\big[\wt H(x_{i+1})+x_{i+1}+d^*\big]\cK(t^*)\\ 
	&-\big[\tfrac12(\xi-(\zeta+2p))^2+(\zeta+2p)d^*-\delta x_{i+1} +o(1)\big]\cK(t^*)\Big).
	\ea \ee 
	in Case $(a)$ and 
	\be \ba 
	\exp{}&\big([\lambda^*x_{i+1}-d^*\wt H(x_{i+1})-\tfrac12(d^*)^2-d^*\xi+\delta x_{i+1}+o(1)]\cK(t^*)\big) \\ 
	&=\exp\big(\big[\lambda^*x_{i+1}-\tfrac{\lambda^*(d^*)^2}{2(\lambda^*+d^*)}-d^*\xi+\delta x_{i+1}\big]\cK(t^*)\big)
	\ea\ee 
	in Case $(b)$, where we use the definition of $\wt H$, as in~\eqref{eq:H}, in the last step. In Case $(a)$ we have $\wt H(x_{i+1})=H(x_{i+1})$, so that the terms in the first square brackets equal zero by the choice of $H$, and  the terms in the second square brackets are positive, since 
	\be 
	H(x_{i+1})+x_{i+1}+d^*=\sqrt{2(\lambda^*+d^*)x_{i+1}}\geq d^*, 
	\ee 
	as $x_{i+1}\geq (d^*)^2/(2(\lambda^*+d^*)$ in this case. Choosing $\delta, p$, and $\zeta$ sufficiently small with respect to $\xi=\xi(\eps)$ thus yields the desired result. In Case $(b)$ we have directly have that the exponential tends to zero by the choice of $x_{i+1}$ in this case and by choosing $\delta$ sufficiently small. This concludes the proof of the first result in Lemma~\ref{lemma:suboptwin}. 	
	
	We then consider the probability
	\be 
	\mathbb P_\cS\big(D^{\max}_{u^*-\eps,u^*+\eps}(s)\leq \ceil{\varphi_1^{-1}\big( t^*-\tfrac{1}{\lambda^*+d^*}\cK_\alpha(r(t))-(H(u^*)+\xi)\cK(t^*)\big)}, \text{ for all }s\in F_{p,t}\big).
	\ee 
	In the same manner as we bound the probability in~\eqref{eq:suboptinwindow}, but replacing $x_i$ and $x_{i+1}$ with $u^*-\eps$ and $u^*+\eps$, respectively, and setting $k:=\ceil{\varphi_1^{-1}( t^*-\tfrac{1}{\lambda^*+d^*}\cK_\alpha(r(t))-(H(u^*)+\xi)\cK(t^*)}$ instead, we can follow the same proof to conclude that this probability tends to one as $t$ tends to infinity when $p$ is sufficiently small with respect to $\xi$ (and thus with respect to $\eps$, by~\eqref{eq:xi}).
	
	Finally, we consider the probability
	\be 
	\mathbb P_\cS\big(D^{\max}_{\eps,t}(s)\leq \ceil{\varphi_1^{-1}\big( t^*-\tfrac{1}{\lambda^*+d^*}\cK_\alpha(r(t))-(d^*-\xi)\cK(t^*)\big)}, \text{ for all }s\in F_{p,t}\big).
	\ee 
	Again, in the same manner as we bound the probability in~\eqref{eq:suboptinwindow}, but replacing $x_i$ and $x_{i+1}$ with $-\eps$ and $0$, respectively, and setting $k:=\ceil{\varphi_1^{-1}( t^*-\tfrac{1}{\lambda^*+d^*}\cK_\alpha(r(t))-(d^*-\xi)\cK(t^*)}$ instead, we can follow the same proof to conclude that this probability tends to one as $t$ tends to infinity when $p$ is sufficiently small with respect to $\xi$ (and thus with respect to $\eps$, by~\eqref{eq:xi}).
\end{proof}

To conclude the section, we prove Lemma~\ref{lemma:outoptwin}. 

\begin{proof}[Proof of Lemma~\ref{lemma:outoptwin}]
	
	We consider the probability 
	\be \label{eq:prob3}
	\mathbb P_\cS\big( D^{\max}_{A,t}(s)\leq \ceil{\varphi_1^{-1}(t^*-\tfrac{1}{\lambda^*+d^*}\cK_\alpha(r(t)))} \text{ for all }s\in F_{p,t}\big),
	\ee 
	that all individuals born after time $\frac{d^*}{\lambda^*+d^*}t+A\cK(t^*)$ that are alive at time $s$ have degree at most $\ceil{\varphi_1^{-1}(t^*-\tfrac{1}{\lambda^*+d^*}\cK_\alpha(r(t)))}$ for all $s\in F_{p,t}$. We fix $\eps>0$ and set $k=\floor{\varphi_1^{-1}(t^*-\tfrac{1}{\lambda^*+d^*}\cK_\alpha(r(t)))}$ and
	\be
	B_{A,t}:=\cB\Big(\frac{d^*}{\lambda^*+d^*}t+\tfrac{1}{\lambda^*+d^*}\cK_\alpha(r(t))+A\cK(t^*),t\Big).
	\ee 
	We write the probability of interest as 
	\be\ba 
	1{}&-\Ps{\exists s\in F_{p,t}\ \exists v\in B_{A,t}: D^{(v)}\geq k, S_k^{(v)}\leq s-\sigma_v, S_{D^{(v)}+1}^{(v)}>s-\sigma_v}\\ 
	&\geq 1-\Ps{\exists s\in F_{p,t}\ \exists v\in B_{A,t}: D^{(v)}\geq k, S_k^{(v)}\leq s-\sigma_v}.
	\ea \ee  
	We bound $s$ from above in the second part of the event in the probability. Further,  we cover the interval $[\frac{d^*}{\lambda^*+d^*}t+\tfrac{1}{\lambda^*+d^*}\cK_\alpha(r(t))+A\cK(t^*), t]$  by smaller intervals of the form
	\be 
	\Big[\frac{d^*}{\lambda^*+d^*}t+\tfrac{1}{\lambda^*+d^*}\cK_\alpha(r(t))+(j-1)\cK(t^*),\frac{d^*}{\lambda^*+d^*}t+\tfrac{1}{\lambda^*+d^*}\cK_\alpha(r(t))+j\cK(t^*)\Big], \ee 
	where 
	\be 
	j\in \Big\{ A+1,\ldots ,\Big\lceil\frac{1}{\lambda^*+d^*}\frac{\lambda^*t-\cK_\alpha(r(t))}{\cK(t^*)}\Big\rceil\Big\}=:I_{A,\eps}. 
	\ee 
	We also define, for $j\in I_{A,\eps}$,  
	\be 
	B_{j,t}:=\cB\Big(\frac{d^*}{\lambda^*+d^*}t+\tfrac{1}{\lambda^*+d^*}\cK_\alpha(r(t))+(j-1)\cK(t^*),\frac{d^*}{\lambda^*+d^*}t+\tfrac{1}{\lambda^*+d^*}\cK_\alpha(r(t))+j\cK(t^*)\Big).
	\ee 
	Finally, we introduce the event 
	\be 
	\cE_M:=\big\{\sup_{t\geq 0}\e^{-\lambda^*\!t}|\cB(0,t)|\leq M\big\},
	\ee 
	which holds with probability at least $1-\eps $ when choosing $M=M(\eps)$ sufficiently large by Corollary~\ref{cor:growth}. Combining all this, we can bound the probability from below by 
	\be \label{eq:sumsplit}
	1-\mathbb P_\cS(\cE_M^c)-\mathbb P_\cS\bigg(\cE_M\cap\Big\{\exists v\in B_{A,t}: D^{(v)}\geq k, S_k^{(v)}\leq t+p\cK(t^*)-\sigma_v\}\bigg).
	\ee 
	As stated, we can bound the first probability from above by $\eps$, for any $\eps>0$, by choosing $M$ sufficiently large. It thus remains to bound the second probability from above. Using a union bound we obtain the upper bound 
	\be \label{eq:sumjbound}
	\sum_{j\in I_{A,\eps}}\!\!\Ps{\cE_M\cap\Big\{\exists v\in B_{j,t}: D^{(v)}\geq k, S_k^{(v)}\leq t+p\cK(t^*)-\sigma_v\Big\}}.
	\ee
	We bound each term in the sum separately. We  use the fact that $v\in B_{j,t}$ to bound $\sigma_v$ from below to arrive at the upper bound
	\be \ba 
	\mathbb P_\cS\bigg(\cE_M\cap\bigcup_{ v\in B_{j,t}}\Big\{{}& D^{(v)}\geq k, S_k^{(v)}\leq t^*-\tfrac{1}{\lambda^*+d^*}\cK_\alpha(r(t))-(j-1-p)\cK(t^*),\Big\}\bigg).
	\ea\ee 
	On the event $\cE_M$, we can bound the size of $B_{j,t}$ from above. Combined with a union bound and using the independence of $D$ and $S_k$, this yields the upper bound
	\be\ba \label{eq:probsplitnonopt} 
	M{}&\P{\cS}^{-1}\exp\Big(\frac{\lambda^*d^*}{\lambda^*+d^*}t+\tfrac{\lambda^*}{\lambda^*+d^*}\cK_\alpha(r(t))+j\lambda^*\cK(t^*)\Big)\\ 
	&\times \P{D\geq k}\P{S_k\leq t^*-\tfrac{1}{\lambda^*+d^*}\cK_\alpha(r(t))-(j-1-p)\cK(t^*)}.
	\ea \ee 
	We apply a Chernoff bound to the second probability on the second line, for some $\theta>0$ to be determined, to bound the second line from above by 
	\be \ba \label{eq:prodcomb}
	\P{D\geq k}\bigg(\prod_{i=0}^{k-1}\frac{b(i)+d(i)}{b(i)+d(i)+\theta}\bigg) \exp\Big(\theta \big[t^*-\tfrac{1}{\lambda^*+d^*}\cK_\alpha(r(t))-(j-1-p)\cK(t^*)\big]\Big).
	\ea\ee 
	By applying the bound $\log(1-x)\leq -x-x^2/2$ for $x\in[0,1)$ and similar to~\eqref{eq:sumineq}, we have
	\be\label{eq:prodbound}
	\prod_{i=0}^{k-1}\frac{b(i)+d(i)}{b(i)+d(i)+\theta} 
	\leq \exp\Big(-\theta\varphi_1(k)+\frac{\theta^2}{2}\varphi_2(k)\Big).
	\ee 
	Combined with Lemma~\ref{lemma:Dtail}, we can thus bound~\eqref{eq:prodcomb} from above by 
	\be \ba 
	\exp\Big(\theta\big(t^*-\tfrac{1}{\lambda^*+d^*}\cK_\alpha(r(t))-(j-1-p)\cK(t^*)\big)-\rho_1(k)-\frac12\rho_2(k)-\theta\varphi_1(k)+\frac{\theta^2}{2}\varphi_2(k)\Big).
	\ea\ee 
	Since $k=\ceil{\varphi_1^{-1}(t^*-\tfrac{1}{\lambda^*+d^*}\cK_\alpha(r(t)))}$ and $\varphi_1$ is increasing, we can write this as
	\be \ba 
	\exp{}&\Big(  -\rho_1(k)-\frac12 \rho_2(k)+ \frac{\theta^2}{2}\varphi_2(k)-\theta(j-1-p)\cK(t^*) \Big)\\ 
	&= \exp\Big(  -\rho_1(k)-\frac12 \rho_2(k)- \frac{(j-1-p)^2\cK(t^*)^2}{2\varphi_2(k)} \Big),
	\ea \ee 
	where the second step optimises over $\theta$ by taking $\theta=(j-1-p)\cK(t^*)/\varphi_2(k)$. Further, we write $\rho_1(k)=d^*\varphi_1(k)+\alpha(k)$ and use $(c)$ in Lemma~\ref{lemma:func} and Assumption~\hyperref[ass:K]{$\cK$} to conclude that $\rho_2(k)=((d^*)^2+o(1))\cK(t^*)$. For Case~\ref{item:2} in the statement of Proposition~\ref{prop:It}, we apply $(a)$ and $(d)$ in Lemma~\ref{lemma:func} and Assumption~\hyperref[ass:Kalpha]{$\cK_\alpha$} to the term $\alpha(k)$ to arrive at
	\be 
	\exp\Big(-\frac{\lambda^*d^*}{\lambda^*+d^*}t-\frac{\lambda^*}{\lambda^*+d^*}\cK_\alpha(r(t))-\Big[\tfrac12(j-1-p)^2+ \tfrac12(d^*)^2+o(1)\Big]\cK(t^*)\Big).
	\ee 
	In Case~\ref{item:1}, we have $\rho_1(k)=\alpha(k)=\cO(1)$ without the need for Assumption~\hyperref[ass:Kalpha]{$\cK_\alpha$}, so that the upper bound remains valid in Case~\ref{item:1}. Multiplying this with the remaining terms on the first line of~\eqref{eq:probsplitnonopt} yields
	\be\label{eq:bound1stline} 
	M\P{\cS}^{-1}\exp\Big(-\Big[\tfrac12(j-1-p)^2-j\lambda^*+\tfrac12(d^*)^2+o(1)\Big]\cK(t^*)\Big).
	\ee  
	Observe that this upper bound is summable in $j$ and tends to zero with $t$. We now choose $A\in\N$ large enough such that $\tfrac12(j-p-1)^2-j\lambda^*+\tfrac12(d^*)^2$ is strictly positive and increasing in $j$ on $[A,\infty)$. Using this in~\eqref{eq:sumjbound}, we thus obtain for some $c>0$, 
	\be\ba 
	\sum_{j\in I_{A,\eps}}\!\!\Ps{\cE_M\cap\Big\{\exists v\in B_{j,t}: D^{(v)}\geq k, S_k^{(v)}<t^*-\tfrac{1}{\lambda^*+d^*}\cK_\alpha(r(t))-(j-1-p)\cK(t^*)}\leq \e^{-c \cK(t^*)}. 
	\ea\ee 
	This completes the proof, as we then have for this choice of $A$ that
	\be 
	\mathbb P\big( D^{\max}_{A,t}(s)\leq \varphi_1^{-1}\big(t^*-\tfrac{1}{\lambda^*+d^*}\cK_\alpha(r(t))\big), \text{ for all }s\in F_{p,t}\big)\geq 1-\eps -\e^{-c \cK(t^*)},
	\ee 
	and $\eps$ is arbitrary.		 
\end{proof}

\section{Lack of persistence  in the `rich die young' regime}\label{sec:maxrdy}

In this section we analyse the random variable $I^\cont_t$, the birth-time of the oldest individual that has the largest offspring at time $t$ in the `rich die old' regime. Unlike the previous section, where we proved a precise scaling limit for $I_t^\cont$ in the `rich are old' regime when the death rates converge to a constant, we only provide a lower bound for $I_t^\cont$ in this section. However, this lower bound holds in a general setting where few assumptions on the birth and death rates are required. 

Recall $R$ from~\eqref{eq:R} and $\lambda^*$ from Assumption~\eqref{ass:C1}. Set 
\be\label{eq:dinfsup} 
\underline d:=\liminf_{i\to\infty}d(i) \qquad \text{and}\qquad \overline d:=\limsup_{i\to\infty}d(i) ,
\ee 
and, for a large constant $A>0$, define the function 
\be \label{eq:w}
w(k):=\begin{cases} 
	\varphi_2(k)&\mbox{if } \lim_{j\to\infty}\varphi_2(j)=\infty, \\
	A&\mbox{otherwise,}
\end{cases} \qquad\text{for } k\in\N_0.
\ee  
Furthermore, fix $K,K'>0$, set  $k:=\ceil{ \rho_1^{-1}(K'\log t)}$, and define 
\be \label{eq:Fstar} 
F^*_{K',t}:=[t-w(k), t+w(k)]  \qquad\text{and}\qquad \ B_{K,t}:=\cB\Big(\frac{R}{\lambda^*+R}t-K\log t,\frac{R}{\lambda^*+R}t+K\log t\Big).
\ee 
When $\underline d>R$, it follows from Proposition~\ref{prop:oldage} that the oldest alive individual at time $t$ is an element of $B_{K,t}$. We have the following result, which tells us that no individual in $B_{K,t}$ is alive and additionally has a `large' offspring at time $s$, for any $s\in F_{K',t}^*$. 

\begin{proposition}[Oldest alive individuals do not have large offspring]\label{prop:nooldhighdeg}
	Suppose that $b$ and $d$ are such that Assumptions~\eqref{ass:A1} and~\eqref{ass:C1} are satisfied, that $b$ tends to infinity, and that $\underline d>R$. For any $K>0$ there exists $K'>0$ such that 
	\be 
	\lim_{t\to\infty}\Ps{\forall s\in  F_{K',t}^*\  \exists v\in B_{K,t} : v\ \mathrm{alive} \ \mathrm{at}\ \mathrm{time}\ s,\ \deg^{(v)}(s)\geq \lceil \rho_1^{-1}(K'\log t)\rceil}=0.
	\ee 
\end{proposition}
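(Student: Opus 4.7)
The strategy is to apply a union bound over individuals in $B_{K,t}$ after truncating $|B_{K,t}|$ using Corollary~\ref{cor:growth}: with probability $1-o(1)$ we have $|B_{K,t}|\leq M_t:=\exp\bigl(\tfrac{R\lambda^*}{\lambda^*+R}t+(\lambda^*K+1)\log t\bigr)$. For each $v\in B_{K,t}$, the event ``there exists $s\in F^*_{K',t}$ with $v$ alive at $s$ and $\deg^{(v)}(s)\geq k$'' (where $k:=\lceil\rho_1^{-1}(K'\log t)\rceil$) is equivalent to $\{D^{(v)}\geq k,\ S^{(v)}_k\leq T_1,\ S^{(v)}_{D^{(v)}+1}>T_2\}$ with $T_1:=t+w(k)-\sigma_v$ and $T_2:=t-w(k)-\sigma_v\geq\tfrac{\lambda^*}{\lambda^*+R}t-\cO(\log t)$. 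Dropping the constraint on $S_k$, using the independence of $S_k$ and $(\tilde L, D)$ with $\tilde L:=S_{D+1}-S_k$, and bounding $\P{D\geq k}\leq t^{-K'}$ by Lemma~\ref{lemma:Dtail}, the task reduces to estimating $\P{S_k+\tilde L>T_2\mid D\geq k}$.

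Since $\underline d>R$, I fix $\eta\in(0,\underline d-R)$ and take $k$ large enough that $d(i)\geq \underline d-\eta$ for $i\geq k$; Lemma~\ref{lemma:stochdomexp}(ii) then yields $\tilde L\mid D\geq k\preceq E_{\underline d-\eta}$, so $\P{S_k+\tilde L>T_2\mid D\geq k}\leq \P{S_k+E>T_2}$ with $E\sim\mathrm{Exp}(\underline d-\eta)$ independent of $S_k$. I apply a Chernoff bound with parameter $\mu:=R-1/T_2<R$; then $\E{e^{\mu E}}=\cO(1)$ since $\mu<\underline d-\eta$. To bound $\E{e^{\mu S_k}}=\prod_{i<k}\tfrac{b(i)+d(i)}{b(i)+d(i)-\mu}$, I isolate the (necessarily finite, since $d(i)>R$ for $i$ large) set $\{i:b(i)+d(i)=R\}$, whose contribution is $(RT_2)^{\cO(1)}=t^{\cO(1)}$, and Taylor-expand the remaining factors via $\log(1-x)^{-1}=x+x^2/2+\cdots$. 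The resulting sum is $\mu\,\varphi_1(k)(1+o(1))$ once the second-order term is absorbed using $\varphi_2=o(\varphi_1)$ from Lemma~\ref{lemma:func}(a). Since $\rho_1(k)=K'\log t$ and $\liminf_{k\to\infty}\rho_1(k)/\varphi_1(k)\geq\underline d$ by Lemma~\ref{lemma:func}(b), I obtain $\varphi_1(k)\leq K'\log t/(\underline d-\eta)$ for $t$ large, and hence $\E{e^{\mu S_k}}\leq t^{RK'/(\underline d-\eta)\,(1+o(1))+\cO(1)}$.

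Combining gives $\P{S_k+E>T_2}\leq e^{-RT_2+\cO(1)}\cdot t^{RK'/(\underline d-\eta)(1+o(1))+\cO(1)}\leq e^{-\frac{R\lambda^*}{\lambda^*+R}t}\cdot t^{RK+RK'/(\underline d-\eta)(1+o(1))+\cO(1)}$ using $RT_2\geq \tfrac{R\lambda^*}{\lambda^*+R}t-RK\log t-\cO(\log t)$. Multiplying by $\P{D\geq k}\leq t^{-K'}$ and by $M_t$ and applying the union bound yields
\begin{equation*}
\mathbb P_\cS(\text{event})\leq o(1)+t^{(\lambda^*+R)K-K'(1-R/(\underline d-\eta))(1+o(1))+\cO(1)},
\end{equation*}
which tends to $0$ upon choosing $\eta$ small so that $1-R/(\underline d-\eta)>0$ (possible precisely because $\underline d>R$) and then $K'$ large in terms of $K,R,\underline d,\lambda^*$. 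The delicate point is the MGF estimate: the Chernoff parameter $\mu$ must approach $R$ at the exact rate $1/T_2$ in order to capture the exponential decay $e^{-RT_2}$ governed by the slowest exponentials of $S_k$ while keeping the resulting ``slow-index'' factor only polynomial in $t$; simultaneously, the $K'$-dependent term $R\varphi_1(k)\approx RK'\log t/\underline d$ coming from the ``good'' indices is exactly what $t^{-K'}$ (from $\P{D\geq k}$) must defeat, and the assumption $\underline d>R$ is what makes the net coefficient of $K'$ positive so that a finite $K'$ suffices.
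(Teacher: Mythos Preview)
Your argument is correct and follows essentially the same route as the paper. Both control $|B_{K,t}|$ via Corollary~\ref{cor:growth}, apply a union bound, use $\P{D\geq k}\leq \e^{-\rho_1(k)}$, and run a Chernoff bound with parameter $R-1/t$ (you use $R-1/T_2$, which is the same up to constants), isolating the finitely many indices with $b(i)+d(i)=R$ to get a merely polynomial factor, and then invoking $\varphi_2=o(\varphi_1)$ and $\varphi_1(k)\le \rho_1(k)/(\underline d-\eta)$ so that the net coefficient of $K'$ in the exponent is $-(1-R/(\underline d-\eta))<0$.

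The one organizational difference is that the paper splits the probability into the two ranges $\{S_k< T_2\}$ (handled via Lemma~\ref{lemma:survdeg}$(i)$) and $\{S_k\in(T_2,T_1]\}$ (handled via a direct Chernoff bound on $\P{S_k>T_2}$), and then observes that both yield the identical expression~\eqref{eq:exprho}. You instead drop the constraint $S_k\le T_1$, write the remaining event as $\{S_k+\tilde L>T_2\}$ with $\tilde L\preceq E_{\underline d-\eta}$ by Lemma~\ref{lemma:stochdomexp}, and apply a single Chernoff bound to $S_k+E$. This is slightly more compact (it folds the paper's two cases into one Laplace-transform estimate, since $\E{\e^{\mu E}}=\cO(1)$ when $\mu<\underline d-\eta$), but the underlying computation and the decisive inequality $\underline d>R$ enter in exactly the same way.
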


\begin{remark}
	Whilst $\rho_1$ need not be invertible in general (when $d(i)=0$ for some $i\in\N_0$), since $\underline d>0$ in Proposition~\ref{prop:divdeathhighdeg}, it follows that $\rho_1$ is invertible on $[C,\infty)$, where $C$ is a sufficiently large constant, so that the quantity $k$ is well-defined for all large enough $t$. 	\ensymboldremark
\end{remark} 

\begin{proof}		
	As we can bound the birth-time of individuals $v\in B_{K,t}$ from above and below, we obtain the upper bound
	\be\ba \label{eq:nolargedeg}
	\mathbb P_\cS({}&\forall s\in F_{K',t}^*\ \exists v\in B_{K,t}: v\text{ alive at time }s, \deg^{(v)}(s)\geq k)\\ 
	={}& \Ps{\bigcap_{s\in F_{K',t}^*}\bigcup_{v\in B_{K,t}}\!\!\!\{D^{(v)}\geq k, S_k^{(v)}\leq s-\sigma_v, S_{D^{(v)}+1}^{(v)}>s-\sigma_v\}}\\ 
	\leq{}& \mathbb P_{\cS}\Bigg(\!\bigcup_{v\in B_{K,t}}\!\!\!\!\!\Big\{D^{(v)}\geq k, S_k^{(v)}\leq \frac{\lambda^*}{\lambda^*+R}t+K\log t+w(k), S_{D^{(v)}+1}^{(v)}>\frac{\lambda^*}{\lambda^*+R}t-K\log t-w(k)\Big\}\!\Bigg).
	\ea\ee 
	By conditioning on $|B_{K,t}|$, a union bound  yields the upper bound
	\be \ba \label{eq:unionubR}
	\mathbb P\bigg({}&|B_{K,t}|\geq \exp\Big(\frac{\lambda^*R}{\lambda^*+R}t+\lambda^*(K+\delta)\log t\Big)\bigg)+	\exp\Big(\frac{\lambda^*R}{\lambda^*+R}t+\lambda^*(K+\delta)\log t\Big)\\ 
	{}&\times \P{D\geq k, S_k\leq \frac{\lambda^*}{\lambda^*+R}t+K \log t+w(k), S_{D+1}>\frac{\lambda^*}{\lambda^*+R}t-K \log t-w(k)}.
	\ea\ee 
	The probability on the first line tends to zero with $t$ by~\eqref{eq:ubB} in Corollary~\ref{cor:growth} (using $r=\frac{R}{\lambda^*+R}t-K\log t$, $s=\frac{R}{\lambda^*+R}t+K\log t$, and $u=\delta \log t$). We further split the probability on the second line in two terms, namely 
	\be \ba\label{eq:splitub}
	\mathbb P\bigg({}&D\geq k, S_k<\frac{\lambda^*}{\lambda^*+R}t-K\log t-w(k), S_{D+1}>\frac{\lambda^*}{\lambda^*+R}t-K\log t-w(k)\bigg)\\ 
	&+\P{D\geq k}\P{ S_k\in\Big(\frac{\lambda^*}{\lambda^*+R}t-K\log t-w(k),\frac{\lambda^*}{\lambda^*+R}t+K\log t+w(k)\Big]}.
	\ea\ee 
	We bound both terms from above, starting with the one on the first line. As $\underline d>R$, there exists $I\in\N$ such that $d(i)>R-1/t$ for all $i\geq I$ and all $t> 0$. As $k\geq I$ for all $t$ sufficiently large, we can use Lemmas~\ref{lemma:Dtail} and~\ref{lemma:survdeg} to bound the first term on the right-hand side from above by 
	\be 
	\e^{-\rho_1(k)}\E{\ind_{\{S_k<\frac{\lambda^*}{\lambda^*+R}t-K\log t-w(k)\}}\exp\Big(\big(R-\tfrac1t\big)\Big[S_k-\Big(\frac{\lambda^*}{\lambda^*+R}t-K\log t-w(k)\Big)\Big]\Big)}.
	\ee 
	We omit the indicator to arrive at the upper bound
	\be \label{eq:exprho}
	\exp\bigg(-\rho_1(k)-\big(R-\tfrac1t\big)\Big(\frac{\lambda^*}{\lambda^*+R}t-K\log t-w(k)\Big)\bigg)(Rt)^{|[k-1]_R|}\!\!\!\!\prod_{\substack{i=0\\ i\not\in[k-1]_R}}^{k-1}\!\!\!\!\frac{b(i)+d(i)}{b(i)+d(i)-R+1/t},
	\ee 
	where we recall $[k-1]_R$ from~\eqref{eq:I-1R} as the set of indices $i\in\{0,\ldots, k-1\}$ such that $b(i)+d(i)=R$. 
	We bound, using that $1+x\leq \e^x$ for all $x\in \R$,
	\be \label{eq:prodwithoutR}
	\prod_{\substack{i=0\\ i\not\in[k-1]_R}}^{k-1}\!\!\!\!\frac{b(i)+d(i)}{b(i)+d(i)-R+1/t}\leq \exp\bigg(R\!\!\!\sum_{\substack{i=0\\ i\not\in[k-1]_R}}^{k-1}\!\!\!\!\frac{1}{b(i)+d(i)-R}\bigg)= \exp\big(R\varphi_1(k)+\cO(\varphi_2(k))\big), 
	\ee 
	where the final step uses that $b$ tends to infinity. Furthermore, we have $\varphi_2=o(\varphi_1)$ and $\varphi_1(k)\leq \rho_1(k)/(\underline d-\eps)$ for any $\eps>0$ and $k$ large by $(a)$ and $(b)$ in Lemma~\ref{lemma:func}, since $b$ tends to infinity. Additionally, by the choice of $w$, it follows that $w(k)=o(\rho_1(k))$.  Combining this with~\eqref{eq:prodwithoutR} in~\eqref{eq:exprho}, we arrive at the upper bound 
	\be 
	\exp\bigg(-\Big(1-\frac{R}{\underline d-\eps}+o(1)\Big)\rho_1(k)-\frac{\lambda^*R}{\lambda^*+R}t+[RK+|[k-1]_R|+o(1)]\log t\Big).
	\ee 
	As $\underline d >R$, it follows that $[k-1]_R\leq L$ for some constant $L\in \N$, uniformly in $k$. We can further use the choice of $k$ and the fact that $\rho_1$ is increasing to finally obtain 
	\be \ba \label{eq:firsttermbound}
	\mathbb P{}&\Big(D\geq k, S_k<\frac{\lambda^*}{\lambda^*+R}t-K\log t-w(k), S_{D+1}>\frac{\lambda^*}{\lambda^*+R}t-K\log t-w(k)\Big)\\ 
	&\leq \exp\bigg(-\frac{\lambda^*R}{\lambda^*+R}t+\big[RK+L-\big(1-\tfrac{R}{\underline d-\eps}\big)K'+o(1)\big]\log t\Big).
	\ea\ee 
	We then bound the term on the second line of~\eqref{eq:splitub} from above. We relax the upper bound on $S_k$, again use Lemma~\ref{lemma:Dtail}, and apply a Chernoff bound with parameter $\theta:=R-1/t$, to obtain 
	\be 
	\e^{-\rho_1(k)}\P{S_k>\frac{\lambda^*}{\lambda^*+R}t-K\log t-w(k)}\leq \exp\bigg(-\rho_1(k)-\theta\Big[\frac{\lambda^*}{\lambda^*+R}t-K\log t-w(k)\Big]\bigg)\E{\e^{\theta S_k}}.
	\ee  
	By the choice of $\theta$, we thus arrive at 
	\be \label{eq:closechernoff}
	\exp\bigg(-\rho_1(k)-\Big(R-\frac1t\Big)\Big[\frac{\lambda^*}{\lambda^*+R}t-K\log t-w(k)\Big]\bigg)(Rt)^{|[k-1]_R|}\!\!\!\!\prod_{\substack{i=0\\ i\not\in [k-1]_R}}^{k-1}\!\!\!\!\frac{b(i)+d(i)}{b(i)+d(i)-R+1/t},
	\ee
	which is the same upper bound as in~\eqref{eq:exprho}. The second line of~\eqref{eq:unionubR} is thus at most
	\be 
	\exp\bigg(-\frac{\lambda^*R}{\lambda^*+R}t+\big[RK+L-\big(1-\tfrac{R}{\underline d-\eps}\big)K'+o(1)\big]\log t\bigg).
	\ee 
	Multiplied with the exponential term on the first line of~\eqref{eq:unionubR}, we finally arrive at the upper bound
	\be \ba 
	\mathbb P(\forall {}&s\in F^*_{K',t}\ \exists v\in B_{K,t}: v\text{ alive at time }s, \deg^{(v)}(s)\geq k)\\
	\leq{}& \exp\bigg(\big[\lambda^*(K+\delta)+RK+L-\big(1-\tfrac{R}{\underline d-\eps}\big)K'+o(1)\big]\log t\bigg)+o(1),
	\ea \ee 
	which tends to zero for $K'$ sufficiently large.		
\end{proof}

Now that we know that the oldest  individuals alive at time $t$ do not have large offspring, it remains to show that there are individuals alive at time $t$, born much  later than the oldest individual, that do indeed attain such large degrees.  We abuse notation to write
\be \label{eq:overlined}
\overline d(i):=\sup_{j\leq i}d(j)\qquad \text{for }i\in\N_0. 
\ee 
It  follows that $\overline d:=\limsup_{i\to\infty}d(i)=\lim_{i\to\infty}\overline d(i)$. We then have the following result. 

\begin{proposition}\label{prop:divdeathhighdeg}
	Suppose that $b$ and and $d$ are such that Assumptions~\eqref{ass:A1} and~\eqref{ass:C1} are satisfied. Recall $R$ from~\eqref{eq:R}, $\underline d,\overline d$ from~\eqref{eq:dinfsup} and $F^*_{K',t}$ from~\eqref{eq:Fstar}. Suppose that $\underline d>0$ and that $b$ tends to infinity, such that $\overline d(k)=o(b(k))$ and $b(k)=\cO(k)$. Finally, assume that $b(k)$ and $\overline d(k)$ are regularly varying in $k$ with a non-negative exponent. For any $K,K'>0$, with $k:=\lceil \rho_1^{-1}(K'\log t)\rceil$,		
	\be 
	\lim_{t\to\infty}\Ps{\forall s\in F^*_{K',t}\ \exists v\in \cB\Big(\frac{R}{\lambda^*+R}t+K\log t, t\Big): v\ \mathrm{alive}\ \mathrm{at}\ \mathrm{time}\ s, \deg^{(v)}(s)\geq k}=1.
	\ee 
\end{proposition}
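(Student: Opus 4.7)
The plan is to localise the search for successful individuals to a carefully chosen birth window and exploit the exponential growth of the branching process together with the independence of offspring processes across distinct individuals. I would partition $F^*_{K',t}$ into sub-intervals $[s^-_j,s^+_j]$ of small constant width $\epsilon>0$, and for each sub-interval show that, under $\mathbb P_\cS$, with high probability there exists $v\in\cB(s^-_j-\varphi_1(k)-1,\,s^-_j-\varphi_1(k))=:\cB_j$ that is alive throughout $[s^-_j,s^+_j]$ with $\deg^{(v)}(s^-_j)\geq k$. A union bound over the $N=\cO(w(k)/\epsilon)=\cO(\log t)$ sub-intervals would then conclude the proof.

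For $v\in\cB_j$ the birth time $\sigma_v$ satisfies $s^-_j-\sigma_v\in[\varphi_1(k),\varphi_1(k)+1]$ and $s^+_j-\sigma_v\in[\varphi_1(k)+\epsilon,\varphi_1(k)+\epsilon+1]$, so the success event $\{D^{(v)}\geq k,\,S_k^{(v)}\leq s^-_j-\sigma_v,\,S_{D^{(v)}+1}^{(v)}>s^+_j-\sigma_v\}$ is implied by
\[
\Big\{D\geq k,\ S_k\in[\varphi_1(k)-\sqrt{\varphi_2(k)\vee 1},\varphi_1(k)],\ {\textstyle\sum_{i=k}^{k+D_k}}E_i>1+\epsilon+\sqrt{\varphi_2(k)\vee 1}\Big\}.
\]
I would then use the independence of $D$, $S_k$ and the remaining lifetime $\sum_{i=k}^{k+D_k}E_i$ conditional on $\{D\geq k\}$ to lower bound this probability as a product of three factors. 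Lemma~\ref{lemma:Dtail} (noting that $\overline d(k)=o(b(k))$ forces $d=o(b)$) yields the first factor $\geq t^{-K'-o(1)}$; the central limit theorem / Chebyshev's inequality applied to $S_k$ (whose variance is $\varphi_2(k)$) yields the second factor $\geq c>0$; and Lemma~\ref{lemma:stochdomexp}(ii) when $\overline d<\infty$, or the crude bound $\sum_{i=k}^{k+D_k}E_i\geq E_k\sim\mathrm{Exp}(b(k)+d(k))$ otherwise, yields the third factor $\geq\exp(-\cO(\mathrm{polylog}(t)))$. For the birth window itself, Corollary~\ref{cor:growth} produces $|\cB_j|\geq\exp(\lambda^*(s^-_j-\varphi_1(k))-\delta\log t)$ with $\mathbb P_\cS$-probability tending to one; since $\underline d>0$ combined with Lemma~\ref{lemma:func}(b) gives $\varphi_1(k)=\cO(\log t)$ and $w(k)\leq\varphi_2(k)=o(\varphi_1(k))$ by Lemma~\ref{lemma:func}(a), one checks that $\cB_j\subseteq(\tfrac{R}{\lambda^*+R}t+K\log t,t]$ for all large $t$.

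Combining these estimates, the success probability per individual is $p_t\geq t^{-K'-o(1)}\exp(-o(\log t))$, while $|\cB_j|\geq\exp(\lambda^*(t-\cO(\log t)))$ with high probability, so the expected number of successful individuals in $\cB_j$ grows exponentially in $t$. By the independence of the offspring processes of distinct $v\in\cB_j$ (conditional on the branching process up to time $s^-_j-\varphi_1(k)$), the probability that no individual in $\cB_j$ succeeds is at most $\exp(-|\cB_j|p_t)$ and decays super-polynomially, which is more than enough to sum over the $\cO(\log t)$ sub-intervals via a union bound. The hard part will be the regime where both $\varphi_2(k)$ and $\overline d(k)$ diverge with $t$: there Lemma~\ref{lemma:remlifediv} cannot be invoked at the required scale $\sqrt{\varphi_2(k)}$ for the remaining lifetime, and only the crude bound via $E_k$ is available. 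This crude bound still suffices because the regular variation of $b$ and $\overline d$ together with $b(k)=\cO(k)$ keeps $b(k)+d(k)$ at most polynomial in $k$, with $k$ itself only polylogarithmic in $t$, so the resulting cost is sub-polynomial in $t$ and is dwarfed by the Malthusian exponential growth of the birth window.
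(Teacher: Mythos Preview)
Your partitioning into sub-intervals is more elaborate than the paper's approach, which works with a \emph{single} birth window $B_{w,t}=\cB(t-\varphi_1(k),\,t-\varphi_1(k)+w(k))$ and finds one individual $v$ satisfying $\{D^{(v)}\geq k,\ S_k^{(v)}\leq\varphi_1(k)-2w(k),\ S_{D^{(v)}+1}^{(v)}>\varphi_1(k)+w(k)\}$; this single event makes $v$ witness the claim for \emph{all} $s\in F^*_{K',t}$ at once. Besides being simpler, this avoids two technical problems in your outline: your width-$1$ windows $\cB_j$ do not meet the hypothesis $s-r\to\infty$ of~\eqref{eq:lbB} in Corollary~\ref{cor:growth}, and summing $N$ applications of that corollary (which carries no quantitative rate) over the sub-intervals is not justified when $N\to\infty$.

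More seriously, the crude bound $\sum_{i=k}^{k+D_k}E_i\geq E_k$ for the remaining lifetime is too weak. Your justification rests on $k$ being polylogarithmic in $t$, but this fails in general. Take $b(i)=i+1$ and $d(i)=1$ except on the sparse set $i_j=2^{2^j}$ where $d(i_j)=j$; then $\overline d(k)\approx\log_2\log_2 k$ is slowly varying with $\overline d=\infty$, all hypotheses of the proposition hold, yet $\rho_1(k)=\log k+O(1)$, so $k=\rho_1^{-1}(K'\log t)\approx t^{K'}$ is \emph{polynomial} in $t$. Your crude bound then gives $\P{E_k>T}\approx\exp(-t^{K'}\cdot T)$ for fixed $T>0$, which for $K'>1$ swamps the Malthusian factor $\e^{\lambda^*t}$. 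The paper instead dominates by the process with rates $b$ and $\overline d$ and applies Lemma~\ref{lemma:remlifediv} (via Remark~\ref{rem:linb} when $b$ is linear), obtaining $\P{\sum_{i=k}^{k+D_k}E_i>4w(k)}\geq\exp(-C\,\overline d(k)\,w(k))$; since $\overline d(k)w(k)=o(t)$ under the stated hypotheses (in the example $\overline d(k)=O(\log\log t)$), this cost is always absorbed. The missing idea is that the remaining lifetime after $k$ children is governed by the death scale $\overline d(k)$, not the birth scale $b(k)$, and capturing this requires the excess-offspring mechanism of Lemma~\ref{lemma:remlifediv}, not just the first clock $E_k$.
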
 

The combination of Propositions~\ref{prop:nooldhighdeg} and~\ref{prop:divdeathhighdeg} $I^\cont_t\gg O^\cont_t$ when $\underline d>R$. It remains to translate this result to the quantities $I_n$ and $O_n$ to prove Theorem~\ref{thrm:biggerR}, which is carried out in Section~\ref{sec:mainproofs}. 

\begin{proof}
	To bound the probability in the proposition statement from below, we only consider individuals in
	\be 
	B_{w,t}:=\cB\big( t-\varphi_1(k), t-\varphi_1(k)+w(k)\big). 
	\ee 
	Note that, since $\underline d>0$, it follows that $\varphi_1(k)=\cO(\rho_1(k))=\cO(\log t)$ by $(b)$ in Lemma~\ref{lemma:func}. Similarly, regardless of whether $\varphi_2$ tends to infinity or not, we have $w(k)=o(\varphi_1(k))$ since $b$ diverges and by $(a)$ in Lemma~\ref{lemma:func}. As a result, 
	\be 
	B_{w,t}\subset \cB\Big(\frac{R}{\lambda^*+R}t+K\log t,t\Big).  
	\ee 
	We show that there exist individuals in the set $B_{w,t}$  who are alive at time $s$ and have produced at least $k$ children by time $s$, for all $s\in F^*_{K',t}$. By bounding the birth-times of individuals in $B_{w,t}$ and $s$ from above and below, we obtain the lower bound
	\be \ba 
	\mathbb P_\cS\bigg(\forall{}& s\in F^*_{K',t}\ \exists v\in \cB\Big(\frac{R}{\lambda^*+R}t+K\log t,t\Big): v\text{ alive at time }s, \deg^{(v)}(s)\geq k\bigg)\\ 
	\geq{}&\mathbb P_\cS\Bigg(\bigcup_{v\in B_{w,t}}\Big\{D^{(v)}\geq k, S_k^{(v)}\leq \varphi_1(k)-2w(k), S_{D^{(v)}+1}^{(v)}> \varphi_1(k)+w(k)\Big\} \Bigg).
	\ea\ee 
	With a similar approach as in the steps between~\eqref{eq:utbound} and~\eqref{eq:utbound2} and by leveraging the independence of the reproduction processes of distinct individuals (when not conditioning on survival), we obtain for any $m\in\N$ the lower bound
	\be  
	1-\Ps{|B_{w,t}|\leq m} -\P{\cS}^{-1}\big(1-\P{D\geq k, S_k\leq \varphi_1(k)-2w(k), S_{D+1}> \varphi_1(k)+w(k)}\big)^m.
	\ee 
	We now set 
	\be 
	m:=\exp\big(\lambda^*(t-\varphi_1(k)+\tfrac12w(k))\big), 
	\ee 
	and use that $1-x\leq \e^{-x}$ for all $x\in\R$ to obtain the lower bound 
	\be \ba \label{eq:dinftyprodlb}
	1{}&-\frac{1}{\P{\cS}}\exp\Big(-\P{D\geq k, S_k\leq \varphi_1(k)-2w(k), S_{D+1}> \varphi_1(k)+ w(k)}\e^{\lambda^*(t-\varphi_1(k)+\frac12w(k)) }\Big)\\ 
	&-\Ps{|B_{w,t}|\leq \exp\big(\lambda^*(t-\varphi_1(k)+\tfrac12w(k))\big)}.
	\ea \ee
	We first bound the probability on the first line from below and later show that the term on the second line can be made arbitrarily small. To this end, we condition on the event $\{D\geq k\}$ and the random variable $S_k$ to arrive at  
	\be \ba \label{eq:lbbeforedom}
	\mathbb P({}&D\geq k, S_k\leq \varphi_1(k)-2w(k), S_{D+1}> \varphi_1(k)+ w(k))\\
	&=\P{D\geq k}\E{\ind_{\{S_k< \varphi_1(k)-2w(k)\}}\P{\sum_{i=k}^{k+D_k} E_i> \varphi_1(k)+ w(k)-S_k\,\Bigg|\, S_k}}.
	\ea \ee 
	We bound the expected value from below by further restricting the value of $S_k$ to the interval  $\cI_k:=( \varphi_1(k)-3w(k), \varphi_1(k)-2w(k))$ in the indicator and then bounding $S_k$ from below by $\varphi_1(k)-3w(k)$ in the probability. This yields the lower bound
	\be \label{eq:genlb}
	\P{D\geq k}\P{S_k\in \cI_k}\P{\sum_{i=k}^{k+D_k} E_i> 4w(k)}.
	\ee 
	To bound the third probability from below, we distinguish between the cases $\overline d<\infty$ and $\overline d=\infty$. 
	
	$\boldsymbol{\overline d<\infty. }$\ There exists $C_1\geq \overline d$ such that $d(i)\leq C_1$ for all $i\geq k$, so that Lemma~\ref{lemma:stochdomexp} yields
	\be 
	\P{\sum_{i=k}^{k+D_k} E_i> 4w(k)}\geq \e^{-4C_1w(k)}\geq \e^{-C_2 \overline d(k)w(k)},
	\ee 
	where $C_2>0$ is large enough so that $C_2\overline d(k)\geq C_1$ for $k$ large, which is possible since $\overline d(k)$ converges to $\overline d$ as $k$ tends to infinity. 
	
	$\boldsymbol{\overline d=\infty. }$\ We recall $\overline d(i)$ from~\eqref{eq:overlined} and introduce the random variables $(\overline E_i)_{i\in\N_0}$ and $\overline D_k$, where
	\be 
	\overline E_i\sim\text{Exp}(b(i)+\overline d(i))\qquad\text{and}\qquad \P{\overline D_k\geq\ell}=\prod_{i=k}^{k+\ell-1}\frac{b(i)}{b(i)+\overline d(i)}. 
	\ee 
	Since $d(i)\leq \overline d(i)$ for all $i\in\N_0$, it follows that $E_i\succeq \overline E_i$ for all $i\in\N_0$ and $D_k\succeq \overline D_k$. Hence, 
	\be 
	\P{\sum_{i=k}^{k+D_k} E_i> 4w(k)}\geq \P{\overline d(k)\sum_{i=k}^{k+\overline D_k} \overline E_i> 4\overline d(k)w(k)}.
	\ee 
	We apply Lemma~\ref{lemma:remlifediv} to the probability on the right-hand side, i.e.\ when using the sequences $(b(k))_{k\in\N_0}$ and $(\overline d(k))_{k\in\N_0}$, for which we need to check several conditions. First, we note that $\overline d(k)$ is increasing by definition and tends to infinity with $k$ since $\overline d=\infty$. Then, by assumption $\overline d(k)$ is regularly varying with non-negative exponent (and increasing), and $\overline d(k)=o(b(k))$. Since $b(k)=\cO(k)$ and $\overline d(k)$ tends to infinity, it follows that $b(k)=o(k\overline d(k))$.  Finally, we show that $t_k:=4\overline d(k)w(k)$ satisfies the required conditions. As $w(k)\geq A$ for all $k$ sufficiently large and $\overline d(k)$ tends to infinity with $k$, it follows that $\liminf_{k\to\infty}t_k>0$. Then, recall that $b$ is regularly varying with non-negative exponent and that $\overline d(k)=o(b(k))$. If $b$ and $d$ are such that $\varphi_2$ tends to infinity, 
	\be 
	w(k)=\varphi_2(k)=\sum_{i=0}^{k-1}\frac{1}{(b(i)+d(i))^2}\leq \sum_{i=0}^{k-1}\frac{1}{b(i)^2}=\cO\Big(\frac{k}{b(k)^2}\Big).
	\ee 
	As a result, $t_k=\cO(k\overline d(k)/b(k)^2)=o(k\overline d(k)/b(k))$, where the final step holds since $b$ tends to infinity. When, instead, $\varphi_2$ converges, then $t_k=4A\overline d(k)=o(kd(k)/b(k))$ when $b(k)=o(k)$. As a result, in both cases we obtain the lower bound
	\be 
	\P{\overline d(k)\sum_{i=k}^{k+\overline D_k} \overline E_i> 4\overline d(k)w(k)}\geq \e^{-(4+o(1))\overline d(k)w(k)}.
	\ee 
	When $\varphi_2$ converges but $b(k)=\cO(k)$ rather than $b(k)=o(k)$, we use Remark~\ref{rem:linb} to obtain a weaker lower bound which holds under the weaker assumption $t_k=\cO(\overline d(k))$, which is the case when $t_k=4A\overline d(k)$ regardless of the value of $A$. This weaker lower bound yields
	\be 
	\P{\overline d(k)\sum_{i=k}^{k+\overline D_k} \overline E_i> 4\overline d(k)w(k)}\geq \e^{-C_3 \overline d(k)w(k)},
	\ee 
	for some large constant $C_3>4$.  It thus follows from Lemma~\ref{lemma:remlifediv} (and Remark~\ref{rem:linb}) that, with $C_4=\max\{C_2,C_3\}$,
	in both  cases $\overline d<\infty$ and $\overline d=\infty$ we arrive the lower bound 
	\be 
	\P{\sum_{i=k}^{k+D_k} E_i> 4w(k)}\geq \e^{-C_4\overline d(k)w(k)},
	\ee 
	for  $k$ sufficiently large. Using this in~\eqref{eq:genlb}, we arrive at
	\be \label{eq:genlb2}
	\P{D\geq k}\P{S_k\in \cI_k}\P{\sum_{i=k}^{k+D_k} E_i> 4w(k)}\geq \P{D\geq k}\P{S_k\in \cI_k}\e^{-C_4\overline d(k)w(k)}.
	\ee
	To conclude the proof, we distinguish between whether $\varphi_2$ tends to infinity or converges. 
	
	\textbf{$\boldsymbol{\varphi_2}$ tends to infinity. } It follows from~\eqref{eq:w} that $w(k)=\varphi_2(k)$. By Lemma~\ref{lemma:mdp},
	\be  \label{eq:Skmdp}
	\P{S_k\in \cI_k}=\P{S_k\in(\varphi_1(k)-3\varphi_2(k),\varphi_1(k)-2\varphi_2(k))} =\exp\big(- (2+o(1))\varphi_2(k)\big).
	\ee
	Combined with Lemma~\ref{lemma:Dtail} and using that $\rho_2=o(\rho_1)$ since $d=o(b)$ by $(a)$ in Lemma~\ref{lemma:func}, we can thus write the lower bound in~\eqref{eq:genlb2} as 
	\be 
	\exp\big(-(1+o(1))\rho_1(k)-(2+o(1))\varphi_2(k)-C_4\overline d(k)\varphi_2(k)\big)=\exp\big(-(1+o(1))\rho_1(k)), 
	\ee 
	where we use that $b$ tends to infinity and is regularly varying with non-negative exponent  and that $\overline d(k)=o(b(k))$, so that $\varphi_2(k)=\cO(\overline d(k)\varphi_2(k))=o(\varphi_1(k))$, and $\varphi_1(k)=\cO(\rho_1(k))$ since $\underline d>0$ by $(a)$ and $(b)$ in Lemma~\ref{lemma:func}. By the choice of $k$, we thus arrive at the lower bound
	\be 
	\P{D\geq k, S_k\leq \varphi_1(k)-2w(k), S_{D+1}> \varphi_1(k)+ w(k)} \geq \e^{-(K'+o(1))\log t}.
	\ee 
	Using this in~\eqref{eq:dinftyprodlb} yields the lower bound
	\be \ba 
	1{}&-\exp\Big(-\exp\big(-(K'+o(1))\log t+\lambda^*(t-\varphi_1(k)+\tfrac12\varphi_2(k))\big)\Big)\\
	&-\Ps{|B_{w,t}|\leq \exp\big(\lambda^*(t-\varphi_1(k)+\tfrac12\varphi_2(k))\big)}
	\ea\ee 
	Again using that $\varphi_2=o(\varphi_1)$ and $\varphi_1=\cO(\rho_1)$, so that $\varphi_1(k)=o(t)$ and  $\varphi_2(k)=o(t)$, the exponential term equals $\exp(-\exp((\lambda^*+o(1))t))$, which tends to zero with $t$. It remains to show that the probability on the second line tends to zero. Here, we use~\eqref{eq:lbB} in Corollary~\ref{cor:growth}, with $r=t-\varphi_1(k)$, $s=t-\varphi_1(k)+\varphi_2(k)$, and $u=\frac12\varphi_2(k)$, to arrive at the desired result. 
	
	\textbf{$\boldsymbol{\varphi_2}$ converges. } We  have
	\be \ba 
	\P{S_k\in \cI_k}=\P{S_k\in( \varphi_1(k)-3A, \varphi_1(k)-2A)}=p_A+o(1), 
	\ea \ee
	for some constant $p_A\in(0,1)$. Indeed,  $S_k-\varphi_1(k)=S_k-\E{S_k}$ converges almost surely in this case, as it is a martingale whose quadratic variation equals $\varphi_2$, which converges by assumption. Again, combined with Lemma~\ref{lemma:Dtail}, this yields 
	\be \ba 
	\mathbb P{}&(D\geq k, S_k<t-w(k)-(t- \varphi_1(k)+w(k)), S_{D+1}>t+w(k)-(t- \varphi_1(k)))\\ 
	&\geq\exp\big(-(1+o(1))\rho_1(k)-AC_4\overline d(k)\big).
	\ea\ee 
	Using this in~\eqref{eq:dinftyprodlb}, we arrive at the lower bound
	\be \ba 
	1{}&-\exp\Big(-\exp\big(-(1+o(1))\rho_1(k)-AC_4\overline d(k)+\lambda^*(t-\varphi_1(k)+2A)\big)\Big)\\
	&-\Ps{|B_{w,t}|\leq \exp\big(\lambda^*(t-\varphi_1(k)+\tfrac12A)\big)}.
	\ea\ee 
	As in the first case, $\varphi_1(k)=o(t)$. Furthermore, by the assumption that $\overline d(k)=o(b(k))=\cO(k)$ it follows that $\overline d(k)=o(t)$ by the choice of $k$, irrespective of the value of $K'$. The exponential term on the first line thus tends to zero with $t$ independently of the choice of $A$ and $K'$. To bound the probability on the second line, we write 
	\be \ba 
	\mathbb P_\cS{}&\big(|B_{w,t}|\leq \exp\big(\lambda^*(t-\varphi_1(k)+\tfrac12A)\big)\big)\\ 
	={}&\Ps{|\cB(0,t- \varphi_1(k)+A)|-|\cB(t-\varphi_1(k),t- \varphi_1(k)+A)|\leq \e^{\lambda^*(t- \varphi_1(k)+\frac12 A)}}\\ 
	\leq{}& \Ps{|\cB(0,t- \varphi_1(k)+A)|\e^{-\lambda^*(t- \varphi_1(k)+A)}\leq 2\e^{-\frac12\lambda^*A}}\\
	&+\Ps{|\cB(0,t-\varphi_1(k))|\e^{-\lambda^*(t-\varphi_1(k))}\geq \e^{\frac12\lambda^*A} },
	\ea\ee  
	where the final step uses a union bound. We then bound the right-hand side from above by
	\be 
	\mathbb P\Big(\inf_{t\geq0}|\cB(0,t)|\e^{-\lambda^*t}\leq 2\e^{-\frac12\lambda^*A}\Big)+\mathbb P\Big(\sup_{t\geq0}|\cB(0,t)|\e^{-\lambda^*t}\geq \e^{\frac12\lambda^*A}\Big).  		
	\ee 
	We can bound either term from above by $\eps/2$ when choosing $A=A(\eps)$ sufficiently large by using~\eqref{eq:supB} and~\eqref{eq:infB} in Proposition~\ref{prop:growth}. Combining both cases, we thus arrive at 
	\be 
	\limsup_{t\to\infty}\Ps{\forall s\in F^*_{K',t}\ \exists v\in \cB\Big(\frac{R}{\lambda^*+R}t+K\log t, t\Big): v\ \mathrm{alive}\ \mathrm{at}\ \mathrm{time}\ s, \deg^{(v)}(s)\geq k}\geq 1-\eps. 
	\ee 
	As $\eps$ is arbitrary, we arrive the desired result and conclude the proof.
\end{proof}

\section{Proofs of the main results}\label{sec:mainproofs}

In this section we combine the results proved in Sections~\ref{sec:old}, ~\ref{sec:max}, and~\ref{sec:maxrdy} (Propositions~\ref{prop:oldage}, ~\ref{prop:It}, \ref{prop:nooldhighdeg}, and~\ref{prop:divdeathhighdeg} in particular) to prove the main results presented in Section~\ref{sec:results}. 

\begin{proof}[Proof of Theorem~\ref{thrm:conv}]	
	Recall that $\tau_n$, as in~\eqref{eq:taun}, is the stopping time at which exactly $n$ many births or deaths have occurred in the branching process $\bp$. Also, recall that $\cB(0,t)$ and $\cA^\cont_t$  denote the number of births that occur in $\bp$ up to time $t$ and the number of alive individuals in $\bp(t)$, respectively. By Proposition~\ref{prop:growth} and Equation~\eqref{eq:charlim}, there exist random variables $W_1,W_2$ such that 
	\be 
	|\cB(0,t)|\e^{-\lambda^*\!t}\toas W_1, \qquad\text{and}\qquad \ |\cA^\cont_t|\e^{-\lambda^*\!t}\toas W_2. 
	\ee 
	Moreover, $W_1$ and $W_2$ are strictly positive almost surely conditionally on the event $\cS$ that the branching process survives. If we let $N(t)$ denote the the number of births and deaths that occur up to time $t$, then we observe that
	\be 
	N(t)=|\cB(0,t)|+(|\cB(0,t)|-\cA^\cont_t),\qquad t\geq0.
	\ee 
	Note that $N(t)\geq 0$ for all $t\geq 0$, and that $N(t)$ is increasing in $t$. Since $(|\cB(0,t)|)_{t\geq 0}$ and $|(\cA^\cont_t|)_{t\geq 0}$ are c\'adl\'ag, it follows that $N(\tau_n)=n$. Hence,
	\be\label{eq:taunconv} 
	\tau_n-\frac{1}{\lambda^*}\log n\toas -\frac{1}{\lambda^*}\log (2W_1-W_2). 
	\ee 
	As $W_2\leq W_1$ since $|\cA^\cont_t|\leq |\cB(0,t)|$ and $W_2>0$ almost surely conditionally on $\cS$, the limit is finite $\mathbb P_{\cS}$-almost surely. Since we assume that $\varphi_1$ and $\varphi_2$ diverge, and thus that $\cK$ diverges, conditionally on $\cS$, the event 
	\be \label{eq:tauconv}
	\tau_n\in \Big[\frac{1}{\lambda^*}\log n-p \cK\big(\tfrac{1}{\lambda^*}\log n\big), \frac{1}{\lambda^*}\log n+p \cK\big(\tfrac{1}{\lambda^*}\log n\big)\Big]=:F_{p,n}',
	\ee 
	holds with high probability as $n$ tends to infinity, for any fixed $p>0$. Then, 
	\be \ba \label{eq:Otaunbound}
	\mathbb P_\cS{}&\bigg(\bigg|\frac{1}{\cK\big(\tfrac{1}{\lambda^*+d^*}\log n\big)}\Big(O_{\tau_n}^\cont-\frac{d^*}{\lambda^*(\lambda^*+d^*)}\log n -\frac{1}{\lambda^*+d^*}\cK_{\alpha}\big(r\big(\tfrac{1}{\lambda^*}\log n\big)\big)\Big)\bigg|<\eps\bigg)\\ 
	\geq{}& \mathbb P_\cS\bigg(\bigg|\frac{1}{\cK\big(\tfrac{1}{\lambda^*+d^*}\log n\big)}\Big(O_s^\cont-\frac{d^*}{\lambda^*(\lambda^*+d^*)}\log n -\frac{1}{\lambda^*+d^*}\cK_{\alpha}\big(r\big(\tfrac{1}{\lambda^*}\log n\big)\big)\Big)\bigg|<\eps, \forall s\in F_{p,n}'\!\bigg)\\
	&-\Ps{\tau_n\not \in F_{p,n}'}.
	\ea \ee  	
	Applying Proposition~\ref{prop:oldage} (\eqref{eq:Otconv} in particular) with $t=\frac{1}{\lambda^*}\log n$ to the first probability on the right-hand side,  with $p$ sufficiently small, and~\eqref{eq:tauconv} to the final probability yields that the left-hand side converges to one as $n$ tends to infinity. We now define, for ease of writing, 
	\be 
	\ell_n(c):=\frac{d^*}{\lambda^*+d^*}\log n +\frac{\lambda^*}{\lambda^*+d^*}\cK_\alpha\big(r\big(\tfrac{1}{\lambda^*}\log n \big)\big)+c\lambda^* \cK\big(\tfrac{1}{\lambda^* +d^*}\log n\big),
	\ee 
	where $c\in \R$ is a constant. We then write the event 
	\be  \label{eq:bigeventOn}
	\bigg\{\bigg|\frac{1}{\cK(\frac{1}{\lambda^*+d^*}\log n)}\Big(\log O_n -\frac{d^*}{\lambda^*+d^*}\log n-\frac{\lambda^*}{\lambda^*+d^*}\cK_\alpha\big(r\big(\tfrac{1}{\lambda^*}\log n\big)\big)\Big)\bigg|\leq 2\lambda^*\eps \bigg\}
	\ee 
	as the event 
	\be \ba \label{eq:smalleventOn}
	\big\{O_n \in \big[\exp\big(\ell_n(-2\eps)\big),\exp\big(\ell_n(2\eps)\big)\big]\big\}.
	\ea \ee 
	The random variable $O_n$ equals the label of the oldest alive vertex in $T_n$, whereas $O_{\tau_n}^\cont$ equals the birth-time of the oldest alive vertex in $\bp(\tau_n)$. Since $O_n\overset d=N(O_{\tau_n}^\cont)$ (see~\eqref{eq:OtItequiv}), we thus need to know the number of births and deaths that have occurred in $\bp(O_{\tau_n}^\cont)$. Hence, we bound
	\be\ba\label{eq:Ondiscrconv}
	\mathbb P_\cS\bigg({}& \bigg|\frac{1}{\cK(\frac{1}{\lambda^*+d^*}\log n)}\Big(\log O_n -\frac{d^*}{\lambda^*+d^*}\log n-\frac{\lambda^*}{\lambda^*+d^*}\cK_\alpha\big(r\big(\tfrac{1}{\lambda^*}\log n\big)\big)\Big)\bigg|\leq 2\lambda^*\eps \bigg)\\
	={}&\mathbb P_\cS\Big(O_n \in \big[\exp\big(\ell_n(-2\eps)\big),\exp\big(\ell_n(2\eps)\big)\big]\Big)\\
	\geq {}& \Ps{O_{\tau_n}^\cont\in \Big[\frac{1}{\lambda^*}\ell_n(-\eps),\frac{1}{\lambda^*}\ell_n(\eps)\Big]}-\Ps{ N\Big(\frac{1}{\lambda^*}\ell_n(-\eps)\Big) \leq \exp\big(\ell_n(-2\eps)\big)}\\ 
	&-\Ps{N\Big(\frac{1}{\lambda^*}\ell_n(\eps)\Big) \geq \exp\big(\ell_n(2\eps)\big)}.
	\ea\ee 
	We then note that the first probability on the right-hand side tends to one by~\eqref{eq:Otaunbound}, and the other two probabilities tend to zero by Corollary~\ref{cor:growth}. Indeed, as we can bound
	\be \label{eq:Nbounds}
	|\cB(0,t)|\leq N(t)\leq 2|\cB(0,t)|, 
	\ee 
	we take $r=0$, $u=\eps \lambda^*\cK(\frac{1}{\lambda^*+d^*}\log n)$, and  $s=\frac{1}{\lambda^*}\ell_n(-\eps) $ for the middle probability and $s=\frac{1}{\lambda^*}\ell_n(\eps)$ for the last probability on the right-hand side of~\eqref{eq:Ondiscrconv}. This yields~\eqref{eq:Onconv}.
	
	In a similar manner, 
	\be \ba \label{eq:Itaunbound}
	\mathbb P_\cS\bigg({}&\bigg|\frac{1}{\cK(\frac{ 1}{\lambda^*+d^*}\log n)}\Big(I_{\tau_n}^\cont-\frac{d^*}{\lambda^*(\lambda^*+d^*)}\log n-\frac{1}{\lambda^*+d^*}\cK_\alpha \big(r\big(\tfrac{1}{\lambda^*}\log n\big)\big)\Big)- \frac{\lambda^*+d^*}{2}\bigg|<\eps\bigg)\\ 
	\geq{}& \Ps{\Bigg|\frac{I_s^\cont-\frac{d^*}{\lambda^*(\lambda^*+d^*)}\log n-\frac{1}{\lambda^*+d^*}\cK_\alpha \big(r\big(\tfrac{1}{\lambda^*}\log n\big)\big)}{\cK(\frac{ 1}{\lambda^*+d^*}\log n)}- \frac{\lambda^*+d^*}{2}\Bigg|<\eps\text{ for all }s\in F_{p,n}'}\\ 
	&-\Ps{\tau_n\not \in F_{p,n}'},
	\ea\ee 
	which converges to one as $n$ tends to infinity by applying Proposition~\ref{prop:It} with $t=\frac{1}{\lambda^*}\log n$ and $p$ sufficiently small, and using~\eqref{eq:tauconv}. Then,  as $I_n\overset d= N(I_{\tau_n}^\cont)$ and using a similar rewriting as in~\eqref{eq:bigeventOn} and~\eqref{eq:smalleventOn},
	\be \ba 
	\mathbb P_\cS{}&\Big(I_n \in \big[\exp\big(\ell_n\big(\tfrac{\lambda^*+d^*}{2}-2\eps\big)\big),\exp\big(\ell_n\big(\tfrac{\lambda^*+d^*}{2}+2\eps\big)\big)\big]\Big)\\
	\geq {}& \Ps{I_{\tau_n}^\cont\not\in \Big[\frac{1}{\lambda^*}\ell_n\big(\tfrac{\lambda^*+d^*}{2}-\eps\big),\frac{1}{\lambda^*}\ell_n\big(\tfrac{\lambda^*+d^*}{2}+\eps\big)\Big]}\\ 
	&-\Ps{N\Big(\frac{1}{\lambda^*}\ell_n\big(\tfrac{\lambda^*+d^*}{2}-\eps\big)\Big) \leq \exp\big(\ell_n\big(\tfrac{\lambda^*+d^*}{2}-2\eps\big)\big)}\\ 
	&-\Ps{N\Big(\frac{1}{\lambda^*}\ell_n\big(\tfrac{\lambda^*+d^*}{2}+\eps\big)\Big) \geq \exp\big(\ell_n\big(\tfrac{\lambda^*+d^*}{2}+2\eps\big)\big)}.
	\ea\ee 
	The right-hand side tends to zero by~\eqref{eq:Itaunbound} and combining~\eqref{eq:Nbounds} with Corollary~\ref{cor:growth} (using $r=0$, $u=\eps\cK(\frac{1}{\lambda^*+d^*}\log n)$ and $s=\frac{1}{\lambda^*}\ell_n(\tfrac{\lambda^*+d^*}{2}-\eps)$ for the second and $s=\frac{1}{\lambda^*}\ell_n(\tfrac{\lambda^*+d^*}{2}+\eps) $ for the third probability). This yields~\eqref{eq:Inconv}.
	
	Finally, we let $\deg^{\max}(s):=\max_{v\in \cA^\cont_s}\deg^{(v)}(s)$ for brevity. With the same approach, we again have 
	\be\ba 
	\mathbb P_\cS\Bigg({}&\Bigg|\frac{\varphi_1(\deg^{\max}(\tau_n))-\frac{1}{\lambda^*+d^*}\log n+\frac{1}{\lambda^*+d^*}\cK_\alpha\big(r\big(\tfrac{1}{\lambda^*}\log n\big)\big)}{\cK\big(\tfrac{1}{\lambda^*+d^*}\log n\big)}-\frac{\lambda^*-d^*}{2}\Bigg|<\eps\Bigg)\\ 
	\geq{}&\Ps{\Bigg|\frac{\varphi_1(\deg^{\max}(s))-\frac{1}{\lambda^*+d^*}\log n+\frac{1}{\lambda^*+d^*}\cK_\alpha\big(r\big(\tfrac{1}{\lambda^*}\log n\big)\big)}{\cK\big(\tfrac{1}{\lambda^*+d^*}\log n\big)}-\frac{\lambda^*-d^*}{2}\Bigg|<\eps\text{ for all }s\in F_{p,n}'}\\ 
	&-\Ps{\tau_n\not \in F_{p,n}'}
	\ea\ee 
	which converges to one as $n$ tends to infinity by applying Proposition~\ref{prop:It} with $t=\frac{1}{\lambda^*}\log n$ and  $p$ sufficiently small, and using~\eqref{eq:tauconv}. As 
	\be 
	\{\varphi_1(\max_{v\in \cA_{\tau_n}^\cont}\deg^{(v)}(\tau_n)):n\in\N\}\overset d=\{ \varphi_1(\max_{v\in \cA_n}\deg^{(v)}(n)):n\in\N\},
	\ee 
	by Proposition~\ref{prop:embed}, we thus arrive at~\eqref{eq:maxdegconv}, which concludes the proof.
\end{proof}

We then provide a short proof of Theorem~\ref{thrm:asPA}, as most results follow directly from Theorem~\ref{thrm:conv}. 

\begin{proof}[Proof of Theorem~\ref{thrm:asPA}]
	The results in~\eqref{eq:Inplusmax} directly follow from Theorem~\ref{thrm:conv}. It remains to prove~\eqref{eq:Otasconv}. This follows directly from Proposition~\ref{prop:oldage}, in particular~\eqref{eq:Otas} (and its proof), which states that there exists some (random) time $T<\infty$ such that $O_t^\cont=O'$ for all $t\geq T$ almost surely conditionally on $\cS$, and for some almost surely finite random variable $O'$. After $O'$ amount of time, the branching process has observed $N(O')$ many births or deaths, so that $O_n\to O:=N(O')$ $\mathbb P_\cS$-a.s. 
\end{proof}
We then prove Theorem~\ref{thrm:biggerR}, which uses a similar approach as the proof of Theorem~\ref{thrm:conv}. 

\begin{proof}[Proof of Theorem~\ref{thrm:biggerR}]
	First, by~\eqref{eq:taunconv}, for any $p>0$ the event 
	\be \label{eq:tauconvR}
	\tau_n\in \Big[\frac{1}{\lambda^*}\log n-p\log\big(\tfrac{1}{\lambda^*}\log n\big),\frac{1}{\lambda^*}\log n+p\log\big(\tfrac{1}{\lambda^*}\log n\big)\Big]=:\wt F'_{p,n}
	\ee 
	holds with probability tending to one as $n$ tends to infinity. To prove the convergence of $O_n$, we apply a similar argument to that in~\eqref{eq:Otaunbound} and~\eqref{eq:Ondiscrconv}, to arrive at 
	\be \ba \label{eq:OnRconv}
	\mathbb P_\cS{}&\bigg(\Big|\frac{\log O_n}{\log n}-\frac{R}{\lambda^*+R}\Big|<2K\lambda^*\frac{\log\big(\tfrac{1}{\lambda^*}\log n\big)}{\log n}\bigg)	\\ 
	\geq{}& \mathbb P_{\cS}\bigg(O_{\tau_n}^\cont\in \Big[\frac{R}{\lambda^*(\lambda^*+R)}\log n -K\log\big(\tfrac{1}{\lambda^*}\log n\big),\frac{R}{\lambda^*(\lambda^*+R)}\log n+K\log\big(\tfrac{1}{\lambda^*}\log n\big)\Big]\bigg)\\
	&-\mathbb P_{\cS}\bigg(N\Big(\frac{R}{\lambda^*(\lambda^*+R)}\log n -K\log\big(\tfrac{1}{\lambda^*}\log n\big)\Big)\leq \exp\Big(\frac{R}{\lambda^*+R}\log n -2K\lambda^*\log\big(\tfrac{1}{\lambda^*}\log n\big)\Big)\bigg)\\
	&-\mathbb P_{\cS}\bigg(N\Big(\frac{R}{\lambda^*(\lambda^*+R)}\log n +K\log\big(\tfrac{1}{\lambda^*}\log n\big)\Big)\geq \exp\Big(\frac{R}{\lambda^*+R}\log n +2K\lambda^*\log\big(\tfrac{1}{\lambda^*}\log n\big)\Big)\bigg).
	\ea\ee 
	The last two probabilities on the right-hand side tend to zero by using~\eqref{eq:Nbounds} in combination with Corollary~\ref{cor:growth} (with $r=0$, $u=K\lambda^*\log\big(\tfrac{1}{\lambda^*} \log n)$, and $s=\frac{R}{\lambda^*(\lambda^*+R)}\log n-K\log\big(\tfrac{1}{\lambda^*}\log n\big)$ in the second and $s=\frac{R}{\lambda^*(\lambda^*+R)}\log n+K\log\big(\tfrac{1}{\lambda^*}\log n\big)$ in the third probability). We bound the first probability in the same way as in~\eqref{eq:Otaunbound}, to arrive at
	\be\ba
	\mathbb P_\cS{}&\bigg(O_{\tau_n}^\cont\in \Big[\frac{R}{\lambda^*(\lambda^*+R)}\log n -K\log\big(\tfrac{1}{\lambda^*}\log n\big),\frac{R}{\lambda^*(\lambda^*+R)}\log n+K\log\big(\tfrac{1}{\lambda^*}\log n\big)\Big]\bigg)\\ 
	\geq{}& \mathbb P_\cS\bigg(O_s^\cont\in \Big[\frac{R}{\lambda^*(\lambda^*+R)}\log n-K\log\big(\tfrac{1}{\lambda^*}\log n\big),\frac{R}{\lambda^*(\lambda^*+R)}\log n+K\log\big(\tfrac{1}{\lambda^*}\log n\big)\Big], \forall s\in\wt F'_{p,n}\bigg)\\ 
	&-\Ps{\tau_n\not \in \wt F_{p,n}'}.
	\ea\ee 
	The first probability on the right-hand side tends to one for $K>K_0$ and $p$ sufficiently small by using~\eqref{eq:ROt} in Proposition~\ref{prop:oldage} (with $t=\frac{1}{\lambda^*}\log n$). The second probability tends to zero by~\eqref{eq:tauconvR}. As a result, the left-hand side of~\eqref{eq:OnRconv} tends to one for any $K>K_0$, which yields~\eqref{eq:OnR}.
	
	We then prove a lower bound for $I_n$, by using results for $I^\cont_{\tau_n}$. We fix $K'>0$, recall $w$ from~\eqref{eq:w}, and set  
	\be 
	k_n:=\ceil{\rho_1^{-1}\big(K'\log \big(\tfrac{1}{\lambda^*}\log n\big)\big)}\qquad\text{and}\qquad \wt F^*_{K',n}:=\Big[\frac{1}{\lambda^*}\log n-w(k_n), \frac{1}{\lambda^*}\log n +w(k_n)\Big].
	\ee  
	We recall the constant $K_0$ from  Proposition~\ref{prop:oldage}, fix $K_1>K_0$, and bound 
	\be \ba \label{eq:ItaunR}
	\mathbb P_\cS{}&\Big( I_{\tau_n}^\cont\geq \frac{R}{\lambda^*(\lambda^*+R)}\log n+K_1\log\big(\tfrac{1}{\lambda^*}\log n\big)\Big)\\
	\geq{}&\Ps{I_s\geq \frac{R}{\lambda^*(\lambda^*+R)}\log n+K_1\log\big(\tfrac{1}{\lambda^*}\log n\big)\text{ for all }s\in \wt F^*_{K',n}}-\Ps{\tau_n\not \in \wt F^*_{K',n}}.
	\ea \ee 
	The final probability is at most $\eps$ by choosing the constant $A$ in the definition of $w$ sufficiently large with respect to $\eps$ and using~\eqref{eq:taunconv}. With $t=\frac{1}{\lambda^*}\log n$, we claim that Propositions~\ref{prop:oldage}, \ref{prop:nooldhighdeg}, and~\ref{prop:divdeathhighdeg} yield that the first probability on the right-hand side converges to one. First, Proposition~\ref{prop:oldage} implies that there exists $p>0$ such that oldest individual alive at time $s$ has a birth-time at least $\frac{R}{\lambda^*(\lambda^*+R)}\log n-K_1\log\big(\tfrac{1}{\lambda^*}\log n\big)$, for all $s$ in the interval
	\be 
	\Big[\frac{1}{\lambda^*}\log n-p\log\big(\tfrac{1}{\lambda^*}\log n\big),\frac{1}{\lambda^*}\log n+p\log\big(\tfrac{1}{\lambda^*}\log n\big)\Big]. 
	\ee
	We then note that $w(k_n)$ is either equal to a constant $A>0$ (when $\varphi_2$ converges) or is equal to $\varphi_2(k_n)$ (when $\varphi_2$ tends to infinity), which is negligible compared to $\rho_1(k_n)$, since $\varphi_2=o(\varphi_1)$, as $b$ diverges, and $\varphi_1=\cO(\rho_1)$ since $\underline d>0$. As a result, in either case and for any $A,K',p>0$ it follows that 
	\be 
	\wt F^*_{K',n}\subset \Big[\frac{1}{\lambda^*}\log n-p\log\big(\tfrac{1}{\lambda^*}\log n\big),\frac{1}{\lambda^*}\log n+p\log\big(\tfrac{1}{\lambda^*}\log n\big)\Big]
	\ee 
	for all $n$ large (i.e.\ larger than some fixed $N=N(A,K',p)\in\N$). Hence, Proposition~\ref{prop:oldage} implies that the oldest individual alive at time $s$ has a birth-time at least $\frac{R}{\lambda^*(\lambda^*+R)}\log n-K_1\log\big(\tfrac{1}{\lambda^*}\log n\big)$, for all $s\in \wt F^*_{K',n}$ with high probability. 
	
	Then, Proposition~\ref{prop:nooldhighdeg} tells us that, by choosing $K'$ sufficiently large with respect to $K_1$, there does not exist $s\in\wt  F^*_{K',n}$ and an individual born before time $\frac{R}{\lambda^*(\lambda^*+R)}\log n+K_1\log\big(\tfrac{1}{\lambda^*}\log n\big)$ that is alive at time $s$ and has at least $k_n$ many children. Finally, Proposition~\ref{prop:divdeathhighdeg} yields that at any time $s\in\wt F^*_{K',n}$ there exists an individual born after time $\frac{R}{\lambda^*(\lambda^*+R)}\log n+K_1\log\big(\tfrac{1}{\lambda^*}\log n\big)$ that is alive at time $s$ and has at least $k_n$ many children. Combined, we conclude that the first term on the right-hand side of~\eqref{eq:ItaunR} indeed converges to one with $n$. For any $K_1>K_0$, we thus arrive at
	\be \label{eq:liminfInbound}
	\liminf_{n\to\infty}\mathbb P_\cS\Big( I_{\tau_n}^\cont\geq \frac{R}{\lambda^*(\lambda^*+R)}\log n+K_1\log\big(\tfrac{1}{\lambda^*}\log n\big)\Big)\geq 1-\eps. 
	\ee 
	As $\eps$ is arbitrary, the left-hand side is in fact equal to one. Finally, we bound 
	\be \ba \label{eq:InRconv}
	\mathbb P_\cS{}&\bigg(\log I_n\geq \frac{R}{\lambda^*+R}\log n+2K_1\lambda^*\log\big(\tfrac{1}{\lambda^*}\log n\big)\bigg)	\\ 
	\geq{}& \mathbb P_{\cS}\bigg(I_{\tau_n}^\cont\geq \frac{R}{\lambda^*(\lambda^*+R)}\log n+K_1\log\big(\tfrac{1}{\lambda^*}\log n\big)\bigg)\\
	&-\mathbb P_{\cS}\bigg(N\Big(\frac{R}{\lambda^*(\lambda^*+R)}\log n +K_1\log\big(\tfrac{1}{\lambda^*}\log n\big)\Big)\geq \exp\Big(\frac{R}{\lambda^*+R}\log n +2K_1\lambda^*\log\big(\tfrac{1}{\lambda^*}\log n\big)\Big)\bigg).
	\ea\ee 
	The first probability on the right-hand side converges to one for any $K_1>K_0$ by~\eqref{eq:liminfInbound}. The second probability tends to zero by using~\eqref{eq:Nbounds} and~\eqref{eq:ubB} in Corollary~\ref{cor:growth} (with $r=0$, $s=\frac{R}{\lambda^*(\lambda^*+R)}\log n +K_1\log\big(\tfrac{1}{\lambda^*}\log n\big)$, and $u=K_1\lambda^*\log\big(\tfrac{1}{\lambda^*}\log n\big)$). We thus arrive at 
	\be 
	\lim_{n\to\infty}\mathbb P_\cS\bigg(\log I_n\geq \frac{R}{\lambda^*+R}\log n+2K_1\lambda^*\log\big(\tfrac{1}{\lambda^*}\log n\big)\bigg)=1,
	\ee 
	for any $K_1>K_0$. Combined with the fact that the left-hand side of~\eqref{eq:OnRconv} converges to one for any $K>K_0$, we can thus choose $K_1>K>K_0$ to arrive at the desired result, since then
	\be 
	\lim_{n\to\infty}\Ps{\log\Big(\frac{I_n}{O_n}\Big)\geq 2(K_1-K)\lambda^*\log\big(\tfrac{1}{\lambda^*}\log n\big)}=1, 
	\ee 
	and thus $I_n/O_n\toinps \infty$, as desired, which concludes the proof.
\end{proof}  

\section{Converging birth rates and constant death rates: Theorem~\ref{thrm:const}}\label{sec:const}

In this section we discuss some of the  adaptations to the analysis in Sections~\ref{sec:old} through~\ref{sec:mainproofs} required to prove Theorem~\ref{thrm:const}.

\textbf{Assumptions. } We observe that Assumptions~\eqref{ass:A1} and~\eqref{ass:A2} are  both satisfied when $b$ and $d$ converge. Assumptions~\hyperref[ass:K]{$\cK$} and~\hyperref[ass:Kalpha]{$\cK_\alpha$} and~\eqref{ass:varphi2} are not required.  

\textbf{Preliminary results. } As $d$ converges to one, it follows from Lemma~\ref{lemma:stochdomexp} that the  remaining lifetime $\sum_{i=k}^{k+D_k}E_i$ is asymptotically distributed as a rate-one exponential random variable (for large $k$). By the upper bound in Lemma~\ref{lemma:survdeg}, with $x=1- \eps$,  $t'\geq t\geq 0$ and $k$ large, we thus have
\be\label{eq:probconstub}
\P{D\geq k, S_k\leq t, S_{D+1}>t'}\leq \e^{-(1-\eps)t'}\P{D\geq k}\E{\ind_{\{S_k\leq t\}}\e^{(1-\eps)S_k}}.
\ee 
The lower bound in Lemma~\ref{lemma:survdeg} yields a lower bound for the probability when changing $1-\eps$ to $1+\eps$, for $k$ large.  Then, as $b$ converges to $c$ and $d$ converges to $1$, 
\be 
\frac{\E{S_k}}{k}=\frac{\varphi_1(k)}{k}=\frac{1+o(1)}{c+1}. 
\ee 
As $S_k$ is a sum of independent exponential random variables whose rates converge to $c+1$, it is clear (by comparing with the case of constant rates equal to $c+1$) that $S_k$ satisfies a large deviation principle   rate function $I$  defined as 
\be 
I(\alpha):=\alpha(c+1)-1-\log(\alpha(c+1))\qquad\text{for }\alpha>0. 
\ee 
That is, for $\alpha<\frac{1}{c+1}$ so that $\alpha k<\E{S_k}$ for all $k$ large (resp.\ $\alpha>\frac{1}{c+1}$), we have
\be 
\lim_{k\to\infty}\frac1k \log\P{S_k\leq \alpha k}=-I(\alpha)\qquad\bigg(\text{resp.\ } 
\lim_{k\to\infty}\frac1k \log\P{S_k\geq \alpha k}=-I(\alpha)\bigg).
\ee 
Similar to the moderate deviation principle with exponential tilting in Proposition~\ref{prop:mdptilt}, one can prove a large deviation principle with exponential tilting in this setting, in the sense that
\begin{align}\label{eq:ldptilt}
	\lim_{k\to\infty}\frac1k\log\E{\ind_{\{S_k\leq \alpha k\}}\e^{\theta S_k}}=-I(\alpha)+\alpha \theta \qquad\text{for }\alpha<\frac{1}{c+1},\theta\in\R,
	\intertext{and}
	\lim_{k\to\infty}\frac1k\log\E{\ind_{\{S_k\geq \alpha k\}}\e^{\theta S_k}}=-I(\alpha)+\alpha \theta\qquad \text{for }\alpha>\frac{1}{c+1},\theta\in\R. 
\end{align}  

\textbf{The oldest alive individual. } As in the `rich are old' and `rich die young' regimes,  $L$ is approximately distributed as an  exponential random variable with rate $\min\{1,R\}$, where we recall that $R:=\inf_{i\in\N_0}(b(i)+d(i))$. More precisely, 
\be 
\lim_{t\to\infty}\frac1t\log\P{L>t}=-\min\{1,R\}=:M. 
\ee 
One can then show, following a proof similar to that of~\eqref{eq:ROt}   and~\eqref{eq:Otconv} (with $d^*=1$) in Proposition~\ref{prop:oldage} that for any $\eps>0$ there exists $p>0$ such that
\be 
\lim_{t\to\infty}\Ps{\forall s\in [(1-p)t,(1+p)t]: \Big| \frac{O_s^\cont}{t}-\frac{M}{\lambda^*+M}\Big|<\eps }=1.
\ee 
\textbf{The individual with the largest offspring. } Define the constant
\be 
u^*=u^*(\lambda^*):=1-\frac{\lambda^*}{2(\lambda^*+1)\log2}.
\ee 
One can then combine the upper bound in~\eqref{eq:probconstub} (and the equivalent lower bound where $1-\eps$ is replaced with $1+\eps$) with the tilted large deviation principle in~\eqref{eq:ldptilt} in a proof similar to that of Proposition~\ref{prop:It} to show that for any $\eps>0$ there exists $p>0$ such that
\be 
\lim_{t\to\infty}\Ps{\forall s\in [(1-p)t,(1+p)t]: \Big|\frac{I_s^\cont}{t}-u^*\Big|<\eps\text{ and }\Big|\frac{\max_{v\in \cA_s^\cont}d^{(v)}(s)}{t}-\frac{\lambda^*}{\log 2}\Big|<\eps }=1.
\ee 
Indeed, we partition $[\frac{M}{\lambda^*+M},u^*-\eps)\cup(u^*+\eps, 1]$ in small intervals $[x_i,x_{i+1})$ of length $\zeta>0$, for $i\in\mathbb I$. Similar to the proof of Proposition~\ref{prop:It}, one can show that for any $\eps>0$ there exists $p>0$ such that 
\be 
\lim_{t\to\infty}\Ps{I_s\in [u^*-\eps,u^*+\eps]\text{ for all }s\in  [(1-p)t,(1+p)t]}=1.
\ee 
To this end, with $\xi>0$ small, one is required to show that
\be 
\lim_{t\to\infty}\mathbb P\big( D^{\max}_{u^*-\eps,u^*+\eps}(s)\geq (H(u^*)-\xi)t,\text{ for all }s\in[(1-p)t,(1+p)t]\big)=1,
\ee 	
and
\be 
\lim_{t\to\infty}\mathbb P\big( D^{\max}_{x_i,x_{i+1}}(s)\leq (H(x_{i+1})+\xi)t, \text{ for all }i\in\mathbb I\text{ and for all }s\in[(1-p)t,(1+p)t]\big)=1,
\ee 
where $H\colon [(\lambda^*+1)^{-1},1)\to [-1-(\lambda^*+1)^{-1},\infty)$ is defined as 
\begin{align} 
	H(x)&:=(\lambda^*+1)(1-x)\Big(1+\frac{1}{\lambda^*+1}+w^{-1}\Big(-\frac{\lambda^*x}{(\lambda^*+1)(1-x)}\Big)\Big),
	\intertext{and $w\colon [-(\lambda^*+1)^{-1},\infty)\to (-\infty, -(\lambda^*+1)^{-1}]$ is defined as} w(x)&:=x-\Big(1+\frac{1}{\lambda^*+1}+x\Big)\log\Big(1+\frac{1}{\lambda^*+1}+x\Big).
\end{align}
It is readily verified that $H$ has a unique maximum at $u^*$ of value $H(u^*)=\lambda^*/\log2$. 
Finally, using a similar proof as that of Theorem~\ref{thrm:conv} yields the asymptotic behaviour of $O_n$ and $I_n$ and proves Theorem~\ref{thrm:const}.

\textbf{Constant birth and death rates (Remark~\ref{rem:const}). } In the case that $b\equiv c>1$ and $d\equiv 1$, by~\cite[Proposition $1.1$]{Dei10} (see also Proposition~\ref{prop:rhoexpl}), 
\be 
\widehat \mu(\lambda)=\sum_{k=0}^\infty \prod_{i=0}^k \frac{b(i)}{\lambda +b(i)+d(i)}=\sum_{k=0}^\infty \Big(\frac{c}{\lambda +c+1}\Big)^{k+1}=\frac{c}{\lambda+1}. 
\ee 
As a result, Assumption~\eqref{ass:C1} is satisfied with $\lambda^*=c-1>0$ and $\underline \lambda =0$, so that the general result in Theorem~\ref{thrm:const} follows for this case with $\lambda^*=c-1$.

\section{Discussion and open problems}\label{sec:disc}

In this article we investigated an evolving tree process where, at each step, either a new alive vertex is introduced that connects to an already present alive vertex, or an alive vertex is selected and killed. Via an embedding in a CMJ branching process, we analysed this process and studied lack of persistence of the maximum degree: under what conditions is the label of the vertex with the largest in-degree much larger than the label of the oldest alive vertex (i.e.\ the alive vertex with the smallest label)?

We identified the `rich are old' and `rich die young' regimes', in which lack of persistence occurs for different reasons. In the former regime, where behaviour is somewhat similar to that of preferential attachment without death, lack of persistence occurs due to the fact that one can find individuals (in the branching process) that are slightly older than the oldest alive individuals and both manage to survive and to produce a large number of children, faster than typical individuals. In the latter regime, the oldest individuals manage to survive by only producing a bounded number of children, whereas individuals that produce a large number of children can only survive for a much shorter amount of time. This regime is entirely novel and cannot be observed in preferential attachment without death, and has not been studied nor identified before.

\emph{Limitations, conjectures, and open problems.}

Though we present a variety of results that provide a good understanding of the phase transition between the `rich are old' and the `rich die young' regimes, there are still cases we are currently  not able to deal with, and limitations to the set-up we considered, which we address here.

\textbf{Behaviour of $I_n$ in the `rich die young' regime. } The results in the `rich die young' regime, in particular that there is always lack of persistence of the maximum degree, hold for quite a general choice of birth and death rates. Unlike in the `rich are old' regime, however, we do not provide a scaling limit for $I_n$. When the death rates converge to a constant $d^*\geq R$, a similar result as in Theorem~\ref{thrm:conv} holds (see Proposition~\ref{prop:It}). However, when the death rates diverge, making it much harder for individuals to survive for a long time and also obtain a large offspring, it is not entirely clear to us what the right scaling of $I_n$ should be. We leave this as an open problem.

\begin{problem}
	Suppose that the sequence $d$ tends to infinity.  Theorem~\ref{thrm:biggerR} shows that $O_n=n^{R/(\lambda^*+R)+o(1)}$ conditionally on survival. What is the order of $I_n$? We expect that $I_n=o(n)$ but $I_n\gg n^{1-\eps}$ for any $\eps>0$. 
\end{problem}

\textbf{Bounded non-converging death rates. } The results presented in the `rich are old' regime do not consider the case when the death rates are bounded but do not converge. This is mainly due to the difficulty to determine the lifetime distribution of individuals in complete generality. As an example, let us consider the case of converging birth rates and alternating convergent death rates. That is, take three sequences $(c_i)_{i\in\N}$, $(d_{1,i})_{i\in\N}$ and $(d_{2,i})_{i\in\N}$ such that $\lim_{i\to\infty}c_i=c>0$ and $\lim_{i\to\infty}d_{j,i}=d_j$ for $j\in\{1,2\}$, with $d_1\neq d_2\geq 0$. We set
\be 
d=(d_{1,1},d_{2,1},d_{1,2},d_{2,2},\ldots), \qquad b(i)=c_i.
\ee 
We then claim (without proof) that the lifetime distribution satisfies 
\be 
\lim_{t\to\infty} \frac1t\log \P{L>t}=-\min\bigg\{c+\frac{d_1+d_2}{2}-\sqrt{c^2+\Big(\frac{d_1-d_2}{2}\Big)^2},R\bigg\}=:-\theta. 
\ee 
This already non-trivial result can be obtained due to the somewhat simple choice of the birth and death rates. More general birth or death rates (e.g.\ $b$ that diverge to infinity) make it much harder to determine the limiting value of $\theta$ that characterises the exponential decay of the lifetime distribution. 

We expect that in this more general setting, a distinction between the `rich are old' and the `rich die young' regimes can be made depending on whether $\theta<R$ or $\theta=R$.  We leave this as an open problem. 

\begin{problem}
	Suppose the sequences $b$ and $d$ are such that Assumptions~\eqref{ass:A1} and~\eqref{ass:C1} are satisfied. Assume that $\theta:=\lim_{t\to\infty}-\frac1t\log\P{L>t}$ exists and recall $R$ from~\eqref{eq:R}. Can we define the `rich are old' and the `rich die young' regimes via $\theta<R$ and $\theta=R$, respectively?
\end{problem}

We recall that  $\theta=\min\{d^*,R\}$ in the case that $d$ converges to a constant $d^*\geq 0$, which indeed agrees with the open problem. 

\textbf{Omitting assumptions. } There are various assumptions used throughout the analysis to prove the results presented here. A number of these assumptions are not necessary to determine whether persistence of lack thereof occurs, we believe. Assumptions~\hyperref[ass:K]{$\cK$} and~\hyperref[ass:Kalpha]{$\cK_\alpha$} are necessary to obtain higher-order behaviour of the quantities $O_n$ and $I_n$ (and their continuous counterparts $O_t^\cont$ and $I_t^\cont$), which we use to determine whether lack of persistence occurs. However, we believe that, like the recent developments presented by Iyer in~\cite{Iyer24}, it should be possible to prove (lack of) persistence occurs even without knowing the order of $O_n$ and $I_n$. This would deem Assumptions~\hyperref[ass:K]{$\cK$} and~\hyperref[ass:Kalpha]{$\cK_\alpha$} unnecessary, as presented in the following conjecture.

\begin{conjecture}\label{conj:omitass}
	Consider the PAVD model, as in Definition~\ref{def:pavd}.  Suppose that $b$ and $d$ are such that Assumptions~\eqref{ass:A1} and~\eqref{ass:C1}  are satisfied. Recall $R$ from~\eqref{eq:R} and suppose that $d$ satisfies that $\limsup_{i\to\infty}d(i)<R$ and that $b$ diverges. Then, persistence holds as in the sense of~\eqref{eq:pers} when Assumption~\eqref{ass:varphi2} is not satisfied, whereas lack of persistence holds as in the sense of~\eqref{eq:nopers} when Assumption~\eqref{ass:varphi2} is satisfied.
\end{conjecture}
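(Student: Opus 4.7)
The plan is to handle the two assertions separately. For the lack-of-persistence direction under Assumption~\eqref{ass:varphi2}, the strategy is to revisit the proofs of Propositions~\ref{prop:oldage} and~\ref{prop:It} and replace every invocation of Assumptions~\hyperref[ass:K]{$\cK$} and~\hyperref[ass:Kalpha]{$\cK_\alpha$} by a qualitative substitute. Concretely, rather than pinpointing a precise ``optimal window'' of width $\Theta(\cK(\cdot))$, one chooses an arbitrary sequence $\omega_t\to\infty$ growing slowly relative to $\sqrt{\varphi_2(\varphi_1^{-1}(t))}$ and compares the contributions of individuals born before, inside, and after a window of shifted width $\omega_t\sqrt{\varphi_2(\varphi_1^{-1}(t))}$. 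The partition argument around~\eqref{eq:probsplit4} still goes through because Lemmas~\ref{lemma:Dtail} and~\ref{lemma:mdp}, together with Proposition~\ref{prop:mdptilt} and Lemma~\ref{lemma:survdeg}, only require $\varphi_2\to\infty$ plus $\limsup d(i)<R$, not any regularity of $\cK$ or $\cK_\alpha$. One also needs to adapt Lemma~\ref{lemma:lifetime2nd} to the non-converging setting by working with $\overline d:=\limsup d(i)$ and $\underline d:=\liminf d(i)$ instead of a single $d^*$; since $\overline d<R$, the tail $\P{L>t}$ is still bounded above and below by exponentials with rates in $[\underline d,\overline d]$. The resulting weaker quantitative separation of $I_t^{\cont}$ and $O_t^{\cont}$ is sufficient to deduce $I_n/O_n\toinps \infty$ via the correspondence~\eqref{eq:corr}.

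For the persistence direction, when $\varphi_2$ is summable the martingale $(S_k-\varphi_1(k))_{k\in\N_0}$ is $L^2$-bounded with quadratic variation $\varphi_2(k)$, so it converges $\mathbb P$-a.s.\ and in $L^2$ to a finite limit $\Sigma_\infty$. This gives uniform $O_\P(1)$ control on offspring production times. Working in the CMJ embedding, I would construct for each individual $v\in \cU_\infty$ a non-negative Nerman-type martingale $(M_t^{(v)})_{t\geq \sigma_v}$ measuring the asymptotic exponential growth rate of $v$'s subtree. Under $\varphi_2<\infty$ these martingales are $L^2$-bounded and converge on the event of subtree survival to a strictly positive random variable $M_\infty^{(v)}$ with an atomless law; this is exactly where the summability plays the role that $\sum 1/b(i)^2<\infty$ plays in the death-free Dereich--M\"orters setting. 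Adopting Iyer's perspective~\cite{Iyer24}, persistence is equivalent to an almost-sure ordering statement: conditionally on $\cS$, among the alive vertices with label in any fixed bounded window, the one with the largest $M_\infty^{(v)}$ eventually attains and retains the maximum degree, and only finitely many alive vertices are ever in contention. Tightness of $(I_n/O_n)_{n\in\N}$ under $\mathbb P_\cS$ then follows by combining atomlessness of the $M_\infty^{(v)}$ with the exponential concentration of $|\cA_t^{\cont}|$.

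The principal obstacle is constructing and analysing the martingales $M_t^{(v)}$ in the presence of vertex death. Unlike in preferential attachment without death, the killing mechanism can truncate entire subtrees, so $v$'s own line may go extinct even when the branching process as a whole survives; one must therefore work on the event of subtree survival and compare the conditional Malthusian growth to the unconditional one. A further subtlety specific to PAVD is that a vertex with a large latent $M_\infty^{(v)}$ can still be killed before realising its potential, so persistence has to be formulated in terms of the alive subfamily. This forces one to quantify the joint law of $M_\infty^{(v)}$ and $L^{(v)}$, and to show that in the `rich are old' regime large values of $M_\infty^{(v)}$ are positively correlated with long survival, which is implicit in Lemma~\ref{lemma:survdeg}(ii) but will need to be strengthened to a comparison between the ordering by $M_\infty^{(v)}$ and the ordering by degree among alive vertices.
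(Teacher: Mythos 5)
First, note that the statement you are proving is stated in the paper as a \emph{conjecture} (Conjecture~\ref{conj:omitass}): the authors explicitly leave both directions open, so there is no proof in the paper to compare against, and your text should be judged as a research programme rather than checked against an existing argument. As such a programme it contains genuine gaps in both directions. For the lack-of-persistence direction, your claim that the machinery of Propositions~\ref{prop:oldage} and~\ref{prop:It} ``only requires $\varphi_2\to\infty$ plus $\limsup_i d(i)<R$'' is not accurate. The separation between the oldest alive individuals and the lucky younger ones is a \emph{second-order} effect of size $\Theta(\cK(\cdot))$, and the proofs use Assumption~\hyperref[ass:K]{$\cK$} precisely to identify $\varphi_2(k)$ at the shifted index $k$ with $\cK(t^*)$ (e.g.\ around~\eqref{eq:sklb} and~\eqref{eq:largedegbound}); without that regularity the moderate-deviation cost and the entropy gain $\lambda^* x_1\cK(t^*)$ need not be comparable, and the optimisation defining $u^*$ breaks down. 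Worse, when $d$ merely satisfies $\limsup_i d(i)<R$ without converging, the objects $d^*$, $\alpha$ and $\cK_\alpha$ are not even defined, $\rho_2(k)=((d^*)^2+o(1))\varphi_2(k)$ fails, and the exponential rate $\theta=\lim_t -\tfrac1t\log\P{L>t}$ is no longer simply sandwiched ``between $\underline d$ and $\overline d$'' in a usable way --- the paper's own discussion (the alternating-rates example in Section~\ref{sec:disc}) shows $\theta$ can be a nontrivial functional of the whole sequence, and determining it for diverging $b$ is flagged there as hard. Since the first-order location of $O_t^\cont$ depends on $\theta$ and the second-order analysis of $I_t^\cont$ depends on $\rho_1\circ\varphi_1^{-1}$ beyond its linear part, ``replacing every invocation by a qualitative substitute'' is exactly the step that is missing, not a routine adaptation.

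For the persistence direction the gap is more structural. Persistence of the maximum degree is a statement about each vertex's \emph{own} degree-and-survival process, not about the growth of its subtree: in the death-free theory the relevant limits are of the type $\lim_k(S_k-\varphi_1(k))$ (equivalently $\lim_t(\varphi_1(\deg^{(v)}(t))-(t-\sigma_v))$) for each vertex, and the summability $\varphi_2(\infty)<\infty$ enters through the a.s.\ convergence and continuity of the law of these per-vertex limits, whereas the Nerman martingale $M_t^{(v)}$ you propose measures the reproductive growth of $v$'s descendant tree and has no direct bearing on whether $v$ itself retains the largest degree. Moreover, even granting a per-vertex limit with an atomless law, tightness of $I_n/O_n$ requires showing that the alive vertex of maximal degree is born within an $O_{\P}(1)$ time window after $O_t^\cont$, uniformly as the window of ``oldest alive'' vertices itself drifts (since under~\eqref{ass:A2} every vertex eventually dies and $O_n\to\infty$); this simultaneous, moving-window control of $O_n$ and $I_n$ is exactly the difficulty the paper identifies as the obstacle to proving persistence, and your sketch does not indicate how the atomlessness argument survives the killing of the current leaders and the constant renewal of the candidate set. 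Your own final paragraph concedes these points; as it stands the proposal is a reasonable outline of where one would like to go, but neither direction is a proof.
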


One could possibly go even further and omit Assumption~\eqref{ass:C1}, which guarantees the existence of a \emph{Malthusian parameter}, as well.

\begin{problem}
	Does the statement in Conjecture~\ref{conj:omitass} still hold without Assumption~\eqref{ass:C1}?
\end{problem} 

\textbf{Non-tree graphs. } The analysis carried out here considers trees only. It would be interesting to also study evolving graphs with vertex death that are not trees, e.g.\ when every new vertex connects to $m>1$ many alive vertices. The restriction to trees is due to the continuous-time embedding into a CMJ branching process that is central in the analysis, and which naturally restricts us to the setting of trees. 

In the case of \emph{affine} preferential attachment graphs without vertex death, a procedure known as \emph{collapsing branching processes} is used to extend the branching process analysis to non-tree graphs as well~\cite{GarHof18}. However, this is limited to the affine case only. When including vertex death, this could only have a potential when the death rates are affine as well. Even then, this introduces difficulties due to the fact that now the lifetime of distinct vertices could have dependencies, which further complicates the analysis.

\textbf{Other modelling choices. } In the (embedding of the) PAVD model the size of the offspring of an individual equals the number of children said individual has produced. Similarly, the rate at which an individual produces its next child or dies depends on the total number of children produced. Instead, one could count only \emph{alive} children towards the offspring, or let the birth and death rates depend on the number of \emph{alive} children. Either way of defining the model can be motivated from applications. However, the analysis for these alternative choices present more challenges, since now either an individual needs to produce $n$ children out of which $n-k$ should die to have an offspring of size $k$, or the rates depend not only on the parent individual but also on the lifetimes of its children. Deijfen concludes from simulations presented in~\cite{Dei10} that the asymptotic behaviour of the degree distribution should not change significantly, and we expect this to be the case for other properties such as persistence as well. Still, it would be interesting to further investigate these models as well.

\section*{Acknowledgements} 

The authors wish to thank Mia Deijfen for several interesting and stimulating discussions. Bas Lodewijks has received funding from the European Union’s Horizon 2022 research and innovation programme under the Marie Sk\l{}odowska-Curie grant agreement no.~$101108569$, ``DynaNet".
Part of this work was done while Markus Heydenreich was in residence at the Mathematical Sciences Research Institute in Berkeley supported by NSF Grant No. DMS-1928930. 

\bibliographystyle{abbrv}
\bibliography{paverbib}

\appendix 

\section{Proofs of preliminary results}\label{app:proofs}

In this appendix we present the proofs of the preliminary results of Section~\ref{sec:prelim}. 

\begin{proof}[Proof of Lemma~\ref{lemma:Dlifedistr}]
	The distribution of $D$ follows directly from its definition in~\eqref{eq:D} and Definition~\ref{def:inhomgeom} of an inhomogeneous geometric random variable. What remains is to show that $D$ is almost surely finite if and only if Assumption~\eqref{ass:A2} is satisfied. To this end, we write 
	\be \label{eq:telescopic}
	\sum_{k=0}^\infty \P{D= k}=\sum_{k=0}^\infty  \frac{d(k)}{b(k)+d(k)}\prod_{i=0}^{k-1}\frac{b(i)}{b(i)+d(i)}= \sum_{k=0}^\infty \Big( \prod_{i=0}^{k-1}\frac{b(i)}{b(i)+d(i)}-\prod_{i=0}^k\frac{b(i)}{b(i)+d(i)}\Big). 
	\ee 
	We thus have a telescopic sum, where the first term in the brackets equals one when $k=0$. As a result, we obtain that $D$ is finite almost surely if and only if 
	\be
	\prod_{i=0}^\infty \frac{b(i)}{b(i)+d(i)}=0 \quad \Leftrightarrow \quad \sum_{i=0}^\infty \frac{d(i)}{b(i)+d(i)}=\infty. 
	\ee 
	Here, the final claim follows directly from the equivalence of the convergence of infinite sums and products. That is, for $(x_i)_{i\in\N_0}\subset[0,1)$, 
	\be 
	\prod_{i=0}^\infty (1-x_i)>0 \quad\Leftrightarrow\quad\sum_{i=0}^\infty x_i<\infty , 
	\ee  
	which concludes the proof.
\end{proof}

\begin{proof}[Proof of Lemma~\ref{lemma:Dtail}]
	First, we prove the upper bound. Using the inequality $\log(1-x)\leq -x-x^2/2$ for $x\in [0,1)$ and the tail distribution of $D$, as in~\eqref{eq:tailD}, we obtain 
	\be 
	\P{D\geq k}=\prod_{i=0}^{k-1}\frac{b(i)}{b(i)+d(i)}\leq \exp\bigg(-\sum_{i=0}^{k-1}\frac{d(i)}{b(i)+d(i)}-\frac12 \sum_{i=0}^{k-1}\Big(\frac{d(i)}{b(i)+d(i)}\Big)^2\bigg)=\e^{-\rho_1(k)-\frac12 \rho_2(k)}. 
	\ee 
	For the lower bounds, we use that $\log(1-x)\geq -x-x^2/2-x^3/(1-x)$ for $x\in[0,1)$ to obtain 
	\be 
	\P{D\geq k}\geq \exp\bigg(-\sum_{i=0}^{k-1}\frac{d(i)}{b(i)+d(i)}-\frac12 \sum_{i=0}^{k-1}\Big(\frac{d(i)}{b(i)+d(i)}\Big)^2-\sum_{i=0}^{k-1}\Big(\frac{d(i)}{b(i)+d(i)}\Big)^2\frac{d(i)}{b(i)}\bigg).
	\ee 
	In the case that $\rho_2$ diverges and $d=o(b)$, it follows that the third sum is negligible compared to the second sum (which equals $\rho_2(k)$), so that we obtain the lower bound
	\be 
	\P{D\geq k}\geq \e^{-\rho_1(k)-(\frac12+o(1))\rho_2(k)}, 
	\ee 
	as desired. In the case that $\rho_2$ converges, it follows that the fraction $d(i)/(b(i)+d(i))$ tends to zero with $i$. As a result, we have $d=o(b)$, so that the third sum converges as well. We again obtain the desired lower bound, which concludes the proof. 
\end{proof}

\begin{proof}[Proof of Lemma~\ref{lemma:stochdomexp}]
	We start by proving~\eqref{eq:Lexp}. First, note that $D_0\overset \dd= D$, so that the result $L\overset \dd=\mathrm{Exp}(d^*)$ when $d\equiv d^*$ directly follows from the case $I=k=0$ in~\eqref{eq:Lexp}. We can interpret the sum in~\eqref{eq:Lexp} as follows. We have exponential `birth clocks' $(E^b_i)_{i\geq k}$ that ring at rates $(b(i))_{i\geq k}$, and exponential `death clocks' $(E^d_i)_{i\geq k}$ that ring at rate $d^*$. All clocks are independent. We first let $E^b_k$ and $E^d_k$ run. If $E^b_k$ rings before $E^d_k$, we let $E^b_{k+1}$ and $E^d_{k+1}$ run. This process continues until some $j\geq k$, where $j$ is the first index such that $E^d_j$ rings before $E^b_j$. This process is equivalent to waiting for the clocks of the sum in~\eqref{eq:Lexp} to ring. Indeed, $E_i\overset \dd=\min\{E^b_i,E^d_i\}$ and, independently of $E_i$, 
	\be 
	k+D_k\overset\dd=j:=\inf\{\ell\geq k: E^d_\ell<E^b_\ell\}.
	\ee 
	The sum in~\eqref{eq:Lexp} then is the total time that has passed, that is, $E^b_k+\cdots+ E^b_{j-1}+E^d_j$. 
	
	Since all death clocks have the same rate $d^*$, by the memoryless property of the exponential distribution, the above process is equivalent to running only one death clock $E^d_k$, and running the birth clocks $(E^b_i)_{i\geq k}$ until $E^d_k$ rings. That is, resampling the death clocks $(E^d_i)_{i\geq k}$ every time $E^b_i$ rings first is equivalent to letting $E^d_k$ continue to run and only resampling the clocks $(E^b_i)_{i\geq k}$. It then directly follows that the total time that has passed equals $E^d_k\sim \text{Exp}(d^*)$, from which~\eqref{eq:Lexp} follows. Assumption~\eqref{ass:A1} is required to ensure that $v$ does not produce an infinite number of children before $E^d_k$ amount of time has passed almost surely. That is, it ensures that $\sum_{i=k}^\infty E_i^b>E^d_k$ almost surely. 
	
	We then prove~\eqref{eq:remainstochdom}. Let $\wt D_k$ be a inhomogeneous geometric random variable characterised by the sequences $(b(i))_{i\geq k}$ and $\lambda$, that is, 
	\be 
	\P{\wt D_k\geq \ell}=\prod_{i=k}^{k+\ell-1}\frac{b(i)}{b(i)+\lambda} \qquad\text{for } \ell\in \N_0, 
	\ee 
	and let $\wt E_i\sim \text{Exp}(b(i)+\lambda)$, independent of $\wt D_k$. Then, as $d(i)\geq \lambda$ for all $i\geq k$, it follows that $D_k\preceq \wt D_k$ and $E_i\preceq \wt E_i$ for any $i\geq k$. The desired result thus follows by using~\eqref{eq:Lexp}. The converse result follows mutatis mutandis by reversing inequalities.
	
	Finally, we prove~\eqref{eq:infremlife}. Since Assumption~\eqref{ass:A2} is not satisfied, it follows from Lemma~\ref{lemma:Dlifedistr} that $\P{D=\infty}>0$. An analogous proof yields that $\P{D_k=\infty}>0$ for any $k\in\N_0$. Then, the case distinction in Table~\ref{table:deglife} yields that $S_\infty=\infty$ almost surely when Assumption~\eqref{ass:A1} is satisfied but Assumption~\eqref{ass:A2} is not. Equivalently, $\sum_{i=k}^\infty E_i=S_\infty-S_k=\infty$ almost surely. Hence, by the independence of the exponential random variables $E_i$ and $D_k$, 
	\be 
	\P{\sum_{i=k}^{k+D_k} E_i=\infty}\geq \P{\sum_{i=k}^\infty E_i=\infty}\P{D_k=\infty}=\P{D_k=\infty}>0\qquad \text{for any }k\in\N_0,
	\ee 
	which concludes the proof.
\end{proof}

\begin{proof}[Proof of Lemma~\ref{lemma:remlifediv}]
	Since $d$ is increasing, we observe that $d(i)\geq d(k)$ for all $i\geq k$. Hence, using Lemma~\ref{lemma:stochdomexp}, for any $t\geq 0$, 
	\be\label{eq:remlifeub} 
	\P{d(k)\sum_{i=k}^{k+D_k}E_i\geq t}\leq \e^{-t},
	\ee 
	so that, in particular, we obtain the desired upper bound for $t=t_k$. For a lower bound, we fix $\eps>0$ and set $\ell_k :=\lfloor (1+\eps)t_kb(k)/d(k)\rfloor$, to bound
	\be 
	\P{d(k)\sum_{i=k}^{k+D_k}E_i\geq t_k}\geq \mathbb P\bigg(d(k) \sum_{i=k}^{k+\ell_k} E_i\geq t_k\bigg)\P{D_k\geq \ell_k}.
	\ee 
	We show that the first term on the right-hand side tends to one for any $\eps >0$ and that the second term is at least $\e^{-(1+2\eps)t_k}$. As $\eps$ is arbitrary, the result then follows. Let us start with the first term. First, since $d$ is increasing, the rates of each of the exponential random variables $E_i$ in the sum are at most $\overline b_k+d(k+\ell_k)$, where $\overline b_k:=\sup\big\{b(i):k\leq i\leq k+\ell_k\big\}$. Hence, with $(E'_i)_{i\geq k}$ a sequence of i.i.d.\ rate-one exponential random variables, 
	\be 
	\sum_{i=k}^{k+\ell_k}E_i\succeq \frac{1}{\overline b_k+d(k+\ell_k)}\sum_{i=k}^{k+\ell_k}E'_i.
	\ee 
	By assuming that $t_k=o(kd(k)/b(k))$, it follows that $\ell_k=o(k)$, so that the index $i$ in the suprema in the definition of $\overline b_k$ ranges from $k$ to $k+o(k)$, and $d(k+\ell_k)=d(k+o(k))$. As a result, since $b$ and $d$ are either regularly varying or slowly varying, we obtain from~\cite[Theorems $1.2.1$ and $1.5.2$]{BinGolTeu89} that $\overline b_k=(1+o(1))b(k)$ and $d(k+\ell_k)=(1+o(1))d(k)$. For any $\xi>0$ and $k$ sufficiently large, we thus arrive at 
	\be \label{eq:boundbar}
	\mathbb P\bigg(d(k) \sum_{i=k}^{k+\ell_k} E_i\geq t_k\bigg)\geq \mathbb P\bigg(\frac{d(k)}{(1+\xi)(b(k)+d(k))} \sum_{i=k}^{k+\ell_k} E_i'\geq t_k\bigg).
	\ee 
	The sum on the right-hand side contains $\ell_k+1$ many i.i.d.\ random variables with mean one. Note that $\ell_k$ diverges, since $d=o(b)$ and $\liminf_{k\to\infty}t_k>0$. Hence, the definition of $\ell_k$ and the fact that $d=o(b)$,  combined with the strong law of large numbers, implies that
	\be 
	\frac{d(k)}{(1+\xi)(b(k)+d(k))t_k} \sum_{i=k}^{k+\ell_k} E_i'\toas \frac{1+\eps}{1+\xi}. 
	\ee 
	By taking $\xi<\eps$, the right-hand side of~\eqref{eq:boundbar} thus converges to one with $k$. 
	
	It remains to prove that 
	\be \label{eq:Dklim}
	\P{D_k\geq \ell_k}\geq \e^{-(1+2\eps)t_k}. 
	\ee
	By using the  distribution of $D_k$ in~\eqref{eq:Dk} together with the inequality $\log(1-x)\geq -x-x^2/2-x^3/(1-x)$ for $x\in (0,1)$,  we obtain  the lower bound
	\be \ba 
	\P{D_k\geq \ell_k}&=\prod_{i=k}^{k+\ell_k-1} \frac{b(i)}{b(i)+d(i)}\\ 
	&\geq \exp\bigg(- \sum_{i=k}^{k+\ell_k-1}\!\!\! \frac{d(i)}{b(i)+d(i)}-\frac12  \sum_{i=k}^{k+\ell_k-1}\!\!\! \Big(\frac{d(i)}{b(i)+d(i)}\Big)^2-\sum_{i=k}^{k+\ell_k-1}\!\!\! \Big(\frac{d(i)}{b(i)+d(i)}\Big)^2\frac{d(i)}{b(i)}\bigg).
	\ea \ee 
	Equivalent to $\overline b_k$, we also define $\underline b_k:=\inf\big\{b(i):k\leq i\leq k+\ell_k\big\}$. Also using that $d$ is increasing,  we obtain the lower bound
	\be 
	\exp\bigg(-  \ell_k  \frac{d(k+\ell_k)}{\underline b_k}-\frac12  \ell_k \Big(\frac{d(k+\ell_k)}{\underline b_k}\Big)^2-\ell_k \Big(\frac{d(k+\ell_k)}{\underline b_k}\Big)^3 \bigg).
	\ee 
	It again follows from~\cite[Theorems $1.2.1$ and $1.5.2$]{BinGolTeu89} that $\underline b_k=(1+o(1))b(k)$. Recalling that $d(k+\ell_k)=(1+o(1))d(k)$ and $\liminf_{k\to\infty}t_k>0$, we thus have
	\be 
	\ell_k\frac{d(k+\ell_k)}{\overline b_k}=(1+\eps+o(1))t_k, \qquad \ell_k\Big(\frac{d(k+\ell_k)}{\overline b_k}\Big)^2=o(t_k), \qquad\text{and}\qquad \ell_k\Big(\frac{d(k+\ell_k)}{\overline b_k}\Big)^3=o(t_k), 
	\ee 
	from which we arrive at the final lower bound
	\be
	\P{D_k\geq \ell_k}\geq \exp\big(-(1+\eps+o(1))t_k\big)\geq \e^{-(1+2\eps)t_k},  
	\ee 
	when $k$ is sufficiently large, which concludes the proof. 
\end{proof}

\begin{remark}[Linear birth rates]\label{rem:linb}
	We can weaken the assumption that $t_k=o(kd(k)/b(k))$ somewhat in the case when $b(k)=\cO(k)$. Namely, we assume that $t_k=\cO(d(k))$. This weaker assumption is required in the proof of Proposition~\ref{prop:divdeathhighdeg}. If we follow the proof, we then can no longer use that $\ell_k=o(k)$, but can only conclude that $\ell_k=\cO(k)$. As a result, we cannot use that $\underline b_k=(1+o(1))\overline b_k=(1+o(1))b(k)$ and $ d(k+\ell_k)=(1+o(1))d(k)$, but rather $\underline b_k, \overline b_k=\Theta(b(k))$ and $d(k+\ell_k)=\Theta(d(k))$. By altering the definition of $\ell_k$ to $\ell_k:=\floor{Ct_k k/d(k)}$ for some sufficiently large constant $C>0$, we can still obtain the lower bound
	\be 
	\P{d(k)\sum_{i=k}^{k+D_k}E_i\geq t_k}\geq \e^{-\wt C t_k}, 
	\ee 
	where $\wt C>0$ is a sufficiently large constant. Though this lower bound does not match with the upper bound provided in the proof, it is sufficient for our purposes when using this result in the proof of Proposition~\ref{prop:divdeathhighdeg}.\ensymboldremark
\end{remark} 

\begin{proof}[Proof of Lemma~\ref{lemma:survdeg}]
	On the event $\{D\geq k\}$ we can write $D=k+D_k$, where $D_k$ is defined in~\eqref{eq:Dk}. Using that the exponential random variables $(E_i)_{i\in\N_0}$  are independent of $D$, we obtain
	\be \ba \label{eq:probbound}
	\P{D\geq k, S_k\leq t, S_{D+1}>t' }&=\P{S_k\leq t,S_{k+1+D_k}>t'\,|\, D\geq k }\P{D\geq k }\\
	&=\P{S_k\leq t,S_{k+1+D_k}>t'}\P{D\geq k }\\
	&= \E{\ind_{\{S_k\leq t\}}\P{\sum_{i=k}^{k+D_k}E_i>t'-S_k\,\Bigg|\, S_k}}\P{D\geq k}. 
	\ea \ee 
	As $d(i)\geq x$ (resp.\ $d(i)\leq x$), Lemma~\ref{lemma:stochdomexp} with $\lambda=x$ yields the upper bound (resp.\ lower bound)
	\be 
	\e^{-x(t'-t)}\P{D\geq k}\E{\ind_{\{S_k\leq t\}}\e^{x(S_k-t)}},
	\ee
	which proves $(i)$ (resp.\ $(ii)$). To prove $(iii)$, we immediately arrive at the upper bound $\P{S_k\leq t}$ by omitting the events $\{D\geq k\}$ and $\{S_{D+1}>t'\}$. To obtain a lower bound, we use the same steps as in~\eqref{eq:probbound}  to arrive at 
	\be \ba 
	\P{D\geq k, S_k\leq t, S_{D+1}>t' } &=\E{\ind_{\{S_k\leq t\}}\P{\sum_{i=k}^{k+D_k}E_i>t'-S_k\,\Bigg|\, S_k}}\P{D\geq k}\\ 
	&\geq \P{S_k\leq t} \P{\sum_{i=k}^{k+D_k}E_i=\infty}\P{D=\infty}. 
	\ea \ee 
	As Assumption~\eqref{ass:A2} is not satisfied, Lemma~\ref{lemma:Dlifedistr} yields that the last probability is non-zero. As, additionally, Assumption~\eqref{ass:A1} is satisfied, (the proof of) Lemma~\ref{lemma:stochdomexp} yields that the second probability equals $\P{D_k=\infty}\geq \P{D_0=\infty}=\P{D=\infty}$, so that we arrive at the desired lower bound and conclude the proof.
\end{proof}

\begin{proof}[Proof of Proposition~\ref{prop:mdptilt}]
	We start by proving~\eqref{eq:mdptiltmin}. For $\theta=0$, the result directly follows from Lemma~\ref{lemma:mdp}, since $\Var(S_k)=\varphi_2(k)$, which tends to infinity with $k$ by Assumption~\eqref{ass:varphi2}. We then take $\theta>0$. First, we prove an upper bound. By directly bounding $S_k$ from above by $\varphi_1(k)-z\varphi_2(k)$ and applying Lemma~\ref{lemma:mdp}, we arrive at the upper bound
	\be \ba 
	\limsup_{k\to\infty}{}&\frac{1}{\varphi_2(k)}\log\E{\ind_{\{S_k\leq \varphi_1(k)-z\varphi_2(k)\}}\e^{\theta (S_k-(\varphi_1(k)-y\varphi_2(k)))}}\\ 
	&\leq  \theta (y-z)+\lim_{k\to\infty}\frac{1}{\varphi_2(k)}\log \P{S_k\leq \varphi_1(k)-z\varphi_2(k)}=\theta (y-z)-\frac{z^2}{2}.
	\ea\ee 
	We then prove a lower bound. We fix $\eps>0$ and bound
	\be \ba 
	\mathbb E{}&\Big[\ind_{\{S_k\leq \varphi_1(k)-z\varphi_2(k)\}}\e^{\theta(S_k-(\varphi_1(k)-y\varphi_2(k)))}\Big]\\ 
	&\geq \E{\ind_{\{(S_k-\varphi_1(k))/\varphi_2(k)\in(-(z+\eps),-z]\}}\e^{\theta(S_k-(\varphi_1(k)-y\varphi_2(k)))}}\\
	&\geq \P{\frac{S_k-\varphi_1(k)}{\varphi_2(k)}\in(-(z+\eps),-z]}\e^{\theta (y-(z+\eps))\varphi_2(k))}. 
	\ea\ee 
	We write the probability as 
	\be 
	\P{S_k-\varphi_1(k)\leq -z\varphi_2(k)}-\P{S_k-\varphi_1(k)\leq -(z+\eps)\varphi_2(k)}. 
	\ee 
	We then apply Lemma~\ref{lemma:mdp} to bound the first probability from below and the second probability from above, to obtain, for $\eps$ sufficiently small and all $k$ sufficiently large, the lower bound 
	\be 
	\exp\Big(-\frac{(z+\eps/3)^2}{2}\varphi_2(k)\Big)-\exp\Big(-\frac{(z+2\eps/3)^2}{2}\varphi_2(k)\Big)=(1-o(1))\exp\Big(-\frac{(z+\eps/3)^2}{2}\varphi_2(k)\Big).
	\ee 
	All combined, we thus arrive at 
	\be \ba 
	\liminf_{k\to\infty}{}&\frac{1}{\varphi_2(k)}\log\E{\ind_{\{S_k\leq \varphi_1(k)-z\varphi_2(k)\}}\e^{\theta(S_k-(\varphi_1(k)-y\varphi_2(k)))}}\geq \theta (y-(z+\eps))-\frac{(z+\eps/3)^2}{2}. 
	\ea \ee 
	Since $\eps>0$ is arbitrary, we arrive at the desired result. The case $\theta<0$ follows mutatis mutandis, which concludes the proof of~\eqref{eq:mdptiltmin}.
\end{proof}

\begin{proof}[Proof of Lemma~\ref{lemma:func}]
	The proofs are based on the following. Suppose that $\ell\colon\N_0\to\R$ and $f\colon\N_0\to(0,\infty)$ are functions such that $\ell$ tends to zero  and 
	\be \label{eq:fdiv}
	\sum_{i=0}^\infty \frac{1}{f(i)}=\infty. 
	\ee 
	Then, 
	\be \label{eq:sumdom}
	\sum_{i=0}^{k-1} \frac{\ell(i)}{f(i)}=o\bigg(\sum_{i=0}^{k-1}\frac{1}{f(i)}\bigg)\qquad \text{as }k\to\infty. 
	\ee 
	Indeed, fix $\eps>0$ and let $i_\eps\in\N_0$ be such that $|\ell(i)|<\eps$ for all $i\geq i_\eps$. Then, for $k i_\eps$, 
	\be 
	\bigg|\sum_{i=0}^{k-1}\frac{\ell(i)}{f(i)}\bigg|\leq \sum_{i=0}^{i_\eps-1}\frac{|\ell(i)|}{f(i)}+\eps \sum_{i=i_\eps}^{k-1} \frac{1}{f(i)}\leq \sum_{i=0}^{i_\eps-1}\frac{|\ell(i)|}{f(i)}+\eps \sum_{i=0}^{k-1}\frac{1}{f(i)}.  
	\ee 
	As a result, by~\eqref{eq:fdiv} and since $\eps$ is arbitrary we arrive at~\eqref{eq:sumdom}. Similarly, when $\ell(i)$ tends to infinity with $i$, 
	\be \label{eq:sumdomflip}
	\sum_{i=0}^{k-1}\frac{1}{f(i)}=o\bigg(\sum_{i=0}^{k-1}\frac{\ell(i)}{f(i)}\bigg)\qquad\text{as }k\to\infty.
	\ee 
	We now prove the lemma.
	
	$(a)$ By Assumption~\eqref{ass:A1}, we have that $\varphi_1(x)$ and $\varphi_1^{-1}(x)$ both tend to infinity with $x$. If $\varphi_2$ converges, both claims are immediate. If $\varphi_2$ tends to infinity, we use that $b$ diverges and apply~\eqref{eq:sumdom} with $\ell(i):=1/(b(i)+d(i))$ and $f(i):=b(i)+d(i)$. For the second part, we apply~\eqref{eq:sumdom} with $\ell(i):=d(i)/(b(i)+d(i))$ and $f(i):=(b(i)+d(i))/d(i)$.
	
	$(b)$ Again, we have that $\varphi_1(x)$ tends to infinity with $x$. Fix $\eps>0$. There exists $i_\eps\in\N_0$ such that $d(i)\leq \overline d+\eps$ for all $i\geq i_\eps$. Hence, for $k> i_\eps$, 
	\be 
	\rho_1(k)=\sum_{i=0}^{i_\eps-1}\frac{d(i)}{b(i)+d(i)}+(\overline d+\eps)\sum_{i=i_\eps}^{k-1}\frac{1}{b(i)+d(i)}\leq \sum_{i=0}^{i_\eps-1}\frac{d(i)}{b(i)+d(i)}+(\overline d+\eps)\varphi_1(k). 
	\ee 
	As $\eps$ is arbitrary, we arrive at the first result. The second is immediate, since $\rho_1(k)/\varphi_1(k)\geq 0$. The second part of $(b)$ follows in an analogous way.
	
	$(c)$ By Assumption~\eqref{ass:varphi2}, we have that $\varphi_2(x)$ tends to infinity with $x$. We then apply~\eqref{eq:sumdom} with $\ell(i)=d(i)^2-(d^*)^2$ and $f(i):=1/(b(i)+d(i))^2$. 
	
	$(d)$ By Assumption~\eqref{ass:A1}, we have that $\varphi_1(x)$ and $\varphi_1^{-1}(x)$ both tend to infinity with $x$. Further, we have $\alpha=o(\varphi_1)$, which follows from~\eqref{eq:sumdom} with $\ell(i):=d(i)-d^*$ and $f(i):=1/(b(i)+d(i))$. As $\cK_\alpha(t)=\alpha(\varphi_1^{-1}(t))$ we thus obtain the first result. The second result follows as for any $\eps>0$ there exists $t_\eps>0$ such that for any $t\geq t_\eps$, 
	\be\ba 
	|\cK_\alpha(t)-\cK_\alpha(t-s(t))|\leq\int_{\varphi_1^{-1}(t-s(t))}^{\varphi_1^{-1}(t)}\!\frac{|d(\lfloor x\rfloor )-d^*|}{b(\lfloor x\rfloor )+d(\lfloor x\rfloor )}\,\dd x\leq \eps \int_{\varphi_1^{-1}(t-s(t))}^{\varphi_1^{-1}(t)}\!\frac{1}{b(\lfloor x\rfloor )+d(\lfloor x\rfloor )}\,\dd x=\eps s(t). 
	\ea\ee 
	The last step follows from the definition of $\varphi_1$ (and the extension of its domain to $(0,\infty)$ by linear interpolation). As $\eps$ arbitrary, we arrive at the desired result.
\end{proof}

\end{document}